\newcommand*\diff{\mathop{}\!\mathrm{d}}
\newtheorem{theorem}{Theorem}[section]
\newtheorem{lemma}[theorem]{Lemma}
\newtheorem{proposition}[theorem]{Proposition}
\newtheorem{corollary}[theorem]{Corollary}
\newtheorem{assumption}[theorem]{Assumption}
\theoremstyle{definition}
\newtheorem{definition}[theorem]{Definition}
\newtheorem{remark}[theorem]{Remark}
\numberwithin{equation}{section}
\newcommand{\cadlag}{c\`adl\`ag }
\newcommand{\tmwk}{\ensuremath{\mathfrak{T}_{M_1}}^{\text{wk}}}
\newcommand{\DR}{\ensuremath{D_\R}}
\newcommand{\tnot}{\ensuremath{\tau_0(\eta)}}
\newcommand{\MRR}{\ensuremath{\mathbf{M}_{\le 1}(\R)}}
\newcommand{\N}{\ensuremath{\mathbb{N}}}
\newcommand{\R}{\ensuremath{\mathbb{R}}}
\newcommand{\1}{\ensuremath{\mathds{1}}}
\newcommand\norm[1]{\left\lVert#1\right\rVert}
\newcommand\nnorm[1]{|#1|}
\newcommand\nnnorm[1]{\left|#1\right|} 
\newcommand{\prob}{\ensuremath\mathbb{P}}
\newcommand{\E}{\mathbb{E}}
\newcommand{\ind}{\mathbbm{1}}
\newcommand{\ubar}[1]{\underaccent{\bar}{#1}}
\pgfplotsset{every axis/.append style={
    axis x line=middle,    
    axis y line=middle,    
    axis line style={<->}, 
    xlabel={$x$},          
    ylabel={$y$},          
    },
    cmhplot/.style={color=blue,mark=none,line width=1pt,<->},
    soldot/.style={color=blue,only marks,mark=*},
    holdot/.style={color=blue,fill=white,only marks,mark=*},
}
\tikzset{>=stealth}
\pgfplotsset{compat=1.18}
\begin{document}
\title{Contagious McKean--Vlasov problems with common noise: from smooth to singular feedback through hitting times}

\author{Ben Hambly\thanks{Mathematical Institute, University of Oxford, United Kingdom, Email: hambly@maths.ox.ac.uk},
\and Alda\"ir Petronilia\thanks{Mathematical Institute, University of Oxford, United Kingdom, Email: aldair.petronilia@bnc.ox.ac.uk, Corresponding author},
\and Christoph Reisinger\thanks{Mathematical Institute, University of Oxford, United Kingdom, Email: christoph.reisinger@maths.ox.ac.uk},
\and Stefan Rigger\thanks{University of Vienna, Austria, Email: stefan.rigger@hotmail.com},
\and Andreas S{\o}jmark\thanks{Department of Statistics, London School of Economics, United Kingdom, Email: a.sojmark@lse.ac.uk}}
\date{April 22, 2025} 

\maketitle

\begin{abstract}
\noindent We consider a family of McKean--Vlasov equations arising as the large particle limit of a system of interacting particles on the positive half-line with common noise and feedback. Such systems are motivated by structural models for systemic risk with contagion. This contagious interaction is such that when a particle hits zero, the impact is to move all the others toward the origin through a kernel which smooths the impact over time. We study a rescaling of the impact kernel under which it converges to the Dirac delta function so that the interaction happens instantaneously and the limiting singular McKean--Vlasov equation can exhibit jumps. Our approach provides a novel method to construct solutions to such singular problems that allows for more general drift and diffusion coefficients and we establish weak convergence to relaxed solutions in this setting. With more restrictions on the coefficients we can establish an almost sure version showing convergence to strong solutions. Under some regularity conditions on the contagion, we also show a rate of convergence up to the time the regularity of the contagion breaks down. Lastly, we perform some numerical experiments to investigate the sharpness of our bounds for the rate of convergence.
\end{abstract}


\section{Introduction}

In this paper, we study the limiting behaviour of the family of conditional McKean--Vlasov equations
\begin{equation}\label{eq: EQUATION WITH SMOOTHING IN INTRODUCITON}
	\left\{
	\begin{array}{r@{{}={}}l}
		\diff{}X_t^\varepsilon &\begin{array}[t]{@{}l}
			b(t,X_t^\varepsilon,\bm{\nu}_t^\varepsilon)\diff t + \sigma(t,X^{\varepsilon}_t)\sqrt{1 - \rho(t,\bm{\nu}^\varepsilon_t)^2}\diff{}W_t + \sigma(t,X^{\varepsilon}_t)\rho(t,\bm{\nu}^\varepsilon_t) \diff{}W_t^0 \\[0.25em]
			- \alpha(t) \diff\mathfrak{L}_t^\varepsilon,\\[0.25em]
		\end{array} \\[0.25em]
		\tau^\varepsilon & \inf\{t > 0 \; : \; X_t^\varepsilon \le 0\},\\[0.25em]
		\mathbf{P}^\varepsilon & \prob\left[\left.X^\varepsilon \in \,\cdot\,\right|W^0 \right],\quad\bm{\nu}_t^\varepsilon = \prob\left[ \left.X_t^\varepsilon \in \cdot, \tau^\varepsilon > t\right|W^0\right],\\[0.25em]
		L_t^\varepsilon & \prob\left[\left.\tau^\varepsilon \le t \right|W^0 \right],\quad\mathfrak{L}^\varepsilon_t = \int_0^t \kappa^{\varepsilon}(t-s)L_s^\varepsilon\diff s,
	\end{array}
	\right.
\end{equation}
as $\varepsilon$ tends to zero. Here, $W$ and $W^0$ are independent standard Brownian motions, and $\kappa^\varepsilon$ is a rescaled mollifier which converges to the Dirac delta as $\varepsilon$ goes to $0$. Such equations arise as the limit of a large particle system, where $W$ is usually referred to as the \emph{idiosyncratic noise} (of a representative particle) and $W^0$ as the \emph{common noise}. Also for the same reason, $L^\varepsilon$ is referred to as the \emph{loss} process and quantifies the amount of mass that has crossed the boundary at zero by time $t$. A solution to this system consists of the random probability measure $\mathbf{P}^\varepsilon$ and the loss process $L^\varepsilon$, conditional on $W^0$.

In addition to the more classical measure dependence of the coefficients that characterise McKean--Vlasov equations, there is
a further feedback mechanism through the loss process $L^\varepsilon$: depending on the value of $\alpha(t) \ge 0$, $L^\varepsilon$ pushes $X^\varepsilon$ towards zero, causing the value of $L^\varepsilon$ to increase, hence pushing $X^\varepsilon$ even closer to 0.
The integral kernel $\kappa^{\varepsilon}$, which is parameterised by some $\varepsilon>0$, is a key element of the model and captures a latency in the transmission of $L^\varepsilon$ to $X^\varepsilon$ present in real-world systems. Precise conditions on the coefficient functions will be given later.

One motivation for this model arises in systemic risk, where $X^\varepsilon$ represents the \emph{distance-to-default} of a prototypical institution in a financial network with infinitely many entities, see for example \cite{hambly2019spde}. In this setting, $L_t^\varepsilon$ denotes the proportion of institutions that have defaulted by time $t$ and is the cause of \emph{endogenous contagion} through the feedback mechanism. In this model, we use the kernel $\kappa^{\varepsilon}$ to capture feedback where, when a financial institution defaults and their positions are unwound, the counterparties experience a decrease in the value of their assets over time.
A model for bank runs using such a smooth transmission of boundary losses was analysed in \cite{burzoni2021mean} in the presence of common noise. Moreover, \cite{inglis2015mean} study a related mean-field model for neurons interacting gradually through threshold hitting times, albeit without common noise.

If the support of $\kappa^\varepsilon$ is contained in the interval $[0, \gamma]$ with $\gamma \ll 1$, then 
the integral $\int_0^t\kappa^\varepsilon(t - s)L_s^\varepsilon \diff s$ is approximately equal to $L_t^\varepsilon$. In this article, we prove convergence in the following sense: if we fix a kernel $\kappa$ and rescale it with a variable $\varepsilon>0$ by $\kappa^{\varepsilon}(t) = \varepsilon^{-1} \kappa(\varepsilon^{-1} t)$, then we have convergence in the $M_1$-topology of $X^{\varepsilon}$ to $X$, where $X$ is a (relaxed) solution to
\begin{equation}\label{eq: FIRST EQUATION IN THE PAPER}
	\left\{
	\begin{array}{r@{{}={}}l}
		\diff{}X_t &\begin{array}[t]{@{}l}
			b(t,X_t,\bm{\nu}_t)\diff t + \sigma(t,X_t)\sqrt{1 - \rho(t,\bm{\nu}_t)^2}\diff{}W_t + \sigma(t,X_t)\rho(t,\bm{\nu}_t) \diff{}W_t^0\\[0.25em]
			- \alpha(t)\diff L_t,\\[0.25em]
		\end{array} \\[0.25em]
		\tau & \inf\{t > 0 \; : \; X_t \le 0\},\\[0.25em]
		\mathbf{P} & \prob\left[\left.X \in \,\cdot\,\right|W^0,\,\mathbf{P} \right],\quad\bm{\nu}_t = \prob\left[ \left.X_t \in \cdot, \tau > t\right|W^0,\,\mathbf{P}\right],\\[0.25em]
		L_t & \prob\left[\left.\tau \le t \right|W^0,\,\mathbf{P} \right].
	\end{array}
	\right.
\end{equation}


Intuitively, \eqref{eq: EQUATION WITH SMOOTHING IN INTRODUCITON} is a smoothed approximation to \eqref{eq: FIRST EQUATION IN THE PAPER}. 
Our motivation for taking the limit as $\varepsilon \to 0$ is to investigate the convergence to the system where the feedback is felt instantaneously, which captures the situation when the latency is small compared to the timescale of interest. 
It is well known that equations of the form \eqref{eq: FIRST EQUATION IN THE PAPER} may develop jump discontinuities, as we will elaborate below.



Variants and special cases of \eqref{eq: FIRST EQUATION IN THE PAPER} have been the subject of extensive research in the field. In the simplest scenario, where $b$ and $\rho$ are both zero, $\sigma$ is equal to $1$, and $\alpha$ is a positive constant, we obtain the probabilistic formulation of the \emph{supercooled Stefan problem}. 
The Stefan problem, introduced in \cite{stefan1889einige},
describes the temperature and the phase boundary of a material undergoing a phase transition, typically from a solid to a liquid. The supercooled Stefan problem describes the freezing of a supercooled liquid (i.e. a liquid which is below its freezing point).
Many authors constructed classical solutions 
\citep[]{fasano1981new,fasano1983critical,fasano1989singularities,lacey1985ill,HOWISONSINGULARITY}
for time intervals where $L_t$ is regular.
In the PDE literature, it was first established in \cite{sherman1970general} that $L_t$ may explode in finite time.


In the probabilistic formulation where the initial condition has finite mass, $L_t$ is bounded but there
can still exist a $t_* \in (0,\,\infty)$ such that $\lim_{s \uparrow t_*} L_s^\prime = \infty$, referred to as a blow-up. 
Moreover,  given suitable assumptions, for $\alpha(t)$ sufficiently large, a jump of $t \mapsto L_t$ must occur, as per \citep[Theorem~1.1]{hambly2019mckean}. 
	With added common noise, there is a set of paths of positive probability where a jump must happen, \citep[Theorem~2.1]{ledger2021mercy}. 
	

	The probabilistic reformulation provides a natural way to restart the system following a blow-up. 
	From this perspective, for arbitrary initial conditions, see \citep[Example~2.2]{hambly2019mckean}, there may be infinitely many solutions: 
	it may be possible for two solutions to be equal up to the first jump time $t$ and then take jumps of different sizes. To address this ambiguity that arises at a jump time, a condition is typically imposed that selects the smallest possible jump sizes. This condition is known as the \emph{physical jump condition}, defined as:
	\begin{equation}\label{eq: PHYSICAL JUMP CONDITION IN THE INTRODUCTION}
		\Delta L_t = \inf \{x > 0\,:\, \bm{\nu}_{t-}([0,\alpha x]) < x\}, 
	\end{equation}
	where $\Delta L_t \coloneqq L_t - L_{t-}$. The intuitive interpretation of \eqref{eq: PHYSICAL JUMP CONDITION IN THE INTRODUCTION} is that if we take the density of $X_{t-}\ind_{\tau \ge t}$ and displace it by an $\alpha x$ amount towards $0$, then the mass of the system below zero is exactly $x$. So it is the minimal amount by which we may displace our density such that the displacement and the mass below zero correspond. From a modelling perspective, the physical jump condition is the preferred choice of jump sizes due to its economic and physical interpretations. 
	
	
	Extensive research has been conducted to investigate various properties of \emph{physical solutions} to the equation 
	with $b=\rho=0$, $\sigma=1$, and $\alpha$ a positive constant
	\citep{nadtochiy2019particle,MEANFIELDTHROUGHHITTINGTIMESNADOTOCHIY,hambly2019mckean,ledger2020uniqueness,ledger2021mercy,lipton2021semi,kaushansky2020convergence,cuchiero2020propagation}. The paper \cite{delarue2022global} establishes that when $X_{0-}$ possesses a bounded density that changes monotonicity finitely many times, then $L$ is unique, and for any $t \ge 0$, $L$ is continuously differentiable on $(t,\, t+ h)$ for some $h > 0$. Additionally, in \cite{hambly2019mckean}, it is demonstrated that for an initial condition with a bounded density that is H\"older continuous near the boundary, $L$ is unique, continuous, and has a weak derivative until some explosion time. The work in \cite{ledger2020uniqueness} extends these results by showing that if the initial condition possesses an $L^2$-density, then we have uniqueness for a short time after the explosion time. Moreover, irrespective of the initial condition, there exists a minimal loss process that will be dominated by any other loss process that solves the equation \citep{cuchiero2020propagation}. It has been established that such minimal solutions are physical \citep[Theorem~6.5]{cuchiero2020propagation}. However, it remains unclear whether physical solutions are necessarily minimal due to the lack of uniqueness for general initial conditions.
	
	Returning to \eqref{eq: FIRST EQUATION IN THE PAPER}, recent advances have been made in the study of general coefficients, specifically $t \mapsto b(t)$, $t \mapsto \sigma(t)$, and $t \mapsto \rho(t)$, in the presence of common noise. 
	In Remark 2.5 from \citep{MEANFIELDTHROUGHHITTINGTIMESNADOTOCHIY}, a generalized Schauder fixed-point argument is presented to construct \emph{strong solutions} in this setting. Strong solutions refer to the property $\mathbf{P} = \prob(X \in \cdot \mid W^0)$, indicating that the random probability measure $\mathbf{P}$ is adapted to the $\sigma$-algebra generated by the common noise. In \citep{ledger2021mercy}, an underlying finite particle system was shown to converge to \emph{relaxed} (or \emph{weak}) \emph{solutions} (see Definition \ref{def: DEFINITION OF RELAXED SOLUTIONS GENERALISED}), satisfying the aforementioned physical jump condition (with coefficients $(t,x) \mapsto b(t,x)$, $t\mapsto \sigma(t)$, and $t\mapsto \rho(t)$). Weak/relaxed solutions are characterised by having $\mathbf{P} = \prob(X \in \cdot \mid W^0,\mathbf{P})$, instead of $\mathbf{P} = \prob(X \in \cdot \mid W^0)$, see Definition \ref{def: DEFINITION OF RELAXED SOLUTIONS GENERALISED}. As the empirical distributions of the finite particle systems converge weakly to $\mathbf{P}$, there is no guarantee that $\mathbf{P}$ will be adapted to the $\sigma$-algebra generated by the common noise. The existence of such \emph{strong} solutions in the sense just discussed, for the common noise problem satisfying the physical jump condition \eqref{eq: PHYSICAL JUMP CONDITION IN THE INTRODUCTION}, has not yet been addressed in the literature.

	The main contributions and structure of this paper are as follows:
	\begin{itemize}
		\item 
		Firstly, in Section \ref{sec: SECTION ON GENERALISED CONVERGENCE TO DELAYED SOLUTIONS},
		we prove Theorem \ref{thm:sec2: THE MAIN EXISTENCE AND CONVERGENCE THEOREM OF SOLUTIONS GENERALISED FOR SECTION 2} and Corollary \ref{thm:sec2: THE MAIN EXISTENCE AND CONVERGENCE THEOREM OF PHYSICAL SOLUTIONS FOR SECTION 2}
		showing the weak convergence of solutions of \eqref{eq: EQUATION WITH SMOOTHING IN INTRODUCITON} to relaxed solutions of \eqref{eq: FIRST EQUATION IN THE PAPER} as $\varepsilon \rightarrow 0$, i.e., as the smoothed feedback mechanism becomes instantaneous in the limit.
		As a by-product, this gives a novel method for establishing the existence of solutions to \eqref{eq: FIRST EQUATION IN THE PAPER},
		avoiding time regularity assumptions on $\sigma \rho$ as needed in \cite{ledger2021mercy}. Furthermore, we derive an upper bound on the jump sizes, Theorem \ref{thm:sec2: THE MAIN EXISTENCE AND CONVERGENCE THEOREM OF SOLUTIONS GENERALISED FOR SECTION 2}, and, under additional assumptions on the coefficients, Corollary \ref{thm:sec2: THE MAIN EXISTENCE AND CONVERGENCE THEOREM OF PHYSICAL SOLUTIONS FOR SECTION 2}, show that the loss process $L$ satisfies the physical jump condition
		\eqref{eq: PHYSICAL JUMP CONDITION IN THE INTRODUCTION}.
		\item
		Secondly, in Section \ref{sec: SECTION ON THE CONVERGENCE FROM STEFAN}, we show in Theorem \ref{thm: Propogation of Minimality} that, if the coefficients depend solely on time and $\alpha$ is a constant, then we may upgrade our mode of convergence from weak to almost sure.
		As a consequence of the method employed, we can guarantee that the limiting loss process will be $W^0$-measurable and 
		satisfy the physical jump condition. In addition, we have the existence of strong solutions in this setting.
		\item
		Lastly, in Section \ref{subsec: RATES OF CONVERGENCE OF THE MOLLIFIED PROCESS}, for constant coefficients and without common noise, we provide in Proposition \ref{prop: MAIN RESULT PROPOSITION IN THE GENERAL CASE} an explicit rate of convergence of the smoothed approximations to the singular system prior to the first time the regularity of the loss function breaks down.
		We also give numerical tests of the convergence order in scenarios of different regularity, with and without common noise.
	\end{itemize}
	
	
	
	\section{Weak convergence of smoothed feedback systems}\label{sec: SECTION ON GENERALISED CONVERGENCE TO DELAYED SOLUTIONS}
	
	Fix a finite time horizon $T > 0$, and let $\mathcal{P}(A)$ denote the set of probability measures on a measurable space $(A,\mathcal{A})$. When $A$ is a metric space, $\mathcal{B}(A)$ denotes the Borel $\sigma$-algebra. Let $\mathbf{M}_{\le 1}(A)$ denote the space of sub-probability measures, which we shall endow with the topology of weak convergence. For any interval $I$ and metric space $X$, let $C(I, X)$ denote the space of continuous functions from $I$ to $X$. Similarly, $D(I,X)$ denotes the space of \cadlag functions from $I$ to $X$. We shall employ the shorthand notation $\mathcal{C}_X$ and $D_X$ for $C(I, X)$ and $D(I, X)$, respectively, when the interval $I$ is clear.
	
	For every $\varepsilon > 0$, we fix a probability space $(\Omega^\varepsilon, \mathcal{F}^\varepsilon,\,\prob^\varepsilon)$ that supports two independent Brownian motions. To simplify the notation, we will denote these Brownian motions by $W$ and $W^{0}$; however, it is important to note that they may not be equal for different values of $\varepsilon$. Similarly, we adopt the simplified notations $\prob$ and $\E$ to refer to $\prob^\varepsilon$ and the expectation under the measure $\prob^\varepsilon$ respectively. In this section, we characterise the weak limit of the system given by the following equation as $\varepsilon$ tends to zero:
	\begin{equation}\label{eq: MOLLIFIED MCKEAN VLASOV EQUATION IN THE SYSTEMIC RISK MODEL WITH COMMON NOISE GENERALISED}
		\left\{
		\begin{array}{r@{{}={}}l}
			\diff{}X_t^\varepsilon &\begin{array}[t]{@{}l}
				b(t,X_t^\varepsilon,\bm{\nu}_t^\varepsilon)\diff t + \sigma(t,X^{\varepsilon}_t)\sqrt{1 - \rho(t,\bm{\nu}^\varepsilon_t)^2}\diff{}W_t + \sigma(t,X^{\varepsilon}_t)\rho(t,\bm{\nu}^\varepsilon_t) \diff{}W_t^0\\[0.25em]
				- \alpha(t)\diff\mathfrak{L}_t^\varepsilon,\\[0.25em]
			\end{array} \\[0.25em]
			\tau^\varepsilon& \inf\{t > 0 \; : \; X_t^\varepsilon \le 0\},\\[0.25em]
			\mathbf{P}^\varepsilon & \prob\left[\left.X^\varepsilon \in \,\cdot\,\right|W^0 \right],\quad\bm{\nu}_t^\varepsilon \coloneqq \prob\left[ \left.X_t^\varepsilon \in \cdot, \tau^\varepsilon > t\right|W^0\right],\\[0.25em]
			L_t^\varepsilon & \prob\left[\left.\tau^\varepsilon \le t \right|W^0\right],\quad\mathfrak{L}^\varepsilon_t = \int_0^t \kappa^{\varepsilon}(t-s)L_s^\varepsilon\diff s,
		\end{array}
		\right.
	\end{equation}
	where $t \in [0,\,T]$. The coefficient $b$ ($\sigma$, $\rho$, or $\alpha$, respectively) is a measurable map from $[0,\,T] \times \R \times \MRR$ ($[0,\,T] \times \R$, $[0,\,T] \times \MRR$, or $[0,\,T]$, respectively) into $\R$. The initial condition, denoted by $X_{0-}$, is assumed to be independent of the Brownian motions and positive almost surely. Finally, we define $\kappa^\varepsilon(t) \coloneqq \varepsilon^{-1}\kappa(t \varepsilon^{-1})$.
	
	One way to view $X^\varepsilon$ is as the mean-field limit of an interacting particle system where particles interact through their first hitting time of zero. The interactions among particles are smoothed out over time by convolving with the kernel $\kappa^\varepsilon$. As $\varepsilon$ approaches zero, the effect of interactions occurs over increasingly smaller time intervals. As $\kappa^\varepsilon$ is a mollifier, it is natural to expect, as $\varepsilon$ tends to zero, $\mathfrak{L}_t^\varepsilon$ to converge to the instantaneous loss at time $t$. That is to say, along a suitable subsequence, the random tuple $\{(\mathbf{P}^\varepsilon,\,W^0,\,W)\}_{\varepsilon > 0}$ would have a limit point $(\mathbf{P},\, W^0,\, W)$ where $\mathbf{P} = \prob\left[X \in \cdot \mid W^0\right]$ and $X$ solves
	\begin{equation}\label{eq: SINGULAR MCKEAN VLASOV EQUATION IN THE SYSTEMIC RISK MODEL WITH COMMON NOISE GENERALISED}
		\left\{
		\begin{array}{r@{{}={}}l}
			\diff{}X_t &\begin{array}[t]{@{}l}
				b(t,X_t,\bm{\nu}_t)\diff t + \sigma(t,X_t)\sqrt{1 - \rho(t,\bm{\nu}_t)^2}\diff{}W_t + \sigma(t,X_t)\rho(t,\bm{\nu}_t) \diff{}W_t^0\\[0.25em]
				- \alpha(t)\diff L_t,\\[0.25em]
			\end{array} \\[0.25em]
			\tau & \inf\{t > 0 \; : \; X_t \le 0\},\\[0.25em]
			\mathbf{P} & \prob\left[\left.X \in \,\cdot\,\right|W^0 \right],\quad\bm{\nu}_t \coloneqq \prob\left[ \left.X_t \in \cdot, \tau > t\right|W^0\right],\\[0.25em]
			L_t & \prob\left[\left.\tau \le t \right|W^0\right],
		\end{array}
		\right.
	\end{equation}
	with $X_{0} = X_{0-} + \alpha(0)L_0$. In this system, the feedback is felt instantaneously and is characterised by the common noise $W^0$. In what follows, for technical reasons, we construct an extension $\tilde{X}$ of the process $X$. For an arbitrary stochastic process $Z$, we define its extended version as follows,
	\begin{equation}\label{eq: HOW WE DEFINE THE EXTENSION TO STOCHASTIC PROCESSES}
		\tilde{Z}_t = 
		\begin{cases}
			\begin{aligned}
				&Z_{0-} &\qquad &t \in [-1,\,0), \\
				&Z_{t} &\qquad &t \in [0,\,T], \\
				&Z_{T} + W_t - W_T &\qquad &t \in (T,\,T +1].
			\end{aligned}
		\end{cases}
	\end{equation}
	We artificially extend the processes to be constant on $[-1,0)$ and by a pure Brownian noise term on $(T,T+1]$. Therefore, the extension to $\mathbf{P}^\varepsilon$ is $\tilde{\mathbf{P}}^\varepsilon \coloneqq \prob(\tilde{X}^{\varepsilon} \in \cdot \mid W^0)$. Consequently, the random measure $\tilde{\mathbf{P}}^\varepsilon$ remains $W^0$-measurable. We show that the collection of measures $\{\tilde{\mathbf{P}}^\varepsilon\}_{\varepsilon > 0}$ is tight; hence there exists a subsequence $(\varepsilon_n)_{n \ge 1}$ that converges to zero such that $\tilde{\mathbf{P}}^{\varepsilon_n}$ converges weakly to the random measure $\mathbf{P}$. However, as the mode of convergence is weak, we cannot expect that the limit point $\mathbf{P}$ is also measurable with respect to $W^0$.
	
	Hence, we relax our notion of solution to \eqref{eq: SINGULAR MCKEAN VLASOV EQUATION IN THE SYSTEMIC RISK MODEL WITH COMMON NOISE GENERALISED},
	which leads to the definition of \emph{relaxed solutions} employed in the literature when studying the mean-field limit of particle systems with common noise \citep{ledger2021mercy} and also in the mean-field game literature with common source of noise \citep{carmona:hal-01144847}.
	
	\begin{definition}[Relaxed solutions] \label{def: DEFINITION OF RELAXED SOLUTIONS GENERALISED}
		Let the coefficient functions $b,\,\sigma,\,\rho$, and $\alpha$ be given along with the initial condition $X_{0-}$ at time $t = 0-$. We define a relaxed solution to \eqref{eq: SINGULAR MCKEAN VLASOV EQUATION IN THE SYSTEMIC RISK MODEL WITH COMMON NOISE GENERALISED} as a family $(X,\, W,\, W^0,\,\, \mathbf{P})$ on a filtered probability space $(\Omega,\,\mathcal{F},\,\prob)$ such that
		\begin{equation}\label{eq: SINGULAR RELAXED MCKEAN VLASOV EQUATION IN THE SYSTEMIC RISK MODEL WITH COMMON NOISE GENERALISED}
			\left\{
			\begin{array}{r@{{}={}}l}
				\diff{}X_t &\begin{array}[t]{@{}l}
					b(t,X_t,\bm{\nu}_t)\diff t + \sigma(t,X_t)\sqrt{1 - \rho(t,\bm{\nu}_t)^2}\diff{}W_t + \sigma(t,X_t)\rho(t,\bm{\nu}_t) \diff{}W_t^0\\[0.25em]
					- \alpha(t)\diff L_t,\\[0.25em]
				\end{array} \\[0.25em]
				\tau & \inf\{t > 0 \; : \; X_t \le 0\},\\[0.25em]
				\mathbf{P} & \prob\left[\left.X \in \,\cdot\,\right|W^0,\,\mathbf{P} \right],\quad\bm{\nu}_t \coloneqq \prob\left[ \left.X_t \in \cdot, \tau > t\right|W^0,\,\mathbf{P}\right],\\[0.25em]
				L_t & \prob\left[\left.\tau \le t \,\right|W^0,\,\mathbf{P} \right],
			\end{array}
			\right.
		\end{equation}
		with $X_0 = X_{0-} + \alpha(0)L_0$, $L_{0-} = 0$, $X_{0-} \perp (W,\,W^0,\,\mathbf{P})$, and $(W^0,\mathbf{P}) \perp W$, where $(W,\,W^0)$ is a two-dimensional Brownian motion, $X$ is a \cadlag process, and $\mathbf{P}$ is a random probability measure on the space of \cadlag paths $D([-1,T+1],\R)$.
	\end{definition}
	
	As the drift and correlation function depend on a flow of measures, we still want them to satisfy some notion of linear growth and Lipschitzness in the measure component. We will also require some spatial and temporal regularity such that \eqref{eq: MOLLIFIED MCKEAN VLASOV EQUATION IN THE SYSTEMIC RISK MODEL WITH COMMON NOISE GENERALISED} is well-posed. We will suppose that our coefficients $b,\,\sigma$, $\rho,\,\kappa$ and $\alpha$ satisfy the following assumptions.
	
	\begin{assumption}\label{ass: MODIFIED AND SIMPLIED FROM SOJMARK SPDE PAPER ASSUMPTIONS II}
		\begin{enumerate}[(i)]
			\item \label{ass: MODIFIED AND SIMPLIED FROM SOJMARK SPDE PAPER ASSUMPTIONS II ONE} (Regularity of $b$) For all $t \in [0,\,T]$ and $\mu \in \MRR$, the map $x \mapsto b(t,x,\mu)$ is $\mathcal{C}^2(\R)$. Moreover, there exists a constant $C_b > 0$ such that
			\begin{align*}
				&\nnnorm{b(t,x,\mu)} \le C_b\left(1 + \nnnorm{x} + \langle\mu,\,\nnorm{\,\cdot\,}\rangle\right),\quad\nnorm{\partial_x^{(n)}b(t,x,\mu)}\le C_b,\quad n = 1,\,2,\\
				&\nnnorm{b(t,x,\mu) - b(t,x,\tilde{\mu})} \le C_b \left(1 + \nnnorm{x} + \langle\mu,\,\nnnorm{\,\cdot\,}\rangle\right)d_0(\mu,\,\tilde{\mu}), 
			\end{align*}
			where
			\begin{equation*}
				d_0(\mu,\,\tilde{\mu}) = \sup\left\{\nnnorm{\langle\mu - \tilde{\mu},\, \psi\rangle}\,:\, \norm{\psi}_{\operatorname{Lip}} \le 1,\, \nnnorm{\psi(0)} \le 1\right\}
			\end{equation*}
			for any $\mu,\tilde{\mu} \in \MRR$.
			\item \label{ass: MODIFIED AND SIMPLIED FROM SOJMARK SPDE PAPER ASSUMPTIONS II TWO} (Space/time regularity of $\sigma$) The map $(t,\,x) \mapsto \sigma(t,\,x)$ is $\mathcal{C}^{1,2}([0,\, T] \times \R)$. Moreover, there exists a constant $C_{\sigma} > 0$ such that
			\begin{equation*}
				\nnorm{\sigma(t,\,x)}\le C_\sigma,\quad \nnorm{\partial_t\sigma(t,\,x)}\le C_\sigma ,\quad\textnormal{and}\quad\nnorm{\partial_x^{(n)}\sigma(t,x)}\le C_\sigma \quad \textnormal{for} \quad n =1,\,2.
			\end{equation*}
			\item \label{ass: MODIFIED AND SIMPLIED FROM SOJMARK SPDE PAPER ASSUMPTIONS II THREE} ($d_1$-Lipschitzness of $\rho$) For all $t \in [0,\,T]$, there exists a constant $C_{\rho} > 0$ such that
			\begin{equation*}
				\nnnorm{\rho(t,\mu) - \rho(t,\tilde{\mu})} \le C_\rho \left(1 + \langle\mu,\,\nnnorm{\,\cdot\,}\rangle\right)d_1(\mu,\,\tilde{\mu}),
			\end{equation*}
			where
			\begin{equation*}
				d_1(\mu,\,\tilde{\mu}) = \sup\left\{\nnnorm{\langle\mu - \tilde{\mu},\, \psi\rangle}\,:\, \norm{\psi}_{\operatorname{Lip}} \le 1,\, \norm{\psi}_{\infty} \le 1\right\}
			\end{equation*}
			for any $\mu,\tilde{\mu} \in \MRR$.
			\item \label{ass: MODIFIED AND SIMPLIED FROM SOJMARK SPDE PAPER ASSUMPTIONS II FOUR}  (Non-degeneracy) For all $t \in [0,\,T]$, $x \in \R$, and $\mu \in \MRR$, the constants $C_\sigma$ and $C_\rho$ assumed above are such that $0 < C_\sigma^{-1} \le \sigma(t,x)$ and $0 \le \rho(t,\mu) \le 1 - C_{\rho}^{-1}$.
			\item \label{ass: MODIFIED AND SIMPLIED FROM SOJMARK SPDE PAPER ASSUMPTIONS II FIVE} (Temporal regularity of $\alpha$) The map $t \mapsto \alpha(t)$ is $\mathcal{C}^{1}([0,\, T])$ and increasing with $\alpha(0) \ge 0$.
			\item \label{ass: MODIFIED AND SIMPLIED FROM SOJMARK SPDE PAPER ASSUMPTIONS THREE} (Sub-Gaussian initial law) The initial law, $\nu_{0-}$ is sub-Gaussian,
			\begin{equation*}
				\exists\, \gamma > 0 \quad \text{s.t.} \quad \nu_{0-}(\lambda,\infty) = O(e^{-\gamma \lambda^2}) \quad \text{as} \quad \lambda \to \infty,
			\end{equation*}
			and has a density $V_{0-} \in L^2(0,\infty)$ such that $\norm{xV_{0-}}_{L^2}^2 = \int_0^\infty\nnorm{xV_{0-}(x)}^2 \diff{}x< \infty$.
			\item \label{ass: MODIFIED AND SIMPLIED FROM SOJMARK SPDE PAPER ASSUMPTIONS FOUR} (Regularity of mollifier) The function $\kappa \in \mathcal{W}^{1,1}_0(\R_+)$, the Sobolev space with one weak derivative in $L^1$ and zero at $0$, such that $\kappa$ is non-negative, and $\norm{\kappa}_1 = 1$.
		\end{enumerate}
	\end{assumption}

	Under Assumption \ref{ass: MODIFIED AND SIMPLIED FROM SOJMARK SPDE PAPER ASSUMPTIONS II}, \citep{hambly2019spde} 
	showed existence and uniqueness of solutions to a stochastic partial differential equation (SPDE) that any solution to \eqref{eq: MOLLIFIED MCKEAN VLASOV EQUATION IN THE SYSTEMIC RISK MODEL WITH COMMON NOISE GENERALISED} will satisfy.
	
	\begin{theorem}[{\citep[Theorem~2.6]{hambly2019spde}}]\label{thm: EXISTENCE AND UNIQUESS OF SOLUTIONS THEOREM FROM SPDE THAT ONLY INCLUDES THE UNIQUESS PART}
		There is a unique strong solution to the SPDE
		\begin{equation}\label{eq: THE SPDE EQUATION FROM THE SPDE PAPER}
			\begin{aligned}
				\diff\langle\mu_t,\,\phi\rangle =& \langle\mu_t,\,b(t,\,\cdot,\,\mu_t)\partial_x\phi\rangle \diff t + \frac{1}{2}\langle\mu_t,\,\sigma(t,\,\cdot)^2\partial_{xx}\phi\rangle \diff t\\
				&+ \langle\mu_t,\,\sigma(t,\,\cdot)\rho(t,\,\mu_t)\partial_x\phi\rangle \diff W^0_t -  \langle\mu_t,\,\alpha(t)\partial_x\phi\rangle \diff \mathfrak{L}_t^\varepsilon, \\
				\mathfrak{L}_t^\varepsilon \coloneqq& \int_0^t \kappa^\varepsilon(t - s)L_s \diff s, \\ L_t \coloneqq& 1 - \mu_t(0,\infty),
			\end{aligned}
		\end{equation}
		where the coefficients $b,\,\sigma$, $\rho,\,\kappa^\varepsilon$, and $\alpha$ satisfy Assumption \ref{ass: MODIFIED AND SIMPLIED FROM SOJMARK SPDE PAPER ASSUMPTIONS II}, and $\phi \in \mathscr{C}_0$, the set of Schwartz functions that are zero at $0$. 
		
	\end{theorem}
	
	From \citep[Theorem~2.6]{hambly2019spde}, we can deduce the existence of solutions to \eqref{eq: MOLLIFIED MCKEAN VLASOV EQUATION IN THE SYSTEMIC RISK MODEL WITH COMMON NOISE GENERALISED}.
	
	\begin{theorem}[{\citep[Theorem~2.7]{hambly2019spde}}]\label{thm: EXISTENCE AND UNIQUESS OF SOLUTIONS THEOREM FROM SPDE}
		Let $(\bm{\nu}^\varepsilon,\,W^0)$ be the unique strong solution to the SPDE \eqref{eq: THE SPDE EQUATION FROM THE SPDE PAPER}. Then, for any Brownian motion $W \perp (X_{0-},W^0)$, we have
		\begin{equation*}
			\bm{\nu}^\varepsilon_t = \prob \left[X_t^\varepsilon \in \cdot,\, \tau^\varepsilon > t\mid W^0\right], 
		\end{equation*}
		where $X^\varepsilon$ is the solution to the conditional McKean--Vlasov diffusion
		\begin{equation*}
			\left\{
			\begin{array}{r@{{}={}}l}
				\diff{}X_t^\varepsilon & \begin{array}[t]{@{}l}
					b(t,X_t^\varepsilon,\bm{\nu}^\varepsilon_t)\diff t + \sigma(t,X_t^\varepsilon)\sqrt{1 - \rho(t,\bm{\nu}^\varepsilon_t)^2}\diff{}W_t + \sigma(t,X_t^\varepsilon)\rho(t,\bm{\nu}^\varepsilon_t) \diff{}W_t^0 \\[0.25em]
					- \alpha(t)\diff\mathfrak{L}_t^\varepsilon,\\[0.25em]
				\end{array} \\[0.25em]
				\mathbf{P}^\varepsilon & \prob\left[\left.X^\varepsilon \in \,\cdot\,\right|W^0 \right],\\[0.25em]
				L_t^\varepsilon & \prob\left[\left.\tau^\varepsilon \le t \,\right|W^0 \right],\quad\mathfrak{L}^\varepsilon_t = \int_0^t \kappa^{\varepsilon}(t-s)L_s^\varepsilon\diff s, \\
				\tau^\varepsilon & \inf\{t > 0\,:\, X^\varepsilon_t \le 0\},
			\end{array}
			\right.
		\end{equation*}
		with initial condition $X_{0-} \sim \nu_{0-}$.
	\end{theorem}
	
	With \citep[Theorem~2.6]{hambly2019spde} and \citep[Theorem~2.7]{hambly2019spde}, it can be inferred that solutions to \eqref{eq: MOLLIFIED MCKEAN VLASOV EQUATION IN THE SYSTEMIC RISK MODEL WITH COMMON NOISE GENERALISED} are unique. The existence of solutions to \eqref{eq: MOLLIFIED MCKEAN VLASOV EQUATION IN THE SYSTEMIC RISK MODEL WITH COMMON NOISE GENERALISED} allows us to introduce the main result of this section, showing that solutions to \eqref{eq: SINGULAR RELAXED MCKEAN VLASOV EQUATION IN THE SYSTEMIC RISK MODEL WITH COMMON NOISE GENERALISED} exist as limit points of the collection of smoothed equations.

	\begin{theorem}[Existence and convergence generalised]\label{thm:sec2: THE MAIN EXISTENCE AND CONVERGENCE THEOREM OF SOLUTIONS GENERALISED FOR SECTION 2}
		Let $\tilde{X}^\varepsilon$ be the extended version of $X^\varepsilon$ in \eqref{eq: MOLLIFIED MCKEAN VLASOV EQUATION IN THE SYSTEMIC RISK MODEL WITH COMMON NOISE GENERALISED} and set $\tilde{\mathbf{P}}^\varepsilon = \operatorname{Law}(\tilde{X}^\varepsilon\mid W^0)$. Then, the family of random tuples $\{(\tilde{\mathbf{P}}^\varepsilon,\,W^0,\,W)\}_{\varepsilon > 0}$ is tight. Any subsequence $\{(\tilde{\mathbf{P}}^{\varepsilon_n},\,W^0,\,W)\}_{n \ge 1}$, for a positive sequence $(\varepsilon_n)_{n \ge 1}$ which converges to zero, has a further subsequence which converges weakly to some $(\mathbf{P},\,W^0,\,W)$. Here, $W^0$ and $W$ are standard Brownian motions, $\mathbf{P}$ is a random probability measure $\mathbf{P}\,:\, \Omega \to \mathcal{P}(\DR)$. 
		
		Given this limit point, there is a background space which carries a stochastic process $X$ such that $(X,\, W^0,\, W,\,\mathbf{P})$ is a relaxed solution to \eqref{eq: SINGULAR RELAXED MCKEAN VLASOV EQUATION IN THE SYSTEMIC RISK MODEL WITH COMMON NOISE GENERALISED} in the sense of Definition \ref{def: DEFINITION OF RELAXED SOLUTIONS GENERALISED}. $X_{0-}$, $W$ and $(W^0, \mathbf{P})$ are all mutually independent and 
		\begin{equation}\label{eq:  UPPER BOUND ON THE JUMPS OF THE LIMITING LOSS DEFINED IN thm:sec2: THE MAIN EXISTENCE AND CONVERGENCE THEOREM OF SOLUTIONS GENERALISED FOR SECTION 2}
			\Delta L_t = \prob\left[\left.\inf_{s < t} X_s > 0,\, X_t \le 0 \right | W^0,\, \mathbf{P} \right] \le \inf\left\{ x \ge 0\,:\, \bm{\nu}_{t-}([0,\,\alpha(t) x]) < x\right\} \qquad \textnormal{a.s.}
		\end{equation}
		for all $t \ge 0$.
	\end{theorem}
	
	\color{black}
	
	The notation $\operatorname{Law}(\tilde{X}^\varepsilon\mid W^0)$ stands for the conditional law of $\tilde{X}^\varepsilon$ given $W^0$, which indeed defines a random $W^0$-measurable probability measure on $D([-1,T+1],\R)$. Under stronger assumptions, namely $b,\,\sigma,$ and $\rho$ being of the form $(t,x) \mapsto b(t,x)$, $t \mapsto \sigma(t)$, $t \mapsto \rho(t)$ and $\alpha$ is a positive constant, there are established results in the literature for a lower bound on the jumps of the loss function. By Proposition 3.5 in \citep[]{ledger2021mercy}, the jumps of the loss satisfy
	\begin{equation*}
		\Delta L_t \ge \inf\left\{ x \ge 0\,:\, \bm{\nu}_{t-}([0,\,\alpha x]) < x\right\} \qquad \textnormal{a.s.}
	\end{equation*}
	Due to the generality of the coefficients, we were not able to establish if \eqref{eq:  UPPER BOUND ON THE JUMPS OF THE LIMITING LOSS DEFINED IN thm:sec2: THE MAIN EXISTENCE AND CONVERGENCE THEOREM OF SOLUTIONS GENERALISED FOR SECTION 2} holds with equality. The primary reason is the lack of independence between the term driven by the idiosyncratic noise and the remainder of the terms that $X$ is composed of. Hence, the technique employed in \citep[Proposition~3.5]{ledger2021mercy} may not be readily applied or extended to our setting. Regardless, given these two results, under stronger assumptions, we have the following existence result.
	
	\begin{corollary}[Existence of physical solutions]\label{thm:sec2: THE MAIN EXISTENCE AND CONVERGENCE THEOREM OF PHYSICAL SOLUTIONS FOR SECTION 2}
		Let the coefficients $b,\,\sigma,$ and $\rho$ be of the form $(t,x) \mapsto b(t,x)$, $t \mapsto \sigma(t)$, $t \mapsto \rho(t)$ and satisfy Assumption \ref{ass: MODIFIED AND SIMPLIED FROM SOJMARK SPDE PAPER ASSUMPTIONS II}. Then provided $\alpha(t) \equiv \alpha > 0$ and constant, there exists a relaxed solution to
		\begin{equation}\label{eq: SINGULAR RELAXED MCKEAN VLASOV EQUATION IN THE SYSTEMIC RISK MODEL WITH COMMON NOISE}
			\left\{
			\begin{array}{r@{{}={}}l}
				\diff{}X_t & b(t,X_t)\diff t + \sigma(t)\sqrt{1 - \rho(t)^2}\diff{}W_t + \sigma(t)\rho(t) \diff{}W_t^0 - \alpha\diff L_t,\\[0.25em]
				\tau & \inf\{t > 0 \; : \; X_t \le 0\},\\[0.25em]
				\mathbf{P} & \prob\left[\left.X \in \,\cdot\,\right|W^0,\,\mathbf{P} \right],\quad\bm{\nu}_t \coloneqq \prob\left[ \left.X_t \in \cdot, \tau > t\right|W^0,\,\mathbf{P}\right],\\[0.25em]
				L_t & \prob\left[\left.\tau \le t\,\right|W^0,\,\mathbf{P} \right].
			\end{array}
			\right.
		\end{equation}
		Moreover, we have the minimal jump constraint
		\begin{equation*}
			\Delta L_t = \inf\left\{ x \ge 0\,:\, \bm{\nu}_{t-}([0,\,\alpha x]) < x\right\} \qquad \textnormal{a.s.}
		\end{equation*}
		for all $t \ge 0$. This determines the jump sizes of $L$.
	\end{corollary}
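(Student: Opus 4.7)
The plan is to deduce the corollary immediately by combining \cref{thm:sec2: THE MAIN EXISTENCE AND CONVERGENCE THEOREM OF SOLUTIONS GENERALISED FOR SECTION 2} with the lower-jump-bound from \cite{ledger2021mercy}. The restricted coefficients $(t,x)\mapsto b(t,x)$, $t\mapsto\sigma(t)$, $t\mapsto\rho(t)$, and a positive constant $\alpha$ are trivially a special case of the general setting of \cref{ass: MODIFIED AND SIMPLIED FROM SOJMARK SPDE PAPER ASSUMPTIONS II} (with zero Lipschitz constant in the measure argument and zero time-derivative for $\alpha$). Consequently, \cref{thm:sec2: THE MAIN EXISTENCE AND CONVERGENCE THEOREM OF SOLUTIONS GENERALISED FOR SECTION 2} is directly applicable, so the limit points of the mollified family yield a relaxed solution $(X, W, W^0, \mathbf{P})$ of \eqref{eq: SINGULAR RELAXED MCKEAN VLASOV EQUATION IN THE SYSTEMIC RISK MODEL WITH COMMON NOISE}, together with the a.s.\ inequality
\begin{equation*}
\Delta L_t \leq \inf\{x \geq 0 : \bm{\nu}_{t-}[0,\alpha x] < x\} \quad \text{for all } t \geq 0.
\end{equation*}

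For the reverse inequality, I would invoke Proposition 3.5 of \cite{ledger2021mercy}. The hypotheses of that result match precisely the restricted coefficient structure assumed in the corollary, so any relaxed solution in this setting satisfies
\begin{equation*}
\Delta L_t \geq \inf\{x \geq 0 : \bm{\nu}_{t-}[0,\alpha x] < x\} \quad \text{a.s.}
\end{equation*}
Combining the two bounds forces equality, which is exactly the physical jump condition, and this uniquely determines the jump sizes of $L$ pathwise from the knowledge of $\bm{\nu}_{t-}$.

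The main (minor) obstacle is to verify that the relaxed solution produced by \cref{thm:sec2: THE MAIN EXISTENCE AND CONVERGENCE THEOREM OF SOLUTIONS GENERALISED FOR SECTION 2} genuinely fits the framework of \cite[Proposition~3.5]{ledger2021mercy}, i.e.\ that the conditioning $\sigma$-algebra, the \cadlag regularity of $X$, and the measurability of $\mathbf{P}$ align with their setup. Since the construction here yields exactly the same object (a \cadlag $X$, an $(W^0,\mathbf{P})$-measurable random probability measure with $W\perp(W^0,\mathbf{P})$, and the loss $L_t=\mathbf{P}[\tau\leq t]$), this compatibility is immediate; no additional argument is needed beyond checking definitions, after which the concatenation of the two inequalities completes the proof.
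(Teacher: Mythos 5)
Your proposal is correct and is essentially the paper's own (implicit) argument: the corollary is obtained exactly by combining the existence and jump upper bound from \cref{thm:sec2: THE MAIN EXISTENCE AND CONVERGENCE THEOREM OF SOLUTIONS GENERALISED FOR SECTION 2} with the lower bound of \citep[Proposition~3.5]{ledger2021mercy}, which is applicable precisely because the restricted coefficients $(t,x)\mapsto b(t,x)$, $t\mapsto\sigma(t)$, $t\mapsto\rho(t)$ and constant $\alpha$ restore the independence structure needed there. Your compatibility check at the end is the same routine verification the paper leaves implicit, so no further comment is needed.
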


	This presents a minor generalisation of the results in \citep{ledger2021mercy}. In their work, the authors imposed the condition that $t \mapsto \sigma(t)\rho(t)$ must be H\"older continuous with an exponent strictly greater than $1/2$. This was necessary in \citep[Lemma~3.16]{ledger2021mercy} to deduce the independence between the idiosyncratic noise and the common noise in the limiting system \eqref{eq: SINGULAR RELAXED MCKEAN VLASOV EQUATION IN THE SYSTEMIC RISK MODEL WITH COMMON NOISE} due to their particle system approach to the problem. Here, such explicit assumptions on the regularity of $\rho$ are not necessary, as we obtain the independence between the idiosyncratic noise and the common noise from the approximating system without additional conditions. Consequently, we can consider Corollary \ref{thm:sec2: THE MAIN EXISTENCE AND CONVERGENCE THEOREM OF PHYSICAL SOLUTIONS FOR SECTION 2} to be an extension of Theorem 3.2 in \citep{ledger2021mercy}.
	
	
	\subsection{Limit points of the smoothed system}
	
	In order to show the existence of a limit point of $X^\varepsilon$, we must first choose a suitable topology to establish convergence. By Theorem 2.4 in \citep{hambly2019spde}, $L^\varepsilon$ is continuous for every $\varepsilon > 0$, but the loss of the limiting process may in fact jump. Skorokhod's $M_1$-topology is sufficiently rich to facilitate the convergence of continuous functions to those with jumps. 
	
	
	\begin{remark}
		The equation \eqref{eq: EQUATION WITH SMOOTHING IN INTRODUCITON} has been posed in slightly more generality (with $\alpha(t, X_t^\varepsilon,\bm{\nu}_t^\varepsilon)$) in \cite{hambly2019spde}. As the convergence is strictly in the $M_1$-topology, not in the $J_1$-topology, we only consider $\alpha$ to be of the form $\alpha(t)$. If we considered it to also be a function of $X_t$ and/or $\bm{\nu}_t$, then we cannot expect to obtain an equation of the form \eqref{eq: FIRST EQUATION IN THE PAPER} in the limit as $\varepsilon \downarrow 0$, due to $X$, $\bm{\nu}$, and $L$ having jumps at the same time.    
	\end{remark}

	The theory in \citep{whitt2002stochastic} requires our c\`adl\`ag  processes to be uniformly right-continuous at the initial time point and left-continuous at the terminal time point, when working with functions on compact time domains. As we are starting from an arbitrary initial condition $X_{0-}$ which is positive almost surely, the limiting process may exhibit a jump immediately at time $0$ given sufficient mass near the boundary. For this reason, we shall embed the process from $D([0,T],\R)$ into $D([-1,\Bar{T}],\R)$, where $\Bar{T} = T + 1$, using the extension defined in \eqref{eq: HOW WE DEFINE THE EXTENSION TO STOCHASTIC PROCESSES}. Unless stated otherwise, for notational convenience we shall denote the latter space, $D([-1,\Bar{T}],\R)$, by $D_\R$. Recall, $\tilde{\mathbf{P}}^\varepsilon$ is defined to be the law of $\tilde{X}^\varepsilon$ conditional on $W^0$. That is $\tilde{\mathbf{P}}^\varepsilon \coloneqq \operatorname{Law}(\tilde{X}^\varepsilon \mid W^0)$.
	
	To show tightness and convergence of the collection of random measures $\{\tilde{\mathbf{P}}^\varepsilon\}_{\varepsilon > 0}$, we shall follow the ideas in \citep{delarue2015particle} and \citep{ledger2021mercy}. To begin, we first derive a Gr\"onwall-type estimate of the smoothed system uniformly in $\varepsilon$. These estimates are necessary to show the tightness of $\{\tilde{\mathbf{P}}^\varepsilon\}_{\varepsilon > 0}$ and the existence of a limiting random measure. In the following Proposition and its sequels, $C$ will denote a constant that may change from line to line, and we will denote the dependencies of the value of $C$ in its subscript. To further simplify notation, we use $Y_t^\varepsilon$, $Y_t^{0,\varepsilon}$, and $\mathcal{Y}_t^{\varepsilon}$ to denote 
	\begin{equation*}
		\int_0^t \sigma(u, X_u^\varepsilon)\sqrt{1 - \rho^2(u,\bm{\nu}^\varepsilon_u)} \diff W_u,\qquad \int_0^t \sigma(u,X_u^\varepsilon)\rho(u,\bm{\nu}^\varepsilon_u) \diff W^0_u, \quad \text{and} \quad Y_t^\varepsilon + Y_t^{0,\varepsilon}
	\end{equation*}
	respectively. We shall use $\tilde{Y}_t^\varepsilon$, $\tilde{Y}_t^{0,\varepsilon}$, and $\tilde{\mathcal{Y}}_t^{\varepsilon}$ to denote their corresponding extensions as defined in \eqref{eq: HOW WE DEFINE THE EXTENSION TO STOCHASTIC PROCESSES}.
	
	\begin{proposition}[Gr\"onwall upper bound]\label{prop: GRONWALL TYPE UPPERBOUND ON THE SUP OF THE MCKEAN VLASOV EQUATION WITH CONVOLUTION IN THE BANKING MODEL SIMPLIFIED AND INDEXED GENERALISED}
		For any $p \ge 1$ and $t \le T$, there exists a constant $C_{\alpha,b,p,T,\sigma} > 0$ independent of $\varepsilon > 0$ such that 
		\begin{equation}\label{eq: GRONWALL TYPE UPPERBOUND ON THE LOSS FUNCTION FOR THE SIMPLIFIED SYSTEM WITH MOLLIFIERS GENERALISED}
			\E \left[\underset{s \le T}{\sup}{ \nnnorm{X_s^\varepsilon}}^p\right] \le C_{\alpha,b,p,T,\sigma}.
		\end{equation}
	\end{proposition}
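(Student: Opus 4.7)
The strategy is a standard BDG--Gr\"onwall argument, but the crucial point (and the reason the bound is uniform in $\varepsilon$) is that the feedback term $\mathfrak{L}^\varepsilon$ is bounded by a constant independent of $\varepsilon$. Specifically, since $L_s^\varepsilon\in[0,1]$ and $\|\kappa^\varepsilon\|_1=\|\kappa\|_1=1$ by \cref{ass: MODIFIED AND SIMPLIED FROM SOJMARK SPDE PAPER ASSUMPTIONS II}~\eqref{ass: MODIFIED AND SIMPLIED FROM SOJMARK SPDE PAPER ASSUMPTIONS FOUR}, we have $\mathfrak{L}_t^\varepsilon\leq 1$ for all $t\in[0,T]$ and all $\varepsilon>0$. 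Combined with the temporal regularity of $\alpha$ from \cref{ass: MODIFIED AND SIMPLIED FROM SOJMARK SPDE PAPER ASSUMPTIONS II}~\eqref{ass: MODIFIED AND SIMPLIED FROM SOJMARK SPDE PAPER ASSUMPTIONS II FIVE}, an integration by parts gives
\[
\Bigl|\int_0^t \alpha(u)\,\diff\mathfrak{L}_u^\varepsilon\Bigr|\;\leq\;\alpha(T)+T\|\alpha'\|_{L^\infty([0,T])},
\]
which is a deterministic $\varepsilon$-uniform bound.

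The plan is to first decompose $\sup_{s\leq t}|X_s^\varepsilon|$ into contributions from the initial condition, drift, the two stochastic integrals $Y^\varepsilon$ and $Y^{0,\varepsilon}$, and the feedback $\int_0^\cdot\alpha\,\diff\mathfrak{L}^\varepsilon$, and raise to the $p$th power at the cost of a constant $C_p$. For the initial condition, assumption \eqref{ass: MODIFIED AND SIMPLIED FROM SOJMARK SPDE PAPER ASSUMPTIONS THREE} gives finite sub-Gaussian moments of all orders. For the two diffusion terms, the Burkholder--Davis--Gundy inequality together with the uniform boundedness of $\sigma$ and of $\rho$ yields
\[
\E\Bigl[\sup_{s\leq t}\nnnorm{Y_s^\varepsilon}^p+\sup_{s\leq t}\nnnorm{Y_s^{0,\varepsilon}}^p\Bigr]\;\leq\; C_{p,T,\sigma,\rho}.
\]
For the drift, I would apply H\"older and use the linear-growth bound in \eqref{ass: MODIFIED AND SIMPLIED FROM SOJMARK SPDE PAPER ASSUMPTIONS II ONE}, so that
\[
\E\Bigl[\Bigl(\int_0^t\nnnorm{b(u,X_u^\varepsilon,\bm{\nu}_u^\varepsilon)}\,\diff u\Bigr)^p\Bigr]\;\leq\;C_{b,p,T}\int_0^t\E\bigl[1+\nnnorm{X_u^\varepsilon}^p+\langle\bm{\nu}_u^\varepsilon,\nnnorm{\,\cdot\,}\rangle^p\bigr]\diff u.
\]

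The measure-dependent term is handled by a conditional Jensen inequality: since $t\mapsto t^p$ is convex for $p\geq1$,
\[
\langle\bm{\nu}_u^\varepsilon,\nnnorm{\,\cdot\,}\rangle^p=\E[\nnnorm{X_u^\varepsilon}\mathbf{1}_{\tau^\varepsilon>u}\mid W^0]^p\;\leq\;\E[\nnnorm{X_u^\varepsilon}^p\mid W^0],
\]
so taking unconditional expectations yields $\E[\langle\bm{\nu}_u^\varepsilon,\nnnorm{\,\cdot\,}\rangle^p]\leq\E[\sup_{s\leq u}\nnnorm{X_s^\varepsilon}^p]$. Assembling the five estimates gives
\[
\E\Bigl[\sup_{s\leq t}\nnnorm{X_s^\varepsilon}^p\Bigr]\;\leq\;K_{\alpha,b,p,T,\sigma}+K'\int_0^t\E\Bigl[\sup_{r\leq u}\nnnorm{X_r^\varepsilon}^p\Bigr]\diff u
\]
with constants independent of $\varepsilon$, and Gr\"onwall's lemma closes the estimate on $[0,T]$. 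The main thing to be careful about is ensuring every bound carries through uniformly in $\varepsilon$; beyond the $\mathfrak{L}^\varepsilon$ bound noted above, this only requires that BDG constants, the bounds on $\sigma,\rho,\alpha$, and the initial moments all depend solely on the data appearing in \cref{ass: MODIFIED AND SIMPLIED FROM SOJMARK SPDE PAPER ASSUMPTIONS II}. A preliminary truncation argument (stopping at $\tau_n^\varepsilon\vcentcolon=\inf\{t:\nnnorm{X_t^\varepsilon}\geq n\}$ and letting $n\uparrow\infty$ via Fatou) may be used to justify finiteness before closing the Gr\"onwall inequality; this is routine given \cref{thm: EXISTENCE AND UNIQUESS OF SOLUTIONS THEOREM FROM SPDE}.
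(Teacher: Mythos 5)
Your proposal follows essentially the same route as the paper's proof: bound the feedback deterministically and uniformly in $\varepsilon$ (the paper simply uses $\int_0^t\alpha(s)\,\diff\mathfrak{L}^\varepsilon_s\leq\norm{\alpha}_\infty$, since $\mathfrak{L}^\varepsilon\leq 1$ and $\alpha$ is non-negative and increasing), use the linear growth of $b$, control the martingale part via Burkholder--Davis--Gundy and the uniform bound on $\sigma$, dominate the measure term through conditional Jensen by $\E[\sup_{s\leq u}\nnnorm{X^\varepsilon_s}^p\mid W^0]$, and close with Gr\"onwall. Whether one runs a single Gr\"onwall at the level of $p$-th moments, or, as the paper does, first a pathwise Gr\"onwall and then a second one after raising to the $p$-th power and taking expectations, is immaterial.

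The one step where your sketch does not quite work as written is the a priori finiteness needed before Gr\"onwall can be closed. Stopping at $\tau^\varepsilon_n=\inf\{t:\nnnorm{X^\varepsilon_t}\geq n\}$ truncates the process but not its conditional law: after Jensen, the drift contributes $\E[\sup_{s\leq u}\nnnorm{X^\varepsilon_s}^p]$ with the \emph{untruncated} supremum, so the truncated inequality carries an inhomogeneous term that you do not yet know to be finite, and Fatou alone does not rescue this. The paper instead imports the required integrability from \citep[Lemma~A.3]{hambly2019spde}, namely $\int_0^T\E\nnnorm{X^\varepsilon_{s\wedge\tau^\varepsilon}}^p\diff s<\infty$, which controls the measure term directly (note $\langle\bm{\nu}^\varepsilon_s,\nnnorm{\,\cdot\,}\rangle\leq\E[\nnnorm{X^\varepsilon_{s\wedge\tau^\varepsilon}}\mid W^0]$); combined with the finite moments of $X_{0-}$ and BDG this first yields $\E[\sup_{s\leq t}\nnnorm{X^\varepsilon_s}^p]<\infty$, after which the quantitative, $\varepsilon$-uniform bound follows exactly as you describe. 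So replace the truncation remark by a citation of that moment estimate (or any equivalent bound furnished by the construction of the solution in \cref{thm: EXISTENCE AND UNIQUESS OF SOLUTIONS THEOREM FROM SPDE}); the rest of your argument matches the paper's.
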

	
	\begin{proof}
		By the linear growth condition on $b$ and the triangle inequality, we have
		\begin{equation*}
			\nnnorm{X_t^\varepsilon} \le \nnnorm{X_{0-}} + C_b \int_0^t 1 + \underset{u \le s}{\sup} \nnnorm{X_u^\varepsilon} + \E\left[\left.\nnnorm{X^\varepsilon_{s \wedge \tau^\varepsilon}}\right|W^0\right] \diff s + \underset{s \le T}{\sup} \nnnorm{\mathcal{Y}^\varepsilon_s} + \norm{\alpha}_{\infty}.
		\end{equation*}
		By \citep[Lemma~A.3]{hambly2019spde}, $\int_0^T \E\nnnorm{X^\varepsilon_{s \wedge \tau^\varepsilon}}^p \diff s < \infty$ for any $p \ge 1$. Therefore, a simple application of Gr\"onwall's inequality shows that
		\begin{equation*}
			\underset{s \le t}{\sup}\nnnorm{X_s^\varepsilon} \le C_{T,b,\alpha} \left(\nnnorm{X_{0-}} + \int_0^t \E\left[\left.\nnnorm{X^\varepsilon_{s \wedge \tau^\varepsilon}}\right|W^0\right] \diff s + \underset{s \le T}{\sup} \nnnorm{\mathcal{Y}^\varepsilon_s} + 1\right).
		\end{equation*}
		By \eqref{ass: MODIFIED AND SIMPLIED FROM SOJMARK SPDE PAPER ASSUMPTIONS THREE} in Assumption \ref{ass: MODIFIED AND SIMPLIED FROM SOJMARK SPDE PAPER ASSUMPTIONS II}, $X_{0-}$ has finite $L^p$-moments for every $p > 0$. Furthermore, by employing the Burkholder-Davis-Gundy inequality to control $\E[\sup_{s \le T} \nnorm{\mathcal{Y}^\varepsilon_s}]$, we may deduce that $\E[\sup_{s \le t}\nnnorm{X_s^\varepsilon}^p] < \infty$ for all $t \ge 0$ and $p \ge 1$. Now, observing that $\nnnorm{X^\varepsilon_{t \wedge \tau^\varepsilon}} \le \sup_{s \le t}\nnnorm{X_s^\varepsilon}$, we have by the monotonicity of expectation and Jensen's inequality that
		\begin{align*}
			\underset{s \le t}{\sup}\nnnorm{X_s^\varepsilon}^p &\le C_{T,b,\alpha}^p \left(\nnnorm{X_{0-}} + \int_0^t \E\left[\left.\nnnorm{X^\varepsilon_{s \wedge \tau^\varepsilon}}\right|W^0\right] \diff s + \underset{s \le T}{\sup} \nnnorm{\mathcal{Y}^\varepsilon_s} + 1\right)^p\\
			&\le C_{T,b,\alpha,p} \left(\nnnorm{X_{0-}}^p + \int_0^t \E\left[\left.\underset{u \le s}{\sup}\nnnorm{X_u^\varepsilon}^p\right|W^0\right] \diff s + \underset{s \le T}{\sup} \nnnorm{\mathcal{Y}^\varepsilon_s}^p + 1\right).
		\end{align*}
		Taking expectations and applying Fubini's theorem followed by Gr\"onwall's inequality, we obtain
		\begin{equation}\label{eq: FIRST EQUATION IN prop: GRONWALL TYPE UPPERBOUND ON THE SUP OF THE MCKEAN VLASOV EQUATION WITH CONVOLUTION IN THE BANKING MODEL SIMPLIFIED AND INDEXED GENERALISED}
			\E\left[\underset{s \le T}{\sup}\nnnorm{X_s^\varepsilon}^p\right] \le C_{T,b,\alpha,p} \E \left[\nnnorm{X_{0-}}^p + \underset{s \le T}{\sup} \nnnorm{\mathcal{Y}^\varepsilon_s}^p + 1\right].
		\end{equation}
		Lastly, by the Burkholder-Davis-Gundy inequality and \eqref{ass: MODIFIED AND SIMPLIED FROM SOJMARK SPDE PAPER ASSUMPTIONS II TWO} from Assumption \ref{ass: MODIFIED AND SIMPLIED FROM SOJMARK SPDE PAPER ASSUMPTIONS II}, we may bound \eqref{eq: FIRST EQUATION IN prop: GRONWALL TYPE UPPERBOUND ON THE SUP OF THE MCKEAN VLASOV EQUATION WITH CONVOLUTION IN THE BANKING MODEL SIMPLIFIED AND INDEXED GENERALISED} independently of $\varepsilon$. This completes the proof.
	\end{proof}

	The collection $\{\tilde{\mathbf{P}}^\varepsilon\}_{\varepsilon > 0}$ consists of $\mathcal{P}(D_\mathbb{R})$-valued random measures. To show that this collection is tight, we need to find, for any $\gamma>0$, a compact set $K_\gamma$ in $\mathcal{P}(D_\R)$ so that $\operatorname{Law}(\tilde{\mathbf{P}}^\varepsilon)(K_\gamma) = \mathbb{P}( \tilde{\mathbf{P}}^\varepsilon \in K_\gamma)\le \gamma$ for all $\varepsilon>0$. In other words, we need the probability measures $\{\operatorname{Law}(\tilde{\mathbf{P}}^\varepsilon)\}_{\varepsilon>0}$ to be tight in $\mathcal{P}(\mathcal{P}(D_\mathbb{R}))$. Rather than tackling this directly, we follow an indirect approach of first showing that the measures $\{\operatorname{Law}(\tilde{X}^\varepsilon)\}_{\varepsilon > 0}$ are tight in $\mathcal{P}(D_\mathbb{R})$ and then we construct $K_\gamma$ from there. Due to how the processes are defined, the latter follows easily from properties of the $M_1$-topology and \citep[Theorem~1]{AVRAM198963}.
	
	\color{black}
	\begin{proposition}[Tightness of smoothed random measures]\label{prop: TIGHTHNESS OF THE EMPIRICAL MEASURES IN THE SIMPLIFIED BANKING MODEL WITH MOLLIFICATIONS}
		Let $\tmwk$ denote the topology of weak convergence on $\mathcal{P}(D_\R)$ induced by the $M_1$-topology on $D_\R$. Then, the collection $\{ \tilde{\mathbf{P}}^\varepsilon \}_{\varepsilon>0} =\{\operatorname{Law}(\tilde{X}^\varepsilon \mid W^0)\}_{\varepsilon > 0}$ is tight on $(\mathcal{P}(D_\R),\tmwk)$ under Assumption \ref{ass: MODIFIED AND SIMPLIED FROM SOJMARK SPDE PAPER ASSUMPTIONS II}.
	\end{proposition}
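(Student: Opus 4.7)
The plan is to apply the Avram--Taqqu M1-tightness criterion \citep[Theorem~1]{AVRAM198963} to the unconditional law of $\tilde{X}^\varepsilon$, and then to deduce tightness of the random-measure family $\{\tilde{\mathbf{P}}^\varepsilon\}$ through a standard Markov--Prohorov reduction. Specifically, for any M1-compact $K \subset D_\R$ and any $\eta > 0$,
$$\prob\bigl(\tilde{\mathbf{P}}^\varepsilon(K^c) > \eta\bigr) \leq \eta^{-1}\,\E\bigl[\tilde{\mathbf{P}}^\varepsilon(K^c)\bigr] = \eta^{-1}\,\prob(\tilde{X}^\varepsilon \notin K),$$
so tightness of $\{\operatorname{Law}(\tilde{X}^\varepsilon)\}$ in $(\mathcal{P}(D_\R), \mathfrak{T}_{\mathrm{M1}}^{\mathrm{wk}})$ will deliver tightness of $\{\tilde{\mathbf{P}}^\varepsilon\}$ in $(\mathcal{P}(\mathcal{P}(D_\R)),\text{weak})$ by Prohorov's theorem applied in both spaces.

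Next, I decompose
$$X^\varepsilon_t = A^\varepsilon_t - B^\varepsilon_t, \qquad A^\varepsilon_t \vcentcolon= X_{0-} + \int_0^t b(s, X^\varepsilon_s, \bm{\nu}^\varepsilon_s)\, \diff s + \mathcal{Y}^\varepsilon_t, \qquad B^\varepsilon_t \vcentcolon= \int_0^t \alpha(s)\, \diff\mathfrak{L}^\varepsilon_s,$$
and extend both processes to $[-1, \bar T]$ according to \eqref{eq: HOW WE DEFINE THE EXTENSION TO STOCHASTIC PROCESSES}. Here $\tilde{A}^\varepsilon$ is continuous, while $\tilde{B}^\varepsilon$ is continuous and non-decreasing: indeed $\alpha(t) \geq 0$ by \eqref{ass: MODIFIED AND SIMPLIED FROM SOJMARK SPDE PAPER ASSUMPTIONS II FIVE}, and the convolution $\mathfrak{L}^\varepsilon = \kappa^\varepsilon \ast L^\varepsilon$ is non-decreasing because $\kappa^\varepsilon \geq 0$ and $L^\varepsilon$ is non-decreasing. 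The containment portion of the Avram--Taqqu criterion follows at once from \cref{prop: GRONWALL TYPE UPPERBOUND ON THE SUP OF THE MCKEAN VLASOV EQUATION WITH CONVOLUTION IN THE BANKING MODEL SIMPLIFIED AND INDEXED GENERALISED} together with Markov's inequality, the Brownian extension on $(T, \bar T]$ contributing only a uniformly bounded additional term by BDG.

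The core observation for the modulus of continuity is that, for any continuous $y$ and any monotone $z$, the M1 oscillation modulus $w''$ satisfies
$$w''(y + z, \delta) \leq w(y, \delta),$$
where $w$ is the usual uniform modulus of continuity. Indeed, for $t_1 \leq t_2 \leq t_3$ with $t_3 - t_1 \leq \delta$, monotonicity of $z$ yields some $\lambda \in [0,1]$ with $z(t_2) = \lambda z(t_1) + (1-\lambda) z(t_3)$, whence the distance from $(y+z)(t_2)$ to the segment $[(y+z)(t_1), (y+z)(t_3)]$ is bounded by $|\lambda(y(t_2) - y(t_1)) + (1-\lambda)(y(t_2) - y(t_3))| \leq w(y, \delta)$. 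Applying this with $y = \tilde{A}^\varepsilon$ and $z = -\tilde{B}^\varepsilon$ reduces the Avram--Taqqu modulus condition for $\tilde{X}^\varepsilon$ to the tightness of $\{\tilde{A}^\varepsilon\}$ in $C([-1, \bar T], \R)$ under the uniform topology.

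The latter is a routine Kolmogorov--Chentsov argument: combining \cref{prop: GRONWALL TYPE UPPERBOUND ON THE SUP OF THE MCKEAN VLASOV EQUATION WITH CONVOLUTION IN THE BANKING MODEL SIMPLIFIED AND INDEXED GENERALISED} with the linear growth of $b$ from \eqref{ass: MODIFIED AND SIMPLIED FROM SOJMARK SPDE PAPER ASSUMPTIONS II ONE}, the boundedness of $\sigma$ and $\rho$ from \eqref{ass: MODIFIED AND SIMPLIED FROM SOJMARK SPDE PAPER ASSUMPTIONS II TWO}--\eqref{ass: MODIFIED AND SIMPLIED FROM SOJMARK SPDE PAPER ASSUMPTIONS II FOUR}, and the BDG inequality for the martingale pieces of $\mathcal{Y}^\varepsilon$ gives an increment bound $\E|\tilde{A}^\varepsilon_t - \tilde{A}^\varepsilon_s|^4 \leq C(t-s)^2$ with $C$ independent of $\varepsilon$. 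The principal obstacle is the monotonicity-based reduction $w''(y+z,\cdot) \leq w(y,\cdot)$, which exploits the M1-specific feature that oscillations riding on a monotone trend do not register in $w''$; once this step is set, the remainder is classical tightness for continuous semimartingales combined with the uniform containment bound already supplied by the smoothed system.
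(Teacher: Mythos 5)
Your proposal is correct and follows essentially the same route as the paper: the Markov/Prokhorov reduction from the conditional laws $\Tilde{\mathbf{P}}^\varepsilon$ to $\operatorname{Law}(\Tilde{X}^\varepsilon)$, the key M1-specific observation that the monotone feedback term $\int_0^\cdot \alpha(s)\,\diff\mathfrak{L}^\varepsilon_s$ is invisible to the three-point oscillation functional, and fourth-moment increment bounds on the continuous part via \cref{prop: GRONWALL TYPE UPPERBOUND ON THE SUP OF THE MCKEAN VLASOV EQUATION WITH CONVOLUTION IN THE BANKING MODEL SIMPLIFIED AND INDEXED GENERALISED}, the linear growth of $b$ and BDG. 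The only points the paper spells out that you leave implicit are the endpoint-oscillation condition (II) of the Avram--Taqqu criterion, which is immediate here because $\Tilde{X}^\varepsilon$ is constant on $[-1,0)$ and purely Brownian on $(T,\Bar{T}]$, and the upgrade of your single-compact Markov estimate to a sequence of compacts $A_l$ with rapidly decaying exceptional mass, so that $\{\mu : \mu(\Bar{A}_l^{\complement}) \leq 2^{-l}\ \forall l\}$ is a genuinely compact subset of $\mathcal{P}(D_\R)$ on which to apply Prokhorov.
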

	
	\begin{proof}
		Define $\tilde{P}^\varepsilon \coloneqq \operatorname{Law}(\tilde{X}^\varepsilon)$. By \citep[Theorem~12.12.3]{whitt2002stochastic}, we need to verify two conditions to show the tightness of the measures on $D_\R$ endowed with the $M_1$-topology:
		\begin{enumerate}[(i)]
			\item $\lim_{\lambda \to \infty} \sup_{\varepsilon > 0} \tilde{P}^\varepsilon\left(\left\{x \in D_\R\;:\; \norm{x}_{\infty} > \lambda \right\}\right) = 0$.
			\item For any $\eta > 0$, we have $\lim_{\delta \to 0} \sup_{\varepsilon > 0} \tilde{P}^\varepsilon\left(\left\{x \in D_\R\;:\; w_{M_1}(x,\delta) \ge \eta \right\}\right) = 0$, where $w_{M_1}$ is the oscillatory function of the $M_1$-topology, defined as in \citep[Section~12, Equation~12.2]{whitt2002stochastic}.
		\end{enumerate}
		To show the first condition, we observe that by the definition of the extension of our process, we have
		\begin{equation*}
			\sup_{t \le \Bar{T}}\nnorm{\tilde{X}_t^\varepsilon}
			\le \sup_{t \le T}\nnorm{X_t^\varepsilon}
			+\sup_{t \le 1} \nnorm{ W_{T  + t} - W_T}
			+ \norm{\alpha}_{\infty}.
		\end{equation*}
		Then, it is clear by Markov's inequality and Proposition \ref{prop: GRONWALL TYPE UPPERBOUND ON THE SUP OF THE MCKEAN VLASOV EQUATION WITH CONVOLUTION IN THE BANKING MODEL SIMPLIFIED AND INDEXED GENERALISED} that for any $\lambda > 0$, 
		\begin{equation*}
			\prob\left[\sup_{t \le \Bar{T}} \nnorm{\tilde{X}_t^\varepsilon} > \lambda \right] = O(\lambda^{-1})
		\end{equation*}
		uniformly in $\varepsilon$. Therefore, by taking the supremum over $\varepsilon$ and then the $\limsup$ over $\lambda$, the first condition holds. We shall not show the second condition directly. By \citep[Theorem~1]{AVRAM198963}, the second condition is equivalent to showing:
		\begin{enumerate}[(I)]
			\item There exists some $C > 0$, uniformly in $\varepsilon$, such that $\prob\left[\mathcal{H}_\R(\tilde{X}_{t_1}^\varepsilon,\tilde{X}_{t_2}^\varepsilon,\tilde{X}_{t_3}^\varepsilon)\ge \eta \right] \le C\eta^{-4}\nnorm{t_3 - t_1}^2$ for all $\eta > 0$ and $-1 \le t_1 \le t_2 \le t_3 \le \Bar{T}$ where $\mathcal{H}_\R(x_1,x_2,x_3) = \inf_{\lambda \in [0,1]}\nnorm{x_2 - (1- \lambda)x_1 - \lambda x_3}$.
			\item \sloppy{$\lim_{\delta \to 0} \sup_{\varepsilon > 0} \prob\left[\sup_{t \in (-1,-1+\delta)}\nnorm{\tilde{X}_t^\varepsilon - \tilde{X}_{-1}} + \sup_{t \in (\Bar{T} - \delta,\Bar{T})}\nnorm{\tilde{X}_t^\varepsilon - \tilde{X}_{\Bar{T}}} \ge \eta\right] = 0$ for all $\eta > 0$.}
		\end{enumerate}
		Note that by Assumption \ref{ass: MODIFIED AND SIMPLIED FROM SOJMARK SPDE PAPER ASSUMPTIONS II}, we have $\alpha$ is non-decreasing and non-negative. Therefore, by the properties of Lebesgue-Stielitjes integration, $t \mapsto \int_0^t \alpha(s) \diff \mathfrak{L}_s^\varepsilon$ is non-decreasing. As monotone functions are immaterial to the $M_1$ modulus of continuity, we have
		\begin{equation*}
			\mathcal{H}_\R(\tilde{X}_{t_1}^\varepsilon,\tilde{X}_{t_2}^\varepsilon,\tilde{X}_{t_3}^\varepsilon) \le \nnnorm{Z_{t_1} - Z_{t_2}} + \nnorm{Z_{t_2} - Z_{t_3}},
		\end{equation*}
		where $Z$ is given by
		\begin{equation*}
			Z_t = X_{0-} + \int_0^{t \wedge T} b(u,X_u^\varepsilon,\bm{\nu}_u^\varepsilon) \diff u + \tilde{\mathcal{Y}}_t
		\end{equation*}
		for $t \ge 0$ and $Z_t = X_{0-}$ for $t < 0$. Hence, to show (I), it is sufficient to bound the increments of $Z$. Note that when $s < t < -1$, $Z$ is constant. Therefore, trivially we have $\E \left[\nnnorm{Z_t - Z_s}^4\right] \le C(t-s)^2$ for any $C > 0$. When  $0 \le s < t,$ by the formula above for $Z$ we have that
		\begin{equation}\label{eq: EQUATION ONE IN THIGTNESS PROPOSITION OF THE LAWS OF THE SIMPLIED AND INDEXED BANKING MODEL FROM SOJMARK SPDE PAPER}
			Z_t - Z_s = \int_{s \wedge T}^{t \wedge T} b(u,X_u^\varepsilon,\bm{\nu}_u^\varepsilon) \diff u + \tilde{\mathcal{Y}}_t - \tilde{\mathcal{Y}}_s.
		\end{equation}
		Employing the linear growth condition on $b$ and Proposition \ref{prop: GRONWALL TYPE UPPERBOUND ON THE SUP OF THE MCKEAN VLASOV EQUATION WITH CONVOLUTION IN THE BANKING MODEL SIMPLIFIED AND INDEXED GENERALISED},
		\begin{equation}\label{eq: EQUATION TWO IN THIGTNESS PROPOSITION OF THE LAWS OF THE SIMPLIED AND INDEXED BANKING MODEL FROM SOJMARK SPDE PAPER}
			\E\left[\nnnorm{ \int_{s \wedge T}^{t \wedge T} b(u,X_u^\varepsilon,\bm{\nu}^\varepsilon_u) \diff u}^4\right] \le C(t - s)^4\left(1 + \E\left[\sup_{u \le T} \nnorm{X_u^\varepsilon}^4\right]\right) = O((t - s)^2)
		\end{equation}
		uniformly in $\varepsilon$. By the Burkholder-Davis-Gundy inequality and the upper bound on $\sigma$, it is clear that
		\begin{equation}\label{eq: EQUATION THREE IN THIGTNESS PROPOSITION OF THE LAWS OF THE SIMPLIED AND INDEXED BANKING MODEL FROM SOJMARK SPDE PAPER}
			\E\left[\nnnorm{ \tilde{\mathcal{Y}}_t - \tilde{\mathcal{Y}}_s}^4\right] = O((t - s)^2)
		\end{equation}
		uniformly in $\varepsilon$. Therefore, by Markov's inequality,
		\begin{align*}
			\prob\left[\mathcal{H}_\R(\tilde{X}_{t_1}^\varepsilon,\tilde{X}_{t_2}^\varepsilon,\tilde{X}_{t_3}^\varepsilon)\ge \eta \right]
			&\le \eta^{-4}\E\left[\mathcal{H}_\R(\tilde{X}_{t_1}^\varepsilon,\tilde{X}_{t_2}^\varepsilon,\tilde{X}_{t_3}^\varepsilon)^4\right]\\
			&\le C \eta^{-4}\E\left[ \nnnorm{Z_{t_1} - Z_{t_2}}^4 + \nnorm{Z_{t_2} - Z_{t_3}}^4\right]\\
			&\le C \eta^{-4} \left((t_2 - t_1)^2 + (t_3 - t_2)^2\right)\\
			&\le C \eta^{-4}(t_3 - t_1)^2,
		\end{align*}
		where all the constants hold uniformly in $\varepsilon$. To verify the second condition, we observe that for any $\eta > 0$ and $\delta < 1$,
		\begin{align*}
			\prob\left[\sup_{t \in (-1,-1+\delta)}\nnorm{\tilde{X}_t^{\varepsilon} - \tilde{X}^\varepsilon_{-1}}\ge \frac{\eta}{2}\right] &= \prob\left[\sup_{t \in (-1,-1+\delta)}\nnorm{X_{0-} - X_{0-}}\ge \frac{\eta}{2}\right] = 0, \textnormal{ and}\\
			\prob\left[\sup_{t \in (\Bar{T} - \delta,\Bar{T})}\nnorm{\tilde{X}_t^\varepsilon - \tilde{X}_{\Bar{T}}} \ge \frac{\eta}{2}\right] &= \prob\left[\sup_{t \in (\Bar{T} - \delta,\Bar{T})}\nnorm{W_t - W_{\Bar{T}}} \ge \frac{\eta}{2}\right] = O(\delta^2),
		\end{align*}
		uniformly in $\varepsilon$. Hence, we have shown that 
		\begin{equation*}
			\prob\left[\sup_{t \in (-1,-1+\delta)}\nnorm{\tilde{X}_t^\varepsilon - \tilde{X}^\varepsilon_{-1}} + \sup_{t \in (\Bar{T} - \delta,\Bar{T})}\nnorm{\tilde{X}_t^\varepsilon - \tilde{X}^\varepsilon_{\Bar{T}}} \ge \eta\right] = O(\delta^{2}) \quad \text{for all} \quad \eta > 0,\,\delta < 1.
		\end{equation*}
		Therefore, together, conditions (I) and (II) show
		\begin{equation*}
			\sup_{\varepsilon > 0} \tilde{P}^\varepsilon\left(\left\{x \in D_\R\;:\; w_{M_1}(x,\delta) \ge \eta \right\}\right) = O(\delta^{2}),
		\end{equation*}
		for all $\delta < 1$ uniformly in $\varepsilon$. This shows condition (ii). 
		
		Based on the above, we can now employ Markov's inequality and Prokhorov's theorem to deduce that $\{ \operatorname{Law}(\tilde{\mathbf{P}}^\varepsilon) \}_{\varepsilon > 0}$ is tight in $\mathcal{P}(\mathcal{P}(D_\mathbb{R}))$. To begin, fix a $\gamma > 0$. For any $l,k \in \N$, we may find a $\lambda_l,\,\delta_{k,l} > 0$ such that
		\begin{align*}
			\tilde{P}^\varepsilon(A^\complement_{k,l}) &< \gamma2^{-(k + 2l+1)} \quad \forall k\in \N \qquad \text{uniformly in $\varepsilon$,}\\
			\text{where} \qquad \qquad \qquad A_{0,l} &= \{x \in D_\R\;:\; \norm{x} \le \lambda_l\},\\
			A_{k,l} &= \left\{x\in D_\R\;:\; w_{M_1}(x,\delta_{k,l}) < \frac{1}{k + 2l}\right\}.
		\end{align*}
		We define $A_l = \cap_{k \ge 0} A_{k,l}$. By \citep[Theorem~12.12.2]{whitt2002stochastic}, $A_l$ has compact closure in the $M_1$-topology. The closure of $A_l$ is denoted by $\Bar{A}_{l}$. Furthermore, by construction $\tilde{P}^\varepsilon(A_l^\complement) \le \sum_{k \ge 0} \tilde{P}^\varepsilon(A_{k,l}^\complement) \le \gamma4^{-l}$. By the subadditivity of measures and Markov's inequality,
		\begin{equation}\label{eqn: CHAIN OF EQUALITIES SHOW THAT WE HAVE TIGHTNESS OF OUR CONDITIONAL MEASURES}
			\prob\left[\bigcup_{l = 1}^\infty \left\{\tilde{\mathbf{P}}^{\varepsilon}(A^\complement_{l}) > 2^{-l}\right\}\right] \le \sum_{l \ge 1} \prob\left[\tilde{\mathbf{P}}^{\varepsilon}(A^\complement_l) > 2^{-l}\right]
			\le \sum_{l \ge 1} 2^{l}\E\left[\tilde{\mathbf{P}}^{\varepsilon}(A^\complement_l)\right]
			\le \sum_{l \ge 1} \frac{\gamma 2^l}{4^l} = \gamma. 
		\end{equation}
		Finally, let $K \coloneqq \{\mu \in \mathcal{P}(D_\R)\,:\, \mu(\Bar{A}_{l}^\complement) \le 2^{-l} \, \forall \, l \in \N\}$. As $D_\R$ endowed with the $M_1$-topology is a Polish space, Prokhorov's theorem may be applied, and it suffices to show that the set of measures $K$ is tight; hence, $K$ will then have compact closure in $\mathcal{P}(D_\R)$ by Prokhorov's theorem. It is clear by construction that the set of measures $K$ is tight as the sets $\Bar{A}_{l}$ are compact in $D_\R$ endowed with the $M_1$-topology. By \eqref{eqn: CHAIN OF EQUALITIES SHOW THAT WE HAVE TIGHTNESS OF OUR CONDITIONAL MEASURES}, we have $\operatorname{Law}(\tilde{\mathbf{P}}^\varepsilon)(\Bar{K}^\complement) \le \gamma$, uniformly in $\varepsilon$. As $\gamma$ was arbitrary, this completes the proof.
	\end{proof}
	
	
	\subsection{Continuity of hitting times}
	
	Note that $(D_\R,M_1)$ is a Polish space by \citep[Theorem~12.8.1]{whitt2002stochastic} and its Borel $\sigma$-algebra is generated by the marginal projections, \citep[Theorem~11.5.2]{whitt2002stochastic}. Hence, the topological space $(\mathcal{P}(\DR), \tmwk)$ is also a Polish space. Therefore, by invoking Prokhorov's Theorem, \citep[Theorem~5.1]{billingsley2013convergence}, tightness is equivalent to being sequentially pre-compact. So, we may choose a weakly convergent subsequence $\{\tilde{\mathbf{P}}^{\varepsilon_n}\}_{n \ge 1}$ for a positive sequence $(\varepsilon_n)_{n \ge 1}$ which converges to zero. Let $\mathbf{P}^*$ denote the limit point of this sequence. Using this limit point, we will construct a probability space and a stochastic process that will be a solution to \eqref{eq: SINGULAR RELAXED MCKEAN VLASOV EQUATION IN THE SYSTEMIC RISK MODEL WITH COMMON NOISE GENERALISED}. 
	
	Before proceeding, we seek to show that for a co-countable set of times $t$, $L_t^{\varepsilon_n} = \tilde{\mathbf{P}}^{\varepsilon_n}(\tnot \le t)$ converges weakly to $L_t^* \coloneqq \mathbf{P}^*(\tnot \le t)$, where $\tau_0$ is a function on $\DR$ whose value is the first hitting time of $0$. To be explicit, we define
	\begin{equation*}
		\mathbb{T} \coloneqq \left\{t \in [-1,\Bar{T}]\;:\; \E \left[\mathbf{P}^*(\eta_t = \eta_{t-})\right] = 1, \E \left[\mathbf{P}^*(\tnot = t)\right] = 0 \right\},
	\end{equation*}
	and
	\begin{equation}\label{eq: FIRST HITTING TIME OF ZERO OF THE CANONICAL PROCESS ON THE SPACE OF CADLAG FUNCTIONS}
		\tau_0(\eta) \coloneqq \inf\{t \ge -1\,:\, \eta_t \le 0\}
	\end{equation}
	with the convention that $\inf\{\emptyset\} = \Bar{T}$. Our first result is that for $\operatorname{Law}(\mathbf{P}^*)$-almost every measure $\mu$, $\mu$-almost every path $\eta \in \DR$ is constant on the interval $[-1,0)$.
	
	\begin{lemma}\label{lem: LEMMA STATING THAT P* IS SUPPORTED ON PATHS THAT ARE CONSTANT BEFORE TIME 0}
		For $\operatorname{Law}(\mathbf{P}^*)$-almost every measure $\mu$, we have that $\sup_{s < 0} \nnorm{\eta_s - \eta_{-1}} = 0$ for $\mu$-almost every path $\eta$.
	\end{lemma}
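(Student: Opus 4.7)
\emph{Strategy and main obstacle.} By the extension rule \eqref{eq: HOW WE DEFINE THE EXTENSION TO STOCHASTIC PROCESSES}, for every $\varepsilon$ the random measure $\Tilde{\mathbf{P}}^\varepsilon$ concentrates almost surely on the set $A \vcentcolon= \{\eta \in \DR : \eta_s = \eta_{-1}\text{ for every } s \in [-1,0)\}$, so the task is to transport this identity to $\mathbf{P}^*$. The obstacle is that the projection $\eta \mapsto \eta_s$ is globally M1-discontinuous at jump points, so the Portmanteau theorem cannot be applied directly at an arbitrary $s$. The plan is to work only with $s$ in the co-countable set $\mathbbm{T}$ of $\mathbf{P}^*$-continuity times already introduced in the paper, pass the identity through the weak limit at each such $s$, and then close up $[-1,0)$ using cadlag right-continuity.

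\emph{Density of $\mathbbm{T}\cap(-1,0)$.} Since every cadlag path has only countably many discontinuities, Fubini gives
\[
\E\int_{-1}^{\Bar{T}}\mathbf{P}^*(\eta_t\neq\eta_{t-})\diff t=\E\int_{\DR}\int_{-1}^{\Bar{T}}\mathbbm{1}_{\eta_t\neq\eta_{t-}}\diff t\,d\mathbf{P}^*(\eta)=0,
\]
so $\mathbbm{T}^\complement$ has Lebesgue measure zero; in particular $\mathbbm{T}\cap(-1,0)$ is dense in $[-1,0)$.

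\emph{Passing the identity through the weak limit.} For each $t\in\mathbbm{T}\cap(-1,0)$, define
\[
F_t(\mu)\vcentcolon=\int_{\DR}\bigl(|\eta_t-\eta_{-1}|^2\wedge 1\bigr)d\mu(\eta),\qquad \mu\in\mathcal{P}(\DR).
\]
Evaluation at the left endpoint $-1$ is M1-continuous on all of $\DR$ (it is read off any parametric representation at the boundary), while evaluation at $t$ is M1-continuous at any $\eta$ with $\eta_{t-}=\eta_t$; hence $F_t$ is $\tmwk$-continuous at every $\mu\in\mathcal{P}(\DR)$ satisfying $\mu(\eta_t=\eta_{t-})=1$, and by the definition of $\mathbbm{T}$ this set has full $\operatorname{Law}(\mathbf{P}^*)$-measure. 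Applying the continuous mapping theorem to $\operatorname{Law}(\Tilde{\mathbf{P}}^{\varepsilon_n})\Rightarrow\operatorname{Law}(\mathbf{P}^*)$ then yields $F_t(\Tilde{\mathbf{P}}^{\varepsilon_n})\Rightarrow F_t(\mathbf{P}^*)$, but the left-hand side is identically zero by the extension rule, so $F_t(\mathbf{P}^*)=0$ almost surely; equivalently $\mathbf{P}^*(\eta_t=\eta_{-1})=1$ a.s.

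\emph{Closing up the interval.} Fix a countable dense $S\subset\mathbbm{T}\cap(-1,0)$. Intersecting the countably many almost-sure events produced in the previous step, almost surely $\mathbf{P}^*(\{\eta:\eta_s=\eta_{-1}\text{ for every }s\in S\})=1$. For any cadlag $\eta$ in this intersection and any $t\in(-1,0)$, choose $s_n\in S$ with $s_n\downarrow t$; right-continuity then forces $\eta_t=\lim_n\eta_{s_n}=\eta_{-1}$, while the case $s=-1$ is trivial. Hence $\sup_{s<0}|\eta_s-\eta_{-1}|=0$ for $\mathbf{P}^*$-a.e.\ $\eta$, almost surely. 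The main technical difficulty is the previous paragraph: isolating the precise set of $\mu$ at which $F_t$ is continuous so that the continuous mapping theorem can be legitimately invoked in spite of the global M1-discontinuity of $\eta\mapsto\eta_t$.
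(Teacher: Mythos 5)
Your proof is correct, and it reaches the conclusion by a somewhat different technical route than the paper. The paper invokes Skorohod's representation theorem to upgrade the weak convergence of the random measures to almost sure convergence, then pushes forward along the running-extrema maps $\eta \mapsto \inf_{s\leq t}\{\eta_s-\eta_{-1}\}$ and $\eta \mapsto \sup_{s\leq t}\{\eta_s-\eta_{-1}\}$ (continuous by Whitt's Theorem 13.4.1), and concludes via Portmanteau, Fatou and monotone convergence; the running extrema control the whole interval $[-1,t]$ at once, so no separate "closing up" step over a dense set of times is needed. You instead work with the single bounded test functional $F_t(\mu)=\langle\mu,|\eta_t-\eta_{-1}|^2\wedge 1\rangle$, use that $F_t$ is weakly continuous at every $\mu$ charging only paths continuous at $t$ (a set of full $\operatorname{Law}(\mathbf{P}^*)$-measure by the definition of $\mathbbm{T}$), apply the a.s.-continuity version of the continuous mapping theorem directly at the level of $\mathcal{P}(\DR)$-valued random variables, and then recover the full interval $[-1,0)$ from a countable dense set of continuity times via right-continuity of c\`adl\`ag paths. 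This buys you an argument that avoids both the Skorohod representation and the running-extrema continuity result, at the modest cost of the density argument for $\mathbbm{T}$ (your Fubini step, which implicitly uses joint measurability of $(t,\eta)\mapsto\ind_{\{\eta_t\neq\eta_{t-}\}}$ and in fact only density, not co-countability, is needed) and of checking, when invoking the mapping theorem, that $F_t$ is Borel on $\mathcal{P}(\DR)$ and that its discontinuity set is contained in the $\operatorname{Law}(\mathbf{P}^*)$-null set $\{\mu:\mu(\eta_t\neq\eta_{t-})>0\}$; both checks are routine but worth stating. Your endpoint observation that evaluation at $-1$ is M1-continuous, and that evaluation at $t$ is M1-continuous at paths with $\eta_{t-}=\eta_t$, is exactly the continuity input the paper also relies on, so the two proofs share the same core mechanism while packaging the limit passage differently.
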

	
	\begin{proof}
		As $(\mathcal{P}(\DR),\tmwk)$ is a Polish space, we may apply Skorokhod's Representation Theorem \citep[Theorem~6.7]{billingsley2013convergence}. Hence, there exists a common probability space, and $\mathcal{P}(\DR)$-valued random variables $(\mathbf{Q}^n)_{n \ge 1}$ and $\mathbf{Q}^*$ such that
		\begin{equation*}
			\operatorname{Law}(\mathbf{Q}^n) = \operatorname{Law}(\tilde{\mathbf{P}}^{\varepsilon_n}), \qquad \operatorname{Law}(\mathbf{Q}^*) = \operatorname{Law}(\mathbf{P}^{*}), \quad \textnormal{and} \quad \mathbf{Q}^n \to \mathbf{Q}^* \quad \textnormal{a.s.} 
		\end{equation*}
		It is straightforward to see-by \citep[Theorem~13.4.1]{whitt2002stochastic}-the following maps from $\DR$ into itself \begin{equation*}
			\eta \mapsto \left(t \mapsto \inf_{s \le t} \{\eta_s - \eta_{-1}\}\right) \qquad \qquad \eta \mapsto \left(t \mapsto \sup_{s \le t} \{ \eta_s - \eta_{-1}\}\right)
		\end{equation*}
		are continuous. Now, for a $t \in \mathbb{T} \cap (-1,0)$, the maps $c_t$ and $\tilde{c}_t$ from $D_\R$ onto $\R$ such that
		\begin{equation*}
			c_t(\eta) = \inf_{s \le t} \{\eta_s - \eta_{-1}\} \qquad \qquad \tilde{c}_t(\eta) = \sup_{s \le t} \{\eta_s - \eta_{-1}\}
		\end{equation*}
		are continuous. Therefore, by the Continuous Mapping Theorem, $c_t^\#\mathbf{Q}^{n} \to c_t^\#\mathbf{Q}^*$ and $\tilde{c}_t^\#\mathbf{Q}^{n} \to \tilde{c}_t^\#\mathbf{Q}^*$ almost surely in $(\mathcal{P}(D_\R),\tmwk)$. 
		
		Fix a $\gamma > 0$. Then by the Portmanteau Theorem and Fatou's lemma, we have
		\begin{align*}
			\E\left[\mathbf{P}^*\left(\inf_{s \le t} \{ \eta_s - \eta_{-1}\} < - \gamma\right)\right] &= \E\left[\mathbf{Q}^*\left(\inf_{s \le t} \{ \eta_s - \eta_{-1}\} < - \gamma\right)\right]\\
			&\le \E\left[\liminf_{n \to \infty}\mathbf{Q}^n\left(\inf_{s \le t} \{ \eta_s - \eta_{-1}\} < - \gamma\right)\right]\\
			&\le \liminf_{n \to \infty} \E\left[\mathbf{Q}^n\left(\inf_{s \le t} \{ \eta_s - \eta_{-1}\} < - \gamma\right)\right]\\
			&= \liminf_{n \to \infty} \E\left[\tilde{\mathbf{P}}^{\varepsilon_n}\left(\inf_{s \le t} \{ \eta_s - \eta_{-1}\} < - \gamma\right)\right]\\
			&=  \liminf_{n \to \infty} \tilde{P}^{\varepsilon_n}\left(\inf_{s \le t} \{\eta_s - \eta_{-1}\} < -\gamma\right) = 0,
		\end{align*}
		where the last equality follows from the embedding of $X^{\varepsilon_n}$ from $D([0,T],\R)$ into $D([-1,\Bar{T}],\R)$. So, by continuity of measure and the Monotone Convergence Theorem, as $\gamma$ was arbitrary, we have
		\begin{equation*}
			\E\left[\mathbf{P}^*\left(\inf_{s \le t} \{ \eta_s - \eta_{-1}\} < 0 \right)\right] = 0.
		\end{equation*}
		Similarly, $ \E\left[\mathbf{P}^*\left(\sup_{s \le t} \{ \eta_s - \eta_{-1}\} > 0 \right)\right] = 0$.
	\end{proof}
	
	As $\tilde{X}^\varepsilon$ is fundamentally a time-changed Brownian motion with drift, it is not hard to show that, with probability one, $\tilde{X}^\varepsilon$ will take a negative value on any open neighbourhood of its first hitting time of zero. This property is preserved by weak convergence for almost every realisation of $\mathbf{P}^*$. Furthermore, as the Lebesgue-Stieltjes integral $\int_0^t \alpha(s)\diff\mathfrak{L}^\varepsilon_s$ takes non-negative values, by weak convergence we expect that for $\operatorname{Law}(\mathbf{P}^*)$-almost every measure $\mu$, $\mu$-almost every path $\eta \in \DR$ will have only downward jumps.
	
	\begin{lemma}[Strong crossing property]\label{lem:sec2: LEMMA STATING THE STRONG CROSSING PROPERTY OF THE LIMITING RANDOM MEASURE P*}
		For any $h > 0$, we have
		\begin{align}
			&\E\left[\mathbf{P}^*\left(\inf_{s \in (\tnot,(\tnot + h)\wedge \Bar{T})}\{\eta_s - \eta_{\tnot}\} \ge 0,\, \tnot < \Bar{T}\right)\right] = 0,\\ \label{eq:sec2: EQUATION STATING THE STRONG CROSSING PROPERTY OF LIMITING MEASURE P* IN lem:sec2: LEMMA STATING THE STRONG CROSSING PROPERTY OF THE LIMITING RANDOM MEASURE P*}
			&\E\left[\mathbf{P}^*\left(\eta\,:\, \Delta \eta_t \le 0 \quad \forall\, t \le \Bar{T}\right)\right] = 1.
		\end{align}
	\end{lemma}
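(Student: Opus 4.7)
The plan is to derive both statements from the analogous properties of the continuous smoothed processes $X^{\varepsilon_n}$ and transfer them to $\mathbf{P}^*$ via the Skorohod representation used in the previous lemma, working on a common space where $\mathbf{Q}^n\to\mathbf{Q}^*$ almost surely in $(\mathcal{P}(\DR),\tmwk)$.

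For the second claim (downward jumps only), I would exploit the pathwise decomposition
\[
    X^\varepsilon_t \;=\; U^\varepsilon_t - A^\varepsilon_t, \qquad U^\varepsilon_t \vcentcolon= X_{0-} + \int_0^t b(u,X^\varepsilon_u,\bm{\nu}^\varepsilon_u)\,\diff u + \mathcal{Y}^\varepsilon_t, \qquad A^\varepsilon_t \vcentcolon= \int_0^t \alpha(s)\,\diff\mathfrak{L}^\varepsilon_s,
\]
where $U^\varepsilon$ is continuous and $A^\varepsilon$ is continuous and non-decreasing (by Assumption \ref{ass: MODIFIED AND SIMPLIED FROM SOJMARK SPDE PAPER ASSUMPTIONS II}\eqref{ass: MODIFIED AND SIMPLIED FROM SOJMARK SPDE PAPER ASSUMPTIONS II FIVE} and non-negativity of $L^\varepsilon$ and $\kappa^\varepsilon$). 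I would upgrade the tightness statement of Proposition \ref{prop: TIGHTHNESS OF THE EMPIRICAL MEASURES IN THE SIMPLIFIED BANKING MODEL WITH MOLLIFICATIONS} to the joint conditional law of $(\tilde U^\varepsilon,\tilde A^\varepsilon)$ in the product topology, exploiting the moment bounds of Proposition \ref{prop: GRONWALL TYPE UPPERBOUND ON THE SUP OF THE MCKEAN VLASOV EQUATION WITH CONVOLUTION IN THE BANKING MODEL SIMPLIFIED AND INDEXED GENERALISED} and the fact that uniformly bounded non-decreasing processes are automatically tight under M1. Along a further subsequence the joint conditional laws converge to a limit whose first marginal is supported on continuous paths (by a Kolmogorov-type argument from the estimates \eqref{eq: EQUATION TWO IN THIGTNESS PROPOSITION OF THE LAWS OF THE SIMPLIED AND INDEXED BANKING MODEL FROM SOJMARK SPDE PAPER}--\eqref{eq: EQUATION THREE IN THIGTNESS PROPOSITION OF THE LAWS OF THE SIMPLIED AND INDEXED BANKING MODEL FROM SOJMARK SPDE PAPER}) and whose second marginal is supported on non-decreasing paths (since the set of non-decreasing functions is closed under the M1 topology). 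Any $\mathbf{P}^*$-typical path then takes the form $\eta = u - a$ with $u$ continuous and $a$ non-decreasing, forcing $\Delta \eta_t = -\Delta a_t \leq 0$ for all $t\in[-1,\bar T]$.

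For the first (strong crossing) claim, I would start at the pre-limit level. For each $n$, $X^{\varepsilon_n}$ is a continuous semimartingale whose quadratic variation satisfies $\diff\langle X^{\varepsilon_n}\rangle_t/\diff t = \sigma(t,X^{\varepsilon_n}_t)^2 \geq C_\sigma^{-2} > 0$ by Assumption \ref{ass: MODIFIED AND SIMPLIED FROM SOJMARK SPDE PAPER ASSUMPTIONS II}\eqref{ass: MODIFIED AND SIMPLIED FROM SOJMARK SPDE PAPER ASSUMPTIONS II FOUR}. At the stopping time $\tau^{\varepsilon_n}$, $X^{\varepsilon_n}_{\tau^{\varepsilon_n}} = 0$, and by Dambis--Dubins--Schwarz combined with the law of the iterated logarithm applied to the time-changed Brownian motion driving $X^{\varepsilon_n}$, one obtains
\[
    \inf_{s\in(\tau^{\varepsilon_n},(\tau^{\varepsilon_n}+h)\wedge\bar T)} X^{\varepsilon_n}_s < 0 \quad \text{a.s.\ on } \{\tau^{\varepsilon_n} < \bar T\},
\]
so $\E[\mathbf{P}^{\varepsilon_n}(\text{strong crossing fails})] = 0$ for every $n$.

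To propagate this to $\mathbf{P}^*$, I would approximate the bad event from the outside by the sets
\[
    F_{\gamma,h} \vcentcolon= \left\{\eta\in \DR\,:\, \sigma_{-\gamma}(\eta) < \bar T - h,\; \inf_{s\in[\sigma_{-\gamma}(\eta),\,\sigma_{-\gamma}(\eta)+h]}\eta_s \geq -\gamma\right\}, \qquad \sigma_{-\gamma}(\eta) \vcentcolon= \inf\{t\geq -1\,:\, \eta_s \leq -\gamma\},
\]
for $\gamma > 0$. On the subspace of $\DR$ of paths with only downward jumps (which has full $\mathbf{P}^*$-mass by the second claim applied first), $\sigma_{-\gamma}$ is M1-upper semi-continuous and $F_{\gamma,h}$ is M1-closed. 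Portmanteau then gives $\E[\mathbf{P}^*(F_{\gamma,h})] \leq \liminf_n \E[\mathbf{P}^{\varepsilon_n}(F_{\gamma,h})] = 0$, and a countable $\gamma\downarrow 0$ argument combined with continuity of measure delivers the first claim, since $\sigma_{-\gamma} \downarrow \tau_0$ on downward-jump paths. The main obstacle, as expected, is verifying the M1-upper semi-continuity of $\sigma_{-\gamma}$ and $F_{\gamma,h}$ on the downward-jump subspace: neither first hitting times nor infima over open intervals are M1-continuous in general, so one must carefully exploit the restriction to paths with only negative jumps together with the parametric representation of the M1 topology.
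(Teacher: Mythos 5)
Your argument for the second claim (downward jumps) is workable, if somewhat heavier than the paper's: the paper obtains $\Delta\eta_t\leq 0$ directly from the one-sided increment bound $X^*_t-X^*_s\leq Y^*_t-Y^*_s+C_b(t-s)(1+\sup_u\nnorm{X^*_u}+Z^*)$, which passes to the limit at continuity times and has a continuous right-hand side, rather than by upgrading tightness to the joint law of the decomposition $(U^\varepsilon,A^\varepsilon)$. Your route can be made to work (non-decreasing paths form an M1-closed set, and adding a continuous limit is M1-continuous), but it requires an extra identification step showing the limiting joint law of $(\tilde U^\varepsilon,\tilde A^\varepsilon)$ pushes forward to $\mathbf{P}^*$ under $(u,a)\mapsto u-a$.

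The proof of the first (strong crossing) claim, however, has a genuine gap: the Portmanteau inequality is used in the wrong direction. For a weakly convergent sequence $\mu^n\Rightarrow\mu$ and a \emph{closed} set $F$, Portmanteau gives $\limsup_n\mu^n(F)\leq\mu(F)$, which yields no upper bound on $\mu(F)$ from $\mu^n(F)=0$; the bound $\mu(F)\leq\liminf_n\mu^n(F)$ that you invoke requires $F$ to be \emph{open}. Your sets $F_{\gamma,h}$ (paths that hit level $-\gamma$ before $\bar T-h$ and then stay $\geq-\gamma$ for time $h$) are of closed type, not open: an arbitrarily small M1-perturbation of a path in $F_{\gamma,h}$ can dip strictly below $-\gamma$ and leave the set, while conversely a sequence of prelimit paths that dip only slightly below the level can converge to a path that merely touches it. For the same reason you cannot fix this by passing to the complement, since the good event (strict dip immediately after the hitting time) is not M1-closed either, so its prelimit probability $1$ does not transfer. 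This is precisely why the paper does not attempt to propagate the crossing property from the smoothed processes: instead it proves it \emph{directly for the limit}, by showing (via the joint limit of $(\tilde X^{\varepsilon_n},\tilde{\mathcal{Y}}^{\varepsilon_n},\tilde Z^{\varepsilon_n})$ and \cref{app:lem: Y* IS  A CONTINUOUS LOCAL MARTINGALE}) that the limiting canonical process satisfies $X^*_t-X^*_s\leq Y^*_t-Y^*_s+C_b(t-s)(1+\sup_u\nnorm{X^*_u}+Z^*)$ with $Y^*$ a continuous local martingale whose quadratic variation is bounded above and below by constant multiples of $t-s$, and then applying the generic crossing result of \cref{app:lem: GENERAL STRONG CROSSSING PROPERTY RESULT} (Dambis--Dubins--Schwarz plus the reflection principle) to the limit itself, after checking $\tau_0(X^*)\geq 0$ using the $L^2$-density of $\nu_{0-}$ and \cref{lem: LEMMA STATING THAT P* IS SUPPORTED ON PATHS THAT ARE CONSTANT BEFORE TIME 0}. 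To repair your proof you would need to replace the outer-approximation/Portmanteau step by an argument of this intrinsic type; your prelimit observation (nondegenerate diffusion implies immediate crossing for each $\varepsilon_n$) is correct but, on its own, does not survive the weak limit.
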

	
	\begin{proof}
		As with the space of \cadlag functions, we shall employ the shorthand notation $\mathcal{C}_\R$ for this proof to denote $\mathcal{C}([-1,\,\Bar{T}]\,,\,\R)$. Now, as $\sigma$ is non-degenerate and bounded by assumption, by the Kolmogorov-Chentsov Tightness Criterion, \citep{kunita1986tightness} and \citep{chentsov1956weak}, we have that $(\tilde{\mathcal{Y}}^\varepsilon)_{\varepsilon >0}$ is tight. Additionally, we define the random variable $\tilde{Z}^\varepsilon \coloneqq \langle\tilde{\mathbf{P}}^\varepsilon,\,\sup_{u \le \Bar{T}}\nnorm{\eta_u}\rangle$. By definition of $\tilde{\mathbf{P}}^\varepsilon$, $\E[\tilde{Z}^\varepsilon] = \E[\sup_{u \le \Bar{T}}\nnorm{\tilde{X}^\varepsilon_u}]$. Therefore, $\E[\tilde{Z}^\varepsilon]$ is uniformly bounded by Proposition \ref{prop: GRONWALL TYPE UPPERBOUND ON THE SUP OF THE MCKEAN VLASOV EQUATION WITH CONVOLUTION IN THE BANKING MODEL SIMPLIFIED AND INDEXED GENERALISED} and hence $\{\tilde{Z}^\varepsilon\}_{\varepsilon > 0}$ is tight on $\R$.
		
		As marginal tightness implies joint tightness, we have $\prob_{x,y,z}^{\varepsilon} \coloneqq \operatorname{Law}(\tilde{X}^\varepsilon,\,\tilde{\mathcal{Y}}^\varepsilon,\tilde{Z}^\varepsilon)$ is tight in $\mathcal{P}(\DR \times \mathcal{C}_\R \times \R)$ by Proposition \ref{prop: TIGHTHNESS OF THE EMPIRICAL MEASURES IN THE SIMPLIFIED BANKING MODEL WITH MOLLIFICATIONS}. Given a suitable subsequence, also denoted by $(\varepsilon_n)_{n \ge 1}$ for simplicity, we have $\tilde{\mathbf{P}}^{\varepsilon_n} \implies \mathbf{P}^*$ and $\prob_{x,y,z}^{\varepsilon_n} \implies \prob_{x,y,z}^*$. Here $\prob_x^*$ and $\prob_y^*$ are used to denote the first and second marginal respectively.
		
		Intuitively, $\E[\mathbf{P}^*(\cdot)]$ and $\prob_x^*$ should have the same law as we are averaging over the stochasticity inherited by the common noise. By definition of $\mathbf{P}^{\varepsilon}$ and $\prob_{x,y,z}^{\varepsilon}$, for any continuous bounded function $f \,:\, \DR \to \R$, we have that
		\begin{equation*}
			\E\left[\langle\mathbf{P}^{\varepsilon_n},\,f\rangle\right] = \langle\prob_{x,y,z}^{\varepsilon_n},\,f\rangle.
		\end{equation*}
		As $\DR$ is a Polish space, by a Monotone Class Theorem argument and Dynkin's Lemma, we have
		\begin{equation}\label{eq:sec2 FIRST EQUATION IN lem:sec2: LEMMA STATING THE STRONG CROSSING PROPERTY OF THE LIMITING RANDOM MEASURE P*}
			\E\left[\mathbf{P}^*(A)\right] = \prob_x^*(A) = \prob_{x,y,z}^*(A\times \mathcal{C}_\R \times \R) \qquad \forall\, A \in \mathcal{B}(\DR).
		\end{equation}
		
		Define the canonical processes $X^*$, $Y^*$ and $Z^*$ on $(\DR,M_1) \times (\mathcal{C}_\R,\norm{\cdot}_\infty) \times (\R,\,\nnorm{\cdot})$, where for $(\eta,\omega,z) \in \DR \times \mathcal{C}_\R \times \R,\,X^*(\eta,\omega,z) = \eta$, $Y^*(\eta,\omega,z) = \omega$ and $Z^*(\eta,\omega,z) = z$. By considering the parametric representations, the map $\eta \mapsto \sup_{u \le \Bar{T}} \nnorm{\eta_u}$ is $M_1$-continuous for any $\eta \in \DR$. Hence, by the linear growth condition on $b$, the Continuous Mapping Theorem, and the Portmanteau Theorem, for any $s,\,t \in \mathbb{T}$ with $s < t$ and $ \gamma > 0$,  
		\begin{equation}\label{eq: SECOND EQUATION IN CROSSING PROPERTY LEMMA}
			\begin{aligned}
				&\prob_{x,y,z}^*\left(X^*_t - X^*_s \le Y_t^* - Y_s^* + C_b(t - s)(1 + \sup_{u \le \Bar{T}}\nnorm{X_u^*} + Z^*) + \gamma\right)\\
				&\qquad \ge \limsup_{n \to \infty} \prob\left[\tilde{X}^{\varepsilon_n}_t - \tilde{X}^{\varepsilon_n}_s \le \mathcal{\tilde{Y}}_t^{\varepsilon_n} - \mathcal{\tilde{Y}}_s^{\varepsilon_n} + C_b(t - s)(1 + \sup_{u \le \Bar{T}}\nnorm{\tilde{X}_u^{\varepsilon_n}} + \tilde{Z}^{\varepsilon_n}) + \gamma\right]\\
				&\qquad = 1.
			\end{aligned}
		\end{equation}
		The last equality follows from the fact that for any $\varepsilon_n$
		\begin{align*}
			\tilde{X}^{\varepsilon_n}_t - \tilde{X}^{\varepsilon_n}_s &= \mathcal{\tilde{Y}}_t^{\varepsilon_n} - \mathcal{\tilde{Y}}_s^{\varepsilon_n} + \int^{(t \vee 0)\wedge T}_{(s \vee 0)\wedge T} b(u,X_u^{\varepsilon_n},\bm{\nu}_u^{\varepsilon_n})\1_{[0,T]}(u) \diff u - \int^{(t \vee 0)\wedge T}_{(s \vee 0)\wedge T}\alpha(u) \diff\mathfrak{L}_u^{\varepsilon_n}\\
			&\le \mathcal{\tilde{Y}}_t^{\varepsilon_n} - \mathcal{\tilde{Y}}_s^{\varepsilon_n} + C_b(t - s)\left(1 + \sup_{u \le \Bar{T}}\nnorm{\tilde{X}_u^{\varepsilon_n}}+\tilde{Z}^{\varepsilon_n}\right).
		\end{align*}
		Sending $\gamma \to 0$ countably and employing the right continuity of $X^*$ and $Y^*$, we deduce
		\begin{equation*}
			X^*_t - X^*_s \le Y_t^* - Y_s^* + C_b(t - s)(1 + \sup_{u \le \Bar{T}}\nnorm{X_u^*} + Z^*) \qquad \forall\, s < t \quad \prob^*_{x,y,z}\text{-a.s.}
		\end{equation*}
		Furthermore, $\Delta X_t^* \le 0$ for all $t$ $\prob_{x,y,z}^*$-almost surely. By Lemma \ref{app:lem: Y* IS  A CONTINUOUS LOCAL MARTINGALE}, $Y^*$ is a continuous local martingale with respect to the filtration generated by $(X^*, Y^*)$. It is clear that $\tau_0(X^*)$ is a stopping time with respect to the filtration generated by $(X^*, Y^*)$. So the claim follows from Lemma \ref{app:lem: GENERAL STRONG CROSSSING PROPERTY RESULT} if $\tau_0(X^*) \ge 0$ and $\E[\sup_{u \le \Bar{T}}\nnorm{X_u^*} + Z^*] < \infty$. For the former condition, it is sufficient to show
		\begin{equation*}
			\prob_x^*\left(\inf_{s < 0} \eta_s \le 0\right) = 0.
		\end{equation*}
		As $\prob^{\varepsilon_n}_x \implies \prob^*_x$, then by Skorokhod's Representation Theorem, there exist $(Z^n)$ and $Z$ on a common probability space such that $\operatorname{Law}(Z^n) = \prob^{\varepsilon_n}_x$, $\operatorname{Law}(Z) = \prob^{*}_x$, and $Z^n \to Z$ almost surely in $(\DR,M_1)$. By the Portmanteau Theorem, for any $\gamma > 0$
		\begin{equation*}
			\prob\left[Z_{-1} < \gamma \right] \le \liminf_{n \to \infty} \prob\left[Z_{-1}^n < \gamma\right] = \prob\left[X_{0-} < \gamma\right] = O(\gamma^{1/2}),
		\end{equation*}
		as $X_{0-}$ has an $L^2$-density by Assumption \ref{ass: MODIFIED AND SIMPLIED FROM SOJMARK SPDE PAPER ASSUMPTIONS II} (\ref{ass: MODIFIED AND SIMPLIED FROM SOJMARK SPDE PAPER ASSUMPTIONS THREE}). So $\prob^*_x(\eta_{-1} \le 0) = \prob\left[Z_{-1} \le 0 \right] = 0$. By Lemma \ref{lem: LEMMA STATING THAT P* IS SUPPORTED ON PATHS THAT ARE CONSTANT BEFORE TIME 0} and \eqref{eq:sec2 FIRST EQUATION IN lem:sec2: LEMMA STATING THE STRONG CROSSING PROPERTY OF THE LIMITING RANDOM MEASURE P*}, we have
		\begin{equation*}
			1 = \E\left[\mathbf{P}^*\left(\sup_{s < 0}\nnnorm{\eta_s - \eta_{-1}} = 0\right)\right] = \prob^*_x\left(\sup_{s < 0}\nnnorm{\eta_s - \eta_{-1}} = 0\right).
		\end{equation*}
		Therefore, $X^*_s > 0$ for every $s \in [-1,0)$ $\prob^*_{x,y,z}$-almost surely. Hence, $\tau_0(X^*) \ge 0$ almost surely. Furthermore, as $\eta \mapsto \sup_{u \le \Bar{T}} \nnorm{\eta_u}$ is an $M_1$-continuous map, $\E[\sup_{u \le \Bar{T}}\nnorm{X_u^*} + Z^*] < \infty$ follows from a simple application of the Continuous Mapping Theorem and Proposition \ref{prop: GRONWALL TYPE UPPERBOUND ON THE SUP OF THE MCKEAN VLASOV EQUATION WITH CONVOLUTION IN THE BANKING MODEL SIMPLIFIED AND INDEXED GENERALISED}. Therefore, we deduce,
		{\small\begin{align*}
				&\E\left[\mathbf{P}^*\left(\inf_{s \in (\tnot,(\tnot + h)\wedge \Bar{T})}\{\eta_s - \eta_{\tnot}\} \ge 0,\, \tnot < \Bar{T}\right)\right] \\
				&\qquad \le \prob^*_{x,y,z}\left(\inf_{s \in (\tau_0^*,(\tau_0^* + h)\wedge \Bar{T})} \left\{Y^*_s - Y^*_{\tau_0^*} + C_b(s - {\tau_0^*})(1 + \sup_{u \le \Bar{T}}\nnorm{X_u^*} + Z^*) \right\}\ge 0,\, \tau_0^* < \Bar{T}\right)\\
				&\qquad = 0,
		\end{align*}}
		where $\tau_0^* = \tau_0(X^*)$ and the final equality is due to Lemma \ref{app:lem: GENERAL STRONG CROSSSING PROPERTY RESULT}.
	\end{proof}
	
	Now we have all the ingredients to show that $\tau_0$ is an $M_1$-continuous map.
	
	\begin{corollary}[Hitting time continuity]\label{cor:sec2: LEMMA THAT THE HITTING TIME IS A M1 CONTINUOUS MAP}
		For $\operatorname{Law}(\mathbf{P}^*)$-almost every measure $\mu$, we have that the hitting time map $\tau_0\,:\, \DR \to \R$ is continuous in the $M_1$-topology for $\mu$-almost every $\eta \in \DR$.
	\end{corollary}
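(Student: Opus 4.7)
The plan is to identify, for $\operatorname{Law}(\mathbf{P}^*)$-almost every realisation $\mu$, a $\mu$-full subset of $\DR$ on which $\tau_0$ is M1-continuous, and then verify continuity path by path. By \cref{lem: LEMMA STATING THAT P* IS SUPPORTED ON PATHS THAT ARE CONSTANT BEFORE TIME 0} together with \cref{lem:sec2: LEMMA STATING THE STRONG CROSSING PROPERTY OF THE LIMITING RANDOM MEASURE P*} (the latter applied to the countable family $h = 1/k$, $k \in \N$), for $\operatorname{Law}(\mathbf{P}^*)$-almost every $\mu$, $\mu$-almost every $\eta \in \DR$ satisfies
\begin{enumerate}[(i)]
    \item $\eta_s = \eta_{-1} > 0$ for every $s \in [-1,0)$;
    \item $\Delta \eta_t \leq 0$ for every $t \in [-1, \Bar{T}]$;
    \item if $\tau_0(\eta) < \Bar{T}$, then $\inf_{s \in (\tau_0(\eta),\, (\tau_0(\eta) + h) \wedge \Bar{T})} \{\eta_s - \eta_{\tau_0(\eta)}\} < 0$ for every $h > 0$.
\end{enumerate}
Fixing such an $\eta$ and any sequence $(\eta^n)_{n \geq 1} \subset \DR$ with $\eta^n \to \eta$ in the M1 topology, I would show $\tau_0(\eta^n) \to \tau_0(\eta)$ by proving both upper and lower semicontinuity separately.

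For upper semicontinuity, assume $\tau_0(\eta) < \Bar{T}$ (the other case being trivial since $\tau_0 \leq \Bar{T}$ by convention). As $\tau_0(\eta)$ is a càdlàg first-passage time, $\eta_{\tau_0(\eta)} \leq 0$ (else right-continuity would push $\tau_0$ to the right). For any fixed $\delta > 0$, property (iii) produces some $s \in (\tau_0(\eta), \tau_0(\eta) + \delta)$ with $\eta_s < \eta_{\tau_0(\eta)} \leq 0$, and right-continuity at $s$ provides a neighbourhood $[s, s + \delta')$ on which $\eta$ remains strictly negative. Since the discontinuities of a càdlàg function are countable, this neighbourhood contains a continuity point $s^*$ of $\eta$ with $\eta_{s^*} < 0$. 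The M1 continuity of the projection $\eta \mapsto \eta_{s^*}$ at continuity points of the limit, \citep[Theorem~12.5.1]{whitt2002stochastic}, yields $\eta^n_{s^*} \to \eta_{s^*} < 0$, so $\tau_0(\eta^n) \leq s^* < \tau_0(\eta) + \delta$ eventually. Sending $\delta \downarrow 0$ gives $\limsup_n \tau_0(\eta^n) \leq \tau_0(\eta)$.

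For lower semicontinuity, fix any continuity point $b \in (-1, \tau_0(\eta))$ of $\eta$ and claim that $m \vcentcolon= \inf_{s \in [-1, b]} \eta_s > 0$. If instead $m = 0$, compactness of $[-1, b]$ yields $s_n \in [-1, b]$ with $\eta_{s_n} \to 0$ and $s_n \to s^* \in [-1, b]$; right-continuity combined with $\eta_{s^*} > 0$ excludes $s_n \downarrow s^*$, while $s_n \uparrow s^*$ forces $\eta_{s^*-} = 0$ and hence an upward jump $\Delta \eta_{s^*} = \eta_{s^*} > 0$, contradicting (ii). Thus $m > 0$, and the M1 continuity of the running infimum $\eta \mapsto \inf_{s \leq b} \eta_s$ at continuity points of $\eta$, \citep[Theorem~13.4.1]{whitt2002stochastic}, gives $\inf_{s \leq b} \eta^n_s \to m > 0$, so $\tau_0(\eta^n) > b$ eventually and $\liminf_n \tau_0(\eta^n) \geq b$. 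Density of continuity points of $\eta$ in $(-1, \tau_0(\eta))$ then yields $\liminf_n \tau_0(\eta^n) \geq \tau_0(\eta)$. The main obstacle in this argument is precisely the lower semicontinuity step: without the no-upward-jump conclusion from \cref{lem:sec2: LEMMA STATING THE STRONG CROSSING PROPERTY OF THE LIMITING RANDOM MEASURE P*}, a path could be strictly positive on $[-1, \tau_0(\eta))$ and yet have $\inf_{s \leq b} \eta_s = 0$ for some $b < \tau_0(\eta)$, which would destroy the uniform positivity required for M1 stability of the first-passage time.
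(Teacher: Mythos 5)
Your argument is correct, and it follows the paper's overall strategy — use \cref{lem: LEMMA STATING THAT P* IS SUPPORTED ON PATHS THAT ARE CONSTANT BEFORE TIME 0} and \cref{lem:sec2: LEMMA STATING THE STRONG CROSSING PROPERTY OF THE LIMITING RANDOM MEASURE P*} (with $h = 1/k$ countably) to restrict to a full-measure set of paths with only downward jumps and the crossing property, and then verify deterministic M1-continuity of $\tau_0$ at such paths — but the deterministic step is organised differently. The paper splits into three cases ($\tau_0(\eta) < \Bar{T}$; $\tau_0(\eta) = \Bar{T}$ with $\inf_s \eta_s > 0$; $\tau_0(\eta) = \Bar{T}$ with $\eta_{\Bar{T}} = 0$) and, for the first case, invokes \cref{app:lem: CONVERGENCE OF THE STOPPING TIME FOR PARTICLES THAT EXHIBIT THE CROSSING PROPERTY}, whose lower-semicontinuity step runs through parametric representations: one picks parameter points realising $(\eta^n_{\tau_0(\eta^n)}, \tau_0(\eta^n))$, extracts convergent subsequences, and uses the no-upward-jump property to identify the limit as a point where $\eta \leq 0$. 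You instead prove both semicontinuity bounds directly: your $\limsup$ bound is essentially Step 1 of that appendix lemma (crossing property plus pointwise M1-convergence at continuity points, with the small extra care of locating a continuity point inside a neighbourhood where the path is strictly negative — just make sure $\delta'$ is shrunk so that $s^* < \tau_0(\eta) + \delta$, which is immediate), while your $\liminf$ bound replaces the parametric-representation argument by positivity of the running infimum on $[-1,b]$ for continuity points $b < \tau_0(\eta)$ (ruled out being zero via no upward jumps) together with M1-continuity of the running-infimum map, the same device the paper itself uses in \cref{lem: LEMMA STATING THAT P* IS SUPPORTED ON PATHS THAT ARE CONSTANT BEFORE TIME 0}; note only that $b$ should be (or automatically is, given continuity of $\eta$ at $b$ and no upward jumps) a continuity point of the running infimum so that evaluation there passes to the limit. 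This unified treatment also subsumes the paper's second and third cases, since for $\tau_0(\eta) = \Bar{T}$ the upper bound is trivial and your lower bound reproduces the paper's argument for case (iii). The trade-off is that the paper's route reuses a lemma needed again in Section 3, whereas yours is self-contained and avoids parametric representations. One cosmetic point: you list $\eta_{-1} > 0$ as part of property (i) and attribute it to the two lemmas, but it is only established inside the proof of \cref{lem:sec2: LEMMA STATING THE STRONG CROSSING PROPERTY OF THE LIMITING RANDOM MEASURE P*} (via the $L^2$ density of $X_{0-}$), not in their statements; as your argument never actually uses it (positivity of $\eta$ strictly before $\tau_0(\eta)$ is automatic from the definition of the hitting time), this does not affect correctness.
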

	
	\begin{proof}
		By Lemma \ref{lem:sec2: LEMMA STATING THE STRONG CROSSING PROPERTY OF THE LIMITING RANDOM MEASURE P*}, for $\operatorname{Law}(\mathbf{P}^*)$-almost every measure $\mu$, $\mu$-almost every path $\eta \in \DR$ will have only downward jumps and one of the following conditions holds:
		\begin{enumerate}
			\item\label{enum: FIRST ITEM IN CONTINUITY PROOF} $\tnot < \Bar{T}$ and $\eta$ takes a negative value on any neighbourhood of $\tnot$,
			\item\label{enum: SECOND ITEM IN CONTINUITY PROOF} $\tnot = \Bar{T}$ and $\inf_{s \le \Bar{T}} \eta_s > 0$,
			\item\label{enum: THIRD ITEM IN CONTINUITY PROOF} $\tnot = \Bar{T}$ and $\eta_{\Bar{T}} = 0$.
		\end{enumerate}
		
		If \ref{enum: FIRST ITEM IN CONTINUITY PROOF} holds, then by Lemma \ref{app:lem: CONVERGENCE OF THE STOPPING TIME FOR PARTICLES THAT EXHIBIT THE CROSSING PROPERTY}, $\tau_0$ is $M_1$-continuous at $\eta$. If \ref{enum: SECOND ITEM IN CONTINUITY PROOF} holds--$\tnot = \Bar{T}$ and $\inf_{s \le \Bar{T}} \eta_s > 0$--then for any approximating sequence $(\eta^n)_{n \ge 1} \subset \DR$ in the $M_1$-topology, we must have $\inf_{s \le \Bar{T}} \eta_s^n > 0$ eventually as the parametric representations get arbitrarily close in the uniform topology. Therefore, as $\inf_{s \le \Bar{T}} \eta_s^n > 0$ eventually, by definition $\tau_0(\eta^n) = \Bar{T}$ eventually. Therefore, $\tnot$ is $M_1$-continuous at $\eta$. If \ref{enum: THIRD ITEM IN CONTINUITY PROOF} holds, when $\tnot = \Bar{T}$ and $\eta_{\Bar{T}} = 0$, then for any $\gamma >0$, with $\Bar{T} -\gamma$ being a continuity point, we must have $\inf_{s \le \Bar{T} - \gamma} \eta_s > 0$ because $\eta$ only jumps downwards. So for any approximating sequence $(\eta^n)_{n \ge 1} \subset \DR$ in the $M_1$-topology, eventually $\inf_{s \le \Bar{T} - \gamma}{\eta^n_s}>0$. Hence, eventually $\tau_0(\eta^n) > \Bar{T} - \gamma$. As $\gamma > 0$ can be made arbitrarily close to zero, by definition, $\lim_{n \to \infty} \tau_0(\eta^n) = \tnot = \Bar{T}$. Therefore, $\tau_0$ is $M_1$-continuous at $\eta$.
	\end{proof}
	
	With the result stating that the hitting time is an $M_1$-continuous map, weak convergence of the loss function follows immediately.
	
	\begin{lemma}[Continuity of conditional feedback]\label{lem:sec2: LEMMA STATING THAT THE CONDITION FEEDBACK IS A CONTINUOUS MAP HENCE WE HAVE WEAK CONVERGENCE OF FEEDBACK ESSENTIALLY}
		For $\operatorname{Law}(\mathbf{P}^*)$-almost every measure $\mu \in \mathcal{P}(\DR)$, the map $\mu \mapsto \mu(\tnot \le t)$ is continuous with respect to $\tmwk$ for all $t \in \mathbb{T}^{\mu} \cap [0,\,\Bar{T})$. $\mathbb{T}^{\mu}$ is the set of continuity points of $t \mapsto \mu(\tnot \le t)$.    
	\end{lemma}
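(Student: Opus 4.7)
The plan is to leverage \cref{cor:sec2: LEMMA THAT THE HITTING TIME IS A M1 CONTINUOUS MAP} together with a standard continuous-mapping/Portmanteau argument. Fix a $\mu$ in the full $\operatorname{Law}(\mathbf{P}^*)$-measure set on which $\tau_0 : \DR \to \R$ is $\mu$-almost surely M1-continuous. Let $(\mu_n)_{n\geq 1} \subset \mathcal{P}(\DR)$ be an arbitrary sequence with $\mu_n \to \mu$ in $\tmwk$. My goal is to show that $\mu_n(\tau_0(\eta)\leq t) \to \mu(\tau_0(\eta)\leq t)$ for every $t \in \mathbbm{T}^{\mu} \cap [0,\Bar{T})$.

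First I would apply the Continuous Mapping Theorem in its version for mappings continuous $\mu$-a.e. (see e.g. \citep[Theorem~5.1]{billingsley2013convergence}): since $\mu_n \to \mu$ weakly and the discontinuity set $D_{\tau_0} \subset \DR$ of $\tau_0$ satisfies $\mu(D_{\tau_0}) = 0$ by \cref{cor:sec2: LEMMA THAT THE HITTING TIME IS A M1 CONTINUOUS MAP}, the pushforward measures converge, $\tau_0^{\#}\mu_n \to \tau_0^{\#}\mu$ weakly on $\R$. Then I would invoke the Portmanteau Theorem: for any $t \in \R$ such that $\tau_0^{\#}\mu(\{t\}) = 0$, the set $(-\infty,t]$ is a continuity set of $\tau_0^{\#}\mu$ and hence
\begin{equation*}
\mu_n(\tnot \leq t) = \tau_0^{\#}\mu_n\bigl((-\infty,t]\bigr) \xrightarrow[n\to\infty]{} \tau_0^{\#}\mu\bigl((-\infty,t]\bigr) = \mu(\tnot \leq t).
\end{equation*}
The condition $\tau_0^{\#}\mu(\{t\}) = 0$ is precisely the condition that $t$ is a continuity point of the CDF $s \mapsto \mu(\tnot \leq s)$, i.e.\ $t \in \mathbbm{T}^{\mu}$, so the required convergence holds for all such $t$.

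There is no substantive obstacle remaining: the heavy lifting was done in establishing the $\mu$-a.s.\ M1-continuity of the hitting time in \cref{cor:sec2: LEMMA THAT THE HITTING TIME IS A M1 CONTINUOUS MAP}, which itself relied on the strong crossing property (\cref{lem:sec2: LEMMA STATING THE STRONG CROSSING PROPERTY OF THE LIMITING RANDOM MEASURE P*}) and the fact that paths charged by $\mu$ jump only downward. The only minor point to verify is that the restriction $t \in [0,\Bar{T})$ together with the fact that $\mu$ is supported on paths that are constant on $[-1,0)$ (\cref{lem: LEMMA STATING THAT P* IS SUPPORTED ON PATHS THAT ARE CONSTANT BEFORE TIME 0}) ensures $\{\tnot \leq t\}$ is a genuine event in the pushforward $\tau_0^{\#}\mu$; this is automatic from the definition of $\tau_0$ in \eqref{eq: FIRST HITTING TIME OF ZERO OF THE CANONICAL PROCESS ON THE SPACE OF CADLAG FUNCTIONS}, so no additional argument is needed.
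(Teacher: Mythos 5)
Your proposal is correct and takes essentially the same route as the paper: both arguments rest entirely on the $\mu$-a.s.\ M1-continuity of $\tau_0$ from \cref{cor:sec2: LEMMA THAT THE HITTING TIME IS A M1 CONTINUOUS MAP} and then convert weak convergence of $\mu^n$ to $\mu$ into convergence of the hitting-time distribution functions at the atomless times $t \in \mathbbm{T}^{\mu}$. The paper simply unpacks your appeal to the mapping theorem and the Portmanteau Theorem by hand, using Skorohod's Representation Theorem and dominated convergence, which is the standard proof of that mapping theorem.
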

	
	\begin{proof}
		By Corollary \ref{cor:sec2: LEMMA THAT THE HITTING TIME IS A M1 CONTINUOUS MAP}, for $\operatorname{Law}(\mathbf{P}^*)$-almost every measure $\mu$, $\tau_0$ is $M_1$-continuous for $\mu$-almost every $\eta$. We fix such a $\mu$ and consider a sequence $(\mu^n)_{n \ge 1}$ converging to $\mu$ in $\mathcal{P}(\DR)$. By Skorokhod's Representation Theorem, 
		\begin{equation*}
			\mu^n(\tnot \le t) = \E\left[\ind_{\tau_0(Z^n) \le t}\right] \qquad \text{and}\qquad \mu(\tnot \le t) = \E\left[\ind_{\tau_0(Z) \le t}\right],
		\end{equation*}
		where $\tau_0$ is continuous for almost all paths $Z$ and $Z^n \to Z$ almost surely in $(\DR,M_1)$. Now, for any $t \in \mathbb{T}^\mu\coloneqq \{t \in [-1,\,\Bar{T}]\,:\, \mu(\tnot = t) = 0\}$, by the Monotone Convergence Theorem,
		\begin{equation}\label{eq:sec2 FIRST EQUATION IN lem:sec2: LEMMA STATING THAT THE CONDITION FEEDBACK IS A CONTINUOUS MAP HENCE WE HAVE WEAK CONVERGENCE OF FEEDBACK ESSENTIALLY}
			\prob\left[\tau_0(Z) = t\right] = \mu(\tnot \le t) - \lim_{s \uparrow t} \mu(\tnot \le s) = 0.
		\end{equation}
		Therefore, employing the continuity of $\tau_0$ and \eqref{eq:sec2 FIRST EQUATION IN lem:sec2: LEMMA STATING THAT THE CONDITION FEEDBACK IS A CONTINUOUS MAP HENCE WE HAVE WEAK CONVERGENCE OF FEEDBACK ESSENTIALLY}, we have
		\begin{equation*}
			\E\left[\ind_{\tau_0(Z^n) \le t}\right] \to \E\left[\ind_{\tau_0(Z) \le t}\right],
		\end{equation*}
		by the Dominated Convergence Theorem. So, we conclude that
		\begin{equation*}
			\mu^n(\tnot \le t) \to \mu(\tnot \le t) \qquad \forall \; t \in \mathbb{T}^\mu.
		\end{equation*}
	\end{proof}
	
	Furthermore, we have weak convergence of the mollified loss to the singular loss.
	
	\begin{corollary}[Convergence of delayed loss]\label{cor:sec2: COROLLARY SHOWING THAT WE HAVE CONVERGENCE OF THE DELAYED / SMOOTHENED LOSS TO THE LIMITING LOSS}
		
		For $\operatorname{Law}(\mathbf{P}^*)$-almost every measure $\mu$, $\int_0^t \kappa^{\varepsilon_n}(t-s)\mu^n(\tnot \le s) \diff s$ converges to $\mu(\tnot \le t)$ for any $t \in \mathbb{T}^\mu$ and any sequence $(\mu^n)_{n \ge 1}$ that converges to $\mu$ in $(\mathcal{P}(\DR),\tmwk)$.
	\end{corollary}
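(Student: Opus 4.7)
The plan is to fix a measure $\mu$ for which the conclusion of \cref{lem:sec2: LEMMA STATING THAT THE CONDITION FEEDBACK IS A CONTINUOUS MAP HENCE WE HAVE WEAK CONVERGENCE OF FEEDBACK ESSENTIALLY} holds, write $L^n(s)\vcentcolon=\mu^n(\tnot\leq s)$ and $L(s)\vcentcolon=\mu(\tnot\leq s)$, and decompose
\begin{equation*}
\int_0^t \kappa^{\varepsilon_n}(t-s)L^n(s)\,\diff s - L(t) \;=\; I_n + J_n,
\end{equation*}
with $I_n\vcentcolon=\int_0^t \kappa^{\varepsilon_n}(t-s)[L^n(s)-L(s)]\,\diff s$ and $J_n\vcentcolon=\int_0^t \kappa^{\varepsilon_n}(t-s)L(s)\,\diff s-L(t)$. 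The functions $L^n,L$ are non-decreasing, take values in $[0,1]$, and by \cref{lem:sec2: LEMMA STATING THAT THE CONDITION FEEDBACK IS A CONTINUOUS MAP HENCE WE HAVE WEAK CONVERGENCE OF FEEDBACK ESSENTIALLY} we have $L^n(s)\to L(s)$ pointwise on $\mathbbm{T}^\mu\cap[0,\Bar{T})$; since the complement of $\mathbbm{T}^\mu$ is countable, this set is dense in $[0,\Bar{T}]$.

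To dispose of $J_n$, change variables to $u=t-s$ and use that $t\in\mathbbm{T}^\mu$ implies $L$ is continuous at $t$. Given $\eta>0$, pick $\delta>0$ such that $|L(t-u)-L(t)|<\eta$ for all $u\in[0,\delta]$; since $\kappa\in\mathcal{W}^{1,1}_0(\R_+)$ with $\norm{\kappa}_1=1$, for large $n$ we have $\int_\delta^\infty \kappa^{\varepsilon_n}(u)\,\diff u<\eta$. Writing $J_n=\int_0^\delta \kappa^{\varepsilon_n}(u)[L(t-u)-L(t)]\,\diff u+\int_\delta^t\kappa^{\varepsilon_n}(u)[L(t-u)-L(t)]\,\diff u-L(t)\int_t^\infty\kappa^{\varepsilon_n}(u)\,\diff u$, each piece is bounded by a multiple of $\eta$, so $J_n\to 0$.

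For $I_n$, fix $\eta>0$ and choose $\delta>0$ so that $t-\delta\in\mathbbm{T}^\mu$ and $L(t)-L(t-\delta)<\eta$, which is possible by the continuity of $L$ at $t$ and the density of $\mathbbm{T}^\mu$. Pointwise convergence at $t$ and $t-\delta$ then yields $|L^n(t)-L(t)|<\eta$ and $|L^n(t-\delta)-L(t-\delta)|<\eta$ for large $n$. Combined with monotonicity, for every $s\in[t-\delta,t]$ the bracketings $L(t-\delta)\leq L(s)\leq L(t)$ and $L^n(t-\delta)\leq L^n(s)\leq L^n(t)$ give $|L^n(s)-L(s)|<2\eta$ uniformly. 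Splitting the integral at $t-\delta$:
\begin{equation*}
\nnorm{I_n} \leq \int_{t-\delta}^t \kappa^{\varepsilon_n}(t-s)\nnorm{L^n(s)-L(s)}\,\diff s \;+\; \int_0^{t-\delta} \kappa^{\varepsilon_n}(t-s)\nnorm{L^n(s)-L(s)}\,\diff s,
\end{equation*}
the first term is at most $2\eta\,\norm{\kappa}_1=2\eta$, while the second is bounded by $2\int_\delta^t \kappa^{\varepsilon_n}(u)\,\diff u\to 0$. Hence $\limsup_n|I_n|\leq 2\eta$, and since $\eta>0$ was arbitrary, $I_n\to 0$, completing the proof.

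The only delicate step is handling the bulk of the mass of $\kappa^{\varepsilon_n}$, which concentrates at $s=t$: there $L^n$ is not known to converge to $L$ uniformly, but monotonicity turns the two-point convergence at $t-\delta$ and $t$ (continuity points) into a uniform bound on $[t-\delta,t]$, which is enough because the continuity of $L$ at $t$ lets us take $\delta$ small.
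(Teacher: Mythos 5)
Your proof is correct, and it follows the same overall strategy as the paper — split the mollified integral into the part where $\kappa^{\varepsilon_n}$ concentrates near $t$ and the tail, use pointwise convergence from \cref{lem:sec2: LEMMA STATING THAT THE CONDITION FEEDBACK IS A CONTINUOUS MAP HENCE WE HAVE WEAK CONVERGENCE OF FEEDBACK ESSENTIALLY} together with continuity of $L$ at $t$, and kill the tail by $\int_\delta^\infty \kappa^{\varepsilon_n}\to 0$ — but it differs in how the supremum of $\nnorm{L^n-L}$ near $t$ is controlled. The paper first upgrades the pointwise convergence of the monotone functions $s\mapsto \mu^n(\tnot\leq s)\ind_{[0,t]}(s)$ to M1-convergence via \citep[Corollary~12.5.1]{whitt2002stochastic}, and then invokes the fact that M1-convergence implies local uniform convergence at continuity points of the limit (\citep[Theorem~12.5.1]{whitt2002stochastic}), taking $\delta=\varepsilon_n^{1/2}$ in a three-term decomposition $I+II+III$. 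You instead obtain the needed uniform bound on $[t-\delta,t]$ elementarily: monotonicity plus pointwise convergence at the two continuity points $t-\delta\in\mathbbm{T}^\mu$ and $t$, together with $L(t)-L(t-\delta)<\eta$, gives $\nnorm{L^n(s)-L(s)}<2\eta$ on the whole interval — the classical P\'olya/bracketing argument for distribution functions — with $\delta$ fixed before sending $n\to\infty$. Your route is self-contained and avoids the Whitt machinery at this step (which the paper has already set up anyway), while the paper's route is shorter given that machinery; the two-term versus three-term decomposition is immaterial, your $J_n$ simply merging the paper's $II$ and $III$. One cosmetic remark: like the paper (which works on $\mathbbm{T}^\mu\cap(0,\Bar{T}]$), your tail estimates implicitly require $t>0$, which is the only case of interest.
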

	
	\begin{proof}
		By Lemma \ref{lem:sec2: LEMMA STATING THAT THE CONDITION FEEDBACK IS A CONTINUOUS MAP HENCE WE HAVE WEAK CONVERGENCE OF FEEDBACK ESSENTIALLY}, $\mu^n(\tau_0 \le t)$ converges to $\mu(\tau_0 \le t)$ for any $t \in \mathbb{T}^\mu$ when $\tau_0$ is an $M_1$-continuous map $\mu$-almost surely. Such measures $\mu$ have full $\operatorname{Law}(\mathbf{P}^*)$-measure by Corollary \ref{cor:sec2: LEMMA THAT THE HITTING TIME IS A M1 CONTINUOUS MAP}. Furthermore, for every such $\mu$,
		\begin{equation}\label{eq:sec2 FIRST EQUATION IN cor:sec2: COROLLARY SHOWING THAT WE HAVE CONVERGENCE OF THE DELAYED / SMOOTHENED LOSS TO THE LIMITING LOSS}
			\left( s \mapsto \mu^n(\tnot \le s)\ind_{[0,\,t]}(s)\right) \xrightarrow[]{n \to \infty} \left( s \mapsto \mu(\tnot \le s)\ind_{[0,\,t]}(s)\right)
		\end{equation}
		in the $M_1$-topology as functions from $[-1,\,t] \to \R$, as the functions are non-decreasing, \citep[Corollary 12.5.1]{whitt2002stochastic}. Now, for any $t  \in \mathbb{T}^\mu \cap (0,\Bar{T}]$,
		{\footnotesize\begin{align*}
				\nnnorm{\int_0^t \kappa^{\varepsilon_n}(s)\mu^n(\tnot \le t -s) \diff s - \mu(\tnot \le t)} &\le \nnnorm{\int_0^t \kappa^{\varepsilon_n}(s)(\mu^n(\tnot \le t -s) - \mu(\tnot \le t - s)) \diff s}\\
				&\quad + \nnnorm{\int_0^t \kappa^{\varepsilon_n}(s)(\mu(\tnot \le t -s) - \mu(\tnot \le t)) \diff s}\\
				&\quad + \nnnorm{\int_0^t \kappa^{\varepsilon_n}(s)\diff s - 1} \mu(\tnot \le t)\\
				&= I + II + III.
		\end{align*}}
		For any $\delta > 0$, we observe
		\begin{align*}
			I &\le \sup_{t - \delta \le s \le t}\nnnorm{\mu^n(\tnot \le s) - \mu(\tnot \le s)}\int_0^\delta \kappa^{\varepsilon_n}(s)\diff s + \int_\delta^\infty \kappa^{\varepsilon_n}(s)\diff s, \\
			II &\le \sup_{t - \delta \le s \le t}\nnnorm{\mu(\tnot \le s) - \mu(\tnot \le t)}\int_0^\delta \kappa^{\varepsilon_n}(s)\diff s + \int_\delta^\infty \kappa^{\varepsilon_n}(s)\diff s, \\
			III &\le  \int_t^\infty \kappa^{\varepsilon_n}(s)\diff s.
		\end{align*}
		As $M_1$-convergence implies local uniform convergence at continuity points, \citep[Theorem~12.5.1]{whitt2002stochastic}, and $t$ is a continuity point, by setting $\delta = \varepsilon_n^{1/2}$ and sending $n \to \infty$, we have that $I,\,II,$ and $III$ all go to zero.
	\end{proof}
	
	
	
	\subsection{Martingale arguments and convergence}
	
	As marginal tightness implies joint tightness, $\{(\tilde{\mathbf{P}}^\varepsilon,\,W^0,\,W)\}_{\varepsilon > 0}$ is tight in $(\mathcal{P}(\DR),\tmwk)\times(\mathcal{C}_\R,\norm{\cdot}_\infty)\times(\mathcal{C}_\R,\norm{\cdot}_\infty)$ where $(\mathcal{C}_\R,\norm{\cdot}_\infty)$ is shorthand notation for $(\mathcal{C}([0,\,T]\, ,\,\R),\norm{\cdot}_\infty)$, the space of continuous functions from $[0,\,T]$ to $\R$ endowed with the topology of uniform convergence. From now on, we fix a weak limit point $(\mathbf{P}^*,\,W^0,\,W)$ along a subsequence $(\varepsilon_n)_{n \ge 1}$ for which $\varepsilon_n$ converges to zero. Although we have fixed a limit point, all the following results will hold for any limit point.
	
	Let $\prob^n \coloneqq \operatorname{Law}(\tilde{\mathbf{P}}^{\varepsilon_n},\,W^0,\,W)$ and $\prob^*_{\mu,\omega^0,\omega}\coloneqq\operatorname{Law}(\mathbf{P}^*,\,W^0,\,W)$. So $\prob^n \implies \prob^*_{\mu,\omega^0,\omega}$. For completeness, we will define the probability space $(\Omega^*,\,\mathcal{F}^*,\,\prob^*_{\mu,\omega^0,\omega})$ where $\Omega^* = \mathcal{P}(\DR)\times\mathcal{C}_\R \times \mathcal{C}_\R$ and $\mathcal{F}^*$ is the corresponding Borel $\sigma$-algebra. Define the random variables $\mathbf{P}^*$, $W^0$, and $W$ on $\Omega^*$ such that for any tuple $(\mu,\omega^0,\omega)$,
	\begin{equation*}
		\mathbf{P}^*(\mu,\omega^0,\omega) = \mu, \qquad W^0(\mu,\omega^0,\omega) = \omega^0, \qquad \textnormal{and} \qquad W(\mu,\omega^0,\omega) = \omega.
	\end{equation*}
	Hence, the joint law of $(\mathbf{P}^*,W^0,W)$ is $\prob^*_{\mu,\omega^0,\omega}$ and $\mathcal{F}^* = \sigma(\mathbf{P}^*,\,W^0,\,W)$. We also define the limiting loss function $L^* \coloneqq \mathbf{P}^*(\tnot \le \cdot)$ and the co-countable set of times
	\begin{equation}
		\mathbb{T} \coloneqq \left\{t \in [-1,\,\Bar{T}]\,:\, \prob^*_{\mu,\omega^0,\omega}(\eta_t = \eta_{t-}) = 1,\,\prob^*_{\mu,\omega^0,\omega}(L_t^* = L_{t-}^*) = 1\right\}.
	\end{equation}
	
	Looking at the approximating system, we know $(\tilde{\mathbf{P}}^\varepsilon,W^0) \perp W$ for any $\varepsilon > 0$. Even though $\mathbf{P}^*$ is the weak limit of $W^0$-measurable random variables, weak convergence does not allow us to guarantee that limit points will be $W^0$-measurable. Regardless, we may exploit the independence from the approximating system to deduce the independence of $(\mathbf{P}^*,\, W^0)$ and $W$ in the limit. To fix the notation, let $\prob^*_{\mu,\omega^0}$ denote the projection of the measure $\prob^*_{\mu,\omega^0,\omega}$ onto its first two coordinates and $\prob^*_{\omega}$ denote the projection onto its final coordinate. Then we intuitively expect $\prob^*_{\mu,\omega^0,\omega} = \prob^*_{\mu,\omega^0} \otimes \prob^*_{\omega}$.
	
	\begin{lemma}[Independence from idiosyncratic noise]\label{lem:sec2: LEMMA STATING THAT THE LIMITING RANDOM MEASURE AND THE COMMON NOISE IS INDEPENDENT OF THE IDIOSYNCRATIC NOISE}
		Let $\mathbf{P}^*$, $W^0$, and $W$ be the canonical random variables on the probability space $(\Omega^*,\,\mathcal{F}^*,\,\prob^*_{\mu,\omega^0,\omega})$ defined above. Then, $(\mathbf{P}^*,\,W^0)$ is independent of $W$.
	\end{lemma}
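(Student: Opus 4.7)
The plan is to exploit the independence structure already present at every $\varepsilon_n$-level and show that it survives weak convergence, using the fact that weak convergence of joint laws plus weak convergence of marginals is enough to identify the limit as a product measure.

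First, I would record the pre-limit independence. For each $n$, the conditional law $\Tilde{\mathbf{P}}^{\varepsilon_n} = \operatorname{Law}(\Tilde{X}^{\varepsilon_n} \mid W^0)$ is by construction $\sigma(W^0)$-measurable, so $(\Tilde{\mathbf{P}}^{\varepsilon_n}, W^0)$ is $\sigma(W^0)$-measurable. Since $W$ and $W^0$ are independent standard Brownian motions under $\prob^{\varepsilon_n}$, we get $(\Tilde{\mathbf{P}}^{\varepsilon_n}, W^0) \perp W$, which at the level of laws means
\begin{equation*}
    \prob^n \;=\; \prob^n_{\mu,\omega^0} \otimes \prob^n_{\omega},
\end{equation*}
where $\prob^n_{\mu,\omega^0}$ and $\prob^n_\omega$ are the first- and last-two projections of $\prob^n$. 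Note also that $\prob^n_\omega$ is simply Wiener measure on $\mathcal{C}_\R$ and hence independent of $n$.

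Next, I would pass to the limit along the subsequence $(\varepsilon_n)$ on which $\prob^n \Rightarrow \prob^*_{\mu,\omega^0,\omega}$. The projections $\pi_{12}:(\mu,\omega^0,\omega)\mapsto(\mu,\omega^0)$ and $\pi_3:(\mu,\omega^0,\omega)\mapsto \omega$ are continuous for the product topology, so the continuous mapping theorem gives $\prob^n_{\mu,\omega^0} \Rightarrow \prob^*_{\mu,\omega^0}$ and $\prob^n_\omega \Rightarrow \prob^*_\omega$. Since weak convergence is preserved by taking products of Polish-space laws, we obtain
\begin{equation*}
    \prob^n_{\mu,\omega^0}\otimes\prob^n_\omega \;\Longrightarrow\; \prob^*_{\mu,\omega^0}\otimes\prob^*_\omega.
\end{equation*}
But the left-hand side is exactly $\prob^n$, which also converges weakly to $\prob^*_{\mu,\omega^0,\omega}$. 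By uniqueness of weak limits, $\prob^*_{\mu,\omega^0,\omega} = \prob^*_{\mu,\omega^0}\otimes\prob^*_\omega$, which is precisely the claim that $(\mathbf{P}^*,W^0)\perp W$.

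Equivalently, and perhaps more transparently, one can test against bounded continuous functions $f:\mathcal{P}(\DR)\times\mathcal{C}_\R\to\R$ and $g:\mathcal{C}_\R\to\R$: the pre-limit factorisation yields $\E[f(\Tilde{\mathbf{P}}^{\varepsilon_n},W^0)g(W)] = \E[f(\Tilde{\mathbf{P}}^{\varepsilon_n},W^0)]\,\E[g(W)]$, and weak convergence lets one take $n\to\infty$ on both sides to conclude $\E[f(\mathbf{P}^*,W^0)g(W)] = \E[f(\mathbf{P}^*,W^0)]\,\E[g(W)]$, which by a standard monotone class / $\pi$-$\lambda$ argument on the Polish product space extends to bounded measurable functions and therefore characterises the required independence. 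There is essentially no hard step here: the only point worth flagging is that one must use the joint tightness (already established) to know that $\prob^*_{\mu,\omega^0,\omega}$ exists as a limit, and that the product space $\mathcal{P}(\DR)\times\mathcal{C}_\R\times\mathcal{C}_\R$ is Polish so the monotone class machinery applies. Crucially, this argument says nothing about whether $\mathbf{P}^*$ is $\sigma(W^0)$-measurable — that stronger property is lost in passing from strong to weak convergence, which is exactly the reason relaxed solutions were introduced.
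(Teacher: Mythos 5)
Your proposal is correct and takes essentially the same approach as the paper: the pre-limit independence of $(\Tilde{\mathbf{P}}^{\varepsilon_n},\,W^0)$ from $W$ is transferred to the limit by testing against bounded continuous functions, passing to the limit using weak convergence along the subsequence, and extending to the full $\sigma$-algebra by a monotone class/Dynkin argument. Your first formulation—convergence of the product measures $\prob^n_{\mu,\omega^0}\otimes\prob^n_\omega$ together with uniqueness of weak limits—is merely a compact repackaging of the factorisation the paper verifies with $f\otimes g\otimes h$ test functions.
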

	
	\begin{proof}
		As $(\mathcal{P}(\DR),\tmwk)$ and $(\mathcal{C}_\R,\norm{\cdot}_\infty)$ are Polish spaces, it is sufficient to show that for any $f \in \mathcal{C}_b(\mathcal{P}(\DR))$ and $g,h \in \mathcal{C}_b(\mathcal{C}_\R)$,
		\begin{equation}\label{eq:sec2: FIRST EQUATION IN lem:sec2: LEMMA STATING THAT THE LIMITING RANDOM MEASURE AND THE COMMON NOISE IS INDEPENDENT OF THE IDIOSYNCRATIC NOISE}
			\left\langle\prob^*_{\mu,\omega^0,\omega},\,f \otimes g \otimes h\right\rangle = \left\langle\prob^*_{\mu,\omega^0},\,f \otimes g\right\rangle\left\langle\prob^*_{\omega},\,h\right\rangle.
		\end{equation}
		The result follows from the Dominated Convergence Theorem and Dynkin's Lemma. Now, \eqref{eq:sec2: FIRST EQUATION IN lem:sec2: LEMMA STATING THAT THE LIMITING RANDOM MEASURE AND THE COMMON NOISE IS INDEPENDENT OF THE IDIOSYNCRATIC NOISE} follows readily by weak convergence and the Portmanteau Theorem as
		\begin{align*}
			\left\langle\prob^*_{\mu,\omega^0,\omega},\,f \otimes g \otimes h\right\rangle &= \lim_{n \to \infty} \left\langle\prob^n,\,f \otimes g \otimes h\right\rangle\\
			&= \lim_{n \to \infty} \left\langle\prob^n,\,f \otimes g\right\rangle \left\langle\prob^n,\,h\right\rangle\\
			&=\left\langle\prob^*_{\mu,\omega^0},\,f \otimes g\right\rangle\left\langle\prob^*_{\omega},\,h\right\rangle.
		\end{align*}
		The equality in the second line follows from the independence of $(\tilde{\mathbf{P}}^\varepsilon,\, W^0)$ from $W$.
	\end{proof}
	
	We shall use $\prob^*_{\mu,\omega^0,\omega}$ to construct a probability space where we can define a process that will solve \eqref{eq: SINGULAR MCKEAN VLASOV EQUATION IN THE SYSTEMIC RISK MODEL WITH COMMON NOISE GENERALISED} in the sense of Definition \ref{def: DEFINITION OF RELAXED SOLUTIONS GENERALISED}. Prior to that, we need to define the map employed in the martingale arguments that follow. This allows us to deduce that the process we construct will be of the correct form. For any $\varepsilon > 0$, we define the following functionals $\mathcal{M},\,\mathcal{M}^\varepsilon :\mathcal{P}(\DR) \times \DR \to \DR$: 
	\begin{align}
		\mathcal{M}^\varepsilon(\mu,\eta) &= \eta - \eta_{-1} - \int_0^{\cdot}b(s,\eta_s,\nu_s^\mu)\diff s + \int_0^\cdot \alpha(s) \diff \mathfrak{L}_s^{\mu,\varepsilon} \label{eq: MARTINGALE ARGUMENTS DEFINITION OF MATHCAL M SUPERSCRIPT M FOR THE BANKING MODELS GENERALISED},\\
		\mathcal{M}(\mu,\eta) &= \eta - \eta_{-1} - \int_0^{\cdot}b(s,\eta_s,\nu_s^\mu)\diff s + \int_{[0,\cdot]} \alpha(s) \diff {L}_s^{\mu}, \label{eq: MARTINGALE ARGUMENTS DEFINITION OF MATHCAL M FOR THE BANKING MODELS GENERALISED}
	\end{align}
	where for any $\mu \in \mathcal{P}(\DR)$, $$\nu_t^\mu \coloneqq \mu(\eta_t \in \cdot, \tnot > t),\qquad L_t^{\mu}\coloneqq \mu(\tnot \le t),\qquad \mathfrak{L}_t^{\mu,\varepsilon} = \int_0^t \kappa^{\varepsilon}(t - s) L_s^{\mu} \diff s,$$ and $b$ satisfies Assumption \ref{ass: MODIFIED AND SIMPLIED FROM SOJMARK SPDE PAPER ASSUMPTIONS II}. For any $s_0,\,t_0 \in \mathbb{T}^{\mu} \cap [0,\, T)$ with $s_0 < t_0$ and $\{s_i\}_{i = 1}^k \subset [0,s_0] \cap \mathbb{T}$, we define the function
	\begin{equation}\label{eq: MARTINGALE ARGUMENTS DEFINITION OF F FOR THE BANKING MODELS INDEXED AND SMOOTHED GENERALISED}
		F\;:\; D_\R \to \R,\quad \eta \mapsto (\eta_{t_0} - \eta_{s_0})\prod_{i = 1}^k f_i(\eta_{s_i}),
	\end{equation}
	for arbitrary $f_i \in \mathcal{C}_b(\R)$. We define the functionals:
	\begin{equation}\label{eq: FUNCTIONAL EMPLOYED IN THE FUNCTIOAL CONTINUITY I ARGUMENTS GENERALISED}
		\begin{cases}
			\Psi^\varepsilon(\mu) = \left\langle \mu,\eta \mapsto F\left(\mathcal{M}^\varepsilon(\mu,\eta)\right)\right\rangle,\\
			\Upsilon^\varepsilon(\mu) = \left\langle \mu,\eta \mapsto F\left((\mathcal{M}^\varepsilon(\mu,\eta))^2 - \int_0^{\cdot} \sigma(s,\eta_s)^2\diff s\right)\right\rangle,\\
			\Theta^\varepsilon(\mu,\omega) = \left\langle \mu,\eta \mapsto F\left(\mathcal{M}^\varepsilon(\mu,\eta)\times \omega - \int_0^\cdot\sigma(s,\eta_s)\sqrt{1 - \rho(s,\nu^\mu_s)^2} \diff s\right)\right\rangle,\\
			\Theta^{0,\varepsilon}(\mu,\omega^0) = \left\langle \mu,\eta \mapsto F\left(\mathcal{M}^\varepsilon(\mu,\eta)\times \omega^0 - \int_0^{\cdot} \sigma(s,\eta_s)\rho(s,\nu^\mu_s) \diff s\right)\right\rangle.
		\end{cases}
	\end{equation}
	\begin{minipage}{\columnwidth}Lastly, the corresponding functionals without the mollification, denoted by $\Psi(\mu),\,\Upsilon(\mu),$ $\Theta(\mu,\omega)$ and $\Theta^{0}(\mu,\omega^0)$, are defined in exactly the same way as $\Psi^\varepsilon(\mu),\,\Upsilon^\varepsilon(\mu),\,\Theta^\varepsilon(\mu,\omega)$, and $\Theta^{0,\varepsilon}(\mu,\omega^0)$ with $\mathcal{M}^\varepsilon$ replaced by $\mathcal{M}$.\end{minipage}
	\begin{remark}[Measurability of measure flows]
		In \eqref{eq: MARTINGALE ARGUMENTS DEFINITION OF MATHCAL M SUPERSCRIPT M FOR THE BANKING MODELS GENERALISED} and \eqref{eq: MARTINGALE ARGUMENTS DEFINITION OF MATHCAL M FOR THE BANKING MODELS GENERALISED}, we are taking a fixed measure, $\mu$, and computing the integral with respect to the measure flow $t \mapsto \nu^\mu_t$. The measurability of the functions $b$ and $\sigma$ is sufficient for this integral to be well-defined.
	\end{remark}
	Using Corollary \ref{cor:sec2: COROLLARY SHOWING THAT WE HAVE CONVERGENCE OF THE DELAYED / SMOOTHENED LOSS TO THE LIMITING LOSS}, we have the following proposition.
	
	\begin{proposition}[Functional Continuity I Generalised]\label{prop: FUNCTIONAL CONTINUITY I GENERALISED}
		For $\prob^*_{\mu,\omega^0,\omega}$-almost every measure $\mu$, we have that $\Psi^{\varepsilon_n}(\mu^n)$, $\Upsilon^{\varepsilon_n}(\mu^n)$, $\Theta^{\varepsilon_n}(\mu^n,\omega^n)$, and $\Theta^{0,\varepsilon_n}(\mu^n,\omega^{0,n})$ converge to $\Psi(\mu)$, $\Upsilon(\mu)$, $\Theta(\mu,\omega)$, and $\Theta^{0}(\mu,\omega^0)$ respectively, whenever $(\mu^n,\omega^{0,n},\omega^n) \to (\mu,\,\omega^0,\,\omega)$ in $(\mathcal{P}(\DR),\tmwk)\times(\mathcal{C}_\R,\norm{\cdot}_\infty)\times(\mathcal{C}_\R,\norm{\cdot}_\infty)$, along a sequence for which $\sup_{n \ge 1} \langle \mu^n,\sup_{s \le \bar{T}} \nnorm{\tilde{\eta}_s}^p \rangle$ is bounded for some $p > 2$ and $\varepsilon_n$ that converges to zero.
	\end{proposition}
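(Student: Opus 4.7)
The strategy is to reduce the joint limit to (i) M1-continuity of a pathwise functional under Skorohod coupling, and (ii) Vitali uniform integrability from the $p$-moment bound with $p>2$. I first fix $\mu$ in the $\prob^*_{\mu,\omega^0,\omega}$-full-measure event on which, by Corollary \ref{cor:sec2: LEMMA THAT THE HITTING TIME IS A M1 CONTINUOUS MAP}, $\tau_0$ is M1-continuous at $\mu$-a.e.~path; the set $\mathbbm{T}^\mu$ of continuity points of $t\mapsto L_t^\mu$ is then co-countable, and the evaluation times $s_i,s_0,t_0$ entering $F$ may be assumed to lie in $\mathbbm{T}^\mu$. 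By Skorohod's Representation Theorem applied on the Polish space $(\mathcal{P}(\DR),\tmwk)\times(\mathcal{C}_\R,\norm{\cdot}_\infty)^2$ I realise $(\mu^n,\omega^{0,n},\omega^n)\to(\mu,\omega^0,\omega)$ by a.s.~convergent random variables, and moreover take $X^n\sim\mu^n$, $X\sim\mu$ with $X^n\to X$ a.s.~in $M1$; the uniform $p$-moment assumption is preserved.

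Next I show $\mathcal{M}^{\varepsilon_n}(\mu^n,X^n)\to\mathcal{M}(\mu,X)$ a.s.~in $M1$ by decomposing into the three summands of \eqref{eq: MARTINGALE ARGUMENTS DEFINITION OF MATHCAL M SUPERSCRIPT M FOR THE BANKING MODELS GENERALISED}. The path $X^n-X_{0-}^n$ converges in $M1$ by Skorohod. For the loss term, Corollary \ref{cor:sec2: COROLLARY SHOWING THAT WE HAVE CONVERGENCE OF THE DELAYED / SMOOTHENED LOSS TO THE LIMITING LOSS} gives pointwise convergence $\mathfrak{L}_t^{\mu^n,\varepsilon_n}\to L_t^\mu$ on $\mathbbm{T}^\mu$; since both are non-decreasing, this upgrades to $M1$-convergence, and the Lebesgue--Stieltjes integral against $\alpha\in\mathcal{C}^1$ preserves this (integrate by parts using continuity of $\alpha$). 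The drift integral is the delicate piece (see next paragraph). The additional integrands appearing in $\Upsilon$, $\Theta$ and $\Theta^0$, namely $\int_0^\cdot\sigma(s,\eta_s)^2\diff s$, $\int_0^\cdot\sigma(s,\eta_s)\sqrt{1-\rho(s,\nu_s^\mu)^2}\diff s$ and $\int_0^\cdot\sigma(s,\eta_s)\rho(s,\nu_s^\mu)\diff s$, are handled the same way using the boundedness of $\sigma$, $\partial_x\sigma$, $\partial_t\sigma$ and the uniform convergence of $\omega^n,\omega^{0,n}$.

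The main obstacle is controlling the drift
\[
\int_0^t\bigl[b(s,X_s^n,\nu_s^{\mu^n})-b(s,X_s,\nu_s^\mu)\bigr]\diff s.
\]
I split this into $\int_0^t[b(s,X_s^n,\nu_s^{\mu^n})-b(s,X_s^n,\nu_s^\mu)]\diff s+\int_0^t[b(s,X_s^n,\nu_s^\mu)-b(s,X_s,\nu_s^\mu)]\diff s$. The second bracket is bounded by $C_b\nnorm{X_s^n-X_s}$ using Assumption \ref{ass: MODIFIED AND SIMPLIED FROM SOJMARK SPDE PAPER ASSUMPTIONS II ONE}, so it vanishes by M1-convergence at points of $\mathbbm{T}^\mu$ (full Lebesgue measure) and dominated convergence via the uniform $p$-th moment. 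For the first bracket, Assumption \ref{ass: MODIFIED AND SIMPLIED FROM SOJMARK SPDE PAPER ASSUMPTIONS II ONE} gives the bound $C_b(1+\nnorm{X_s^n}+\langle\nu_s^{\mu^n},\nnorm{\cdot}\rangle)d_0(\nu_s^{\mu^n},\nu_s^\mu)$. For each $s\in\mathbbm{T}^\mu$ with $\mu(\tnot=s)=0$, the map $\eta\mapsto\psi(\eta_s)\ind_{\tnot(\eta)>s}$ is $\mu$-a.s.~$M1$-continuous for any continuous $\psi$, so weak convergence $\mu^n\to\mu$ and the uniform integrability from the $p>2$ moment bound yield convergence of $\langle\nu_s^{\mu^n},\psi\rangle$ to $\langle\nu_s^\mu,\psi\rangle$ for bounded continuous $\psi$ and for $\psi(x)=\nnorm{x}$; this is precisely 1-Wasserstein/$d_0$-convergence. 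Integrating in $s$ on $[0,t]$, dominated convergence (the integrand is controlled by $C(1+\sup_s\nnorm{X_s^n}^2+\sup_s\langle\nu_s^{\mu^n},\nnorm{\cdot}\rangle^2)$ which is uniformly integrable via $p>2$) gives the required limit.

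Finally I pass from $M1$-convergence of $\mathcal{M}^{\varepsilon_n}(\mu^n,X^n)\to\mathcal{M}(\mu,X)$ to convergence of $F$ evaluated there. Because $s_0,t_0,s_1,\dots,s_k\in\mathbbm{T}^\mu$, the marginal projections are a.s.~continuous at the limit path, so $F(\mathcal{M}^{\varepsilon_n}(\mu^n,X^n))\to F(\mathcal{M}(\mu,X))$ a.s., and analogously for the integrands in $\Upsilon^{\varepsilon_n},\Theta^{\varepsilon_n},\Theta^{0,\varepsilon_n}$. Pointwise evaluations of $\mathcal{M}^{\varepsilon_n}$ are bounded in absolute value by $C(1+\sup_{s\leq T}\nnorm{X^n_s}+\sup_s\nnorm{\omega^{0,n}_s}+\sup_s\nnorm{\omega^n_s}+\norm{\alpha}_\infty)$, and the multiplicative factors $f_i$ are bounded, so Assumption \ref{ass: MODIFIED AND SIMPLIED FROM SOJMARK SPDE PAPER ASSUMPTIONS II TWO} together with the hypothesis $\sup_n\langle\mu^n,\sup_{s\leq T}\nnorm{\eta_s}^p\rangle<\infty$ with $p>2$ gives Vitali uniform integrability of the integrands against $\mu^n$. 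Hence $\langle\mu^n,F(\mathcal{M}^{\varepsilon_n}(\mu^n,\cdot))\rangle\to\langle\mu,F(\mathcal{M}(\mu,\cdot))\rangle$, and the analogous statements for $\Upsilon,\Theta,\Theta^0$ follow by the same Vitali argument applied with the extra integrals, concluding the proof.
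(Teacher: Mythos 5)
Your proposal is correct and takes essentially the same route as the paper's proof: Skorohod coupling of $\mu^n$ with paths converging a.s.\ in M1, convergence of the smoothed loss term via \cref{cor:sec2: COROLLARY SHOWING THAT WE HAVE CONVERGENCE OF THE DELAYED / SMOOTHENED LOSS TO THE LIMITING LOSS} together with integration by parts against $\alpha \in \mathcal{C}^1$, convergence of the drift and coefficient integrals from the $d_0$/$d_1$-Lipschitz assumptions, pointwise evaluation at the continuity times entering $F$, and Vitali's convergence theorem from the $p>2$ moment bound. The only cosmetic difference is that you re-derive inline (weak convergence of $\nu_s^{\mu^n}$, upgrade to $d_0$-convergence via uniform integrability, then dominated convergence in $s$) what the paper delegates to \cref{app: lem: FUNCTIONAL CONTINUITY II} and \cref{app: lem: FUNCTIONAL CONTINUITY III}.
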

	
	\begin{proof}
		By Lemma \ref{lem:sec2: LEMMA STATING THE STRONG CROSSING PROPERTY OF THE LIMITING RANDOM MEASURE P*} and the definition of $\mathbb{T}$, we have a set of $\mu$'s that have full $\prob^*_{\mu,\omega^0,\omega}$ measure, such that
		\begin{equation*}
			\mu\left(\inf_{s \in (\tnot,\,(\tnot + h) \wedge \Bar{T})} \left\{\eta_s - \eta_{\tnot}\right\} \ge 0,\, \tnot < \Bar{T}\right) = 0
		\end{equation*}
		for any $h > 0$, $\mu(\eta_{s_i} = \eta_{s_i-}) = 1$, $\mu(\eta_{t_0} = \eta_{t_0-}) = 1$, and $\mu(\tnot = t_0) = 0$. First, we shall show that $\Psi^{\varepsilon_n}(\mu^n)$ converges to $\Psi(\mu)$. By Corollary \ref{cor:sec2: COROLLARY SHOWING THAT WE HAVE CONVERGENCE OF THE DELAYED / SMOOTHENED LOSS TO THE LIMITING LOSS}, $\mathfrak{L}_t^{\mu^n,\varepsilon_n}$ converges to $L_t^\mu$. It is well-known that for any Borel measurable functions $f$ and $g$ of finite variation, we have for any $t > 0$
		\begin{equation*}
			f_tg_t = f(0)g(0) + \int_{(0,t]} f_{s-} \diff g_s + \int_{(0,t]} g_{s-} \diff f_s + \sum_{s \le t} \Delta f_s \Delta g_s.
		\end{equation*}
		This, together with the continuous differentiability of $\alpha$ implies
		\begin{equation*}
			\int_0^t \alpha(s) \diff \mathfrak{L}_s^{\mu,\varepsilon_n} = \alpha(t)\mathfrak{L}_t^{\mu,\varepsilon_n} - \int_0^t \mathfrak{L}_s^{\mu,\varepsilon_n} \alpha^\prime(s) \diff s \xrightarrow[]{} \alpha(t) {L}_t^{\mu} - \int_0^t {L}_s^{\mu} \alpha^\prime(s) \diff s = \int_{[0,t]} \alpha(s) \diff {L}_s^{\mu}.
		\end{equation*}
		
		As $\mu^n \implies \mu$, by Skorokhod's Representation Theorem, there exists a $(Z^n)_{n \ge 1}$ and $Z$ defined on a common probability space such that $\operatorname{Law}(Z^n) = \mu^n,\,\operatorname{Law}(Z) = \mu$ and $Z^n \to Z$ almost surely in $(\DR,M_1)$. Hence,
		\begin{equation*}
			\Psi^{\varepsilon_n}(\mu^n) = \E \left[F(\mathcal{M}^{\varepsilon_n}(\mu^n,Z^n))\right] \qquad \text{and} \qquad \Psi^{}(\mu) = \E \left[F(\mathcal{M}(\mu,Z))\right].
		\end{equation*}
		By Lemma \ref{app: lem: FUNCTIONAL CONTINUITY III},
		\begin{equation}\label{eq: FIRST EQUATION IN FUNCTIONAL CONTINUITY GENERALISED EQUATION THAT SHOWS CONVERGENCE OF THE INTEGRAL WITH B}
			\int_0^t b(s,Z_s^n,\nu_s^{\mu^n}) \diff s \to \int_0^t b(s,Z_s,\nu_s^{\mu}) \diff s
		\end{equation}
		almost surely for any $t \ge 0$. Since $\mathbb{T}^{\mu}$ contains all the almost sure continuity points of $Z$, by the properties of $M_1$-convergence and \eqref{eq: FIRST EQUATION IN FUNCTIONAL CONTINUITY GENERALISED EQUATION THAT SHOWS CONVERGENCE OF THE INTEGRAL WITH B}, we have
		\begin{equation*}
			Z_t^n - Z_{-1}^n - \int_0^t b(s,Z_s^n,\nu_s^{\mu^n}) \diff s \to Z_t - Z_{-1} - \int_0^t b(s,Z_s,\nu_s^{\mu}) \diff s
		\end{equation*}
		almost surely for any $t \in \{t_0,s_0,\ldots,s_k\}$. Hence, we deduce that $F(\mathcal{M}^{\varepsilon_n}(\mu^n,Z^n))$ converges almost surely to $F(\mathcal{M}(\mu,Z))$ in $\R$. Lastly, we observe
		\begin{equation}\label{eq: SECOND EQUATION IN FUNCTIONAL CONTINUITY GENERALISED LEMMA THAT SHOWS THE UNIFORM P TH MOMENT BOUNDS}
			\left\langle\mu^n,\,\nnorm{\mathcal{M}^{\varepsilon_n}_t(\mu^n,\cdot)}^p\right\rangle \le C \left(\left\langle\mu^n,\,\sup_{s \le \bar{T}}\nnnorm{\tilde{\eta_s}}^p\right\rangle + 1\right),
		\end{equation}
		for some constant $C$ that depends on $p$ and $b$ only but is uniform in $n$. Therefore, $F(\mathcal{M}^{\varepsilon_n}(\mu^n,Z^n))$ is uniformly $L^p$-bounded as
		\begin{equation*}
			\nnnorm{F(\mathcal{M}^{\varepsilon_n}(\mu^n,Z^n))}^p \le C\left(\nnnorm{\mathcal{M}^{\varepsilon_n}_{t_0}(\mu^n,Z^n)}^p + \nnnorm{\mathcal{M}^{\varepsilon_n}_{s_0}(\mu^n,Z^n)}^p\right),
		\end{equation*}
		and $\E[\nnorm{\mathcal{M}^{\varepsilon_n}_{t_0}(\mu^n, Z^n)}^p] = \langle \mu^n,\, \nnorm{\mathcal{M}^{\varepsilon_n}_{t_0}(\mu^n,\cdot)}^p\rangle$ where the latter is uniformly bounded in $n$ for some $p > 2$ by \eqref{eq: SECOND EQUATION IN FUNCTIONAL CONTINUITY GENERALISED LEMMA THAT SHOWS THE UNIFORM P TH MOMENT BOUNDS} and assumption. Therefore, by Vitali's Convergence Theorem, it follows that $\Psi^{\varepsilon_n}(\mu^n)$ converges to $\Psi(\mu)$.
		
		The convergence of $\Upsilon^{\varepsilon_n}(\mu^n)$, $\Theta^{\varepsilon_n}(\mu^n,\omega^n)$, and $\Theta^{0,\varepsilon_n}(\mu^n,\omega^{0,n})$, to $\Upsilon(\mu)$, $\Theta(\mu,\omega)$ and $\Theta^{0}(\mu,\omega^0)$ respectively follows from similar arguments. As $\sigma$ and $\rho$ are totally bounded by Assumption \ref{ass: MODIFIED AND SIMPLIED FROM SOJMARK SPDE PAPER ASSUMPTIONS II} \eqref{ass: MODIFIED AND SIMPLIED FROM SOJMARK SPDE PAPER ASSUMPTIONS II TWO} and \eqref{ass: MODIFIED AND SIMPLIED FROM SOJMARK SPDE PAPER ASSUMPTIONS II FOUR}, $\Upsilon^{\varepsilon_n}(\mu^n)$, $\Theta^{\varepsilon_n}(\mu^n,\omega^n)$, and $\Theta^{0,\varepsilon_n}(\mu^n,\omega^{0,n})$ are $L^p$-bounded uniformly in $n$. The continuity of $\sigma$ and the almost sure convergence of $Z^n$ to $Z$ in the $M_1$-topology ensures that
		\begin{equation*}
			\int_0^t \sigma(s,Z_s^n)^2 \diff s \to \int_0^t \sigma(s,Z_s)^2 \diff s
		\end{equation*}
		almost surely for all $t \ge 0$. Lastly, by the boundedness of $\sigma$ and $\rho$, a straightforward computation shows that
		{\small\begin{align*}
				\nnnorm{\sigma(t,x)\rho(t,\mu) - \sigma(t,x)\rho(t,\tilde{\mu})} &\le C \left(1 + \langle\mu,\,\nnnorm{\,\cdot\,}\rangle\right)d_1(\mu,\,\tilde{\mu})\le C \left(1 + \langle\mu,\,\nnnorm{\,\cdot\,}\rangle\right)d_0(\mu,\,\tilde{\mu}),\\
				\nnnorm{\sigma(t,x)\sqrt{1 - \rho(t,\mu)^2} - \sigma(t,x)\sqrt{1 - \rho(t,\tilde{\mu})^2}} &\le C \left(1 + \langle\mu,\,\nnnorm{\,\cdot\,}\rangle\right)d_1(\mu,\,\tilde{\mu})\le C \left(1 + \langle\mu,\,\nnnorm{\,\cdot\,}\rangle\right)d_0(\mu,\,\tilde{\mu}).
		\end{align*}}
		Therefore, the functions $(t,x,\mu) \mapsto \sigma(t,x)\rho(t,\mu)$ and $(t,x,\mu) \mapsto \sigma(t,x)\sqrt{1 - \rho(t,\mu)^2}$ satisfy Assumption \ref{ass: MODIFIED AND SIMPLIED FROM SOJMARK SPDE PAPER ASSUMPTIONS II} (\ref{ass: MODIFIED AND SIMPLIED FROM SOJMARK SPDE PAPER ASSUMPTIONS II ONE}). Now we may apply Lemma \ref{app: lem: FUNCTIONAL CONTINUITY III} and conclude
		\begin{align*}
			\int_0^t \sigma(s,Z_s^n)\sqrt{1 - \rho(s,\nu_s^{\mu^n})^2} \diff s &\xrightarrow[]{} \int_0^t \sigma(s,Z_s)\sqrt{1 - \rho(s,\nu_s^{\mu})^2} \diff s,\\
			\int_0^t \sigma(s,Z_s^n)\rho(s,\nu_s^{\mu^n}) \diff s &\xrightarrow[]{} \int_0^t \sigma(s,Z_s)\rho(s,\nu_s^{\mu}) \diff s,
		\end{align*}
		almost surely for all $t \ge 0$.
	\end{proof}
	
	The remainder of this section aims to show that the conditional law of $\{\tilde{X}^{\varepsilon_n}\}$ converges weakly to a random variable ${X}$ which will have the dynamics defined in \eqref{eq: SINGULAR RELAXED MCKEAN VLASOV EQUATION IN THE SYSTEMIC RISK MODEL WITH COMMON NOISE}. This is achieved in the following two steps:
	\begin{enumerate}
		\item First, we construct a probability space $(\Bar{\Omega},\,\Bar{\mathcal{F}},\,\Bar{\prob})$ such that $\mathcal{M}_\cdot$, $\mathcal{M}_\cdot^2 - \int_0^\cdot \sigma(s,\eta_s)^2 \diff s$, $\mathcal{M}_\cdot \times W - \int_0^\cdot \sigma(s,\eta_s)\sqrt{1 - \rho(s,\nu_s^\mu)^2} \diff s$, and $\mathcal{M}_\cdot \times W^0 - \int_0^\cdot \sigma(s,\eta_s)\rho(s,\nu_s^\mu) \diff s$, defined as in \eqref{eq: MARTINGALE ARGUMENTS DEFINITION OF MATHCAL M FOR THE BANKING MODELS GENERALISED}, are continuous martingales.
		\item Secondly, we construct a stochastic process ${X}$ on $(\Bar{\Omega},\,\Bar{\mathcal{F}},\,\Bar{\prob})$ such that the tuple $({X}, W, W^0, \mathbf{P}^*)$ is a solution to \eqref{eq: SINGULAR RELAXED MCKEAN VLASOV EQUATION IN THE SYSTEMIC RISK MODEL WITH COMMON NOISE GENERALISED} in the sense of Definition \ref{def: DEFINITION OF RELAXED SOLUTIONS GENERALISED}.
	\end{enumerate}
	
	To this end, we now proceed to show the above two claims. We begin by defining the probability space $(\Bar{\Omega},\,\Bar{\mathcal{F}},\,\Bar{\prob})$ where $\Bar{\Omega} = \Omega^* \times \DR = \mathcal{P}(\DR)\times \mathcal{C}_\R \times \mathcal{C}_\R \times \DR$ and $\Bar{\mathcal{F}}$ is the Borel $\sigma$-algebra $\mathcal{B}(\Bar{\Omega})$. We define the probability measure
	\begin{equation}\label{eq:sec2: EQUATION THAT DEFINES THE PROBABILITY MEASURE WE HAVE CONSTRUCTED}
		\Bar{\prob}(A) \coloneqq \int_{\mathcal{P}(\DR)\times \mathcal{C}_\R \times \mathcal{C}_\R} \mu\left(\left\{ \eta\,:\, (\mu, \omega^0,\omega,\eta) \in A\right\}\right) \diff \prob^*_{\mu,\omega^0,\omega}(\mu,\omega^0,\omega),
	\end{equation}
	for any $A \in \mathcal{B}(\Bar{\Omega})$. Observe by construction, for any $A \in \mathcal{B}(\Bar{\Omega})$,
	\begin{equation*}
		\Bar{\prob}(A) = \E^*\left[\left\langle\mathbf{P}^*,\, \ind_{A}(\mathbf{P}^*,\,W^0,\,W,\,\cdot)\right\rangle\right]. 
	\end{equation*}
	Furthermore, under $\Bar{\prob}$, $W^0$ and $W$ are still Brownian motions, and $(\mathbf{P}^*,\,W^0)$ is independent of $W$. This is immediate, as for any $A \in \mathcal{B}(\mathcal{P}(\DR)\times \mathcal{C}_\R)$ and $B \in \mathcal{B}(\mathcal{C}_\R)$,
	\begin{equation*}
		\Bar{\prob}\left[(\mathbf{P}^*,\, W^0) \in A,\, W \in B\right] =\Bar{\prob}\left(A\times B \times \DR\right) = \prob^*_{\mu,\omega^0,\omega}(A\times B). 
	\end{equation*}
	Given these ingredients, we may now show our first claim.
	
	\begin{proposition}\label{prop:sec2:PROPOSITION SHOWING THAT WE HAVE A CONTINUOUS LOCAL MARTINGALE WITH THE CORRECT COVARIATION}
		Let $\mathcal{M}$ be given as in \eqref{eq: MARTINGALE ARGUMENTS DEFINITION OF MATHCAL M FOR THE BANKING MODELS GENERALISED}. Then $\mathcal{M}_\cdot$, $\mathcal{M}_\cdot^2 - \int_0^\cdot \sigma(s,\pi_s(\cdot))^2 \diff s$,
		$\mathcal{M}_\cdot \times W - \int_0^\cdot \sigma(s,\pi_s(\cdot))\sqrt{1 - \rho(s,\nu_s^{\cdot})^2} \diff s$, and $\mathcal{M}_\cdot \times W^0 - \int_0^\cdot \sigma(s,\pi_s(\cdot))\rho(s,\nu_s^\cdot) \diff s$ are all continuous martingales on $(\Bar{\Omega},\,\Bar{\mathcal{F}},\, \Bar{\prob})$, where 
		\begin{equation*}
			\pi_s\,:\, \DR \to \R, \, \pi_s(\eta) = \eta_s \qquad \text{and} \qquad \nu_s^\cdot \,:\, \mathcal{P}(\DR) \to \MRR,\, \nu_s^\cdot(\mu) = \nu_s^\mu.  
		\end{equation*}
	\end{proposition}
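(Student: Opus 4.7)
The plan is to inherit the four martingale properties from the smoothed system, where they follow directly from It\^o's formula, and to pass to the limit via the functional identities captured by $\Psi, \Upsilon, \Theta, \Theta^0$. The key bridge is the Fubini-type decomposition \eqref{eq:sec2: EQUATION THAT DEFINES THE PROBABILITY MEASURE WE HAVE CONSTRUCTED} of $\Bar{\prob}$: an identity of the form $\E^*[\chi\cdot\Psi(\mathbf{P}^*)] = 0$ is precisely the $\Bar{\prob}$-martingale identity $\Bar{\E}[\chi\prod_i f_i(\pi_{s_i})(\mathcal{M}_{t_0} - \mathcal{M}_{s_0})] = 0$, and likewise for the other three functionals.

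In the approximating system, the SDE for $X^{\varepsilon_n}$ identifies $\mathcal{M}^{\varepsilon_n}(\Tilde{\mathbf{P}}^{\varepsilon_n}, \Tilde{X}^{\varepsilon_n})$ on $[0,T]$ with the continuous local martingale $\int \sigma\sqrt{1-\rho^2}\,dW + \int \sigma\rho\,dW^0$. Enlarging the natural filtration initially by $\sigma(W^0)$ (which preserves the Brownian character of $W$ via independence), this is a true martingale with quadratic variation $\int_0^\cdot \sigma^2\,ds$ and the prescribed covariations with $W, W^0$, and It\^o renders the other three processes martingales as well. Testing against $\chi(\Tilde{\mathbf{P}}^{\varepsilon_n}, W^0, W|_{[0,s_0]})\prod_i f_i(\pi_{s_i})$ for $s_i \leq s_0 < t_0$ lying in $\mathbbm{T}$, and exploiting the conditional-independence structure $W \perp (W^0, \Tilde{\mathbf{P}}^{\varepsilon_n})$, reduces each test to the integral of a past-measurable variable against the martingale increment, yielding $\E[\chi\Psi^{\varepsilon_n}(\Tilde{\mathbf{P}}^{\varepsilon_n})] = 0$ and the three analogues.

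To pass to $\varepsilon_n \downarrow 0$, I would apply Skorohod's Representation Theorem to the joint weak convergence of $(\Tilde{\mathbf{P}}^{\varepsilon_n}, W^0, W)$, invoke \cref{prop: FUNCTIONAL CONTINUITY I GENERALISED}---whose $p$-th moment hypothesis is supplied by the Gronwall bound of \cref{prop: GRONWALL TYPE UPPERBOUND ON THE SUP OF THE MCKEAN VLASOV EQUATION WITH CONVOLUTION IN THE BANKING MODEL SIMPLIFIED AND INDEXED GENERALISED}---for pathwise convergence of the functionals, and then apply Vitali's Convergence Theorem to upgrade to convergence in expectation, producing $\E^*[\chi\Psi(\mathbf{P}^*)] = 0$ and its analogues. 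Varying $\chi, \{f_i\}, s_i, s_0, t_0$ over countable dense families and invoking a monotone-class argument together with right-continuity then yields the martingale property for $\mathcal{M}, \mathcal{M}^2 - \int\sigma^2\,ds, \mathcal{M}W - \int\sigma\sqrt{1-\rho^2}\,ds$ and $\mathcal{M}W^0 - \int\sigma\rho\,ds$ on $(\Bar{\Omega},\Bar{\mathcal{F}},\Bar{\prob})$ with respect to the filtration $\Bar{\mathcal{G}}_s := \sigma(\mathbf{P}^*, W^0_u, W_u, \pi_u : u \leq s)$. Continuity of $\mathcal{M}$ is then established by introducing the explicit continuous $\Bar{\mathcal{G}}$-martingale
\begin{equation*}
N_t := \int_0^t \sigma(s,\pi_s)\sqrt{1-\rho(s,\nu^{\mathbf{P}^*}_s)^2}\,dW_s + \int_0^t \sigma(s,\pi_s)\rho(s,\nu^{\mathbf{P}^*}_s)\,dW^0_s;
\end{equation*}
the covariation identities force $\langle\mathcal{M},N\rangle = \langle N\rangle = \langle\mathcal{M}\rangle$, so $\langle\mathcal{M}-N\rangle \equiv 0$, and since $\mathcal{M}_0 = N_0 = 0$ this forces $\mathcal{M} = N$.

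The main obstacle is two-fold: (i) upgrading the in-expectation functional identities to a genuine $\Bar{\prob}$-a.s.\ martingale property across all times (and not only those in $\mathbbm{T}$), which is handled via the countable dense family combined with the right-continuity of the processes; and (ii) establishing continuity of $\mathcal{M}$, for which continuity of its predictable quadratic variation alone is insufficient because of possible jumps of $\pi$ and $L^{\mathbf{P}^*}$, making the covariation comparison with the explicit continuous martingale $N$ the essential ingredient.
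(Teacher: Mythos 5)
Your overall skeleton (transfer the martingale identities from the smoothed system through the functionals $\Psi,\Upsilon,\Theta,\Theta^{0}$, pass to the limit by Skorohod representation, \cref{prop: FUNCTIONAL CONTINUITY I GENERALISED} together with the Gronwall moment bound, Vitali, and a monotone class argument on $(\Bar{\Omega},\Bar{\mathcal{F}},\Bar{\prob})$) is the paper's argument, but your key prelimit step is flawed. You claim $\E[\chi\,\Psi^{\varepsilon_n}(\Tilde{\mathbf{P}}^{\varepsilon_n})]=0$ for test factors $\chi=\chi(\Tilde{\mathbf{P}}^{\varepsilon_n},W^0,W|_{[0,s_0]})$, justified by the remark that initial enlargement of the filtration by $\sigma(W^0)$ preserves the Brownian character of $W$. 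It does, but it destroys the martingale character of $W^0$ itself, and the increment $\mathcal{M}^{\varepsilon_n}_{t_0}-\mathcal{M}^{\varepsilon_n}_{s_0}$ contains the common-noise integral $\int_{s_0}^{t_0}\sigma(s,X^{\varepsilon_n}_s)\rho(s,\bm{\nu}^{\varepsilon_n}_s)\,\diff W^0_s$, which is not a martingale increment under that enlargement (already $\int_{s_0}^{t_0}W^0_s\,\diff W^0_s$ is $\sigma(W^0)$-measurable). Since $\Tilde{\mathbf{P}}^{\varepsilon_n}$ is a functional of the entire path of $W^0$, any $\chi$ depending on it anticipates future common-noise increments: taking $\prod_i f_i\equiv 1$ and $\chi$ a bounded truncation of $W^0_{t_0}-W^0_{s_0}$ (admissible in your class) gives $\E[\chi(\mathcal{M}^{\varepsilon_n}_{t_0}-\mathcal{M}^{\varepsilon_n}_{s_0})]=\E\big[\chi\int_{s_0}^{t_0}\sigma\rho\,\diff W^0_s\big]>0$, since the $\diff W$ part still vanishes by independence. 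Correspondingly, the statement you aim at --- the martingale property with respect to $\Bar{\mathcal{G}}_s=\sigma(\mathbf{P}^*,W^0_u,W_u,\pi_u:u\le s)$, with all of $\mathbf{P}^*$ at time zero --- is too strong: $\mathbf{P}^*$ encodes the whole future of the conditional measure flow and is correlated with future increments of $W^0$, so the $\diff W^0$ component of $\mathcal{M}$ cannot be a martingale under this initial enlargement. The paper avoids the issue by testing only against $F(\mathcal{N})=(\mathcal{N}_{t_0}-\mathcal{N}_{s_0})\prod_i f_i(\mathcal{N}_{s_i})$, i.e.\ functions of the process $\mathcal{N}$ at earlier times, so that the prelimit identity is exactly the tower property for the true $\mathcal{F}_t$-martingales $\Tilde{\mathcal{N}}^{\varepsilon_n}$; no anticipating test factor ever appears. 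A repair would have to restrict $\chi$ to genuinely non-anticipating data (for instance $W^0$, $W$ and $\nu^\mu_u$ only up to $s_0$), which is a different and more delicate argument than the one you sketch.

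The continuity step inherits the same problem. Your comparison martingale $N_t=\int_0^t\sigma\sqrt{1-\rho^2}\,\diff W_s+\int_0^t\sigma\rho\,\diff W^0_s$ has $\mathbf{P}^*$-dependent integrands, so for $N$ even to be defined as an It\^o integral you need $W^0$ to remain a Brownian motion with respect to a filtration making $\nu^{\mathbf{P}^*}_s$ adapted --- precisely the compatibility your step above fails to deliver --- and the bracket identities $\langle\mathcal{M},N\rangle=\langle N\rangle=\langle\mathcal{M}\rangle$ must hold relative to that same common filtration, which is what is being proven; as a route to the mere continuity asserted in the proposition it is circular. The paper instead proves continuity directly: applying the functional-continuity/Vitali scheme to $\Tilde{F}(\eta)=|\eta_t-\eta_s|^4$ and Burkholder--Davis--Gundy in the prelimit yields $\Bar{\E}|\mathcal{M}_t-\mathcal{M}_s|^4\le C|t-s|^2$ for $s,t\in\mathbbm{T}$, and Kolmogorov's criterion on the dense set $\mathbbm{T}$ combined with right-continuity gives a continuous indistinguishable version; the identification of $\mathcal{M}$ as a stochastic integral is deferred to L\'evy's characterisation in the proof of \cref{thm:sec2: THE MAIN EXISTENCE AND CONVERGENCE THEOREM OF SOLUTIONS GENERALISED FOR SECTION 2}.
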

	
	\begin{proof}
		If $\mathcal{M}$ is continuous, then the continuity of the other processes follows from the continuity of $\mathcal{M}$ and the continuity of integration. For simplicity, we shall use $\mathcal{N}$ to denote any one of $\mathcal{M}_\cdot,$ $ \mathcal{M}_\cdot \times W - \int_0^\cdot \sigma(s,\pi_s(\cdot))\sqrt{1 - \rho(s,\nu_s^{\cdot})^2} \diff s$, $\mathcal{M}_\cdot \times W^0 - \int_0^\cdot \sigma(s,\pi_s(\cdot))\rho(s,\nu_s^\cdot) \diff s$ or $\mathcal{M}_\cdot^2 - \int_0^\cdot \sigma(s,\pi_s(\cdot))^2 \diff s$. Hence to show that $\mathcal{N}$ is a martingale, it is sufficient by a Monotone Class argument that
		\begin{equation}\label{eq:sec2 FIRST EQUATION IN prop:sec2:PROPOSITION SHOWING THAT WE HAVE A CONTINUOUS LOCAL MARTINGALE WITH THE CORRECT COVARIATION}
			\Bar{\E}\left[F(\mathcal{N})\right] = 0,
		\end{equation}
		
		To begin, recall that $\prob^{\varepsilon_n} \implies \prob^*$, where $\prob^{\varepsilon_n} = \operatorname{Law}(\tilde{\mathbf{P}}^{\varepsilon_n},W^0,W)$ and $\prob^* = \operatorname{Law}(\mathbf{P}^*,W^0,W)$. By Skorokhod's Representation Theorem, we may find $\{(\mathbf{Q}^n,B^n,\tilde{B}^n)\}_{n \ge 1}$ and $(\mathbf{Q}^*,B^*,\tilde{B}^*)$ defined on a common probability space such that $\prob^{\varepsilon_n} = \operatorname{Law}(\mathbf{Q}^n,\,B^n,\,\tilde{B}^n)$, $\prob^* = \operatorname{Law}(\mathbf{Q}^*,\,B^*,\,\tilde{B}^*)$ and $(\mathbf{Q}^n,\,B^n,\,\tilde{B}^n) \xrightarrow[]{} (\mathbf{Q}^*,\,B^*,\,\tilde{B}^*)$ almost surely in $(\mathcal{P}(\DR),\tmwk)\times(\mathcal{C}_\R,\norm{\cdot}_\infty)\times(\mathcal{C}_\R,\norm{\cdot}_\infty)$.
		By the definition of $\tilde{\mathbf{P}}^{\varepsilon_n}$, for any $p > 1$, we have:
		\begin{equation}\label{eq:sec2 SECOND
				EQUATION IN prop:sec2:PROPOSITION SHOWING THAT WE HAVE A CONTINUOUS LOCAL MARTINGALE WITH THE CORRECT COVARIATION}
			\E\left[\left\langle \tilde{\mathbf{P}}^{\varepsilon_n},\, \sup_{s \le \bar{T}}\nnnorm{\eta_s}^p\right\rangle\right] = \E \left[\sup_{s \le \bar{T}}\nnnorm{\tilde{X}^{\varepsilon_n}}^p\right] \le C,
		\end{equation}
		where the constant $C$ is derived from Proposition \ref{prop: GRONWALL TYPE UPPERBOUND ON THE SUP OF THE MCKEAN VLASOV EQUATION WITH CONVOLUTION IN THE BANKING MODEL SIMPLIFIED AND INDEXED GENERALISED}. As the map $\eta \mapsto \sup_{s \le \bar{T}} \nnnorm{\eta_s}$ is $M_1$-continuous, the Portmanteau Theorem yields:
		\begin{equation}\label{eq:sec2 SECOND
				EQUATION WITH A STAR IN prop:sec2:PROPOSITION SHOWING THAT WE HAVE A CONTINUOUS LOCAL MARTINGALE WITH THE CORRECT COVARIATION}
			\E\left[\left\langle {\mathbf{P}}^{*},\, \sup_{s \le \bar{T}}\nnnorm{\eta_s}^p\right\rangle\right] \le C.
		\end{equation}
		
		Now, for any $K > 0$, we can write
		\begin{align*}
			\E [ \nnorm{\langle {\mathbf{Q}}^{n},\, \sup_{s \le \bar{T}}\nnorm{\eta_s}^p\rangle - \langle {\mathbf{Q}}^{*},\, \sup_{s \le \bar{T}}\nnorm{\eta_s}^p\rangle}] \le&
			\E [ \nnorm{\langle {\mathbf{Q}}^{n},\, \sup_{s \le \bar{T}}\nnorm{\eta_s}^p \wedge K\rangle - \langle {\mathbf{Q}}^{*},\, \sup_{s \le \bar{T}}\nnorm{\eta_s}^p\wedge K\rangle}] \\
			&+ \E [ \nnorm{\langle {\mathbf{Q}}^{n},\, (\sup_{s \le \bar{T}}\nnorm{\eta_s}^p - K)\ind_{\sup_{s \le \bar{T}}\nnorm{\eta_s}^p > K}\rangle}] \\
			&+ \E [ \nnorm{\langle {\mathbf{Q}}^{*},\, (\sup_{s \le \bar{T}}\nnorm{\eta_s}^p - K)\ind_{\sup_{s \le \bar{T}}\nnorm{\eta_s}^p > K}\rangle}].
		\end{align*}
		By equation \eqref{eq:sec2 SECOND
			EQUATION IN prop:sec2:PROPOSITION SHOWING THAT WE HAVE A CONTINUOUS LOCAL MARTINGALE WITH THE CORRECT COVARIATION}, and a combination of H\"older's and Markov's inequalities, the second term on the right-hand side above is $o(1)$ as $K \to \infty$ uniformly in $n$. Similarly, by \eqref{eq:sec2 SECOND
			EQUATION WITH A STAR IN prop:sec2:PROPOSITION SHOWING THAT WE HAVE A CONTINUOUS LOCAL MARTINGALE WITH THE CORRECT COVARIATION}, the third term on the right-hand side above is $o(1)$ as $K \to \infty$. For the first term, we recall that $\mathbf{Q}^n \to \mathbf{Q}^*$ almost surely in $(\mathcal{P}(\DR),\tmwk)$. Hence, as $\eta \mapsto \sup_{s \le \bar{T}}\nnorm{\eta_s}^p \wedge K$ is an $M_1$-continuous and bounded function, we have by the almost sure convergence and the Portmanteau Theorem that $\langle {\mathbf{Q}}^{n},\, \sup_{s \le \bar{T}}\nnorm{\eta_s}^p \wedge K\rangle$ converges almost surely towards $\langle {\mathbf{Q}}^{*},\, \sup_{s \le \bar{T}}\nnorm{\eta_s}^p\wedge K\rangle$. Therefore, by the Dominated Convergence Theorem, the first term on the right-hand side above is $o(1)$ as $n \to \infty$. Therefore, we can make the right-hand side arbitrarily small by first fixing $K$ sufficiently large, then choosing $n$ sufficiently large. Hence, we have
		\begin{equation*}
			\E [ \nnorm{\langle {\mathbf{Q}}^{n},\, \sup_{s \le \bar{T}}\nnorm{\eta_s}^p\rangle - \langle {\mathbf{Q}}^{*},\, \sup_{s \le \bar{T}}\nnorm{\eta_s}^p\rangle}] \to 0.
		\end{equation*} 
		Now, fixing a $p > 2$, as we have $L^1$-convergence, we can find a subsequence (also denoted by $n$ for simplicity) such that $\langle {\mathbf{Q}}^{n},\, \sup_{s \le \bar{T}}\nnorm{\eta_s}^p\rangle$ converges almost surely to $\langle {\mathbf{Q}}^{*},\, \sup_{s \le \bar{T}}\nnorm{\eta_s}^p\rangle$. By \eqref{eq:sec2 SECOND
			EQUATION WITH A STAR IN prop:sec2:PROPOSITION SHOWING THAT WE HAVE A CONTINUOUS LOCAL MARTINGALE WITH THE CORRECT COVARIATION},
		$\langle {\mathbf{Q}}^{*},\, \sup_{s \le \bar{T}}\nnorm{\eta_s}^p\rangle < +\infty$ almost surely. Therefore, restricting ourselves to this subsequence going forward, we have
		\begin{equation*}
			\sup_{n \ge 1} \left\langle \mathbf{Q}^{n},\, \sup_{s \le \bar{T}}\nnnorm{\eta_s}^p\right\rangle < \infty
		\end{equation*}
		almost surely.
		
		By definition of $\mathcal{M}$, for the same $p > 2$ fixed above
		
		\color{black}
		\begin{equation}\label{eq:sec2 SECOND
				EQUATION IN prop:sec2:PROPOSITION SHOWING THAT WE HAVE A CONTINUOUS LOCAL MARTINGALE WITH THE CORRECT COVARIATION TWO}
			\E\left[\left\langle \tilde{\mathbf{P}}^{\varepsilon_n},\, \nnnorm{\mathcal{M}_t(\tilde{\mathbf{P}}^{\varepsilon_n},\cdot)}^p\right\rangle\right] = \E \left[\nnnorm{\int_{0}^t\sigma(s)\sqrt{1 -\rho(s)^2} \diff W_s + \int_0^t \sigma(s)\rho(s \diff W_s^0}^p\right] \le Ct^p,
		\end{equation}
		where the constant $C$ depends on the constant from applying Burkholder--Davis--Gundy, $p$, and the bounds on $\sigma$ but is independent of $\varepsilon$. Hence, $\E[\langle \mathbf{Q}^{n},\, \nnnorm{\mathcal{M}_t(\mathbf{Q}^{n},\cdot)}^p \rangle] < \infty$ uniformly in $n$.
		
		Employing Proposition \ref{prop:sec2:PROPOSITION SHOWING THAT WE HAVE A CONTINUOUS LOCAL MARTINGALE WITH THE CORRECT COVARIATION} and Vitali's Convergence Theorem, we have
		\begin{equation*}
			\Bar{\E}\left[F(\mathcal{N})\right] = \E^*\left[\left\langle\mathbf{P}^*,\, F(\mathcal{N}(\mathbf{P}^*,\,W^0,\,W,\,\cdot))\right\rangle\right] = \lim_{n \to \infty} \E\left[\left\langle\mathbf{P}^{\varepsilon_n},\, F(\mathcal{N}^{\varepsilon_n}(\mathbf{P}^{\varepsilon_n},\,W^0,\,W,\,\cdot))\right\rangle\right],
		\end{equation*}
		where $\mathcal{N}^{\varepsilon_n}$ represents one of $\left(\mathcal{M}^{\varepsilon_n}_\cdot\right)^2 - \int_0^\cdot \sigma(s,\pi_s(\cdot))^2 \diff s$, $\mathcal{M}^{\varepsilon_n}_\cdot \times W^0 - \int_0^\cdot \sigma(s,\pi_s(\cdot))\rho(s,\nu_s^\cdot) \diff s$,  $\mathcal{M}^{\varepsilon_n}_\cdot \times W - \int_0^\cdot \sigma(s,\pi_s(\cdot))\sqrt{1 - \rho(s,\nu_s^\cdot)^2} \diff s$, or $\mathcal{M}^{\varepsilon_n}_\cdot$ depending on $\mathcal{N}$. Recall for arbitrary $f_i \in \mathcal{C}_b(\R)$, $F(\eta) = (\eta_{t_0} - \eta_{s_0})\prod_{i = 1}^kf_i(\eta_{s_i})$. So
		
		\begin{equation}\label{eq:sec2 THIRD
				EQUATION IN prop:sec2:PROPOSITION SHOWING THAT WE HAVE A CONTINUOUS LOCAL MARTINGALE WITH THE CORRECT COVARIATION}
			\E\left[\left\langle\mathbf{P}^{\varepsilon_n},\, F(\mathcal{N}^{\varepsilon_n}(\mathbf{P}^{\varepsilon_n},\,W^0,\,W,\,\cdot))\right\rangle\right] = \E\left[\left(\tilde{\mathcal{N}}^{\varepsilon_n}_{t_0} - \tilde{\mathcal{N}}^{\varepsilon_n}_{s_0}\right)\prod_{i = 1}^kf_i(\tilde{\mathcal{N}}^{\varepsilon_n}_{s_i})\right],
		\end{equation}
		where $\tilde{\mathcal{N}}^{\varepsilon_n}$ is one of $\tilde{\mathcal{Y}}^{\varepsilon_n},$ $(\tilde{\mathcal{Y}}^{\varepsilon_n})^2 - \int_0^\cdot \sigma(s,\tilde{X}^{\varepsilon_n})^2 \diff s,$ $ \tilde{\mathcal{Y}}^{\varepsilon_n} \times W - \int_0^\cdot \sigma(s,\tilde{X}^{\varepsilon_n})\sqrt{1 - \rho(s,\bm{\nu}_s^{\varepsilon_n})^2} \diff s$, or $\tilde{\mathcal{Y}}^{\varepsilon_n} \times W^0 - \int_0^\cdot \sigma(s,\tilde{X}^{\varepsilon_n})\rho(s,\bm{\nu}_s^{\varepsilon_n}) \diff s$ depending on the choice of $\mathcal{N}$. By the boundness assumption on $\sigma$, Assumption \ref{ass: MODIFIED AND SIMPLIED FROM SOJMARK SPDE PAPER ASSUMPTIONS II} \eqref{ass: MODIFIED AND SIMPLIED FROM SOJMARK SPDE PAPER ASSUMPTIONS II TWO}, $\tilde{\mathcal{N}}^{\varepsilon_n}$ is a martingale. As $s_1 \le \ldots \le s_k \le s_0 < t_0$, we have \eqref{eq:sec2 THIRD EQUATION IN prop:sec2:PROPOSITION SHOWING THAT WE HAVE A CONTINUOUS LOCAL MARTINGALE WITH THE CORRECT COVARIATION} equals zero by the tower property. Hence, we have shown \eqref{eq:sec2 FIRST EQUATION IN prop:sec2:PROPOSITION SHOWING THAT WE HAVE A CONTINUOUS LOCAL MARTINGALE WITH THE CORRECT COVARIATION}.
		
		Lastly, to see the continuity of $\mathcal{M}$, define the function
		\begin{equation*}
			\tilde{F}\;:\; D_\R \to \R,\quad \eta \mapsto  \nnnorm{\eta_t - \eta_s}^4,
		\end{equation*}
		for $s,\, t \in \mathbb{T} \cap [0,T)$. As before, define the functionals
		\begin{equation*}
			\tilde{\Psi}^\varepsilon(\mu) = \left\langle\mu,\,\tilde{F}(\mathcal{M^\varepsilon(\mu,\,\cdot)})\right \rangle, \qquad \tilde{\Psi}(\mu) = \left\langle\mu,\,\tilde{F}(\mathcal{M(\mu,\,\cdot)})\right \rangle.
		\end{equation*}
		Following the same proof as in Proposition \ref{prop: FUNCTIONAL CONTINUITY I GENERALISED}, we have that for $\prob^*_{\mu,\omega^0,\omega}$-almost every measure $\mu$, $\tilde{\Psi}^{\varepsilon_n}(\mu^n)$ converges to $\tilde{\Psi}(\mu)$ whenever $\mu^n \to \mu$ in $(\mathcal{P}(\DR),\tmwk)$ along a sequence for which \[\sup_{n \ge 1}\langle\mu^n,\,\sup_{s \le \bar{T}}\nnnorm{\eta_s}^p\rangle < \infty\] for some $p > 4$. We have finite moments for any $p > 1$, by  \eqref{eq:sec2 SECOND EQUATION IN prop:sec2:PROPOSITION SHOWING THAT WE HAVE A CONTINUOUS LOCAL MARTINGALE WITH THE CORRECT COVARIATION}. Therefore, by functional continuity and Vitali's convergence theorem, for any $s,t \in \mathbb{T}\cap[0,\, T)$ we have
		\begin{align*}
			\Bar{\E}\nnnorm{\mathcal{M}_t - \mathcal{M}_s}^4 &= \E^*\left[\left\langle\mathbf{P}^*,\, \nnnorm{\mathcal{M}_t(\mathbf{P}^*,\,\cdot) - \mathcal{M}_s(\mathbf{P}^*,\,\cdot)}^4 \right \rangle\right]\\
			&= \lim_{n \to \infty} \E\left[\left\langle\tilde{\mathbf{P}}^{\varepsilon_n},\, \nnnorm{\mathcal{M}^{\varepsilon_n}_t(\tilde{\mathbf{P}}^{\varepsilon_n},\,\cdot) - \mathcal{M}^{\varepsilon_n}_s(\tilde{\mathbf{P}}^{\varepsilon_n},\,\cdot)}^4 \right\rangle\right].
		\end{align*}
		By the definition of $\tilde{\mathbf{P}}^{\varepsilon_n}$ and the Burkholder--Davis--Gundy inequality,
		\begin{equation*}
			\E\left[\left\langle\tilde{\mathbf{P}}^{\varepsilon_n},\, \nnnorm{\mathcal{M}^{\varepsilon_n}_t(\mathbf{P}^{\varepsilon_n},\,\cdot) - \mathcal{M}^{\varepsilon_n}_s(\mathbf{P}^{\varepsilon_n},\,\cdot)}^4 \right\rangle\right] = \E \nnnorm{\tilde{\mathcal{Y}}^{\varepsilon_n}_t - \tilde{\mathcal{Y}}^{\varepsilon_n}_s}^4 \le C\nnnorm{t -s}^2,
		\end{equation*}
		where the constant $C$ is uniform in $n$. As $\mathbb{T}$ is dense, by Kolmogorov's Criterion, there exists a continuous process that is a modification of $\mathcal{M}$. Since $\mathcal{M}$ is right continuous and $\mathbb{T}$ is dense, these two processes are indistinguishable. Hence, $\mathcal{M}$ has a continuous version.
	\end{proof}
	
	Now, we have all the ingredients to prove Theorem \ref{thm:sec2: THE MAIN EXISTENCE AND CONVERGENCE THEOREM OF SOLUTIONS GENERALISED FOR SECTION 2}.
	
	\begin{proof}[Proof of Theorem \ref{thm:sec2: THE MAIN EXISTENCE AND CONVERGENCE THEOREM OF SOLUTIONS GENERALISED FOR SECTION 2}]
		By Proposition \ref{prop: TIGHTHNESS OF THE EMPIRICAL MEASURES IN THE SIMPLIFIED BANKING MODEL WITH MOLLIFICATIONS}, $\{(\tilde{\mathbf{P}}^\varepsilon,\, W^0,\,W)\}_{\varepsilon > 0}$ is tight. By Prokhorov's Theorem, tightness on Polish spaces is equivalent to being sequentially precompact. Therefore, for any subsequence $\{(\mathbf{P}^{\varepsilon_n},\,W^0,\, W)\}_{n \ge 1}$, where $(\varepsilon_n)_{n \ge 1}$ is a positive sequence that converges to zero, we have a convergent sub-subsequence. Fix a limit point $(\mathbf{P}^*,\,W^0,\,W)$ of this subsequence. Having fixed $(\mathbf{P}^*,\,W^0,\,W)$, we define the probability space $(\Bar{\Omega},\,\Bar{\mathcal{F}},\,\Bar{\prob})$ exactly as in \eqref{eq:sec2: EQUATION THAT DEFINES THE PROBABILITY MEASURE WE HAVE CONSTRUCTED}. Now, define the \cadlag process $X$ by
		\begin{equation*}
			X\,:\, \Bar{\Omega} \to \DR,\qquad (\mu,\,\omega^0,\,\omega,\,\eta) \mapsto \eta.
		\end{equation*}
		Then, by the construction of $\Bar{\prob}$ and the fact that $\prob^*_{\mu,\omega^0,\omega} = \prob^*_{\mu,\omega^0} \times\prob^*_{\omega}$ by Lemma \ref{lem:sec2: LEMMA STATING THAT THE LIMITING RANDOM MEASURE AND THE COMMON NOISE IS INDEPENDENT OF THE IDIOSYNCRATIC NOISE}, for all $A \in \mathcal{B}(\DR),\, S \in \mathcal{B}(\mathcal{P}(\DR)\times \mathcal{C}_\R)$, we have
		\begin{equation*}
			\Bar{\prob}\left[ X \in A,\, (\mathbf{P}^*,\,W^0) \in S\right] = \int_S \mu(A) \diff \prob^*_{\mu,\omega^0}. 
		\end{equation*}
		Here, $\prob^*_{\mu,\omega^0,\omega} = \operatorname{Law}(\mathbf{P}^*,\,W^0,\,W),\, \prob^*_{\mu,\omega^0} = \operatorname{Law}(\mathbf{P}^*,\,W^0)$, and $\prob^*_{\omega} = \operatorname{Law}(W)$. Consequently,
		\begin{equation*}
			\Bar{\prob}\left[\left.X \in A\right|\mathbf{P}^*,\,W^0\right] = \mathbf{P}^*(A) \qquad \forall \quad A \in \mathcal{B}(\DR).
		\end{equation*}
		By Proposition \ref{prop:sec2:PROPOSITION SHOWING THAT WE HAVE A CONTINUOUS LOCAL MARTINGALE WITH THE CORRECT COVARIATION},
		\begin{equation*}
			\mathcal{M}_t = X_t - X_{-1} - \int_0^t b(s,\,X_s,\,\bm{\nu}_s^*) \diff s - \int_{[0,t]}\alpha(s) \diff \mathbf{P}^*(\tau_0(X) \le s)
		\end{equation*}
		is a continuous local martingale with
		\begin{gather*}
			\left\langle\mathcal{M}\right \rangle_t = \int_0^t \sigma(s,X_s)^2 \diff s, \quad \left\langle\mathcal{M},\,W \right \rangle_t = \int_0^t \sigma(s,X_s)\sqrt{1 - \rho(s,\bm{\nu}^*_s)^2} \diff s,\\
			\left\langle\mathcal{M},\,W^0\right \rangle_t = \int_0^t \sigma(s,X_s)\rho(s,\bm{\nu}_s^*) \diff s, 
		\end{gather*}
		where $\bm{\nu}_s^* \coloneqq \mathbf{P}^*(X_s \in \cdot,\,\tau_0(X) > s)$. As $W^0$ and $W$ are standard independent Brownian motions, by L\'evy's Characterisation Theorem we have that
		\begin{equation*}
			\mathcal{M}_t = \int_0^t \sigma(s,X_s)\left(\sqrt{1 - \rho(s,\bm{\nu}_s^*)^2} \diff W_s + \rho(s,\bm{\nu}_s^*) \diff W_s^0\right).
		\end{equation*}
		Now, as $-1 \in \mathbb{T}$, the map $\eta \mapsto \eta_{-1}$ is $\mu$-almost surely continuous for $\prob_{\mu,\omega^0,\omega}$-almost every measure $\mu$. A simple application of the Portmanteau Theorem shows that $X_{-1} \sim \nu_{0-}$. By Lemma \ref{lem: LEMMA STATING THAT P* IS SUPPORTED ON PATHS THAT ARE CONSTANT BEFORE TIME 0}, we deduce that $X_{0-} \sim \nu_{0-}$. The independence between $(\mathbf{P}^*,\, W^0)$ and $W$ follows from Lemma \ref{lem:sec2: LEMMA STATING THAT THE LIMITING RANDOM MEASURE AND THE COMMON NOISE IS INDEPENDENT OF THE IDIOSYNCRATIC NOISE}. A similar argument as employed in Lemma \ref{lem:sec2: LEMMA STATING THAT THE LIMITING RANDOM MEASURE AND THE COMMON NOISE IS INDEPENDENT OF THE IDIOSYNCRATIC NOISE} shows that $X_{0-} \perp (\mathbf{P}^*,\, W^0,\, W)$. Lastly, by Lemma \ref{app:lem: APPENDIX LEMMA STATING THE UPPER BOUND ON THE JUMPS OF THE LIMITING PROCESS},
		\begin{equation*}
			\Delta L_t^* \le \inf\{x \ge 0\,:\, \bm{\nu}_{t-}^*([0,\alpha(t) x]) < x\} \qquad \textnormal{a.s.}
		\end{equation*}
		for all $t \ge 0$.
	\end{proof}
	
	
	\section{Stronger mode of convergence}\label{sec: SECTION ON THE CONVERGENCE FROM STEFAN}
	
	One of the limitations of the method in Section \ref{sec: SECTION ON GENERALISED CONVERGENCE TO DELAYED SOLUTIONS} is that it fails to yield a strong solution. That is, $\mathbf{P}$ is not equal to $  \operatorname{Law}(X \mid W^0)$. This is due to the mode of convergence employed being weak. To the best of our knowledge, there are no results in the existing literature relating to the existence of strong physical solutions in the setting with common noise. By Remark 2.5 from \citep[]{MEANFIELDTHROUGHHITTINGTIMESNADOTOCHIY}, the existence of strong solutions in the setting when $b$, $\sigma$ and $\rho$ are functions of time only is shown; however, it remains unclear whether these solutions are physical or not.
	
	The work introduced in \citep{cuchiero2020propagation} provided an alternative framework to construct solutions to systems with simplified dynamics and without common noise. This is done by a fixed-point approach. Notably, the constructed solutions possess a minimality property, meaning that any alternative solution to the system will dominate the solution obtained in \citep{cuchiero2020propagation}. By utilising the mean-field limit of a perturbed finite particle system approximation, the authors deduce that minimal solutions are in fact physical.
	
	This section extends this work to the case with common noise. Provided more restrictive assumptions on the coefficients than those introduced in Assumption \ref{ass: MODIFIED AND SIMPLIED FROM SOJMARK SPDE PAPER ASSUMPTIONS II}, we provide an algorithm to construct minimal $W^0$-measurable solutions to the singular and smoothed system. Furthermore, we get almost sure convergence of the smoothed minimal system towards the singular minimal system. As a consequence, we are able to conclude that the minimal $W^0$-measurable solution is, in fact, physical. This provides an alternative method to show minimal solutions are physical in the setting of \citep{cuchiero2020propagation}.
	
	We fix a filtered probability space $(\Omega,\,\mathcal{F},\,(\mathcal{F}_t)_{t \ge 0},\,\prob)$ that satisfies the usual conditions and supports two independent Brownian motions. This differs from Section \ref{sec: SECTION ON GENERALISED CONVERGENCE TO DELAYED SOLUTIONS} as the filtered probability space may change as we change $\varepsilon$. The mode of convergence was weak in Section \ref{sec: SECTION ON GENERALISED CONVERGENCE TO DELAYED SOLUTIONS}, therefore the smoothed systems needed not be defined on the same probability space. In this section, to be able to show a stronger mode of convergence, we require that our probability space and our Brownian motions are fixed because our methods employ a comparison principle approach.
	
	We would like the loss process to be adapted and measurable with respect to the common noise. Hence, for measurability reasons, we define $\mathcal{F}^{W^0}$ as the $\sigma$-algebra generated by $W^0$ and augmented to contain all $\prob$-null sets. We define $\mathcal{F}_t^{W^0}$ to be the right continuous filtration generated by $W^0$ that contains all the information up to time $t$ and augmented to contain all $\prob$-null sets. To be precise, that is
	\begin{equation*}
		\mathcal{F}^{W^0}_t = \left(\bigcap_{s > t} \sigma(\{W_u^0\,:\, u \le s\})\right) \vee \sigma(\{N \in \mathcal{F}\,:\, \prob(N) = 0\}).
	\end{equation*}
	As Brownian motion is continuous and has independent increments, $W^0$ is still a standard Brownian motion under the filtration $(\mathcal{F}^{W^0}_t)_{t\ge 0}$.
	
	We now propose our alternative method of solution construction. We will be considering the equation
	\begin{equation}\label{eq: SINGULAR MCKEAN VLASOV EQUATION IN THE SYSTEMIC RISK MODEL WITH COMMON NOISE WITH SIMPLIFIED COEFFICIENTS IN THE MINIMALITY SECITON}
		\begin{cases}
			\diff{}X_t = b(t)\diff t + \sigma(t)\sqrt{1 - \rho(t)^2}\diff{}W_t + \sigma(t)\rho(t) \diff{}W_t^0 - \alpha\diff L_t,\\
			\hspace{0.423cm}\tau = \inf\{t > 0 \; : \; X_t \le 0\},\\
			\hspace{0.339cm}\mathbf{P} = \prob\left[\left.X \in \,\cdot\,\right|\mathcal{F}^{W^0} \right],\quad\bm{\nu}_t \vcentcolon= \prob\left[ \left.X_t \in \cdot, \tau > t\right|\mathcal{F}^{W^0}_t\right],\\
			\hspace{0.25cm}L_t = \prob\left[\left.\tau \le t\right|\mathcal{F}^{W^0}_t \right],
		\end{cases}
	\end{equation}
	where $\alpha > 0$ is a constant. The coefficients $b$, $\sigma$, and $\rho$ are a measurable maps from $\R$ into $\R$ satisfying Assumption \ref{ass: MODIFIED AND SIMPLIED FROM SOJMARK SPDE PAPER ASSUMPTIONS II}. The system starts at time $0-$ with initial condition $X_{0-}$ which is almost surely positive. We require no further assumptions on the initial condition. 
	
	Given any solution $(X,L)$ to \eqref{eq: SINGULAR MCKEAN VLASOV EQUATION IN THE SYSTEMIC RISK MODEL WITH COMMON NOISE WITH SIMPLIFIED COEFFICIENTS IN THE MINIMALITY SECITON}, we may view the paths of $L$ living in the space
	\begin{equation*}
		M \vcentcolon= \left\{\ell:\Bar{\R} \to [0,1]\,:\, \ell_{0-} = 0,\,\ell_\infty = 1,\,\ell \text{ increasing and \cadlag}\right\}.
	\end{equation*}
	$M$ is the space of cumulative density functions on the extended real line. We endow $M$ with the topology induced by the L\'evy-metric
	\begin{equation*}
		d_{L}(\ell^1,\ell^2) \vcentcolon= \inf \left\{\varepsilon > 0\,:\, \ell^1_{t+\varepsilon} + \varepsilon \ge \ell_t^2\ge \ell_{t - \varepsilon}^1 - \varepsilon,\, \forall t \ge 0\right\}.
	\end{equation*}
	The L\'evy-metric metricizes weak convergence, hence we are endowing $M$ with the topology of weak convergence as we can associate each $\ell$ with a distribution $\mu_\ell \in \mathcal{P}([0,\infty
	])$. Hence as $M$ is endowed with the topology of weak convergence, then we observe that $\ell^n \xrightarrow[]{} \ell$ in $M$ if and only if $\ell_t^n \xrightarrow[]{} \ell_t$ for all $t \in \mathbbm{T} \vcentcolon= \{t \ge 0\,:\, \ell_{t-} = \ell_{t}\}$. With this topology, $M$ is a compact Polish space. As in the previous section, we will let $\DR$ denote the space of \cadlag functions from $[-1,\infty)$ to $\R$ and we endow $\DR$ with the $M_1$-topology. As elements in $M$ are increasing, then convergence in $M$ is equivalent to convergence in $\DR$.
	\subsection{Properties of $\Gamma$ and existence of strong solutions}
	For any $W^0$-measureable process $\ell$ with values in $M$, we define the operator $\Gamma$ as
	\begin{equation*}
		\begin{cases}
			\diff{}X_t^\ell = b(t)\diff t + \sigma(t)\sqrt{1 - \rho(t)^2}\diff{}W_t + \sigma(t)\rho(t) \diff{}W_t^0 - \alpha\diff \ell_t,\\
			\hspace{0.328cm}\tau^\ell = \inf\{t > 0 \; : \; X_t^\ell \le 0\},\\
			\hspace{0.2cm}\Gamma[\ell]_t = \mathbf{P}\left[\left.\tau^\ell \le t\right|\mathcal{F}^{W^0}_t \right].
		\end{cases}
	\end{equation*}
	By the independence of increments of Brownian motion, $\mathbf{P}[\tau^\ell \le t\mid \mathcal{F}^{W^0}_t ] = \mathbf{P}[\tau^\ell \le t\mid\mathcal{F}^{W^0}]$. Therefore, we may always choose a version of $\mathbf{P}[\tau^\ell \le t\mid \mathcal{F}^{W^0}_t ]$ such that $ \Gamma[\ell]$ is a $W^0$-measurable process with \cadlag paths. By artificially setting $\Gamma[\ell]_{\infty} = 1$, $\Gamma[\ell]$ has paths in $M$. First, we observe that $\Gamma$ is a continuous operator.
	
	\begin{proposition}[Continuity of $\Gamma$]\label{prop: CONTINUITY OF GAMMA PROPOSITIION}
		Let $\ell^n$ and $\ell$ be a sequence of adapted $W^0$-measurable processes that take values in $M$ such that $\ell^n \xrightarrow[]{} \ell$ almost surely in $M$. Then $\Gamma[\ell^n] \xrightarrow[]{} \Gamma[\ell]$ almost surely in $M$.
	\end{proposition}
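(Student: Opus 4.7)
The plan mirrors the hitting-time continuity argument from \cref{sec: SECTION ON GENERALISED CONVERGENCE TO DELAYED SOLUTIONS}: upgrade $\ell^n\to\ell$ in $M$ to M1-convergence of the associated processes $X^{\ell^n}$, invoke a strong crossing property to obtain $\tau^{\ell^n}\to\tau^\ell$, and pass through the conditional probability via dominated convergence, working $\omega^0$-pathwise throughout so that the mode of convergence is almost sure rather than weak.

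First, I would restrict attention to a full-measure set of $\omega^0$ on which $\ell^n(\omega^0)\to\ell(\omega^0)$ in $M$. Writing
\begin{equation*}
    X^\ell_t(\omega^0,\cdot) = Z^\ell_t(\omega^0) + X_{0-} + \int_0^t \sigma(s)\sqrt{1-\rho(s)^2}\,\diff W_s,
\end{equation*}
with $Z^\ell_t(\omega^0) \vcentcolon= \int_0^t b(s)\diff s + \int_0^t \sigma(s)\rho(s)\diff W^0_s(\omega^0) - \alpha\ell_t(\omega^0)$, the term $Z^\ell$ is a càdlàg function depending only on $(\omega^0,\ell)$, while $M^W_t \vcentcolon= \int_0^t \sigma(s)\sqrt{1-\rho(s)^2}\diff W_s$ is a continuous martingale in $W$ with $\langle M^W\rangle_t \geq C_\sigma^{-2}C_\rho^{-1}\,t$ by \cref{ass: MODIFIED AND SIMPLIED FROM SOJMARK SPDE PAPER ASSUMPTIONS II} (\ref{ass: MODIFIED AND SIMPLIED FROM SOJMARK SPDE PAPER ASSUMPTIONS II TWO}), (\ref{ass: MODIFIED AND SIMPLIED FROM SOJMARK SPDE PAPER ASSUMPTIONS II FOUR}). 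Since M1-convergence of monotone càdlàg functions coincides with their pointwise convergence at continuity points (hence with convergence in the Lévy metric), and adding a fixed continuous function preserves M1-convergence, it follows that $X^{\ell^n}(\omega^0,\cdot)\to X^\ell(\omega^0,\cdot)$ in $(\DR,\textnormal{M1})$ for a.e. $(W,X_{0-})$.

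Next, because $\langle M^W\rangle$ is bounded below by a deterministic linear function, $M^W$ performs non-degenerate oscillations exactly as in the proof of \cref{lem:sec2: LEMMA STATING THE STRONG CROSSING PROPERTY OF THE LIMITING RANDOM MEASURE P*}, and since the feedback term $-\alpha\ell$ is monotone decreasing, \cref{app:lem: GENERAL STRONG CROSSSING PROPERTY RESULT} gives that $X^\ell$ takes strictly negative values in every right neighbourhood of $\tau^\ell$ and only jumps downwards, a.s. Combined with the M1-convergence, \cref{app:lem: CONVERGENCE OF THE STOPPING TIME FOR PARTICLES THAT EXHIBIT THE CROSSING PROPERTY} yields $\tau^{\ell^n}(\omega^0,\cdot)\to\tau^\ell(\omega^0,\cdot)$ for a.e. $(W,X_{0-})$.

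Finally, since $\ell$ is $\mathcal{F}^{W^0}$-measurable while $(W,X_{0-})$ is independent of $W^0$, the remark preceding the proposition reduces $\Gamma[\ell]_t$ to $\mathbf{P}[\tau^\ell\leq t\mid\mathcal{F}^{W^0}]$, and a Fubini/disintegration argument expresses this, for a.e. $\omega^0$, as an integral in the $(W,X_{0-})$-marginal alone; the same applies to each $\ell^n$. For any continuity point $t$ of $s\mapsto \Gamma[\ell]_s(\omega^0)$ one has $\mathbf{P}(\tau^\ell(\omega^0,\cdot)=t)=0$, so dominated convergence delivers $\Gamma[\ell^n]_t(\omega^0)\to\Gamma[\ell]_t(\omega^0)$; since $\Gamma[\ell^n],\Gamma[\ell]\in M$, pointwise convergence at every continuity point of $\Gamma[\ell](\omega^0)$ is equivalent to convergence in the Lévy metric. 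The main technical nuisance is producing a universal null set valid simultaneously for all $n$, all continuity points $t$, and all relevant realisations of the noises; this is controlled by the fact that the exceptional set in \cref{app:lem: GENERAL STRONG CROSSSING PROPERTY RESULT} depends only on the law of $M^W$, independently of $\ell$ and of $\omega^0$, so a single countable intersection suffices.
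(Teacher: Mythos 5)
Your overall route is the paper's: couple the systems through the same noises so that $X^{\ell^n}+\alpha\ell^n=X^{\ell}+\alpha\ell$, transfer the $M$-convergence of $\ell^n$ into M1-convergence of $X^{\ell^n}$, deduce continuity of $\tau_0$ at almost every path of $X^{\ell}$ from a crossing property via \cref{app:lem: CONVERGENCE OF THE STOPPING TIME FOR PARTICLES THAT EXHIBIT THE CROSSING PROPERTY}, and pass through the conditional probability by dominated convergence at continuity points, concluding via the monotone-function/L\'evy-metric characterisation of convergence in $M$. Your disintegration-in-$\omega^0$ version of the last step (pathwise DCT at the $\omega^0$-dependent continuity points of $\Gamma[\ell](\omega^0)$) is a legitimate variant of the paper's bookkeeping, which instead applies conditional dominated convergence at the deterministic a.s.-continuity times and then upgrades along a countable dense set using monotonicity.

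There is, however, a genuine gap in how you justify the crossing property. You attribute it to the non-degenerate oscillations of $M^W=\int_0^\cdot\sigma(s)\sqrt{1-\rho(s)^2}\,\diff W_s$ alone, treating $\int_0^\cdot\sigma(s)\rho(s)\,\diff W^0_s(\omega^0)$ as part of a frozen path, and you assert that the exceptional null set of \cref{app:lem: GENERAL STRONG CROSSSING PROPERTY RESULT} ``depends only on the law of $M^W$, independently of $\ell$ and of $\omega^0$''. Neither claim holds: that lemma covers a continuous local martingale plus a drift of the form $tX$ with $X\geq 0$, and a frozen Brownian-type path is not of this form, nor can it be dismissed by comparing oscillation rates — its increments are of the same order $\sqrt{u\log\log(1/u)}$ as the dips of $M^W$, and when $\sigma^2\rho^2>\sigma^2(1-\rho^2)$ (i.e.\ $\rho>1/\sqrt{2}$, which \cref{ass: MODIFIED AND SIMPLIED FROM SOJMARK SPDE PAPER ASSUMPTIONS II} permits) the frozen common-noise increment can dominate the idiosyncratic dip on a right neighbourhood of $\tau^{\ell}$; moreover the exceptional set plainly depends on $\ell$ and $\omega^0$ through $\tau^{\ell}$. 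The repair is exactly what the paper does: apply the crossing lemma under the joint law to the full martingale $\int_0^\cdot\sigma(s)\sqrt{1-\rho(s)^2}\,\diff W_s+\int_0^\cdot\sigma(s)\rho(s)\,\diff W^0_s$, whose quadratic variation has density $\sigma^2\in[C_\sigma^{-2},C_\sigma^{2}]$, with the bounded drift $\int_0^\cdot b(s)\,\diff s$ playing the role of the $tX$ term and the decreasing feedback $-\alpha\ell$ only helping; if you insist on the $\omega^0$-pathwise formulation, first obtain this joint a.s.\ statement and then pass to a.e.\ $\omega^0$-section by Fubini before invoking dominated convergence. With that correction the remainder of your argument goes through.
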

	
	\begin{proof}
		For simplicity, we shall denote $X^{\ell^n}$ by $X^n$ and $X^\ell$ by $X$. As done previously, we may artificially extend $X^n$ and $X$ to be \cadlag processes on $[-1,\,\infty)$ by
		\begin{equation*}
			\Tilde{X}^n \vcentcolon= \begin{cases}
				X_{0-}   \hspace{0.6875cm} t \in [-1,0),\\
				X_t^n \qquad \,\, t \ge 0,
			\end{cases}
			\qquad%
			\Tilde{X} \vcentcolon= \begin{cases}
				X_{0-}   \hspace{0.6875cm} t \in [-1,0),\\
				X_t \qquad \,\, t \ge 0,
			\end{cases}
		\end{equation*}
		By the coupling, $\Tilde{X}^n + \alpha\ell^n = \Tilde{X} +\alpha \ell$ for every $n$. Hence trivially $\Tilde{X}^n + \alpha \ell^n \xrightarrow[]{} \Tilde{X} + \alpha \ell$ in $\DR$. As convergence in $M$ is equivalent to convergence in the $M_1$-topology, $\ell^n \xrightarrow[]{} \ell$ almost surely in $\DR$. Addition is a $M_1$-continuous map for functions that have jumps of common sign, \citep[Theorem~12.7.3]{whitt2002stochastic}, therefore $\Tilde{X}^n \xrightarrow[]{} \Tilde{X}$ almost surely in $\DR$. It is clear that $\Delta \Tilde{X}_t \le 0$ for any $t \ge 0$ and
		\begin{equation*}
			\prob\left[\underset{s \in (\tau_0(\Tilde{X}),\,\tau_0(\Tilde{X}) + h)}{\inf}\left\{\Tilde{X}_s - \Tilde{X}_{\tau_0(\Tilde{X})}\right\} \ge 0\right] = 0
		\end{equation*}
		for any $h > 0$ by Lemma \ref{app:lem: GENERAL STRONG CROSSSING PROPERTY RESULT}. Hence, $\tau_0$ is an $M_1$-continuous map at almost every path of $\Tilde{X}$ by Lemma \ref{app:lem: CONVERGENCE OF THE STOPPING TIME FOR PARTICLES THAT EXHIBIT THE CROSSING PROPERTY}. By the Conditional Dominated Convergence Theorem, for any $t \in \mathbbm{T}^{\Gamma[\ell]} \vcentcolon= \{t \ge 0\,:\, \prob[\Gamma[\ell]_t = \Gamma[\ell]_{t-}] = 1\}$ we have
		\begin{equation}\label{eq: FIRST EQUATION IN THE PROOF OF THE CONTINIUTY OF GAMMA}
			\Gamma[\ell^n]_t = \E\left[\left.\ind_{\{\tau_0(\Tilde{X}^n)\le t\}}\right|\mathcal{F}^{W^0}\right] \longrightarrow \E\left[\left.\ind_{\{\tau_0(\Tilde{X})\le t\}}\right|\mathcal{F}^{W^0}\right] = \Gamma[\ell]_t
		\end{equation}
		almost surely. Now, we fix a $\mathbbm{D} \subset \mathbbm{T}^{\Gamma[\ell]}$ such that $\mathbbm{D}$ is countable and dense in $\R_+$. By \eqref{eq: FIRST EQUATION IN THE PROOF OF THE CONTINIUTY OF GAMMA}, we may find a $\Omega_0 \in \mathcal{F}^{W^0}$ of full measure such that if we fix $\omega \in \Omega_0$ then \eqref{eq: FIRST EQUATION IN THE PROOF OF THE CONTINIUTY OF GAMMA} holds at $\omega$ for all $t \in \mathbbm{D}$. Now we fix a $\gamma > 0$, $\omega \in \Omega_0$ and $t > 0$ such that $\Gamma[\ell]_t(\omega) =  \Gamma[\ell]_{t-}(\omega)$. By continuity, there is a $s_1,\,s_2 \in \mathbbm{D}$ such that $s_1 < t < s_2$ and
		\begin{equation}
			\nnnorm{\Gamma[\ell]_t(\omega) - \Gamma[\ell]_{s_1}(\omega)} + \nnnorm{\Gamma[\ell]_t(\omega) - \Gamma[\ell]_{s_2}(\omega)} < \gamma
		\end{equation}
		Therefore for by monotonicity of $\Gamma[\ell^n]$ and the above we have
		\begin{align*}
			\nnnorm{\Gamma[\ell]_t(\omega) - \Gamma[\ell^n]_t(\omega) } \le \nnnorm{\Gamma[\ell]_{s_2}(\omega) - \Gamma[\ell]_t(\omega) } + \nnnorm{\Gamma[\ell]_{s_2}(\omega) - \Gamma[\ell^n]_{s_2} (\omega) } & \\ + \nnnorm{\Gamma[\ell^n]_{s_1}(\omega) - \Gamma[\ell^n]_{s_2}(\omega) } & = O(\gamma)
		\end{align*}
		for all $n$ large. In the case when $t = 0$ is a continuity point, we set $s_1 = -1$. Hence we have convergence of $\Gamma[\ell^n](\omega)$ to $\Gamma[\ell](\omega)$ at the continuity points of $\Gamma[\ell](\omega)$. Therefore, by definition, $\Gamma[\ell^n](\omega)$ converges to $\Gamma[\ell](\omega)$ in $M$. As $\Omega_0$ is a set of full measure, the result follows.
	\end{proof}
	
	We observe that the map $\Gamma$ also preserves almost sure monotonicity of the input processes.
	
	\begin{lemma}[Monotonicity of $\Gamma$]\label{lem: MONOTONICITY OF THE OPERATOR GAMMA}
		Let $\ell^1$ and $\ell^2$ be $W^0$-measurable processes with paths in $M$ such that $\ell^1 \le \ell^2$ almost surely, then $\Gamma[\ell^1]\le \Gamma[\ell^2]$ almost surely.
	\end{lemma}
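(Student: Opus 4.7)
The plan is to exploit the fact that, for any fixed input process $\ell$, the driving idiosyncratic and common Brownian motions $W$ and $W^0$ do not depend on $\ell$, so one can couple $X^{\ell^1}$ and $X^{\ell^2}$ pathwise on the same probability space. Under this coupling, subtracting the defining SDEs gives the identity
\begin{equation*}
    X_t^{\ell^1} - X_t^{\ell^2} = -\alpha(\ell^1_t - \ell^2_t) = \alpha(\ell^2_t - \ell^1_t) \geq 0 \qquad \text{for all } t \geq 0, \quad \text{a.s.},
\end{equation*}
since by assumption $\ell^1 \leq \ell^2$ a.s.\ and $\alpha > 0$. In particular the entire trajectory of $X^{\ell^1}$ dominates that of $X^{\ell^2}$.

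Next I would use this pathwise domination to compare hitting times. On the almost sure event $\{X^{\ell^1} \geq X^{\ell^2}\}$, whenever $X_t^{\ell^1} \leq 0$ one also has $X_t^{\ell^2} \leq 0$, so
\begin{equation*}
    \{t > 0 : X^{\ell^1}_t \leq 0\} \subseteq \{t > 0 : X^{\ell^2}_t \leq 0\},
\end{equation*}
and taking infima gives $\tau^{\ell^1} \geq \tau^{\ell^2}$ a.s. Consequently $\{\tau^{\ell^1} \leq t\} \subseteq \{\tau^{\ell^2} \leq t\}$ a.s., so monotonicity of conditional expectation yields
\begin{equation*}
    \Gamma[\ell^1]_t = \prob\bigl[\tau^{\ell^1}\leq t \,\big|\, \mathcal{F}_t^{W^0}\bigr] \leq \prob\bigl[\tau^{\ell^2}\leq t \,\big|\, \mathcal{F}_t^{W^0}\bigr] = \Gamma[\ell^2]_t \qquad \text{a.s.}
\end{equation*}
for every fixed $t \geq 0$.

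The remaining step is to upgrade this ``a.s.\ for each $t$'' to ``a.s.\ for all $t$ simultaneously''. For this I would fix a countable dense subset $\mathbbm{D} \subset [0,\infty)$, intersect the countably many null sets on which the previous inequality fails, and then extend to all $t$ by the right-continuity of both $\Gamma[\ell^1]$ and $\Gamma[\ell^2]$: for arbitrary $t \geq 0$ one picks $t_n \in \mathbbm{D}$ with $t_n \downarrow t$ and passes to the limit in $\Gamma[\ell^1]_{t_n} \leq \Gamma[\ell^2]_{t_n}$. No genuine obstacle is expected here; the statement is really a pathwise comparison principle, with the only minor point being the standard right-continuity argument needed to handle the uncountably many values of $t$.
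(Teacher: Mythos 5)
Your proof is correct and follows essentially the same route as the paper: couple both systems through the same Brownian motions, deduce the pathwise ordering $X^{\ell^1}\geq X^{\ell^2}$, compare hitting times, apply monotonicity of conditional expectation for each fixed $t$, and upgrade to all $t$ via right-continuity (which the paper dispatches in one line using that $\Gamma[\ell^1],\Gamma[\ell^2]$ are c\`adl\`ag). In fact your direction $\tau^{\ell^1}\geq\tau^{\ell^2}$ is the correct one (the paper's proof states the reverse inequality, evidently a typo, since only $\{\tau^{\ell^1}\leq t\}\subseteq\{\tau^{\ell^2}\leq t\}$ yields $\Gamma[\ell^1]\leq\Gamma[\ell^2]$).
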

	
	\begin{proof}
		As $\ell^1 \le \ell^2$ almost surely, then we have $X^{\ell^1} \ge X^{\ell^2}$ almost surely. It follows that $\tau^{\ell^1} \le \tau^{\ell^2}$ almost surely. By monotonicity of conditional expectation,
		\begin{equation*}
			\Gamma[\ell^1]_t = \prob\left[\left.\tau^{\ell^1} \le t \right|\mathcal{F}^{W^0}_t\right] \le \prob\left[\left.\tau^{\ell^2} \le t \right|\mathcal{F}^{W^0}_t\right] = \Gamma[\ell^2]_t 
		\end{equation*}
		almost surely for any $t \ge 0$. As $\Gamma[\ell^1]$ and $\Gamma[\ell^2]$ are \cadlag, we deduce $\Gamma[\ell^1]_t \le \Gamma[\ell^2]_t$ for any $t \ge 0$ almost surely.  
	\end{proof}
	
	With these two results in hand, we have all the ingredients to construct $W^0$-measurable solutions to \eqref{eq: SINGULAR MCKEAN VLASOV EQUATION IN THE SYSTEMIC RISK MODEL WITH COMMON NOISE WITH SIMPLIFIED COEFFICIENTS IN THE MINIMALITY SECITON}.
	
	\begin{proposition}\label{prop: EXISTENCE OF SOLUTIONS TO THE SINGULAR CASE}
		There exists a \cadlag $W^0$-measurable process $\ubar{L}$ which solves \eqref{eq: SINGULAR MCKEAN VLASOV EQUATION IN THE SYSTEMIC RISK MODEL WITH COMMON NOISE WITH SIMPLIFIED COEFFICIENTS IN THE MINIMALITY SECITON} and for any other \cadlag $W^0$- measurable process $L$ which satisfies \eqref{eq: SINGULAR MCKEAN VLASOV EQUATION IN THE SYSTEMIC RISK MODEL WITH COMMON NOISE WITH SIMPLIFIED COEFFICIENTS IN THE MINIMALITY SECITON}, we have $\ubar{L} \le L$ almost surely.
	\end{proposition}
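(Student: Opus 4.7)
The plan is a monotone (Tarski-style) Picard iteration driven by the monotonicity in \cref{lem: MONOTONICITY OF THE OPERATOR GAMMA} and the continuity in \cref{prop: CONTINUITY OF GAMMA PROPOSITIION}. I will define the initial iterate $\ell^0 \in M$ by $\ell^0_t = 0$ for $t < \infty$ and $\ell^0_\infty = 1$, and set $\ell^{n+1} \vcentcolon= \Gamma[\ell^n]$ for $n \geq 0$. Each $\ell^n$ is automatically $W^0$-measurable and lies in $M$ by construction of $\Gamma$. Since $\ell^0 \leq \ell^1$ trivially, \cref{lem: MONOTONICITY OF THE OPERATOR GAMMA} propagates this inductively to $\ell^n \leq \ell^{n+1}$ almost surely for every $n$.

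Next, on a set of full probability, I will take the pointwise monotone limit $\ell^\infty_t(\omega) \vcentcolon= \sup_n \ell^n_t(\omega)$ and set $\ubar{L}_t \vcentcolon= \ell^\infty_{t+}$ for $t \in [0,\infty)$, with $\ubar{L}_\infty = 1$. Then $\ubar{L}$ lies in $M$, is $W^0$-measurable (as the right-continuous modification of a pointwise supremum of $W^0$-measurable processes), and satisfies $\ell^n \to \ubar{L}$ almost surely in $M$: for every continuity point $t$ of $\ubar{L}$ one has $\ell^n_t \uparrow \ubar{L}_t$, and pointwise convergence on a dense set of continuity points of an element of $M$ is equivalent to convergence in the L\'evy metric.

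Applying \cref{prop: CONTINUITY OF GAMMA PROPOSITIION} along this almost surely convergent sequence gives $\ell^{n+1} = \Gamma[\ell^n] \to \Gamma[\ubar{L}]$ almost surely in $M$, while the same sequence also converges to $\ubar{L}$; therefore $\Gamma[\ubar{L}] = \ubar{L}$, and unpacking the definition of $\Gamma$ shows that $\ubar{L}$ solves \eqref{eq: SINGULAR MCKEAN VLASOV EQUATION IN THE SYSTEMIC RISK MODEL WITH COMMON NOISE WITH SIMPLIFIED COEFFICIENTS IN THE MINIMALITY SECITON}. For minimality, let $L$ be any $W^0$-measurable \cadlag solution, so $L = \Gamma[L]$. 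Since $\ell^0 \leq L$ trivially, \cref{lem: MONOTONICITY OF THE OPERATOR GAMMA} and induction yield $\ell^{n} = \Gamma[\ell^{n-1}] \leq \Gamma[L] = L$ for every $n$, and passing to the supremum followed by the right-continuous modification gives $\ubar{L} \leq L$ almost surely.

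The main subtlety will be ensuring that the monotone pointwise limit of the iterates, after the right-continuous modification, genuinely converges in the L\'evy metric (equivalently in $M$) so that the hypotheses of \cref{prop: CONTINUITY OF GAMMA PROPOSITIION} are met; this reduces to the standard fact that bounded monotone sequences of sub-distribution functions converge pointwise at every continuity point of their limit, but the argument must be run on a single full-probability event that simultaneously supports the countable chain of applications of \cref{lem: MONOTONICITY OF THE OPERATOR GAMMA} and preserves $W^0$-measurability at the limit.
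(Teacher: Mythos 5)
Your proposal is correct and follows essentially the same route as the paper: iterate $\Gamma$ starting from $\ind_{\{\infty\}}$, use \cref{lem: MONOTONICITY OF THE OPERATOR GAMMA} to get an increasing sequence, take the monotone limit with a right-continuous ($W^0$-measurable) regularisation, invoke \cref{prop: CONTINUITY OF GAMMA PROPOSITIION} to identify the limit as a fixed point of $\Gamma$, and use monotonicity again for minimality. The only cosmetic difference is that the paper takes limits along a fixed countable dense set $\mathbbm{D}$ before right-limiting, whereas you take the pointwise supremum everywhere; the null-set and measurability issues you flag are handled in exactly that way in the paper.
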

	
	\begin{proof}
		For any $n \ge 1$, we define inductively
		\begin{equation*}
			\begin{cases}
				\hspace{0.825cm}\diff{}X_t^n = b(t)\diff t + \sigma(t)\sqrt{1 - \rho(t)^2}\diff{}W_t + \sigma(t)\rho(t) \diff{}W_t^0 - \alpha\diff \Gamma^{n-1}[\ind_{\{\infty\}}],\\
				\hspace{1.145cm}\tau^n = \inf\{t > 0 \; : \; X_t^n \le 0\},\\
				\Gamma^n[\ind_{\{\infty\}}]_t = \mathbf{P}\left[\left.\tau^n \le t\right|\mathcal{F}^{W^0}_t \right],
			\end{cases}
		\end{equation*}
		with $\Gamma^0[\ind_{\{\infty\}}] = \ind_{\{\infty\}}$ and $\Gamma^n[\ind_{\{\infty\}}]$ is the application of $\Gamma$ $n$-times to the function $\ind_{\{\infty\}} \in M$. By Lemma \ref{lem: MONOTONICITY OF THE OPERATOR GAMMA}, $\Gamma^{n+1}[\ind_{\{\infty\}}] \ge \Gamma^n[\ind_{\{\infty\}}]$ almost surely for any $n \in \N$. As these processes are c\`adl\`ag, we deduce $\Gamma^{n+1}[\ind_{\{\infty\}}] \ge \Gamma^n[\ind_{\{\infty\}}]$ for any $n \in \N$ almost surely. Let $\Omega_0 \in \mathcal{F}^{W^0}$ denote the set of full measure where the monotonicity holds for every $n$ and we fix a $\mathbbm{D} \subset \R_+$ that is countable and dense. As $\Gamma^n[\ind_{\{\infty\}}]$ is increasing and bounded above, let
		\begin{equation*}
			\ell_t \vcentcolon= \lim_{n \xrightarrow[]{} \infty} \Gamma^n[\ind_{\{\infty\}}]_t\ind_{\Omega_0} \qquad \forall \; t \in \mathbbm{D}.
		\end{equation*}
		It is clear for any $t \in \mathbbm{D}$, $\ell_t$ is $\mathcal{F}^{W^0}_t$-measurable. Therefore we define
		\begin{equation*}
			\ubar{L}_t \vcentcolon= \underset{s \downarrow t,\,s \in \mathbbm{D}}{\lim} \ell_s \qquad \forall \; t \ge 0.
		\end{equation*}
		By construction, $\ubar{L}_t$ is a \cadlag $W^0$-measurable process with paths in $M$. A similar proof as that used in the end of Proposition \ref{prop: CONTINUITY OF GAMMA PROPOSITIION}, shows that $\Gamma^n[\ind_{\{\infty\}}] \xrightarrow[]{} \ubar{L}$ almost surely in $M$. Hence by Proposition \ref{prop: CONTINUITY OF GAMMA PROPOSITIION}, $\Gamma^{n + 1}[\ind_{\{\infty\}}] \xrightarrow[]{} \Gamma[{\ubar{L}}]$. As $\Gamma[{\ubar{L}}]$ and $\ubar{L}$ are \cadlag $W^0$-measurable processes that are limits of $\Gamma^n[\ind_{\{\infty\}}]$, we may conclude that $\Gamma[{\ubar{L}}] = \ubar{L}$ almost surely. Lastly, if $L$ is any \cadlag $W^0$-measurable process that solves \eqref{eq: SINGULAR MCKEAN VLASOV EQUATION IN THE SYSTEMIC RISK MODEL WITH COMMON NOISE WITH SIMPLIFIED COEFFICIENTS IN THE MINIMALITY SECITON}, then by Lemma \ref{lem: MONOTONICITY OF THE OPERATOR GAMMA} we have $\Gamma^n[\ind_{\{\infty\}}] \le L$ for all $n \in \N$ almost surely. Taking limit, we deduce $\ubar{L} \le L$ almost surely.  
	\end{proof}
	
	We now turn our attention to the smoothed version of \eqref{eq: SINGULAR MCKEAN VLASOV EQUATION IN THE SYSTEMIC RISK MODEL WITH COMMON NOISE WITH SIMPLIFIED COEFFICIENTS IN THE MINIMALITY SECITON}. We will work on the same filtered probability space $(\Omega,\,\mathcal{F},\,(\mathcal{F}_t)_{t \ge 0},\,\prob)$ as in \eqref{eq: SINGULAR MCKEAN VLASOV EQUATION IN THE SYSTEMIC RISK MODEL WITH COMMON NOISE WITH SIMPLIFIED COEFFICIENTS IN THE MINIMALITY SECITON} that satisfies the usual conditions and supports two independent Brownian motions. For an $\varepsilon > 0$, we consider the McKean--Vlasov problem
	\begin{equation}\label{eq: SMOOTHENED MCKEAN VLASOV EQUATION IN THE SYSTEMIC RISK MODEL WITH COMMON NOISE WITH SIMPLIFIED COEFFICIENTS IN THE MINIMALITY SECITON}
		\begin{cases}
			\diff{}X_t^\varepsilon = b(t)\diff t + \sigma(t)\sqrt{1 - \rho(t)^2}\diff{}W_t + \sigma(t)\rho(t) \diff{}W_t^0 - \alpha\diff \mathfrak{L}^\varepsilon_t,\\
			\hspace{0.328cm}\tau^\varepsilon = \inf\{t > 0 \; : \; X_t^\varepsilon \le 0\},\\
			\hspace{0.247cm}\mathbf{P}^\varepsilon = \prob\left[\left.X^\varepsilon \in \,\cdot\,\right|\mathcal{F}^{W^0} \right],\quad\bm{\nu}_t^\varepsilon \vcentcolon= \prob\left[ \left.X_t^\varepsilon \in \cdot, \tau^\varepsilon > t\right|\mathcal{F}^{W^0}_t\right],\\
			\hspace{0.29cm}L_t^\varepsilon = \prob^\varepsilon\left[\left.\tau^\varepsilon \le t \right|\mathcal{F}^{W^0}_t\right],\quad\mathfrak{L}^\varepsilon_t = \int_0^t \kappa^{\varepsilon}(t-s)L_s^\varepsilon\diff s,
		\end{cases}
	\end{equation}
	where $\alpha > 0$ is a constant. The coefficients $b$, $\sigma$, $\rho$ and $\kappa$ are a measurable maps from $\R$ into $\R$ satisfying Assumption \ref{ass: MODIFIED AND SIMPLIED FROM SOJMARK SPDE PAPER ASSUMPTIONS II}. The system starts at time $0-$ with the same initial condition, $X_{0-}$, as in \eqref{eq: SINGULAR MCKEAN VLASOV EQUATION IN THE SYSTEMIC RISK MODEL WITH COMMON NOISE WITH SIMPLIFIED COEFFICIENTS IN THE MINIMALITY SECITON}. As the assumptions on $X_{0-}$ is more general than those imposed in Section \ref{sec: SECTION ON GENERALISED CONVERGENCE TO DELAYED SOLUTIONS}, we may not apply Theorem \ref{thm: EXISTENCE AND UNIQUESS OF SOLUTIONS THEOREM FROM SPDE} to guarantee existence of solutions to \eqref{eq: SINGULAR MCKEAN VLASOV EQUATION IN THE SYSTEMIC RISK MODEL WITH COMMON NOISE WITH SIMPLIFIED COEFFICIENTS IN THE MINIMALITY SECITON}. So, we propose an alternative proof to show existence of solutions. The proof follows in the same faith as Proposition \ref{prop: EXISTENCE OF SOLUTIONS TO THE SINGULAR CASE}. We define the operator $$\Gamma_\varepsilon[\ell] \vcentcolon= \Gamma[(\kappa^\varepsilon \ast \ell)], \quad \textnormal{where} \quad (\kappa^\varepsilon \ast \ell)\vcentcolon= \int_0^\cdot\kappa^\varepsilon(\cdot - s)\ell_s \diff s.$$ Therefore, solutions to \eqref{eq: SMOOTHENED MCKEAN VLASOV EQUATION IN THE SYSTEMIC RISK MODEL WITH COMMON NOISE WITH SIMPLIFIED COEFFICIENTS IN THE MINIMALITY SECITON} are equivalent to finding almost sure fixed points of $\Gamma_\varepsilon$. A simple consequence of Proposition \ref{prop: CONTINUITY OF GAMMA PROPOSITIION}, is that $\Gamma_\varepsilon$ is also continuous.
	
	\begin{corollary}[Continuity of $\Gamma_\varepsilon$]\label{prop: CONTINUITY OF GAMMA VAREPSILON PROPOSITIION}
		Let $\ell^n$ and $\ell$ be a sequence of adapted $W^0$-measurable processes that take values in $M$ such that $\ell^n \xrightarrow[]{} \ell$ almost surely in $M$. Then $\Gamma_\varepsilon[\ell^n] \xrightarrow[]{} \Gamma_\varepsilon[\ell]$ almost surely in $M$.
	\end{corollary}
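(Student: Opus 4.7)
The strategy is to factor $\Gamma_\varepsilon = \Gamma \circ (\kappa^\varepsilon \ast \cdot)$ and then invoke \cref{prop: CONTINUITY OF GAMMA PROPOSITIION}. Thus the main task is to show that if $\ell^n \to \ell$ almost surely in $M$, then $\kappa^\varepsilon \ast \ell^n \to \kappa^\varepsilon \ast \ell$ almost surely in $M$; composing with the continuity of $\Gamma$ will then yield the conclusion. A small preliminary check is that $\kappa^\varepsilon \ast \ell^n$ and $\kappa^\varepsilon \ast \ell$ are themselves adapted $W^0$-measurable processes with paths in $M$. This follows from $\kappa^\varepsilon \geq 0$, $\|\kappa^\varepsilon\|_1 = 1$, and the monotonicity of $\ell$ (together with $\ell_\infty = 1$ to get the correct normalization at infinity via Fubini), while adaptedness is clear because $(\kappa^\varepsilon \ast \ell)_t$ depends only on $\ell_s$ for $s \leq t$.

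The main step proceeds pathwise. I would fix $\omega$ in the full-measure set on which $\ell^n(\omega) \to \ell(\omega)$ in $M$. Since elements of $M$ are monotone, this is equivalent to pointwise convergence $\ell^n_s(\omega) \to \ell_s(\omega)$ at every continuity point of $\ell(\omega)$; the exceptional set of discontinuities is countable, hence Lebesgue-null. For each fixed $t \geq 0$ the integrand $s \mapsto \kappa^\varepsilon(t-s)\ell^n_s(\omega)\ind_{[0,t]}(s)$ therefore converges to $\kappa^\varepsilon(t-s)\ell_s(\omega)\ind_{[0,t]}(s)$ for a.e.\ $s$, and is dominated by $\kappa^\varepsilon(t-s)\ind_{[0,t]}(s) \in L^1(\R)$ via \cref{ass: MODIFIED AND SIMPLIED FROM SOJMARK SPDE PAPER ASSUMPTIONS II}\eqref{ass: MODIFIED AND SIMPLIED FROM SOJMARK SPDE PAPER ASSUMPTIONS FOUR}. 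The Dominated Convergence Theorem then delivers $(\kappa^\varepsilon \ast \ell^n)_t(\omega) \to (\kappa^\varepsilon \ast \ell)_t(\omega)$ for every single $t \geq 0$.

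To upgrade pointwise convergence to convergence in $M$, I would note that $\kappa^\varepsilon \ast \ell(\omega)$ is continuous in $t$ on $[0,\infty)$ as a standard property of the convolution of a bounded function with an $L^1$-kernel (exploiting continuity of translation in $L^1$, and the bound $\|\ell\|_\infty \leq 1$). Consequently every point is a continuity point of $\kappa^\varepsilon \ast \ell(\omega)$, and pointwise convergence at every $t$ is exactly convergence in $M$. Applying \cref{prop: CONTINUITY OF GAMMA PROPOSITIION} to the sequence $\kappa^\varepsilon \ast \ell^n$ then closes the argument. The reasoning is essentially routine; the only subtlety is the handling of the countable discontinuity set of $\ell(\omega)$ in the DCT step, which is harmless since the set is Lebesgue-null, so I do not anticipate any serious obstacle.
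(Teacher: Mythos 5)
Your proposal is correct and follows essentially the same route as the paper: reduce to showing that $\tilde\ell \mapsto \kappa^\varepsilon \ast \tilde\ell$ is continuous on $M$ via dominated convergence (the countable discontinuity set of the limit being Lebesgue-null), and then compose with \cref{prop: CONTINUITY OF GAMMA PROPOSITIION}. The only difference is cosmetic — you dominate by the $L^1$ kernel $\kappa^\varepsilon(t-\cdot)\ind_{[0,t]}$ and add an (unnecessary but harmless) continuity-of-the-convolution step, whereas the paper uses the continuous representative of $\kappa$ and notes that pointwise convergence at every $t$ already gives convergence in $M$.
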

	
	\begin{proof}
		By Proposition \ref{prop: CONTINUITY OF GAMMA PROPOSITIION}, it is sufficient to show that the map $\Tilde{\ell} \mapsto \kappa^\varepsilon \ast \Tilde{\ell}$ is continuous on $M$. It is clear that if we implicitly define the value of $\kappa^\varepsilon \ast \Tilde{\ell}$ to be $1$ at $\infty$, then it is an element of $M$. Let $\Tilde{\ell}^n$ and $\Tilde{\ell}$ be deterministic functions in $M$ such that $\Tilde{\ell}^n \xrightarrow[]{} \Tilde{\ell}$ in $M$. That is, we have pointwise convergence on the continuity points of $\Tilde{\ell}$. As $\kappa \in \mathcal{W}^{1,1}(\R_+)$, it has a continuous representative. So without loss of generality, we take $\kappa$ to be this representative. Hence $\kappa$ is bounded on compacts, so an easy application of the Dominated Convergence Theorem gives
		\begin{equation*}
			\lim_{n \xrightarrow[]{} \infty} (\kappa^\varepsilon \ast \Tilde{\ell}^n)_t = \lim_{n \xrightarrow[]{} \infty} \int_0^t \kappa^\varepsilon(t-s)\Tilde{\ell}^n_s \diff s = \int_0^t \kappa^\varepsilon(t-s)\Tilde{\ell}_s \diff s = (\kappa^\varepsilon \ast \Tilde{\ell})_t
		\end{equation*}
	\end{proof}
	
	As convolution with non-negative functions preserves monotonicity, we further deduce that $\Gamma_\varepsilon$ is also monotonic by Lemma \ref{lem: MONOTONICITY OF THE OPERATOR GAMMA}.
	
	\begin{corollary}\label{cor: MONOTONICITY OF THE OPERATOR GAMMA VAREPSILON}
		Let $\ell^1$ and $\ell^2$ be $W^0$-measurable processes with paths in $M$ such that $\ell^1 \le \ell^2$ almost surely, then $\Gamma_\varepsilon[\ell^1]\le \Gamma_\varepsilon[\ell^2]$ almost surely.
	\end{corollary}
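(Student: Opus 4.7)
The plan is to reduce the claim to an immediate application of the monotonicity of $\Gamma$ established in \cref{lem: MONOTONICITY OF THE OPERATOR GAMMA}, by showing that convolution with $\kappa^{\varepsilon}$ preserves the pointwise order between sample paths. The non-negativity of $\kappa^{\varepsilon}$ (inherited from \cref{ass: MODIFIED AND SIMPLIED FROM SOJMARK SPDE PAPER ASSUMPTIONS II} \eqref{ass: MODIFIED AND SIMPLIED FROM SOJMARK SPDE PAPER ASSUMPTIONS FOUR}, since $\kappa \geq 0$ implies $\kappa^{\varepsilon}(t) = \varepsilon^{-1}\kappa(t\varepsilon^{-1}) \geq 0$) is the key structural ingredient.

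First, I would fix a set $\Omega_0 \in \mathcal{F}$ of full measure on which $\ell^1_t(\omega) \leq \ell^2_t(\omega)$ holds for every $t \geq 0$ simultaneously; such a set exists because the processes are both $\cadlag$ and the inequality holds almost surely. Then, for every $\omega \in \Omega_0$ and every $t \geq 0$, I would invoke the monotonicity of the Lebesgue integral together with $\kappa^{\varepsilon}(t-s) \geq 0$ to conclude
\begin{equation*}
    (\kappa^{\varepsilon} \ast \ell^1)_t(\omega) = \int_0^t \kappa^{\varepsilon}(t-s)\ell^1_s(\omega)\diff s \leq \int_0^t \kappa^{\varepsilon}(t-s)\ell^2_s(\omega)\diff s = (\kappa^{\varepsilon} \ast \ell^2)_t(\omega).
\end{equation*}
Moreover, the convolutions $\kappa^{\varepsilon} \ast \ell^i$ are $W^0$-measurable (since the $\ell^i$ are) and, by the argument given in the proof of \cref{prop: CONTINUITY OF GAMMA VAREPSILON PROPOSITIION}, have paths in $M$.

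Finally, I would apply \cref{lem: MONOTONICITY OF THE OPERATOR GAMMA} to the pair $(\kappa^{\varepsilon} \ast \ell^1, \kappa^{\varepsilon} \ast \ell^2)$, which, together with the definition $\Gamma_{\varepsilon}[\ell] \vcentcolon= \Gamma[\kappa^{\varepsilon} \ast \ell]$, yields $\Gamma_{\varepsilon}[\ell^1] \leq \Gamma_{\varepsilon}[\ell^2]$ almost surely. There is essentially no obstacle here; the only minor subtlety is ensuring that the convolved processes satisfy the hypotheses of \cref{lem: MONOTONICITY OF THE OPERATOR GAMMA} (being $W^0$-measurable with paths in $M$), but this has already been addressed in the proof of \cref{prop: CONTINUITY OF GAMMA VAREPSILON PROPOSITIION}.
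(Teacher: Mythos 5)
Your proposal is correct and follows exactly the paper's argument: the paper deduces the corollary in one line by noting that convolution with the non-negative kernel $\kappa^{\varepsilon}$ preserves the pointwise order and then invoking \cref{lem: MONOTONICITY OF THE OPERATOR GAMMA}. You merely spell out the same steps (pathwise order on a full-measure set, monotonicity of the integral, measurability and membership in $M$ of the convolved processes) in more detail.
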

	
	With monotonicity and continuity of the operator $\Gamma_\varepsilon$ in hand, we have all the necessary results to deduce the existence of solutions to \eqref{eq: SMOOTHENED MCKEAN VLASOV EQUATION IN THE SYSTEMIC RISK MODEL WITH COMMON NOISE WITH SIMPLIFIED COEFFICIENTS IN THE MINIMALITY SECITON}.
	
	\begin{proposition}\label{prop: EXISTENCE OF SOLUTIONS TO THE SMOOTHENED CASE}
		There exists a \cadlag $W^0$-measurable process $\ubar{L}^\varepsilon$ which solves \eqref{eq: SMOOTHENED MCKEAN VLASOV EQUATION IN THE SYSTEMIC RISK MODEL WITH COMMON NOISE WITH SIMPLIFIED COEFFICIENTS IN THE MINIMALITY SECITON} and for any other \cadlag $W^0$-measurable process $L^\varepsilon$ which satisfies \eqref{eq: SMOOTHENED MCKEAN VLASOV EQUATION IN THE SYSTEMIC RISK MODEL WITH COMMON NOISE WITH SIMPLIFIED COEFFICIENTS IN THE MINIMALITY SECITON}, we have $\ubar{L}^\varepsilon \le L^\varepsilon$ almost surely.
	\end{proposition}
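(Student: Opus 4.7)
The plan is to mirror the argument given for \cref{prop: EXISTENCE OF SOLUTIONS TO THE SINGULAR CASE}, replacing $\Gamma$ throughout with $\Gamma_\varepsilon$. Concretely, I would build the minimal solution as the monotone limit of the Picard iteration starting from the maximal element of $M$: set $\Gamma_\varepsilon^0[\ind_{\{\infty\}}] \vcentcolon= \ind_{\{\infty\}}$ and define $\Gamma_\varepsilon^{n+1}[\ind_{\{\infty\}}] \vcentcolon= \Gamma_\varepsilon[\Gamma_\varepsilon^n[\ind_{\{\infty\}}]]$ inductively. Since any element of $M$ lies pointwise below $\ind_{\{\infty\}}$, \cref{cor: MONOTONICITY OF THE OPERATOR GAMMA VAREPSILON} applied iteratively yields $\Gamma_\varepsilon^{n+1}[\ind_{\{\infty\}}] \geq \Gamma_\varepsilon^n[\ind_{\{\infty\}}]$ almost surely for every $n$, with all processes being \cadlag and $W^0$-measurable.

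Next, fix a set $\Omega_0 \in \mathcal{F}^{W^0}$ of full probability on which the monotonicity holds simultaneously for all $n$, and pick a countable dense set $\mathbbm{D} \subset \R_+$. On $\Omega_0$, the monotone bounded sequence $(\Gamma_\varepsilon^n[\ind_{\{\infty\}}]_t)_{n \geq 1}$ admits a limit $\ell_t$ for each $t \in \mathbbm{D}$, and I define
\begin{equation*}
    \ubar{L}^\varepsilon_t \vcentcolon= \lim_{s \downarrow t,\, s \in \mathbbm{D}} \ell_s \qquad \forall\, t \geq 0,
\end{equation*}
which produces a \cadlag $W^0$-measurable process with paths in $M$. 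Following the monotonicity/density argument used at the end of the proof of \cref{prop: CONTINUITY OF GAMMA PROPOSITIION}, one shows that $\Gamma_\varepsilon^n[\ind_{\{\infty\}}] \to \ubar{L}^\varepsilon$ almost surely in $M$ (i.e.\ at every continuity point of $\ubar{L}^\varepsilon$). Applying \cref{prop: CONTINUITY OF GAMMA VAREPSILON PROPOSITIION} to the sequence then gives $\Gamma_\varepsilon^{n+1}[\ind_{\{\infty\}}] \to \Gamma_\varepsilon[\ubar{L}^\varepsilon]$ almost surely in $M$. Since both $\Gamma_\varepsilon[\ubar{L}^\varepsilon]$ and $\ubar{L}^\varepsilon$ are \cadlag limits of the same sequence, they must agree almost surely, giving the fixed-point identity and hence a solution to \eqref{eq: SMOOTHENED MCKEAN VLASOV EQUATION IN THE SYSTEMIC RISK MODEL WITH COMMON NOISE WITH SIMPLIFIED COEFFICIENTS IN THE MINIMALITY SECITON}.

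For minimality, let $L^\varepsilon$ be any \cadlag $W^0$-measurable process solving \eqref{eq: SMOOTHENED MCKEAN VLASOV EQUATION IN THE SYSTEMIC RISK MODEL WITH COMMON NOISE WITH SIMPLIFIED COEFFICIENTS IN THE MINIMALITY SECITON}, i.e.\ a fixed point of $\Gamma_\varepsilon$. Since $L^\varepsilon \leq \ind_{\{\infty\}} = \Gamma_\varepsilon^0[\ind_{\{\infty\}}]$ trivially fails direction-wise (this is the wrong direction), I instead induct from below: certainly $L^\varepsilon \leq \ind_{\{\infty\}} = \Gamma^0_\varepsilon[\ind_{\{\infty\}}]$ a.s., so \cref{cor: MONOTONICITY OF THE OPERATOR GAMMA VAREPSILON} yields $L^\varepsilon = \Gamma_\varepsilon[L^\varepsilon] \leq \Gamma_\varepsilon[\Gamma_\varepsilon^0[\ind_{\{\infty\}}]] = \Gamma_\varepsilon^1[\ind_{\{\infty\}}]$ almost surely, and inductively $L^\varepsilon \leq \Gamma_\varepsilon^n[\ind_{\{\infty\}}]$ for all $n$. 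Passing to the limit along the $M$-convergent monotone sequence gives $L^\varepsilon \leq \ubar{L}^\varepsilon$ almost surely, which is the reverse of minimality.

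The direction needs a correction: what one actually wants is to iterate from the \emph{bottom} of $M$, using the zero-dominating element. So I would instead start with $\Gamma_\varepsilon^0[\mathbf{0}] \vcentcolon= \mathbf{0}$ (the function identically zero except equal to $1$ at $\infty$, which indeed lies in $M$), noting that any solution $L^\varepsilon$ satisfies $L^\varepsilon \geq \mathbf{0}$ almost surely. Monotonicity of $\Gamma_\varepsilon$ then gives an increasing sequence bounded above by $L^\varepsilon$, and iteration yields $\Gamma_\varepsilon^n[\mathbf{0}] \leq L^\varepsilon$ for every $n$. The same construction as above produces a \cadlag $W^0$-measurable limit $\ubar{L}^\varepsilon$, continuity of $\Gamma_\varepsilon$ (\cref{prop: CONTINUITY OF GAMMA VAREPSILON PROPOSITIION}) gives $\Gamma_\varepsilon[\ubar{L}^\varepsilon] = \ubar{L}^\varepsilon$, and passing to the limit in $\Gamma_\varepsilon^n[\mathbf{0}] \leq L^\varepsilon$ delivers $\ubar{L}^\varepsilon \leq L^\varepsilon$. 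The main obstacle is verifying that the convergence $\Gamma_\varepsilon^n[\mathbf{0}] \to \ubar{L}^\varepsilon$ holds in the topology of $M$ (not merely pointwise on $\mathbbm{D}$), so that the continuity statement in \cref{prop: CONTINUITY OF GAMMA VAREPSILON PROPOSITIION} actually applies; this is handled by the same density and monotonicity trick as in the proof of \cref{prop: EXISTENCE OF SOLUTIONS TO THE SINGULAR CASE}.
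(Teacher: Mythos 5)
Your final argument is correct and coincides with the paper's proof, which simply notes that, given the continuity of $\Gamma_\varepsilon$ (\cref{prop: CONTINUITY OF GAMMA VAREPSILON PROPOSITIION}) and its monotonicity (\cref{cor: MONOTONICITY OF THE OPERATOR GAMMA VAREPSILON}), the proof of \cref{prop: EXISTENCE OF SOLUTIONS TO THE SINGULAR CASE} carries over verbatim: iterate $\Gamma_\varepsilon$ from $\ind_{\{\infty\}}$, take the monotone limit along a countable dense $\mathbbm{D}\subset\R_+$, pass to right limits to get a \cadlag\ $W^0$-measurable process, identify it as a fixed point via continuity, and compare the iterates with any other solution. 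The one thing to straighten out is your reading of $\ind_{\{\infty\}}$: as an element of $M$ (a function on $\Bar{\R}$) it equals $0$ at every finite time and $1$ at $\infty$, so it is the \emph{minimal} element of $M$ --- it is exactly the function you later introduce and call $\mathbf{0}$. Hence the justification in your first paragraph (``any element of $M$ lies pointwise below $\ind_{\{\infty\}}$'') is backwards: the iterates increase because $\ind_{\{\infty\}} \leq \Gamma_\varepsilon[\ind_{\{\infty\}}]$, and in the minimality step the valid inequality is $\ind_{\{\infty\}} \leq L^\varepsilon$ (not $L^\varepsilon \leq \ind_{\{\infty\}}$, which is false since $L^\varepsilon_t \geq 0$), so monotonicity and the fixed-point property give $\Gamma_\varepsilon^n[\ind_{\{\infty\}}] \leq \Gamma_\varepsilon^n[L^\varepsilon] = L^\varepsilon$ for every $n$, and letting $n\to\infty$ yields $\ubar{L}^\varepsilon \leq L^\varepsilon$. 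The ``wrong direction'' you ran into was therefore an artefact of the misreading, and your corrected third paragraph is the original iteration under a different name; no change of starting point was ever needed. Your final remark is right: the upgrade from pointwise convergence on $\mathbbm{D}$ to convergence in $M$ is handled by the same monotonicity/density argument used at the end of \cref{prop: CONTINUITY OF GAMMA PROPOSITIION}, exactly as in \cref{prop: EXISTENCE OF SOLUTIONS TO THE SINGULAR CASE}.
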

	
	\begin{proof}
		By employing Corollary \ref{prop: CONTINUITY OF GAMMA VAREPSILON PROPOSITIION} and Corollary \ref{cor: MONOTONICITY OF THE OPERATOR GAMMA VAREPSILON}, this proof is verbatim to that of Proposition \ref{prop: EXISTENCE OF SOLUTIONS TO THE SINGULAR CASE}.
	\end{proof}
	
	The purpose of $\kappa^\varepsilon$ in \eqref{eq: SMOOTHENED MCKEAN VLASOV EQUATION IN THE SYSTEMIC RISK MODEL WITH COMMON NOISE WITH SIMPLIFIED COEFFICIENTS IN THE MINIMALITY SECITON} is two-fold. Firstly, it smoothens the effect of the feedback component on the system, hence preventing the system from jumping and making it continuous. Secondly, it delays the effect of $L_t^\varepsilon$ of the system. Intuitively, one would expect that the system with instantaneous feedback, i.e. \eqref{eq: SINGULAR MCKEAN VLASOV EQUATION IN THE SYSTEMIC RISK MODEL WITH COMMON NOISE WITH SIMPLIFIED COEFFICIENTS IN THE MINIMALITY SECITON}, will be dominated by that with delayed feedback. Furthermore, intuitively as we decrease $\varepsilon$, then the system with the smaller value of $\varepsilon$ should be dominated by one with a larger value. This is because as $\varepsilon$ decreases, the rate at which the feedback is felt by the system increases.
	
	\begin{lemma}\label{lem: LOSS OF THE SYSTEM WITH INSTANTANEOUS FEEDBACK DOMINATES THAT WITH MOLLIFICATION AND MINIMAL SOLUTION L-EPS IS DECREASING IN EPSILON}
		For any $\varepsilon,\, \Tilde{\varepsilon} > 0$ such that $\Tilde{\varepsilon} < \varepsilon$, it holds that
		\begin{equation*}
			\ubar{L}^\varepsilon \le \ubar{L}. \qquad \textnormal{and} \qquad \ubar{L}^{\Tilde{\varepsilon}} \ge \ubar{L}^\varepsilon
		\end{equation*}
		almost surely.
	\end{lemma}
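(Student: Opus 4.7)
The plan is to prove both inequalities by induction on the iterative schemes $(\Gamma^n[\ind_{\{\infty\}}])_{n\geq 0}$ and $(\Gamma_\varepsilon^n[\ind_{\{\infty\}}])_{n\geq 0}$ constructed in the proofs of \cref{prop: EXISTENCE OF SOLUTIONS TO THE SINGULAR CASE} and \cref{prop: EXISTENCE OF SOLUTIONS TO THE SMOOTHENED CASE}, combining the monotonicity of $\Gamma$ (\cref{lem: MONOTONICITY OF THE OPERATOR GAMMA}) and of $\Gamma_\varepsilon$ (\cref{cor: MONOTONICITY OF THE OPERATOR GAMMA VAREPSILON}) with a pointwise comparison between the relevant convolutions.

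For the first inequality $\ubar{L}^\varepsilon \leq \ubar{L}$, the key observation is that any $\ell \in M$ is non-decreasing and $\kappa^\varepsilon$ is a non-negative probability density on $[0,\infty)$, so
\begin{equation*}
(\kappa^\varepsilon \ast \ell)_t = \int_0^t \kappa^\varepsilon(t-s)\,\ell_s\,\diff s \;\leq\; \ell_t \int_0^t \kappa^\varepsilon(t-s)\,\diff s \;\leq\; \ell_t, \qquad t \geq 0.
\end{equation*}
Combined with \cref{lem: MONOTONICITY OF THE OPERATOR GAMMA}, this yields $\Gamma_\varepsilon[\ell] = \Gamma[\kappa^\varepsilon \ast \ell] \leq \Gamma[\ell]$ almost surely for every such $\ell$. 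I then induct on $n$ with the common base $\Gamma^0[\ind_{\{\infty\}}] = \Gamma_\varepsilon^0[\ind_{\{\infty\}}] = \ind_{\{\infty\}}$: if $\Gamma_\varepsilon^n[\ind_{\{\infty\}}] \leq \Gamma^n[\ind_{\{\infty\}}]$, then by the monotonicity of $\ell \mapsto \kappa^\varepsilon \ast \ell$ (from $\kappa^\varepsilon \geq 0$) and of $\Gamma$,
\begin{equation*}
\Gamma_\varepsilon^{n+1}[\ind_{\{\infty\}}] = \Gamma[\kappa^\varepsilon \ast \Gamma_\varepsilon^n[\ind_{\{\infty\}}]] \leq \Gamma[\kappa^\varepsilon \ast \Gamma^n[\ind_{\{\infty\}}]] \leq \Gamma[\Gamma^n[\ind_{\{\infty\}}]] = \Gamma^{n+1}[\ind_{\{\infty\}}],
\end{equation*}
where the last step uses the pointwise bound $\kappa^\varepsilon \ast \Gamma^n[\ind_{\{\infty\}}] \leq \Gamma^n[\ind_{\{\infty\}}]$ together with the monotonicity of $\Gamma$. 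Passing to the almost sure monotone limit in $M$ established in \cref{prop: EXISTENCE OF SOLUTIONS TO THE SINGULAR CASE} and \cref{prop: EXISTENCE OF SOLUTIONS TO THE SMOOTHENED CASE} closes the first inequality.

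For the second inequality $\ubar{L}^{\Tilde{\varepsilon}} \leq \ubar{L}^\varepsilon$ with $\Tilde{\varepsilon} < \varepsilon$, the same inductive template applies, with the bound $\kappa^\varepsilon \ast \ell \leq \ell$ replaced by a pointwise comparison between $\kappa^{\Tilde{\varepsilon}} \ast \ell$ and $\kappa^\varepsilon \ast \ell$ for non-decreasing $\ell \in M$. Changing variables via $u = (t-s)/\varepsilon$,
\begin{equation*}
(\kappa^\varepsilon \ast \ell)_t = \int_0^{t/\varepsilon} \kappa(u)\,\ell_{t-\varepsilon u}\,\diff u,
\end{equation*}
so the monotonicity of $\ell$ together with the non-negativity of $\kappa$ gives a pointwise ordering of the two convolutions compatible with the claim. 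Composing once more with the monotonicity of $\Gamma$ gives the corresponding ordering of $\Gamma_{\Tilde{\varepsilon}}[\ell]$ and $\Gamma_\varepsilon[\ell]$; iterating along the sequence $\Gamma_\varepsilon^n[\ind_{\{\infty\}}]$ and passing to the limit via \cref{prop: CONTINUITY OF GAMMA VAREPSILON PROPOSITIION} then yields the inequality.

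The main delicate point is verifying the correct direction of the kernel comparison in the second step, since the naive ``less mollification is closer to the identity'' heuristic must be reconciled with the claimed ordering; once this is in place, the rest of the argument is a direct iteration of the monotonicity and continuity framework already developed.
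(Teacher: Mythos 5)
Your treatment of the first inequality is correct and is essentially the paper's own argument: the bound $(\kappa^\varepsilon\ast\ell)_t\le\ell_t$ for nondecreasing $\ell\in M$, combined with the monotonicity of $\Gamma$ (\cref{lem: MONOTONICITY OF THE OPERATOR GAMMA}) and an induction along the iterates $\Gamma^n[\ind_{\{\infty\}}]$ and $\Gamma^n_\varepsilon[\ind_{\{\infty\}}]$, passes to the monotone limits from \cref{prop: EXISTENCE OF SOLUTIONS TO THE SINGULAR CASE} and \cref{prop: EXISTENCE OF SOLUTIONS TO THE SMOOTHENED CASE} and gives $\ubar{L}^\varepsilon\le\ubar{L}$.

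The second half has a genuine gap, and it sits exactly where you placed your caveat: you never state, let alone verify, the direction of the kernel comparison, yet that direction is the entire content of the claim. Carrying out your own change of variables, for nondecreasing $\ell\in M$ and $\tilde\varepsilon<\varepsilon$,
\begin{equation*}
(\kappa^{\varepsilon}\ast\ell)_t=\int_0^{t/\varepsilon}\kappa(u)\,\ell_{t-\varepsilon u}\,\diff u\;\le\;\int_0^{t/\varepsilon}\kappa(u)\,\ell_{t-\tilde\varepsilon u}\,\diff u\;\le\;\int_0^{t/\tilde\varepsilon}\kappa(u)\,\ell_{t-\tilde\varepsilon u}\,\diff u=(\kappa^{\tilde\varepsilon}\ast\ell)_t,
\end{equation*}
since $t-\varepsilon u\le t-\tilde\varepsilon u$, $\ell$ is nondecreasing and $\kappa\ge0$. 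Feeding this into your inductive template with the monotonicity of $\Gamma$ yields $\Gamma^n_{\varepsilon}[\ind_{\{\infty\}}]\le\Gamma^n_{\tilde\varepsilon}[\ind_{\{\infty\}}]$ for every $n$, hence in the limit $\ubar{L}^{\varepsilon}\le\ubar{L}^{\tilde\varepsilon}$: the minimal smoothed loss is \emph{decreasing} in $\varepsilon$. This is exactly the comparison the paper's proof records (for $\ell^1\le\ell^2$ it uses $\kappa^\varepsilon\ast\ell^1\le\ell^2$ and $\kappa^\varepsilon\ast\ell^1\le\kappa^{\tilde\varepsilon}\ast\ell^2$), it is what the lemma's title asserts, and it is what is actually invoked later in the proof of \cref{thm: Propogation of Minimality}, where one needs $L^{\varepsilon_n\vee\varepsilon_m}\le L^{\varepsilon_n\wedge\varepsilon_m}$. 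In particular the inequality as displayed in the statement, $\ubar{L}^{\tilde\varepsilon}\le\ubar{L}^{\varepsilon}$ for $\tilde\varepsilon<\varepsilon$, has the roles of $\varepsilon$ and $\tilde\varepsilon$ transposed and is the reverse of what this (and the paper's) argument produces; heuristically, a longer delay weakens the feedback and so produces fewer defaults by any given time. So your plan is the right one, but as written it proves nothing about the second inequality: you must commit to, and prove, the direction of the convolution comparison, and once you do, the ordering you obtain is the corrected one rather than the printed one.
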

	
	\begin{proof}
		For any deterministic functions $\Tilde{\ell}^1,\,\Tilde{\ell}^2 \in M$ such that $\Tilde{\ell}^1 \le \Tilde{\ell}^2$, a straightforward computation shows that $(\kappa^\varepsilon \ast \Tilde{\ell}^1) \le \Tilde{\ell}^2$. Furthermore, for any $t > 0$, we have:
		\begin{align*}
			\int_0^t \kappa^{\varepsilon}(s)\tilde{\ell}_{t - s}^1\diff s 
			&= \int_0^t \varepsilon^{-1}\kappa(s \varepsilon^{-1}) \tilde{\ell}_{t - s}^1\diff s\\
			&= \int_0^{\frac{t\tilde{\varepsilon}}{\varepsilon}} \tilde{\varepsilon}\kappa(\tilde{s} \tilde{\varepsilon}^{-1}) \tilde{\ell}_{t - \frac{\tilde{s}\tilde{\varepsilon}}{\varepsilon}}^1\diff \tilde{s} \\
			&\le \int_0^t \kappa^{\tilde{\varepsilon}}(\tilde{s})\tilde{\ell}_{t - \tilde{s}}^1\diff \tilde{s}\\
			&\le \int_0^t \kappa^{\tilde{\varepsilon}}(\tilde{s})\tilde{\ell}_{t - \tilde{s}}^2\diff \tilde{s}.
		\end{align*}
		The second equality follows from employing the substitution $\tilde{s}\tilde{\varepsilon}^{-1} = {s}{\varepsilon}^{-1}$, and the third inequality follows from the fact that $\tilde{\varepsilon}\varepsilon^{-1} <1$, hence $\frac{t\tilde{\varepsilon}}{\varepsilon} < t$ and $\tilde{\ell}_{t - \frac{\tilde{s}\tilde{\varepsilon}}{\varepsilon}}^1 \le \tilde{\ell}_{t - \tilde{s}}^1$. The claim now follows from the monotonicity from Proposition \ref{prop: CONTINUITY OF GAMMA PROPOSITIION} and Lemma \ref{lem: MONOTONICITY OF THE OPERATOR GAMMA}.        
	\end{proof}
	\subsection{Convergence of minimal solutions}
	From now on, we will fix a sequence of positive real numbers $(\varepsilon_n)_{n \ge 1}$ that converge to zero. As we have established that $\ubar{L}^\varepsilon$ is a decreasing process in $\varepsilon$ by Lemma \ref{lem: LOSS OF THE SYSTEM WITH INSTANTANEOUS FEEDBACK DOMINATES THAT WITH MOLLIFICATION AND MINIMAL SOLUTION L-EPS IS DECREASING IN EPSILON}, we shall exploit this structure to construct a solution to \eqref{eq: SINGULAR MCKEAN VLASOV EQUATION IN THE SYSTEMIC RISK MODEL WITH COMMON NOISE WITH SIMPLIFIED COEFFICIENTS IN THE MINIMALITY SECITON}. This will be a $W^0$-measurable solution that will be dominated by every other $W^0$-measurable solution. Therefore, we may conclude that this solution must coincide with $\ubar{L}$ on a set of full measure. 
	
	\begin{theorem}[Almost sure convergence]\label{thm: Propogation of Minimality}
		Let $(\varepsilon_n)_{n \ge 1}$ be a sequence of positive real numbers that converges to zero. Let $(\ubar{X}^\varepsilon,\ubar{L}^\varepsilon)$ denote the $W^0$-measurable solution to \eqref{eq: SMOOTHENED MCKEAN VLASOV EQUATION IN THE SYSTEMIC RISK MODEL WITH COMMON NOISE WITH SIMPLIFIED COEFFICIENTS IN THE MINIMALITY SECITON} constructed in Proposition \ref{prop: EXISTENCE OF SOLUTIONS TO THE SMOOTHENED CASE}, and $(\ubar{X},\ubar{L})$ denote the $W^0$-measurable solution to \eqref{eq: SINGULAR MCKEAN VLASOV EQUATION IN THE SYSTEMIC RISK MODEL WITH COMMON NOISE WITH SIMPLIFIED COEFFICIENTS IN THE MINIMALITY SECITON} constructed in Proposition \ref{prop: EXISTENCE OF SOLUTIONS TO THE SINGULAR CASE}. Then by considering the extended system
		\begin{equation*}
			\Tilde{\ubar{X}}^{\varepsilon_n} \vcentcolon= \begin{cases}
				X_{0-}   \hspace{0.6875cm} t \in [-1,0),\\
				\ubar{X}_t^{\varepsilon_n} \qquad \,\, t \ge 0,
			\end{cases}
			\qquad%
			\Tilde{\ubar{X}} \vcentcolon= \begin{cases}
				X_{0-}   \hspace{0.6875cm} t \in [-1,0),\\
				\ubar{X}_t \qquad \,\, t \ge 0,
			\end{cases}
		\end{equation*}
		we have $\operatorname{Law}(\Tilde{\ubar{X}}^{\varepsilon_n}\mid \mathcal{F}^{W^0}) \xrightarrow[]{}\operatorname{Law}(\Tilde{\ubar{X}}\mid \mathcal{F}^{W^0})$ almost surely in $(\mathcal{P}(D_\R),\mathfrak{T}_{M_1}^{\text{wk}})$. Furthermore, $\ubar{L}^{\varepsilon_n}$ converges to $\ubar{L}$ almost surely in $M$ and $\ubar{L}$ satisfies the physical jump condition.
	\end{theorem}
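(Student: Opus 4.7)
The plan is to exploit the monotonicity of the minimal smoothed losses from \cref{lem: LOSS OF THE SYSTEM WITH INSTANTANEOUS FEEDBACK DOMINATES THAT WITH MOLLIFICATION AND MINIMAL SOLUTION L-EPS IS DECREASING IN EPSILON} to extract an almost-sure limit, identify it with $\ubar{L}$ through the fixed-point structure of $\Gamma$, and then transfer the loss convergence pathwise to the processes and their conditional laws. Since $(\ubar{L}^{\varepsilon_n})_{n \geq 1}$ is monotone in $n$ on a set of full probability with $\ubar{L}^{\varepsilon_n} \leq \ubar{L}$, the pointwise limit $L^*_t \vcentcolon= \lim_n \ubar{L}^{\varepsilon_n}_t$ exists outside a null set for every $t \geq 0$; passing to its right-continuous modification $\ell^* \in M$ and repeating the dense-countable-set argument at the end of the proof of \cref{prop: CONTINUITY OF GAMMA PROPOSITIION}, one obtains $\ubar{L}^{\varepsilon_n} \to \ell^*$ almost surely in $M$, with $\ell^*$ being $W^0$-adapted and satisfying $\ell^* \leq \ubar{L}$ almost surely.

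The crucial ingredient for identifying $\ell^*$ as a fixed point of $\Gamma$ is the convergence $\kappa^{\varepsilon_n} \ast \ubar{L}^{\varepsilon_n} \to \ell^*$ almost surely in $M$. The upper direction uses $\kappa^{\varepsilon_n} \ast \ubar{L}^{\varepsilon_n} \leq \ubar{L}^{\varepsilon_n}$, valid because $\ubar{L}^{\varepsilon_n}$ is non-decreasing in time and $\kappa^{\varepsilon_n}$ has unit mass on $\R_+$; the lower direction is obtained by sandwiching against $\kappa^{\varepsilon_n} \ast \ubar{L}^{\varepsilon_m}$ for fixed $m$ and exploiting the continuity of $\ubar{L}^{\varepsilon_m}$ (\citep[Theorem~2.4]{hambly2019spde}), so that $\kappa^{\varepsilon_n} \ast \ubar{L}^{\varepsilon_m} \to \ubar{L}^{\varepsilon_m}$ as $n \to \infty$, before sending $m \to \infty$. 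Applying \cref{prop: CONTINUITY OF GAMMA PROPOSITIION} then gives
\[
\ubar{L}^{\varepsilon_n} \;=\; \Gamma\bigl[\kappa^{\varepsilon_n} \ast \ubar{L}^{\varepsilon_n}\bigr] \;\longrightarrow\; \Gamma[\ell^*] \qquad \text{a.s.\ in } M,
\]
so $\ell^* = \Gamma[\ell^*]$ almost surely. The minimality statement in \cref{prop: EXISTENCE OF SOLUTIONS TO THE SINGULAR CASE} forces $\ubar{L} \leq \ell^*$, and combined with $\ell^* \leq \ubar{L}$ this yields $\ell^* = \ubar{L}$ almost surely, hence $\ubar{L}^{\varepsilon_n} \to \ubar{L}$ almost surely in $M$.

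To transfer this to the processes, the explicit coupling on $(\Omega, \mathcal{F}, \prob)$ gives
\[
\Tilde{\ubar{X}}^{\varepsilon_n}_t - \Tilde{\ubar{X}}_t \;=\; \alpha\bigl(\ubar{L}_t - \mathfrak{L}^{\varepsilon_n}_t\bigr)\ind_{[0,\infty)}(t),
\]
and the right-hand side tends to $0$ at every continuity point of $\ubar{L}$ by \cref{cor:sec2: COROLLARY SHOWING THAT WE HAVE CONVERGENCE OF THE DELAYED / SMOOTHENED LOSS TO THE LIMITING LOSS}. Since $\Tilde{\ubar{X}}^{\varepsilon_n}$ and $\Tilde{\ubar{X}}$ only exhibit downward jumps, M1-continuity of addition on summands with jumps of common sign (\citep[Theorem~12.7.3]{whitt2002stochastic}) delivers $\Tilde{\ubar{X}}^{\varepsilon_n} \to \Tilde{\ubar{X}}$ almost surely in $(\DR, \textnormal{M1})$. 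Picking any countable $\tmwk$-convergence-determining family $\{f_k\} \subset \mathcal{C}_b(\DR)$, which exists since $(\DR, \textnormal{M1})$ is Polish, conditional dominated convergence gives $\E[f_k(\Tilde{\ubar{X}}^{\varepsilon_n}) \mid \mathcal{F}^{W^0}] \to \E[f_k(\Tilde{\ubar{X}}) \mid \mathcal{F}^{W^0}]$ almost surely for each $k$, and intersecting the countably many null sets yields the advertised almost-sure convergence in $(\mathcal{P}(\DR), \tmwk)$.

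The physical jump condition then follows by bracketing: because $(\ubar{X}, W, W^0, \operatorname{Law}(\Tilde{\ubar{X}} \mid \mathcal{F}^{W^0}))$ is realised as the almost-sure limit of the mollified systems, the upper bound $\Delta \ubar{L}_t \leq \inf\{x \geq 0 : \bm{\nu}_{t-}[0, \alpha x] < x\}$ from \cref{thm:sec2: THE MAIN EXISTENCE AND CONVERGENCE THEOREM OF SOLUTIONS GENERALISED FOR SECTION 2} applies, and since $\ubar{L}$ is a strong $W^0$-measurable solution with constant $\alpha$ and time-only coefficients, the matching lower bound supplied by \citep[Proposition~3.5]{ledger2021mercy} closes the sandwich. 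The principal technical obstacle is the joint mollifier-integrand limit $\kappa^{\varepsilon_n} \ast \ubar{L}^{\varepsilon_n} \to \ubar{L}$: one must simultaneously pass the shrinking kernel and the moving integrand in a topology where $\ubar{L}$ may jump, and the two-sided monotonicity from \cref{lem: LOSS OF THE SYSTEM WITH INSTANTANEOUS FEEDBACK DOMINATES THAT WITH MOLLIFICATION AND MINIMAL SOLUTION L-EPS IS DECREASING IN EPSILON} is precisely what permits the sandwich argument above to close cleanly.
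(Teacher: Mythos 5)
Your argument is correct and follows essentially the same route as the paper: a monotone almost-sure limit of the minimal smoothed losses, identification of the limit as a $W^0$-measurable solution (you via continuity of $\Gamma$ applied to $\kappa^{\varepsilon_n}\ast\ubar{L}^{\varepsilon_n}\to\ell^*$, the paper by running the same crossing-property/hitting-time argument directly on the coupled process $X^{L}$ — the same content, since a fixed point of $\Gamma$ is by definition such a solution), the two-sided minimality sandwich, conditional dominated convergence for the conditional laws, and the jump-size sandwich via \cref{app:lem: APPENDIX LEMMA STATING THE UPPER BOUND ON THE JUMPS OF THE LIMITING PROCESS} together with \citep[Proposition~3.5]{ledger2021mercy}. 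The one repair needed is at the start: for a general null sequence $(\varepsilon_n)$ the family $(\ubar{L}^{\varepsilon_n})_{n\geq 1}$ is not monotone in $n$, so, as in the paper, first extract a decreasing subsequence, take the monotone limit along it, and then use the $\varepsilon$-monotonicity of \cref{lem: LOSS OF THE SYSTEM WITH INSTANTANEOUS FEEDBACK DOMINATES THAT WITH MOLLIFICATION AND MINIMAL SOLUTION L-EPS IS DECREASING IN EPSILON} to conclude that the full sequence converges to the same limit.
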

	
	\begin{proof}
		As $(\varepsilon_n)_{n \ge 1}$ is a bounded sequence of reals converging to zero, we may find a decreasing subsequence $(\varepsilon_{n_{j}})_{j \ge 1}$ which converges to zero. We fix a $\mathbbm{D}\subset \R_+$ that is countable and dense in $\R_+$ and by Lemma \ref{lem: LOSS OF THE SYSTEM WITH INSTANTANEOUS FEEDBACK DOMINATES THAT WITH MOLLIFICATION AND MINIMAL SOLUTION L-EPS IS DECREASING IN EPSILON} we may find a $\Omega_0 \in \mathcal{F}^{W^0}$ such that $L^{\varepsilon_n \vee \varepsilon_m} \le L^{\varepsilon_n \wedge \varepsilon_m}$ for any $n,\,m \in \N$. By the boundness of $L^\varepsilon$ and Lemma \ref{lem: LOSS OF THE SYSTEM WITH INSTANTANEOUS FEEDBACK DOMINATES THAT WITH MOLLIFICATION AND MINIMAL SOLUTION L-EPS IS DECREASING IN EPSILON},
		\begin{equation*}
			\ell_t \vcentcolon= \underset{j \xrightarrow[]{} \infty}{\lim} L_t^{\varepsilon_{n_j}}\ind_{\Omega_0}
		\end{equation*}
		is well defined for any $t \in \mathbbm{D}$. Furthermore by Lemma \ref{lem: LOSS OF THE SYSTEM WITH INSTANTANEOUS FEEDBACK DOMINATES THAT WITH MOLLIFICATION AND MINIMAL SOLUTION L-EPS IS DECREASING IN EPSILON}, we may deduce that 
		\begin{equation*}
			\ell_t \vcentcolon= \underset{n \xrightarrow[]{} \infty}{\lim} L_t^{\varepsilon_{n}}\ind_{\Omega_0}
		\end{equation*}
		for any $t \in \mathbbm{D}$. It is clear by construction that $\ell_t$ is $\mathcal{F}_t^{W^0}$-measurable. Lastly, we define
		\begin{equation*}
			L_t = \underset{s \downarrow t,\, s \in \mathbbm{D}}{\lim} \ell_s.
		\end{equation*}
		It is immediate that $L$ is a \cadlag $W^0$-measurable process. Following the similar procedure as at the end of Proposition \ref{prop: CONTINUITY OF GAMMA PROPOSITIION} with the obvious changes, we obtain that $L^{\varepsilon_n} \xrightarrow[]{} L$ almost surely in $M$. For simplicity, we will denote $X^L$ by simply $X$ and let
		\begin{equation*}
			\Tilde{X} \vcentcolon= \begin{cases}
				X_{0-}   \hspace{0.6875cm} t \in [-1,0),\\
				X_t \qquad \,\, t \ge 0.
			\end{cases}
		\end{equation*}
		Then $\Tilde{X}^{\varepsilon_n} \xrightarrow[]{} \Tilde{X}$ almost surely in $\DR$. As $\Delta \Tilde{X}_t \le 0$ for every $t$ almost surely and 
		\begin{equation*}
			\prob\left[\underset{s \in (\tau_0(\Tilde{X}),\,\tau_0(\Tilde{X}) + h)}{\inf}\left\{\Tilde{X}_s - \Tilde{X}_{\tau_0(\Tilde{X})}\right\} \ge 0\right] = 0
		\end{equation*}
		for any $h \ge 0$, we have that $\tau_0$ is $M_1$-continuous at almost every path of $\Tilde{X}$. Therefore we deduce $(X,\,L)$ is a $W^0$-measurable solution to \eqref{eq: SINGULAR MCKEAN VLASOV EQUATION IN THE SYSTEMIC RISK MODEL WITH COMMON NOISE WITH SIMPLIFIED COEFFICIENTS IN THE MINIMALITY SECITON}. By Lemma \ref{lem: LOSS OF THE SYSTEM WITH INSTANTANEOUS FEEDBACK DOMINATES THAT WITH MOLLIFICATION AND MINIMAL SOLUTION L-EPS IS DECREASING IN EPSILON}, we have that $L \le \ubar{L}$ almost surely. By Proposition \ref{prop: EXISTENCE OF SOLUTIONS TO THE SINGULAR CASE}, we must have $L = \ubar{L}$ almost surely and hence $L^{\varepsilon_n} \xrightarrow[]{} \ubar{L}$ almost surely in $M$. As $\Tilde{\ubar{X}}^{\varepsilon_n}$ converges to $\Tilde{\ubar{X}}$ almost surely in $\DR$, then by the Conditional Dominated Convergence Theorem $\operatorname{Law}(\ubar{\Tilde{X}}^{\varepsilon_n}\mid \mathcal{F}^{W^0}) \xrightarrow[]{}\operatorname{Law}(\ubar{\Tilde{X}}\mid \mathcal{F}^{W^0})$ in $(\mathcal{P}(D_\R),\mathfrak{T}_{M_1}^{\text{wk}})$. By Lemma \ref{app:lem: APPENDIX LEMMA STATING THE UPPER BOUND ON THE JUMPS OF THE LIMITING PROCESS} and \citep[Proposition~3.5]{ledger2021mercy}, we have
		\begin{equation*}
			\Delta \ubar{L}_t = \inf\left\{ x \ge 0\,:\, \bm{\nu}_{t-}[0,\,\alpha x] < x\right\} \qquad \textnormal{a.s.}
		\end{equation*}
		for all $t \ge 0$.
	\end{proof}
	
	\begin{remark}[Propagation of minimality]
		This result is parallel to Theorem 6.6 in \cite{cuchiero2020propagation}, which states that minimal solutions to the finite particle system approximation will converge in probability to the limiting equation provided a unique physical solution exists. The above shows that the $W^0$-measurable minimal solutions to the smoothed system will converge to the $W^0$-measurable minimal solution of the limiting system \emph{without} needing to assume the existence of a unique physical solution. 
	\end{remark}
	
	All of the results in this section only required non-negativity of the initial condition. Moreover, we only established the existence of solutions to \eqref{eq: SINGULAR MCKEAN VLASOV EQUATION IN THE SYSTEMIC RISK MODEL WITH COMMON NOISE WITH SIMPLIFIED COEFFICIENTS IN THE MINIMALITY SECITON} and \eqref{eq: SMOOTHENED MCKEAN VLASOV EQUATION IN THE SYSTEMIC RISK MODEL WITH COMMON NOISE WITH SIMPLIFIED COEFFICIENTS IN THE MINIMALITY SECITON} but made no comments and have no results regarding the number of solutions in such a general setting. However, if we assume that the initial condition satisfies Assumption \ref{ass: MODIFIED AND SIMPLIED FROM SOJMARK SPDE PAPER ASSUMPTIONS II} \eqref{ass: MODIFIED AND SIMPLIED FROM SOJMARK SPDE PAPER ASSUMPTIONS THREE}, then there is a unique solution to \eqref{eq: SMOOTHENED MCKEAN VLASOV EQUATION IN THE SYSTEMIC RISK MODEL WITH COMMON NOISE WITH SIMPLIFIED COEFFICIENTS IN THE MINIMALITY SECITON}. In other words, the $\ubar{L}^\varepsilon$ we constructed is the only solution. Furthermore, if we further assume that the initial condition satisfies
	\begin{equation*}
		\inf \{x > 0\,:\, \bm{\nu}_{t-}[0,\alpha x] < x\} = 0,
	\end{equation*}
	then $0$ is an almost sure continuity point of $\ubar{X}$. Therefore these observations along with Theorem \ref{thm: Propogation of Minimality} allow us to deduce the following result.
	
	\begin{corollary}
		Let $(\varepsilon_n)_{n \ge 1}$ be a sequence of positive real numbers that converges to zero. We suppose that the initial condition, $X_{0-}$, satisfies Assumption \ref{ass: MODIFIED AND SIMPLIED FROM SOJMARK SPDE PAPER ASSUMPTIONS II} \eqref{ass: MODIFIED AND SIMPLIED FROM SOJMARK SPDE PAPER ASSUMPTIONS THREE} and that $\inf \{x > 0\,:\, \bm{\nu}_{t-}[0,\alpha x] < x\} = 0$. Then $\operatorname{Law}({{X}}^{\varepsilon_n}\mid \mathcal{F}^{W^0}) \xrightarrow[]{}\operatorname{Law}({\ubar{X}}\mid \mathcal{F}^{W^0})$ almost surely in $(\mathcal{P}(D_\R),\mathfrak{T}_{M_1}^{\text{wk}})$. Furthermore, ${L}^{\varepsilon_n}$ converges to $\ubar{L}$ almost surely in $M$ and $\ubar{L}$ satisfies the physical jump condition.
	\end{corollary}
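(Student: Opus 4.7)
The plan is to reduce the statement to what has already been established, relying on the two observations made in the paragraph just preceding the corollary. First, under \cref{ass: MODIFIED AND SIMPLIED FROM SOJMARK SPDE PAPER ASSUMPTIONS II}~\eqref{ass: MODIFIED AND SIMPLIED FROM SOJMARK SPDE PAPER ASSUMPTIONS THREE}, \cref{thm: EXISTENCE AND UNIQUESS OF SOLUTIONS THEOREM FROM SPDE} applies to the smoothed system \eqref{eq: SMOOTHENED MCKEAN VLASOV EQUATION IN THE SYSTEMIC RISK MODEL WITH COMMON NOISE WITH SIMPLIFIED COEFFICIENTS IN THE MINIMALITY SECITON} and guarantees that there is a unique strong solution. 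Consequently the minimal $W^0$-measurable solution $\ubar{L}^{\varepsilon_n}$ constructed in \cref{prop: EXISTENCE OF SOLUTIONS TO THE SMOOTHENED CASE} is the only solution, so $L^{\varepsilon_n}=\ubar{L}^{\varepsilon_n}$ almost surely and $X^{\varepsilon_n}=\ubar{X}^{\varepsilon_n}$ almost surely. Exactly the same identification holds for the extensions $\Tilde{X}^{\varepsilon_n}=\Tilde{\ubar{X}}^{\varepsilon_n}$.

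Next, \cref{thm: Propogation of Minimality} applied to this sequence gives
\[
\operatorname{Law}(\Tilde{\ubar{X}}^{\varepsilon_n}\mid \mathcal{F}^{W^0}) \longrightarrow \operatorname{Law}(\Tilde{\ubar{X}}\mid \mathcal{F}^{W^0}) \quad \textnormal{a.s. in } (\mathcal{P}(D_\R),\mathfrak{T}_{\textnormal{M1}}^{\text{wk}}),
\]
together with $\ubar{L}^{\varepsilon_n}\to \ubar{L}$ a.s. in $M$ and the physical jump condition for $\ubar{L}$. The physical jump condition is therefore immediate, and $L^{\varepsilon_n}\to \ubar{L}$ a.s.\ in $M$ follows directly from the identification in the first step.

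It remains to pass from the extensions $\Tilde{\ubar{X}}$ to the original processes $\ubar{X}$ on the right-hand side of the conditional-law statement. This is where the second hypothesis enters. The condition $\inf\{x>0:\bm{\nu}_{0-}[0,\alpha x]<x\}=0$ together with the upper bound from \cref{app:lem: APPENDIX LEMMA STATING THE UPPER BOUND ON THE JUMPS OF THE LIMITING PROCESS} forces $\Delta \ubar{L}_0=0$ a.s., so $\ubar{X}_0=\ubar{X}_{0-}$ a.s. and $t=0$ is almost surely a continuity point of $\ubar{X}$. Since the extended processes agree with the originals on $[0,T]$ and are constant on $[-1,0)$, the M1 behaviour of $\Tilde{\ubar{X}}^{\varepsilon_n}$ at the left endpoint $-1$ is controlled (the value $X_{0-}$ is shared), and the absence of an initial jump at $0$ ensures that convergence of the extensions in $(\mathcal{P}(D([-1,T+1],\R)),\mathfrak{T}_{\textnormal{M1}}^{\text{wk}})$ projects to convergence of $\operatorname{Law}(\ubar{X}^{\varepsilon_n}\mid\mathcal{F}^{W^0})$ to $\operatorname{Law}(\ubar{X}\mid\mathcal{F}^{W^0})$ in $(\mathcal{P}(D_\R),\mathfrak{T}_{\textnormal{M1}}^{\text{wk}})$ via the continuous mapping theorem applied to the restriction/projection from $D([-1,T+1],\R)$ to $D([0,T],\R)$, which is M1-continuous at paths without a jump at $0$.

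The main subtle point, and the only one that requires a small argument beyond quoting previous results, is the restriction step in the last paragraph: one must verify that the restriction map is a.s.\ continuous at the limit paths (guaranteed by $\Delta \ubar{L}_0=0$) and then apply the continuous mapping theorem to the random measures themselves, which is justified because the map induced on $\mathcal{P}(D_\R)$ is $\mathfrak{T}_{\textnormal{M1}}^{\text{wk}}$-continuous at any measure supported on the set of paths continuous at $0$. Everything else is a direct appeal to \cref{thm: EXISTENCE AND UNIQUESS OF SOLUTIONS THEOREM FROM SPDE} and \cref{thm: Propogation of Minimality}.
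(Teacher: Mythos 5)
Your proposal is correct and follows essentially the same route as the paper, whose own justification is exactly the discussion preceding the corollary: uniqueness of the smoothed system under Assumption \ref{ass: MODIFIED AND SIMPLIED FROM SOJMARK SPDE PAPER ASSUMPTIONS II} \eqref{ass: MODIFIED AND SIMPLIED FROM SOJMARK SPDE PAPER ASSUMPTIONS THREE} identifies $L^{\varepsilon_n}$ with $\ubar{L}^{\varepsilon_n}$, the no-mass-near-the-boundary condition makes $0$ an almost sure continuity point of $\ubar{X}$, and the rest is \cref{thm: Propogation of Minimality}. The only thing you add beyond the paper is spelling out the restriction/continuous-mapping step that removes the artificial extension, which is a sound (and welcome) elaboration of what the paper leaves implicit.
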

	
	
	\section{Rates of convergence}\label{subsec: RATES OF CONVERGENCE OF THE MOLLIFIED PROCESS}
	
	One of the limitations of the previous arguments is that they fail to yield a rate at which the convergence will occur. For simple systems, that is in the case of no drift, no common noise and a volatility parameter set to $1$, 
	we employ a coupling argument to derive the speed of convergence, which depends on the regularity of $L$.
	
	The regularity of the loss process, $L$, has been established in the literature, \cite{delarue2022global, hambly2019mckean}, for a suitable class of initial conditions. 
	In this setting, we not only have almost-sure convergence of the stochastic process along a subsequence, but we will have uniform convergence on any time domain before the time that the regularity of $L$ decays. These results are in some sense parallel to those presented in \cite{particleSystemHittingTimeChristoph}, with the crucial difference that we are looking at the rate of convergence of systems with smoothed loss to the limiting system, as opposed to the convergence of timestepping schemes that approximate the limiting system.
	Note also that \cite{kaushansky2020convergence, cuchiero2024implicit} provide convergence orders for timestepping schemes in the case of H{\"o}lder continuous $L$, but convergence without order in the case jump case.
	
	To be precise, we will be considering the following system of equations
	\begin{equation}\label{eq: THE SYSTEM BEING EXPLORED IN THE RATE OF CONVERGECNE SECTION}
		\begin{cases}
			X_t^\varepsilon = X_{0-} + W_t - \alpha \mathfrak{L}_t^\varepsilon,\\
			\tau^\varepsilon = \inf{\{t \ge 0\,:\, X_t^\varepsilon \le 0 \}},\\
			L_t^\varepsilon = \prob\left(\tau^\varepsilon \le t\right),\\
			\mathfrak{L}_t^\varepsilon = \int_0^t \kappa^\varepsilon(t-s)L_s^\varepsilon \diff s,
		\end{cases}
		\begin{cases}
			X_t = X_{0-} + W_t - \alpha {L}_t,\\
			\tau = \inf{\{t \ge 0\,:\, X_t \le 0 \}},\\
			L_t = \prob\left(\tau \le t\right),
		\end{cases}
	\end{equation}
	where $t \ge 0$, $W$ is a standard Brownian motion, $\kappa$ is a function from $\R$ to $\R$ satisfying Assumption \ref{ass: MODIFIED AND SIMPLIED FROM SOJMARK SPDE PAPER ASSUMPTIONS II} and $\operatorname{supp}(\kappa) \subset [0,\,1]$.
	
	
	\subsection{Theoretical estimates on rates of convergence}
	
	The main result of this section is the following:
	
	\begin{proposition}\label{prop: MAIN RESULT PROPOSITION IN THE GENERAL CASE}
		Let $(X,\,L)_{t \ge 0}$ be a physical solution to \eqref{eq: THE SYSTEM BEING EXPLORED IN THE RATE OF CONVERGECNE SECTION} with initial condition $X_{0-}$. Suppose further that $X_{0-}$ admits a bounded initial density $V_{0-}$ s.t.
		\begin{equation*}
			V_{0-}(x) \le Cx^\beta\mathbbm{1}_{\{x \le x_*\}} + D\mathbbm{1}_{\{x > x_*\}} \quad \forall\, x > 0,
		\end{equation*}
		where $C,\,D,\,x_*,\,\beta > 0$ are constants with $\beta < 1$. Then, 
		for any $t_0 \in (0,\,t_{\mathrm{explode}})$ there exists a constant $\Tilde{K} = \Tilde{K}(t_0)$ s.t.
		\begin{equation*}
			\sup \limits_{s \in [0,\,t_0]} \nnorm{L_s - {L}_s^\varepsilon} \le \Tilde{K}\varepsilon^{\beta/2},
		\end{equation*}
		where
		\begin{equation}\label{eq: DEFINITION OF EXPLOSION TIME IN THE SETTING OF HOLDER REGULARITY NEAR THE BOUNDARY}
			t_{\mathrm{explode}} \vcentcolon= \sup \{t > 0 \; : \; \norm{L}_{H^1(0,\,t)} < +\infty\} \in (0,\, +\infty].
		\end{equation}
	\end{proposition}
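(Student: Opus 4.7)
The plan is to use a pathwise coupling argument. I would place both $(X, L)$ and $(X^\varepsilon, L^\varepsilon)$ on the same probability space, driven by the common Brownian motion $W$ and initial condition $X_{0-}$. Since the feedback terms $L_t$ and $\mathfrak{L}_t^\varepsilon$ are both deterministic, the pathwise difference
\begin{equation*}
    X_t^\varepsilon - X_t = \alpha\bigl(L_t - \mathfrak{L}_t^\varepsilon\bigr) =: \eta_t
\end{equation*}
is a deterministic continuous function of $t$. Writing $\delta_t := \sup_{s \leq t}|\eta_s|$ and $\Delta_t := \sup_{s \leq t}|L_s - L_s^\varepsilon|$, the goal reduces to bounding $\Delta_{t_0}$ by $\tilde K \varepsilon^{\beta/2}$.

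The first ingredient is an event comparison. Because the two paths differ by a deterministic amount of modulus at most $\delta_s$, the running minima $\inf_{u \leq s} X_u$ and $\inf_{u \leq s} X_u^\varepsilon$ also differ by at most $\delta_s$, giving the pathwise inclusion $\{\tau \leq s\} \triangle \{\tau^\varepsilon \leq s\} \subset \{\inf_{u \leq s} X_u \in (-\delta_s, \delta_s]\}$ and hence
\begin{equation*}
    |L_s - L_s^\varepsilon| \leq \prob\Bigl(\inf_{u \leq s} X_u \in (-\delta_s,\, \delta_s]\Bigr).
\end{equation*}
This is where the assumption on $V_{0-}$ enters: for $t_0 < t_{\mathrm{explode}}$, the regularity theory (cf.\ \cite{hambly2019mckean, delarue2022global}) gives $L \in H^1((0, t_0))$ together with a H\"older-type control of the density of $X_t\ind_{\tau > t}$ near the boundary inherited from the exponent $\beta$ in $V_{0-}$. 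Combined with a reflection argument for the underlying Brownian motion shifted by the bounded-variation drift $\alpha L_t$, this should produce a small-ball estimate of the form
\begin{equation*}
    \prob\Bigl(\inf_{u \leq s} X_u \in (-\delta,\, \delta]\Bigr) \leq K_1 \delta^{\beta}, \qquad s \in [0, t_0],\ \delta > 0,
\end{equation*}
with a constant $K_1 = K_1(t_0)$ that stays finite for $t_0 < t_{\mathrm{explode}}$.

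Next I would control $\delta_t$ itself. Decomposing
\begin{equation*}
    \eta_t = \alpha\bigl[L_t - (\kappa^\varepsilon * L)_t\bigr] + \alpha\bigl[\kappa^\varepsilon * (L - L^\varepsilon)\bigr]_t,
\end{equation*}
the first summand is a self-mollification error of $L$, which the $H^1$-regularity on $(0, t_0)$ controls by $K_2 \sqrt{\varepsilon}$, while the second, by $\operatorname{supp}\kappa^\varepsilon \subset [0, \varepsilon]$, is bounded by $\sup_{u \in [(t-\varepsilon)_+, t]}|L_u - L_u^\varepsilon|$. Plugging these bounds into the small-ball estimate gives the fixed-point inequality
\begin{equation*}
    \Delta_t \leq K_1 \bigl(\alpha K_2 \sqrt{\varepsilon} + \alpha \Delta_t\bigr)^{\beta}.
\end{equation*}
To extract the rate $\varepsilon^{\beta/2}$ I would iterate this inequality on a partition of $[0, t_0]$ into windows of length $\varepsilon$: at each step the convolution term only sees $\Delta$ through its values in the previous window, so starting from $\Delta_0 = 0$ the bound bootstraps interval by interval to $\Delta_{t_0} \leq \tilde K \varepsilon^{\beta/2}$.

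The hardest step is expected to be the small-ball bound with exponent exactly $\beta$, which requires tracking how the H\"older structure of $V_{0-}$ propagates first to the density of the killed process and then to the law of the running minimum, uniformly over $[0, t_0]$ and with constants that do not blow up before $t_{\mathrm{explode}}$. A secondary difficulty is the closure of the fixed-point inequality: a direct Gr\"onwall is useless because $\beta < 1$ would only yield $\Delta_t \lesssim$ constant, and one must essentially exploit the delay structure of the convolution with $\kappa^\varepsilon$ together with $\Delta_0 = 0$ to iterate window by window.
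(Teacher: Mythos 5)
There is a genuine gap, and it sits exactly where you flagged the difficulty: the closing of the sub-linear fixed-point inequality. If your small-ball bound held with exponent $\beta$, you would arrive at $\Delta_t \leq K_1\bigl(\alpha K_2\sqrt{\varepsilon} + \alpha\Delta_t\bigr)^{\beta}$ with $\beta<1$, and this inequality simply does not force $\Delta_t$ to be small: it is satisfied by every $\Delta_t$ up to roughly the fixed point of $x = K_1(\alpha x)^\beta$, an order-one constant independent of $\varepsilon$. Your proposed rescue — iterating window by window of length $\varepsilon$ — fails for two reasons. First, since $\operatorname{supp}\kappa^\varepsilon \subset [0,\varepsilon]$, the convolution $\int_0^t\kappa^\varepsilon(t-s)(L_s - L^\varepsilon_s)\,\diff s$ involves $s \in [t-\varepsilon,\,t]$, i.e.\ the \emph{current} window, so the recursion never decouples as claimed. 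Second, even granting the decoupling, the iteration $\Delta_{(k+1)\varepsilon} \leq K_1(\alpha K_2\sqrt{\varepsilon} + \alpha\Delta_{k\varepsilon})^{\beta}$ starting from $\Delta_0 = 0$ produces $\Delta_{k\varepsilon} \lesssim \varepsilon^{\beta^{k}/2}$, which degrades geometrically in $k$; with $t_0/\varepsilon$ windows the exponent $\beta^{t_0/\varepsilon}$ collapses to zero and the rate is lost entirely. In addition, the small-ball estimate $\prob(\inf_{u\leq s}X_u \in (-\delta,\delta]) \leq K_1\delta^{\beta}$ uniformly on $[0,t_0]$ is asserted rather than proved, and the exponent $\beta$ is not the natural one here: the first-passage comparison for Brownian paths is \emph{linear} in the boundary shift $\delta$ (with a $(t-u)^{-1/2}$ singular factor), and the initial-density exponent $\beta$ enters through the time-regularity of $L$, not through a spatial small-ball exponent.

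This is precisely how the paper's proof avoids the trap. It first invokes \citep[Theorem~1.8]{hambly2019mckean} to get $L'_u \leq K u^{-(1-\beta)/2}$ on $[0,t_0]$, hence $L$ is $\tfrac{1+\beta}{2}$-H\"older and the self-mollification error $L_t - \int_0^t\kappa^\varepsilon(t-s)L_s\,\diff s$ is $O(\varepsilon^{(1+\beta)/2})$ (your $K_2\sqrt{\varepsilon}$ bound for this term is the analogous, slightly weaker, ingredient). Then, using the comparison $L \geq L^\varepsilon$ (\cref{lem: LOSS OF THE SYSTEM WITH INSTANTANEOUS FEEDBACK DOMINATES THAT WITH MOLLIFICATION AND MINIMAL SOLUTION L-EPS IS DECREASING IN EPSILON}) and the argument of \citep[Proposition~3.1]{hambly2019mckean}, it derives $0 \leq L_t - L^\varepsilon_t \leq c_1\int_0^t (t-u)^{-1/2}\,(L_u - \mathfrak{L}^\varepsilon_u)\,L'_u\,\diff u$, which is \emph{linear} in the error $L - \mathfrak{L}^\varepsilon$, with all the boundary information packed into the integrable singular weight $L'_u \leq Ku^{-(1-\beta)/2}$. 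This linear structure is then closed by a two-parameter singular Gronwall lemma (\cref{prop: GENERALISED GRONWALL TYPE INEQUALITY}) with kernel $(t-u)^{-1/2}u^{-(1-\beta)/2}$, yielding the rate $\varepsilon^{\beta/2}$ on all of $[0,t_0]$ in one stroke, with no window-by-window bootstrap. To repair your argument you would need to replace the $\delta^\beta$ small-ball bound by a bound linear in $\delta$ with an integrable time singularity — at which point you essentially recover the paper's proof.
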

	
	\begin{proof}
		By assumption, we are in the setting of \cite[Theorem~1.8]{hambly2019mckean}. Hence, we have a unique solution, $L$, to \eqref{eq: THE SYSTEM BEING EXPLORED IN THE RATE OF CONVERGECNE SECTION} up to the time $t_{\mathrm{explode}}$ defined as in \eqref{eq: DEFINITION OF EXPLOSION TIME IN THE SETTING OF HOLDER REGULARITY NEAR THE BOUNDARY}. Also, for all $t_0 \in (0,\, t_{\mathrm{explode}})$ there exists $K = K(t_0)$ s.t. $L \in \mathcal{S}(\frac{1-\beta}{2},\,K,\,t_0)$ where
		\begin{equation*}
			\mathcal{S}\left(\frac{1-\beta}{2},\,K,\,t_0 \right) \vcentcolon= \{l \in H^1(0,\,t_0) \; : \; {l}_t^{\prime} \le Kt^{-\frac{1-\beta}{2}} \text{ for almost all t } \in [0,\,t_0]\}
		\end{equation*}
		\\[1 ex]
		\noindent \underline{Step 1: Regularity of $L$.}
		Choose $t_0 \in (0,\,t_{\mathrm{explode}})$. As $L \in H^1(0,\,t_0)$, for Lebesgue a.e. $t,\, s \in (0,\,t_0)$ we may write
		\begin{equation*}
			L_t - L_s = \int_s^t {L}_s^\prime \diff{s} \le K(1-\gamma)^{-1}(t^{1 - \gamma} - s^{1-\gamma}),
		\end{equation*}
		where the last inequality is from $L \in \mathcal{S}(\gamma,\,K,\,t_0)$ with $\gamma = (1-\beta)/2$. This implies
		\begin{equation*}
			\frac{\nnorm{L_t - L_s}}{\nnorm{t-s}^{1 - \gamma}} \le \frac{K(t^{1 - \gamma} - s^{1-\gamma})}{(1-\gamma)\nnorm{t-s}^{1 - \gamma}} \le \frac{K}{1- \gamma}.
		\end{equation*}
		The last inequality is due to the subadditivity of concave functions. Therefore, $L_t$ is almost everywhere $\frac{\beta + 1}{2}$- H\"older continuous.
		\\[1 ex]
		\noindent \underline{Step 2: Decomposition of $L$ into an integral form.}
		We may write $L$ as
		\begin{equation*}
			L_t = \int_0^t\kappa^\varepsilon(t-s)L_s\diff{s} + \left[1 - \int_0^t\kappa^\varepsilon(t-s)\diff{s}\right]L_t + \int_0^t\kappa^\varepsilon(t-s)(L_t-L_s)\diff{s}.
		\end{equation*}
		Observe
		\begin{equation*}
			\left[1 - \int_0^t\kappa^\varepsilon(t-s)\diff{s}\right]L_t \le \frac{2K\varepsilon^{1 - \gamma}}{1 - \gamma} \qquad \textnormal{and} \qquad \int_0^t\kappa^\varepsilon(t-s)(L_t-L_s)\diff{s} \le \frac{K\varepsilon^{1-\gamma}}{1- \gamma}.
		\end{equation*}
		Therefore
		\begin{equation}\label{eq: BOUNDS ON THE ERROR TO APPROXIMATE THE LOSS FUNCTION AS AN INTEGRAL}
			L_t = \int_0^t\kappa^\varepsilon(t-s)L_s\diff{s} + \Psi^\varepsilon(t) \qquad \text{ where } \nnorm{\Psi^\varepsilon(t)} \le \frac{3K\varepsilon^{1-\gamma}}{1 - \gamma} \, \forall \,t \in [0,\,t_0].
		\end{equation}
		\\[1 ex]
		\noindent \underline{Step 3: Comparison between the delayed loss and instantaneous loss.}
		By Lemma \ref{lem: LOSS OF THE SYSTEM WITH INSTANTANEOUS FEEDBACK DOMINATES THAT WITH MOLLIFICATION AND MINIMAL SOLUTION L-EPS IS DECREASING IN EPSILON}, we have that $L \ge L^\varepsilon$, in the same spirit as \cite[Proposition~3.1]{hambly2019mckean},
		{\small\begin{align*}
				0 \le  L_t - L_t^\varepsilon \le c_1 \int_0^t \frac{L_u - \mathfrak{L}_u^\varepsilon}{\sqrt{t-u}} L^\prime_u \diff u 
				\le c_1 \int_0^t \int_0^u \kappa^\varepsilon(u - s) \frac{L_s - L_s^\varepsilon}{\sqrt{t-u}} L^\prime_u \diff s \diff u + c_1\int_0^t \frac{\Psi^\varepsilon(s)L^\prime_s}{\sqrt{t - s}} \diff s,
		\end{align*}}
		where $c_1 = \alpha \sqrt{2/\pi}$ and the second inequality follows by \eqref{eq: BOUNDS ON THE ERROR TO APPROXIMATE THE LOSS FUNCTION AS AN INTEGRAL}. As $L \in \mathcal{S}(\gamma,\,K,\,t_0)$, 
		\begin{equation*}
			0 \le \nnnorm{L_t - L_t^\varepsilon} \le K c_1 \int_0^t\int_0^u \frac{\kappa^\varepsilon(u - s)\nnnorm{L_s - L_s^\varepsilon}}{u^\gamma \sqrt{ t- u}} \diff s \diff u + K c_1 \int_0^t \frac{\nnnorm{\Psi^\varepsilon(s)}}{s^\gamma\sqrt{t - s}} \diff s.
		\end{equation*}
		By \eqref{eq: BOUNDS ON THE ERROR TO APPROXIMATE THE LOSS FUNCTION AS AN INTEGRAL}, we may find a constant $C_{K,t_0,\alpha}$ such that the second term above is bounded by $C_{K,t_0,\alpha} \varepsilon^{1 - \gamma}$. Therefore,
		\begin{align}
			0 \le \nnorm{L_t - {L}^\varepsilon_t} \le 
			K c_1 \int_0^t \nnorm{L_s - {L}_s^\varepsilon} \rho^\varepsilon(t,s) \diff s + C_{K,t_0,\alpha} \varepsilon^{1 - \gamma}, \label{eq: UPPERBOUND ON THE DIFFERENCES OF THE LOSSES}
		\end{align}
		where
		\begin{equation*}
			\rho^\varepsilon(t,s) = \int_s^t  \frac{\kappa^{\varepsilon}(u-s)}{u^{\gamma}\sqrt{t-u}} \diff u.
		\end{equation*}
		\\[1 ex]
		\noindent \underline{Step 4: Bounds on $ \rho^\varepsilon(t,s)$}\\[1 ex]
		As $\rho^\varepsilon$ depends on $t$ and $s$, we may not immediately apply Gr\"onwall's lemma or any of its generalisations. To this end, we first construct upper bounds to relax the dependence of $\rho^\varepsilon$ on $t$ and $s$ via the function $\kappa$. 
		In the case when $0 \le t - s \le \varepsilon$
		\begin{align*}
			\rho^\varepsilon(t,s) &= \int_s^t  \frac{\kappa^{\varepsilon}(u-s)}{u^\gamma\sqrt{t-u}} \diff u = \int_0^{\frac{t-s}{\varepsilon}} \frac{\kappa(\Tilde{u})}{(\varepsilon\Tilde{u} + s)^\gamma\sqrt{t-s - \varepsilon\Tilde{u}}} \diff \Tilde{u} \le \frac{\norm{\kappa}_{L^\infty}}{s^{\gamma}\varepsilon^{1/2}} \int_0^{\frac{t-s}{\varepsilon}} \frac{\diff \Tilde{u}}{\sqrt{\frac{t-s}{\varepsilon} - \Tilde{u}}}\\
			&= \frac{2\norm{\kappa}_{L^\infty}(t- s)^{1/2}}{s^\gamma\varepsilon} \le \frac{2\norm{\kappa}_{L^\infty}}{s^\gamma(t- s)^{1/2}},
		\end{align*}
		where we used the substitution $\Tilde{u} = (u-s)\varepsilon^{-1}$. In the case when ${t-s} > \varepsilon$, as the support of $\kappa^\varepsilon$ is in $[0,\,\varepsilon]$
		\begin{align*}
			\rho^\varepsilon(t,s) &= \int_s^t  \frac{\kappa^{\varepsilon}(u-s)}{u^\gamma\sqrt{t-u}} \diff u = \int^{s + \varepsilon}_s  \frac{\kappa^{\varepsilon}(u-s)}{u^\gamma\sqrt{t-u}} \diff u \\
			&\le \frac{\norm{\kappa}_{L^\infty}}{s^\gamma\varepsilon} \int^{s + \varepsilon}_s  \frac{\diff u}{\sqrt{t-u}} = \frac{2\norm{\kappa}_{L^\infty}}{s^\gamma} \left[\frac{(t-s)^{1/2} - (t - s - \varepsilon)^{1/2}}{\varepsilon}\right].
		\end{align*}
		\\[1 ex]
		\noindent \underline{Step 5: Gr\"onwall type argument}\\[1 ex]
		Now that we have sufficiently decoupled $\kappa$ from $\rho^\varepsilon$, we may put \eqref{eq: UPPERBOUND ON THE DIFFERENCES OF THE LOSSES} into a form where we may apply a generalised Gr\"onwall Lemma. By step 4 case 1 and \eqref{eq: UPPERBOUND ON THE DIFFERENCES OF THE LOSSES}, for $t \le \varepsilon$,
		\begin{align*}
			\nnorm{L_t - {L}^\varepsilon_t} \le K c_1 \int_0^t 2\norm{\kappa}_{L^\infty} s^{-\gamma} (t - s)^{-1/2} \nnorm{L_s - {L}_s^\varepsilon} \rho^\varepsilon(t,s) \diff s +  C_{K,t_0,\alpha} \varepsilon^{1 - \gamma}.
		\end{align*}
		By the second case of step 4 and \eqref{eq: UPPERBOUND ON THE DIFFERENCES OF THE LOSSES}, we have for $t > \varepsilon $
		\begin{align*}
			\nnorm{L_t - {L}^\varepsilon_t} &\le K c_1 \int_0^{t - \varepsilon} \nnorm{L_s - {L}_s^\varepsilon} \rho^\varepsilon(t,s) \diff s + K c_1 \int_{t - \varepsilon}^t \nnorm{L_s - {L}_s^\varepsilon} \rho^\varepsilon(t,s) \diff s+C_{K,t_0,\alpha} \varepsilon^{1 - \gamma}\\
			&\le 2 K c_1 \norm{\kappa}_{L^\infty} \int_0^{t - \varepsilon} \left[\frac{(t-s)^{1/2} - (t - s - \varepsilon)^{1/2}}{\varepsilon}\right]s^{-\gamma} \nnorm{L_s - {L}_s^\varepsilon} \diff s \\
			&\phantom{\le}+ 2 K c_1 \norm{\kappa}_{L^\infty} \int_{t - \varepsilon}^t (t -s)^{-1/2}s^{-\gamma}\nnorm{L_s - {L}_s^\varepsilon} \diff s + C_{K,t_0,\alpha} \varepsilon^{1 - \gamma}\\
			&\le 2 K c_1 \norm{\kappa}_{L^\infty} \sum_{j \ge 2 } \Tilde{C}_j \varepsilon^{j-1} \int_0^{t - \varepsilon} (t-s)^{\frac{-2j+1}{2}}s^{-\gamma} \nnorm{L_s - {L}_s^\varepsilon} \diff s \\
			&\phantom{\le}+ 2 K c_1 \norm{\kappa}_{L^\infty} \int_{0}^t (t -s)^{-1/2}s^{-\gamma}\nnorm{L_s - {L}_s^\varepsilon} \diff s+ C_{K,t_0,\alpha} \varepsilon^{1 - \gamma},
		\end{align*}
		where the last line follows from applying Taylor's Theorem and the Monotone Convergence Theorem. We note $\Tilde{C}_j \vcentcolon= (2j-2)!/[j!(j-1)!2^{2j-1}]$ is summable. Now we turn our attention onto the expression in the penultimate line. 
		In the case when $\varepsilon < t \le 2\varepsilon$, \begin{equation*}
			\Tilde{C}_j \varepsilon^{j-1} \int_0^{t - \varepsilon} (t-s)^{\frac{-2j+1}{2}}s^{-\gamma} \diff s \le \Tilde{C}_j \varepsilon^{j-1} \varepsilon^{\frac{-2j+1}{2}} \int_0^{t - \varepsilon} s^{-\gamma}  \diff s 
			\le \Tilde{C}_j \varepsilon^{\frac{-1}{2}} \int_0^{\varepsilon} s^{-\gamma} \diff s 
			\le \frac{\Tilde{C}_j \varepsilon^{\frac{1}{2} - \gamma}}{1 - \gamma}
		\end{equation*}
		where the first inequality follows from the fact that $(-2j + 1)/2 < 0$ as $j \ge 2$ and $t - s \in [\varepsilon,\,t]$ for $s \in [0,\,t-\varepsilon]$. In the case when $ t >  2\varepsilon$, we observe that
		\begin{equation*}
			\Tilde{C}_j \varepsilon^{j-1} \int_0^{\varepsilon} (t-s)^{\frac{-2j+1}{2}}s^{-\gamma} \diff s 
			\le \Tilde{C}_j \varepsilon^{j-1} \varepsilon^{\frac{-2j+1}{2}} \int_0^{\varepsilon} s^{-\gamma}  \diff s 
			\le \frac{\Tilde{C}_j \varepsilon^{\frac{1}{2} - \gamma}}{1 - \gamma},
		\end{equation*}
		and
		\begin{align*}
			\Tilde{C}_j \varepsilon^{j-1} \int_\varepsilon^{t - \varepsilon} (t-s)^{\frac{-2j+1}{2}}s^{-\gamma} \diff s &\le \Tilde{C}_j \varepsilon^{j-1} \varepsilon^{-\gamma} \int_\varepsilon^{t - \varepsilon} (t-s)^{\frac{-2j+1}{2}}  \diff s \\
			&= \Tilde{C}_j \varepsilon^{j-1 - \gamma} \frac{2}{2j-3}\left.(t - s)^{\frac{-2j+3}{2}}\right|_{s = \varepsilon}^{t - \varepsilon}\\ 
			&\le \frac{2\Tilde{C}_j \varepsilon^{j-1 - \gamma}\varepsilon^{\frac{-2j+3}{2}}}{2j - 3}
			= \frac{2\Tilde{C}_j \varepsilon^{1/2 - \gamma}}{2j - 3} 
			\le \frac{2\Tilde{C}_j \varepsilon^{1/2 - \gamma}}{1 - \gamma}. 
		\end{align*}
		Therefore, we have shown that
		\begin{equation*}
			\Tilde{C}_j \varepsilon^{j-1} \int_0^{t - \varepsilon} (t-s)^{\frac{-2j+1}{2}}s^{-\gamma} \diff s 
			\le \frac{3\Tilde{C}_j \varepsilon^{1/2 - \gamma}}{1 - \gamma}
		\end{equation*}
		for all $t > \varepsilon$. As $L$ and ${L}^\varepsilon$ are bounded by $1$, we have independent of $t$ being greater or less than $\varepsilon$,
		{\small\begin{align*}
				\nnorm{L_t - {L}^\varepsilon_t} &\le  2 K c_1 \norm{\kappa}_{L^\infty} \int_{0}^t (t -s)^{-1/2}s^{-\gamma}\nnorm{L_s - {L}_s^\varepsilon} \diff s+ \frac{12 K c_1 \norm{\kappa}_{L^\infty}\varepsilon^{1/2 - \gamma} \sum_{j \ge 2} \Tilde{C}_j}{1 - \gamma} + C_{K,t_0,\alpha} \varepsilon^{1 - \gamma} \\
				&= 2 K c_1 \norm{\kappa}_{L^\infty} \int_{0}^t (t -s)^{-1/2}s^{-\gamma}\nnorm{L_s - {L}_s^\varepsilon} \diff s+ C_{K,t_0,\alpha,\gamma}\varepsilon^{1/2 - \gamma},
		\end{align*}}
		for any $t \in [0,\,t_0]$. Lastly, by Proposition \ref{prop: GENERALISED GRONWALL TYPE INEQUALITY}, using $\Tilde{\beta} = 1/2$ and $\Tilde{\alpha} = 1 - \gamma$, then $\Tilde{\alpha} + \Tilde{\beta}  - 1 > 0$ as $\gamma < 1/2$ and
		\begin{align*}
			\nnorm{L_t - {L}^\varepsilon_t} &\le C_{K,t_0,\alpha,\gamma}\varepsilon^{1/2 - \gamma}\sum_{n \ge 0} (2 K c_1 \norm{\kappa}_{L^\infty})^n C_n t_0^{n(1/2 - \gamma)}\\
			&=  C_{K,t_0,\alpha,\gamma}\varepsilon^{\beta/2}\sum_{n \ge 0} (2 K c_1 t_0^{\beta/2} \norm{\kappa}_{L^\infty})^n C_n,
		\end{align*}
		where in the last equality we used $\gamma = (1 - \beta)/2$. This completes the proof.
	\end{proof}
	
	The works of Fasano et al.\ \cite{fasano1981new, fasano1983critical}, Di Benedetto et al.\ \cite{dibenedetto1984ill}, and Chayes et al.\ \cite{chayes1996hydrodynamic, chayes2008two} extensively investigate the supercooled Stefan problem, focusing on the existence of a unique solution without blow-ups for all time or until the entire liquid freezes. Recently, Delarue et al.\ \citep{delarue2022global} established global uniqueness for the system described in \eqref{eq: THE SYSTEM BEING EXPLORED IN THE RATE OF CONVERGECNE SECTION}, under the condition that the density of the initial condition $X_{0-}$ undergoes only a finite number of changes in monotonicity. 
	Under this assumption, the loss is continuously differentiable on $(0,\,t_{\mathrm{explode}})$ for 
	$t_{\mathrm{explode}}$
	defined in \eqref{eq: DEFINITION OF EXPLOSION TIME IN THE SETTING OF HOLDER REGULARITY NEAR THE BOUNDARY}.
	Moreover, if the initial density has sufficient regularity, the loss will be continuously differentiable down to $t=0$. Motivated by these results, we next investigate the rate of convergence when the loss function is differentiable.
	
	
	\begin{proposition}\label{prop: MAIN RESULT PROPOSITION IN THE CONTINUOUS CASE}
		Suppose we have a unique physical solution $(X, L)$ to \eqref{eq: THE SYSTEM BEING EXPLORED IN THE RATE OF CONVERGECNE SECTION} such that $L \in \mathcal{C}^1([0,\,t_{\mathrm{explode}}))$ for $t_{\mathrm{explode}} \in (0,\,\infty]$. Then for any $t_0 \in (0,\,t_{\mathrm{explode}})$, there exists a constant $\Tilde{K} = \Tilde{K}(t_0)$ such that
		\begin{equation*}
			\sup \limits_{s \in [0,\,t_0]} \nnorm{L_s - {L}_s^\varepsilon} \le \Tilde{K}\varepsilon^{1/2}.
		\end{equation*}
	\end{proposition}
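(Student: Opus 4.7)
The plan is to retrace the proof of \cref{prop: MAIN RESULT PROPOSITION IN THE GENERAL CASE} and observe that the hypothesis $L \in \mathcal{C}^1([0,t_{\mathrm{explode}}))$ is strictly stronger than $L \in \mathcal{S}(\gamma, K, t_0)$ with $\gamma = (1-\beta)/2$: concretely, for a fixed $t_0 \in (0, t_{\mathrm{explode}})$ we can take $K := \sup_{s \in [0,t_0]} L'_s < \infty$ and treat $L$ as $K$-Lipschitz on $[0,t_0]$, i.e., work effectively with $\gamma = 0$. All five steps of the previous proof then go through with the weights $s^{-\gamma}$ set to $1$ and improved bounds on the error $\Psi^\varepsilon$.

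First I would redo the decomposition step. Writing
\begin{equation*}
    L_t = \int_0^t \kappa^\varepsilon(t-s) L_s \diff s + \Psi^\varepsilon(t), \quad \Psi^\varepsilon(t) = \left[1 - \int_0^t \kappa^\varepsilon(t-s)\diff s\right]L_t + \int_0^t \kappa^\varepsilon(t-s)(L_t - L_s)\diff s,
\end{equation*}
the first summand vanishes for $t > \varepsilon$ and is bounded by $K\varepsilon$ for $t \leq \varepsilon$ (using $L_t \leq Kt$), while the second is bounded by $K\varepsilon$ using Lipschitzness of $L$ and $\supp(\kappa^\varepsilon) \subset [0,\varepsilon]$. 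Thus $|\Psi^\varepsilon(t)| \leq 2K\varepsilon$ uniformly in $t \in [0,t_0]$, an order-of-magnitude improvement over the $\varepsilon^{1-\gamma}$ bound of the previous proposition.

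Next I would invoke the same comparison estimate (\citep[Proposition~3.1]{hambly2019mckean}) yielding $L \geq L^\varepsilon$ and
\begin{equation*}
    0 \leq L_t - L_t^\varepsilon \leq c_1 \int_0^t \frac{L_u - \mathfrak{L}_u^\varepsilon}{\sqrt{t-u}} L'_u \diff u,
\end{equation*}
and substitute the decomposition of $L_u$ together with $L'_u \leq K$ to obtain
\begin{equation*}
    |L_t - L_t^\varepsilon| \leq Kc_1 \int_0^t |L_s - L_s^\varepsilon|\, \rho^\varepsilon(t,s) \diff s + C\varepsilon,
\end{equation*}
where $\rho^\varepsilon(t,s) = \int_s^t \kappa^\varepsilon(u-s)/\sqrt{t-u}\, \diff u$ and the $C\varepsilon$ term absorbs $Kc_1 \int_0^t |\Psi^\varepsilon(s)|/\sqrt{t-s}\, \diff s \leq 2K^2 c_1 \varepsilon \cdot 2\sqrt{t_0}$.

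I would then bound $\rho^\varepsilon$ exactly as in Step 4 of the previous proof but with the $s^{-\gamma}$ weight absent: for $t-s \leq \varepsilon$, $\rho^\varepsilon(t,s) \leq 2\|\kappa\|_\infty (t-s)^{-1/2}$; for $t-s > \varepsilon$, $\rho^\varepsilon(t,s) \leq 2\|\kappa\|_\infty [(t-s)^{1/2} - (t-s-\varepsilon)^{1/2}]/\varepsilon$. Taylor expanding $(t-s)^{1/2} - (t-s-\varepsilon)^{1/2}$ and applying the same bookkeeping as in the previous proof (now without the $s^{-\gamma}$ factors, so the elementary integrals $\varepsilon^{j-1} \int_0^{t-\varepsilon}(t-s)^{(-2j+1)/2} \diff s$ are $O(\varepsilon^{1/2})$ uniformly in $j$, with coefficients summing by $\sum_j \tilde C_j < \infty$), one arrives at
\begin{equation*}
    |L_t - L_t^\varepsilon| \leq 2Kc_1\|\kappa\|_\infty \int_0^t (t-s)^{-1/2} |L_s - L_s^\varepsilon| \diff s + \tilde C \varepsilon^{1/2},
\end{equation*}
valid for all $t \in [0,t_0]$. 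Finally, the generalised Gronwall inequality (the one used at the end of the previous proof, with $\tilde\alpha = 1$, $\tilde\beta = 1/2$, hence $\tilde\alpha + \tilde\beta - 1 = 1/2 > 0$) closes the argument and delivers the claimed rate $\tilde K \varepsilon^{1/2}$ on $[0,t_0]$. I do not anticipate any genuine obstacle beyond the previous proof; the only care needed is the same Taylor-expansion bookkeeping of $\rho^\varepsilon$ in the regime $t-s > \varepsilon$, which is entirely routine once the weights $s^{-\gamma}$ are removed.
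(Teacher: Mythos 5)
Your proposal is correct and follows essentially the same route as the paper's own proof: decompose $L$ into its mollified form with an $O(\varepsilon)$ remainder $\Psi^\varepsilon$, apply the comparison estimate from \citep[Proposition~3.1]{hambly2019mckean} together with $L \geq L^\varepsilon$, bound $\rho^\varepsilon(t,s)$ in the two regimes $t-s \leq \varepsilon$ and $t-s > \varepsilon$ via the Taylor-expansion bookkeeping, and close with the generalised Gronwall inequality using $\Tilde{\alpha} = 1$, $\Tilde{\beta} = 1/2$. The only differences are immaterial constants (e.g.\ $2K\varepsilon$ versus $3K\varepsilon$ for $\Psi^\varepsilon$).
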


	\begin{proof}
		See appendix.
	\end{proof}
	
	
	\subsection{Numerical simulations}\label{sec: SECTION ON THE NUMERICAL SCHEMES}
	Lastly, we investigate the convergence rate of the smoothed loss function towards the singular loss function through numerical simulations. The estimates from the previous section, for the case without common noise, give upper bounds on the rate at which the smoothed system will approach the singular system, prior to the decline in regularity of the singular loss function. The proofs employed in the analysis utilised relatively crude upper bounds, prompting the question of whether the obtained rates are optimal.
	
	To the best of our knowledge, there is no existing literature on the regularity of the loss process in the presence of common noise. Consequently, the theoretical methods employed earlier may not be applicable in this scenario. Nevertheless, we can still explore the convergence rate numerically in this context as well.  We consider the simplest setting with common noise,
	\begin{equation}
		\begin{cases}
			X_t^\varepsilon = X_{0-} + (1 - \rho^2)^{1/2}W_t + \rho W_t^0 - \alpha \mathfrak{L}_t^\varepsilon,\\
			\tau^\varepsilon = \inf{\{t \ge 0\,:\, X_t^\varepsilon \le 0 \}},\\
			L_t^\varepsilon = \prob\left[\tau^\varepsilon \le t \mid \mathcal{F}_t^{W^0}\right],\\
			\mathfrak{L}_t^\varepsilon = \int_0^t \kappa^\varepsilon(t-s)L_s^\varepsilon \diff s,
		\end{cases}
		\begin{cases}
			X_t = X_{0-} + (1 - \rho^2)^{1/2}W_t + \rho W_t^0 - \alpha {L}_t,\\
			\tau = \inf{\{t \ge 0\,:\, X_t \le 0 \}},\\
			L_t = \prob\left[\tau \le t\mid \mathcal{F}^{W^0}_t\right],
		\end{cases}
	\end{equation}
	where $\rho \in [0,1)$ is a fixed constant. We propose a numerical scheme that employs a particle system approximation to compute both the limiting and smoothed loss functions. Instead of employing numerical integration to compute the mollified loss of $X^\varepsilon$, the system will feel the impulse from a particle hitting the boundary at a random time in the future sampled from a random variable whose probability density function is the mollification kernel. The scheme is given in Algorithm \ref{alg: ALGORITHM USED TO APPROXIMATE THE SMOOTHED LOSS WITH COMMON NOISE}.

	By setting $\rho$ to zero, the algorithm approximates the loss in the setting without common noise. To compute the limiting loss function we set $\varsigma$ to zero. In the case when $\rho = 0$ and $\varsigma = 0$, we recover the numerical scheme proposed in \cite{particleSystemHittingTimeChristoph, kaushansky2020convergence}. In the numerical experiments below, we used $10^{6.5}$ particles and a uniform time mesh of width $\Delta_t \vcentcolon= \min_i\{\varepsilon_i\} / 10$, with $\{\varepsilon_i\}_i$ the set of delay values, so that $t_i = i \times \Delta_t$ in Algorithm \ref{alg: ALGORITHM USED TO APPROXIMATE THE SMOOTHED LOSS WITH COMMON NOISE}.
	
	Overall, given sufficient regularity of the loss function, a rate of convergence close to $1$ is observed. In the other cases studied with H\"older initial data, with the possibility of there being a jump after the test interval, and with common noise, the rate of convergence appears to be between $1/2$ and $1$. See Appendix \ref{app:sec: FURTHER ANALYSIS AND DISCUSSION ON NUMERICS} for further analysis regarding the rate of convergence and how $\Delta_t$ affects the estimated rate. 
	
	\subsection{Initial density vanishing at zero and no discontinuity or common noise}\label{subsec: SUBSECTION ON THE RATE OF CONVERGENCE IN THE CONTINUOUS SETTING}
	
	Two different initial conditions were examined in our experimental analysis, and no discontinuity was observed in either case. In the first simulation, we set $X_{0-}$ to follow a uniform distribution on $[0.25, 0.35]$, with $\alpha =0.5$. In the second scenario, $X_{0-}$ was generated from a gamma distribution with parameters $\left(2.1,\frac{1}{2}\right)$, with $\alpha=1.3$. Interestingly, the data from \cref{fig: FIGURE IN THE CONTINUOUS SETTING} indicate a convergence rate of $1$ in both cases. This exceeds the theoretically guaranteed convergence rate of $1/2$.
	
	\vspace{10pt}
	\begin{algorithm}[H]
		\DontPrintSemicolon
		\SetAlgoLined
		\Require{$N\,-\,$ number of interacting particles}
		\Require{$n\,-\,$ number of time steps: $0 < t_1 < t_2 < \ldots < t_n$}
		\Require{$\varepsilon\,-\,$ the strength of the delay}
		Draw one sample of  $W^0$\;
		Draw $N$ samples of $X_{0-}$,  $W$, and $\varsigma$ (r.v. with distribution $\kappa^\varepsilon(t)\diff t$)\;
		\For{$i = 1:n$\,}{
			$\hat{L}^\varepsilon_{t_i} = \frac{1}{N}\sum_{m=1}^N \ind_{(-\infty,0]}(\min_{t_j < t_i} \{ \hat{X}_{t_j}^{(m)}\})$\;
			\For{$k = 1:N$\,}{
				Update $\hat{X}_{t_i}^{(k)} = X_{0-}^{(k)} + (1 - \rho^2)^{1/2}W_{t_i}^{(k)} + \rho W^0_{t_i} - \frac{\alpha}{N}\sum_{m=1}^N \ind_{(-\infty,0]}(\min_{t_j < t_i - \varsigma^{(m)}} \{ \hat{X}_{t_j}^{(m)}\})$\;
			}
		}
		\caption{Discrete time Monte Carlo scheme for simulation of the smoothed loss process with common noise}
		\label{alg: ALGORITHM USED TO APPROXIMATE THE SMOOTHED LOSS WITH COMMON NOISE}
	\end{algorithm}
	
	\begin{figure}[H]
		\makebox[\linewidth][c]{
			\begin{subfigure}[b]{0.55\columnwidth}
				\centering
				\includegraphics[width=\columnwidth]{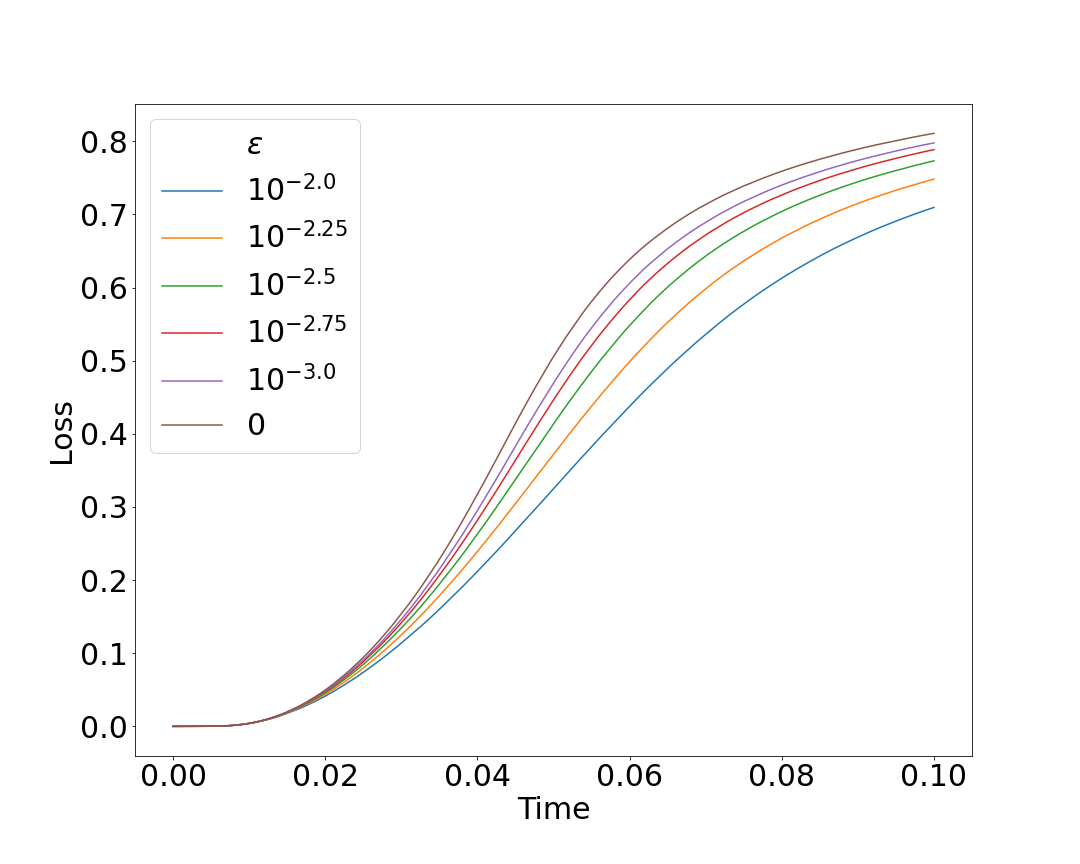}
				\caption{$X_{0-} \sim^d  \operatorname{Uniform}[0.25,\,0.35],\, \alpha = 0.5$}
				
			\end{subfigure}
			\hfill
			\begin{subfigure}[b]{0.55\columnwidth}
				\centering
				\includegraphics[width=\columnwidth]{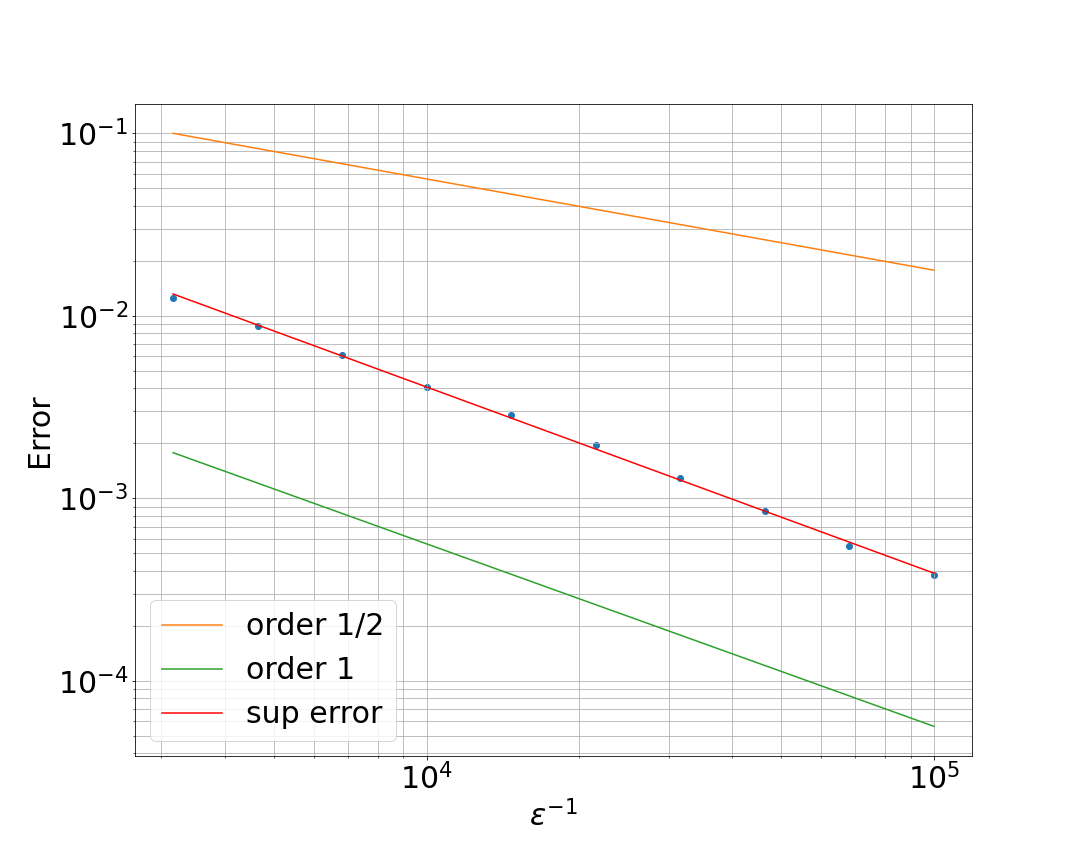}
				\caption{Rate of Convergence}
				
			\end{subfigure}
		}
		
		\makebox[\linewidth][c]{
			\begin{subfigure}[b]{0.55\columnwidth}
				\centering
				\includegraphics[width=\columnwidth]{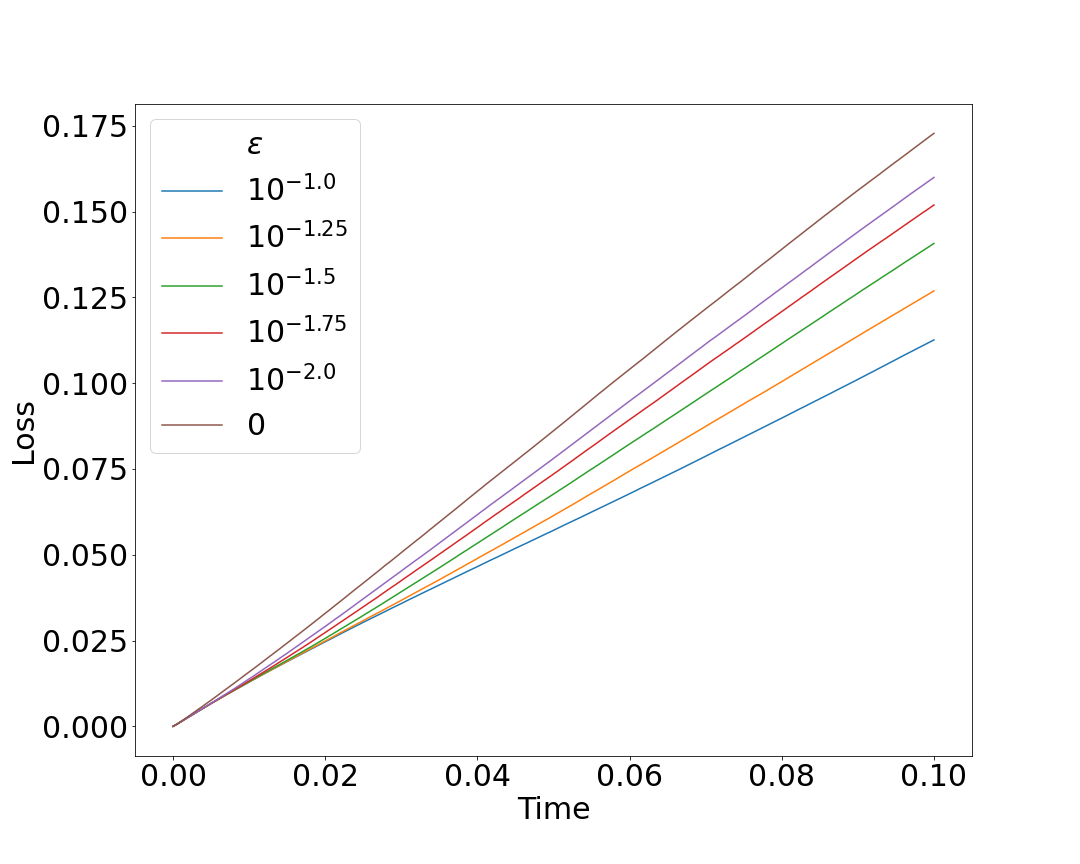}
				\caption{$X_{0-} \sim^d  \Gamma(2.1,\frac{1}{2}),\, \alpha = 1.3$}
				
			\end{subfigure}
			\hfill
			\begin{subfigure}[b]{0.55\columnwidth}
				\centering
				\includegraphics[width=\columnwidth]{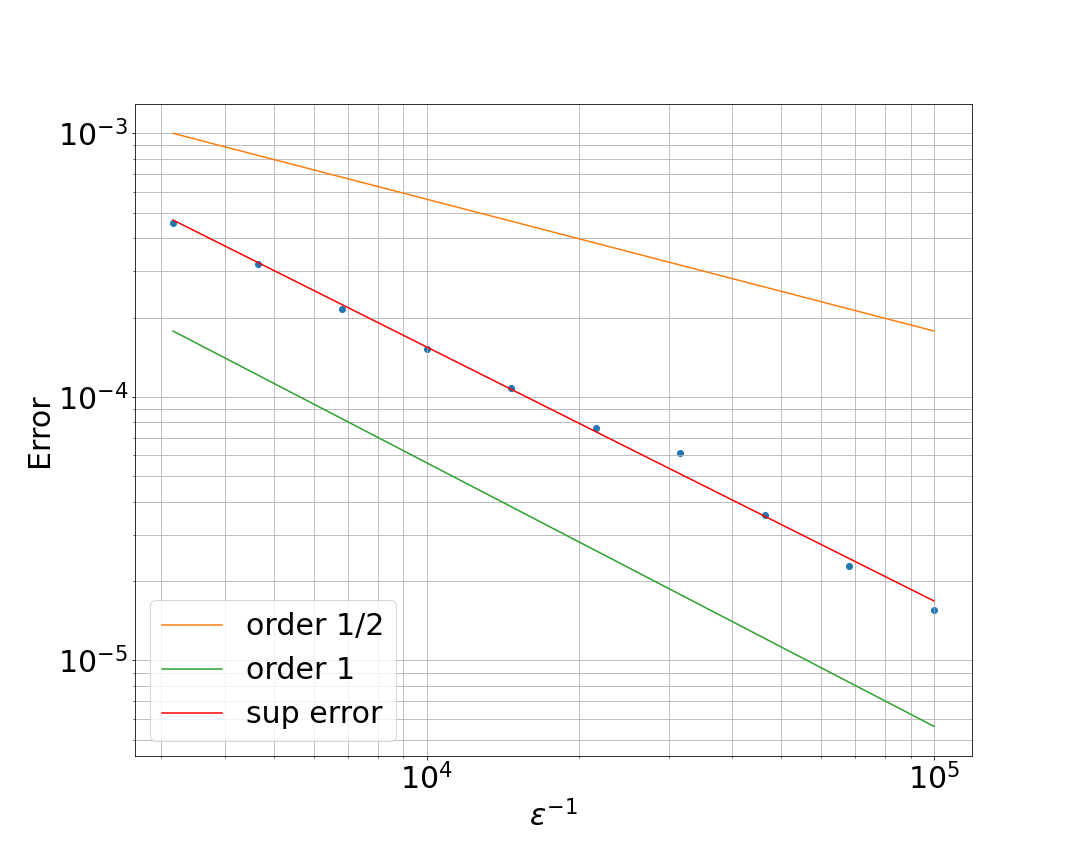}
				\caption{Rate of Convergence}
				
			\end{subfigure}
		}
		\caption{Initial density vanishing at zero with no discontinuity or common noise}
		\label{fig: FIGURE IN THE CONTINUOUS SETTING}
	\end{figure}
	
	\subsection{Setting with discontinuity and without common noise}
	
	To simulate a setting where we would see a systemic event, we changed the parameters of the Gamma distribution such that most of the mass will be near the boundary and made $\alpha$ sufficiently large. In \cref{fig: FIGURE IN THE DISCONTINUOUS SETTING}, we conducted simulations using two different initial conditions. In the first case, we set $X_{0-}$ to follow a Gamma distribution with parameters $(1.2, 0.5)$ and set $\alpha= 0.9$. In the second case, $X_{0-}$ was generated from a Gamma distribution with parameters $(1.4, 0.5)$, and $\alpha= 2$. {Within this particular setup, we observe a convergence rate between $1/2$ and $1$ 
		prior to the first jump, while the theory only gives a rate below $0.5$.} The rate 
	does not display a clear dependence on the characteristics of the density of $X_{0-}$ near the boundary, in contrast to the theoretical estimates; see Table \ref{tab: ALTERNATIVE RATE OF CONVERGENCE ANALYSIS TABLE} for more details on the data. 
	
	\begin{figure}[!t]
		
		\makebox[\linewidth][c]{
			\begin{subfigure}[b]{0.42\columnwidth}
				\centering
				\includegraphics[width=\columnwidth]{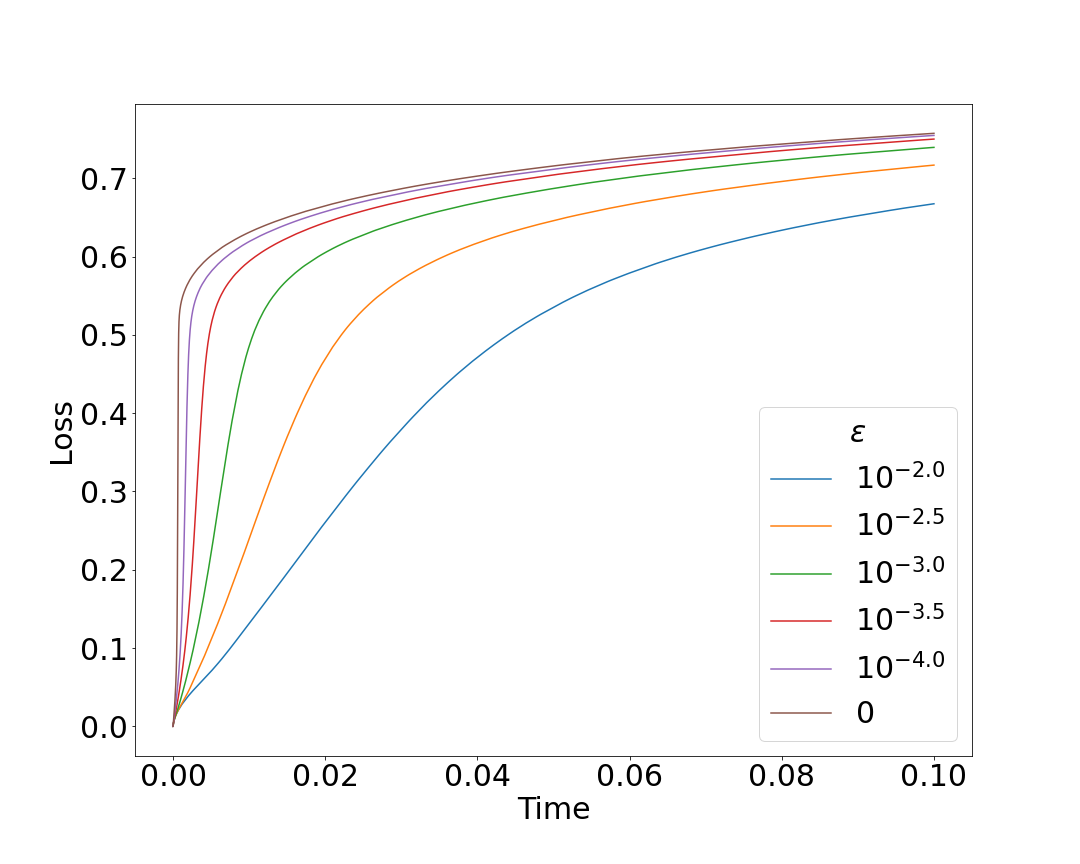}
				\caption{$X_{0-} \sim^d  \Gamma(1.2,0.5),\, \alpha = 0.9$}
				
			\end{subfigure}
			\hfill
			\begin{subfigure}[b]{0.42\columnwidth}
				\centering
				\includegraphics[width=\columnwidth]{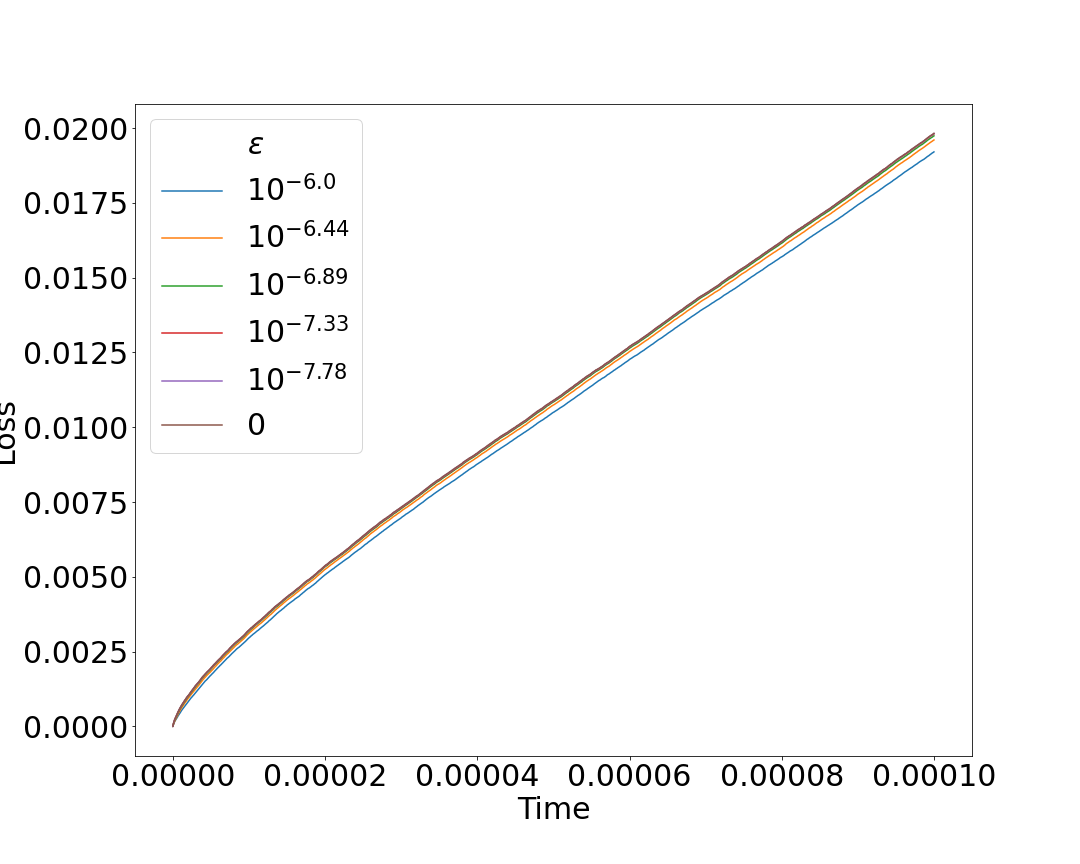}
				\caption{$t_{\max} = 0.0001$}
				
			\end{subfigure}
			\hfill
			\begin{subfigure}[b]{0.42\columnwidth}
				\centering
				\includegraphics[width=\columnwidth]{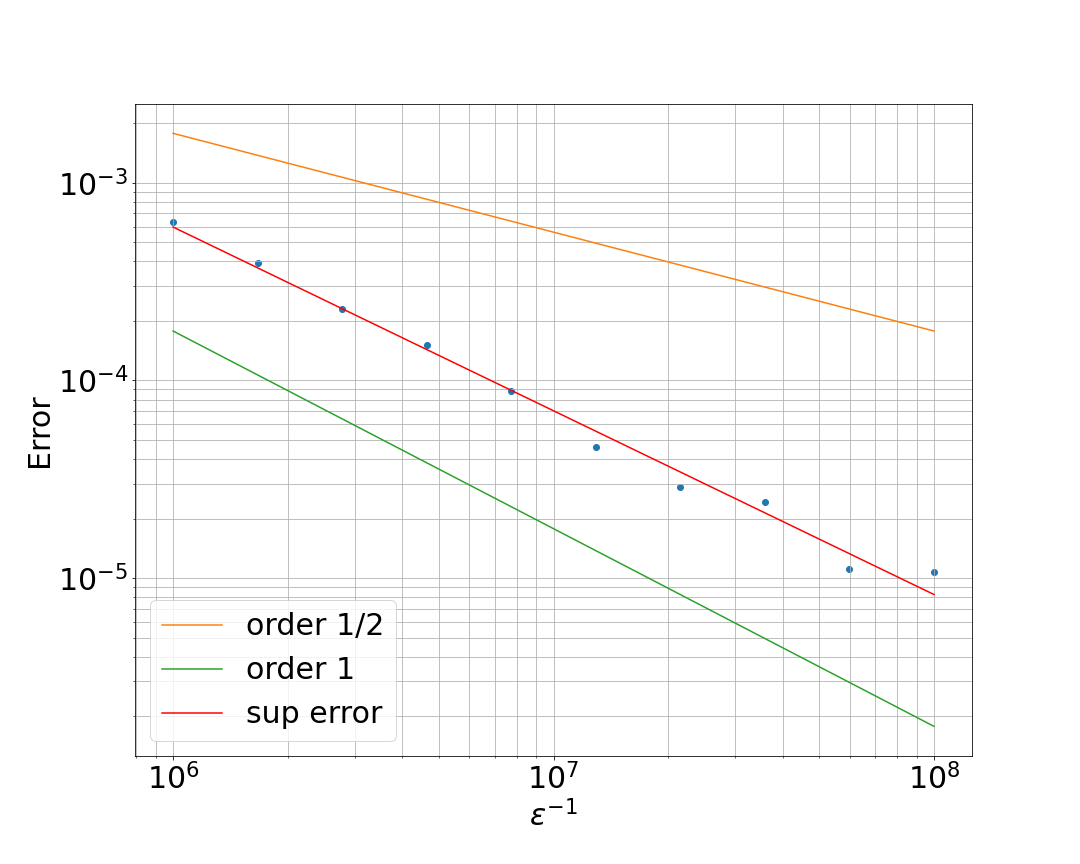}
				\caption{Rate of Convergence}
				
			\end{subfigure}
		}
		
		\makebox[\linewidth][c]{
			\begin{subfigure}[b]{0.42\columnwidth}
				\centering
				\includegraphics[width=\columnwidth]{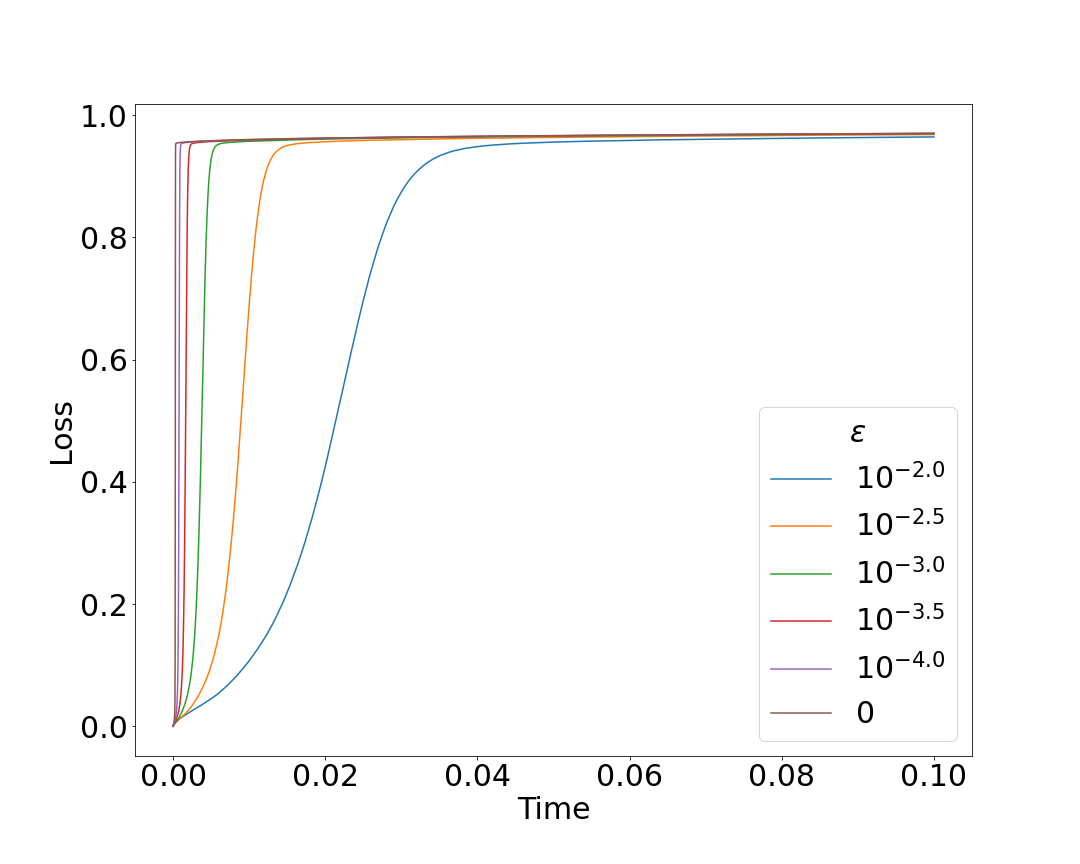}
				\caption{$X_{0-} \sim^d  \Gamma(1.4,0.5),\, \alpha = 2$}
				
			\end{subfigure}
			\hfill
			\begin{subfigure}[b]{0.42\columnwidth}
				\centering
				\includegraphics[width=\columnwidth]{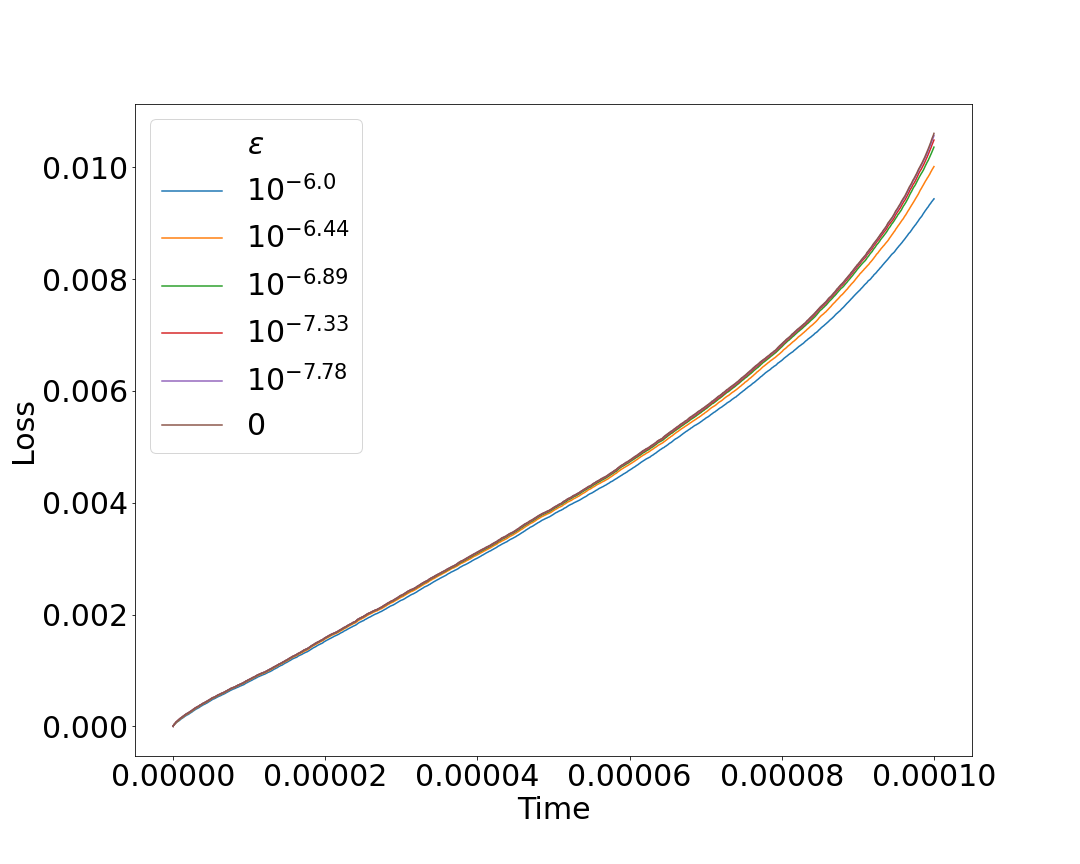}
				\caption{$t_{\max} = 0.0001$}
				
			\end{subfigure}
			\hfill
			\begin{subfigure}[b]{0.42\columnwidth}
				\centering
				\includegraphics[width=\columnwidth]{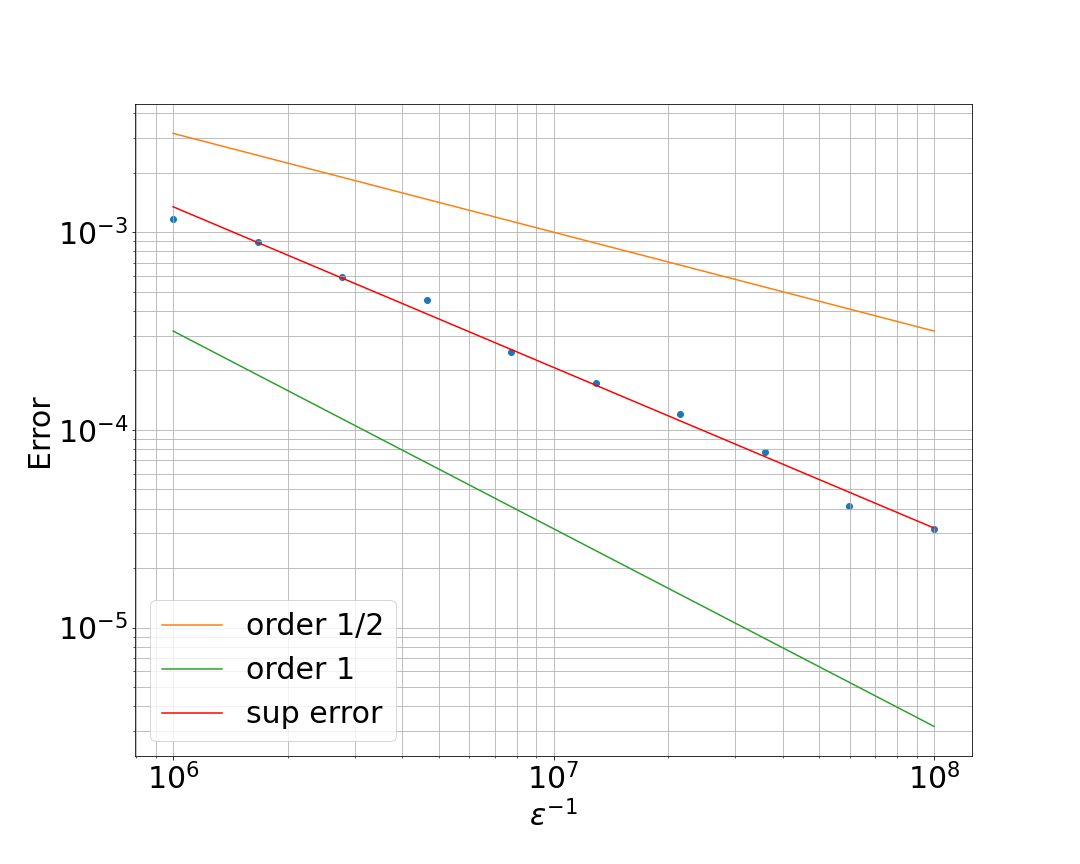}
				\caption{Rate of Convergence}
				
			\end{subfigure}
		}
		\caption{Initial density vanishing at zero with discontinuity and no common noise}
		\label{fig: FIGURE IN THE DISCONTINUOUS SETTING}
	\end{figure}

	\subsection{Simulations with common noise}
	In both experiments in this subsection, we assigned $X_{0-}$ a uniform distribution over the interval $[0.25, 0.35]$, 
	the same as in subsection \ref{subsec: SUBSECTION ON THE RATE OF CONVERGENCE IN THE CONTINUOUS SETTING}, and
	set $\alpha=0.5$ and $\rho=0.5$, with different common noise paths for each experiment. In the first simulation, the common noise path increases to $1$ over the time domain. This led to the loss process becoming rougher than the loss in the previous setting. In the second simulation, the common noise path decreases to $-1$. This induces a systemic event due to the rapid loss of mass. {Despite the differences between the scenarios, we observed a similar rate of convergence between $1/2$ and $1$ as illustrated in \cref{fig: FIGURE IN THE COMMON NOISE SETTING}.}
	

\begin{figure}[!t]
	\makebox[\linewidth][c]{
		\begin{subfigure}[b]{0.42\columnwidth}
			\centering
			\includegraphics[width=\columnwidth]{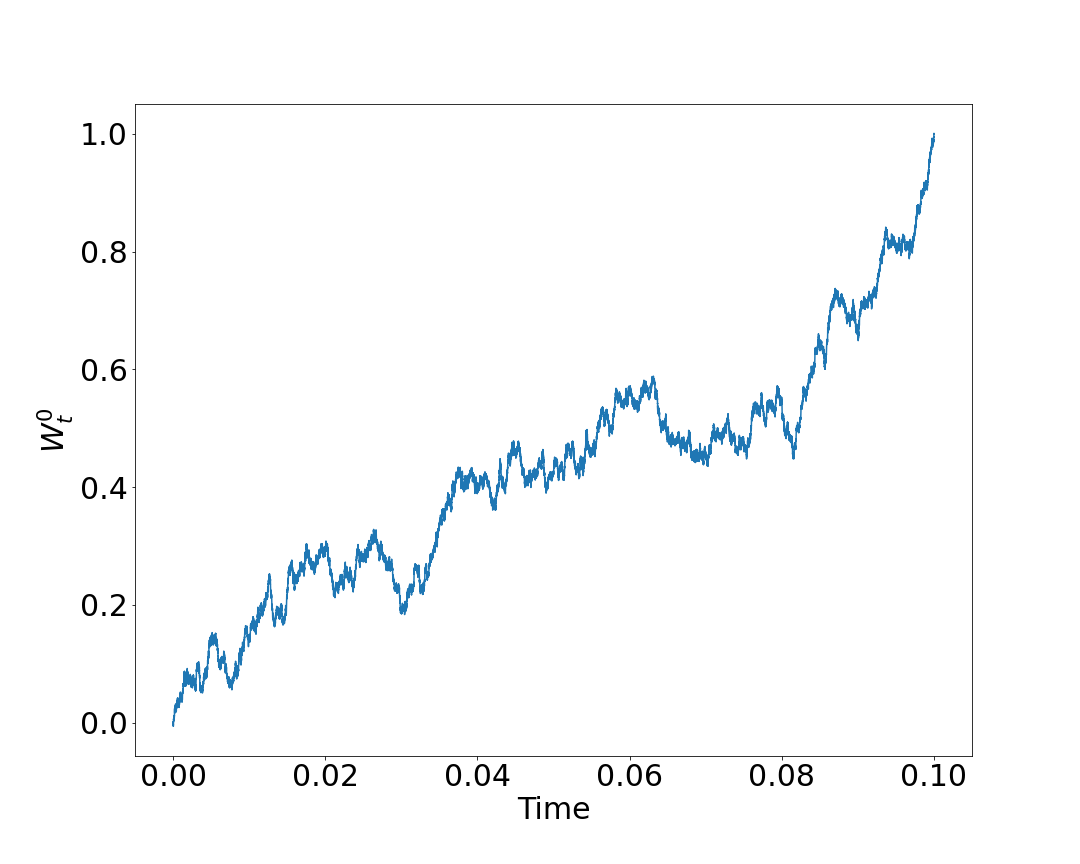}
			\caption{Common Noise Path}
			
		\end{subfigure}
		\hfill
		\begin{subfigure}[b]{0.42\columnwidth}
			\centering
			\includegraphics[width=\columnwidth]{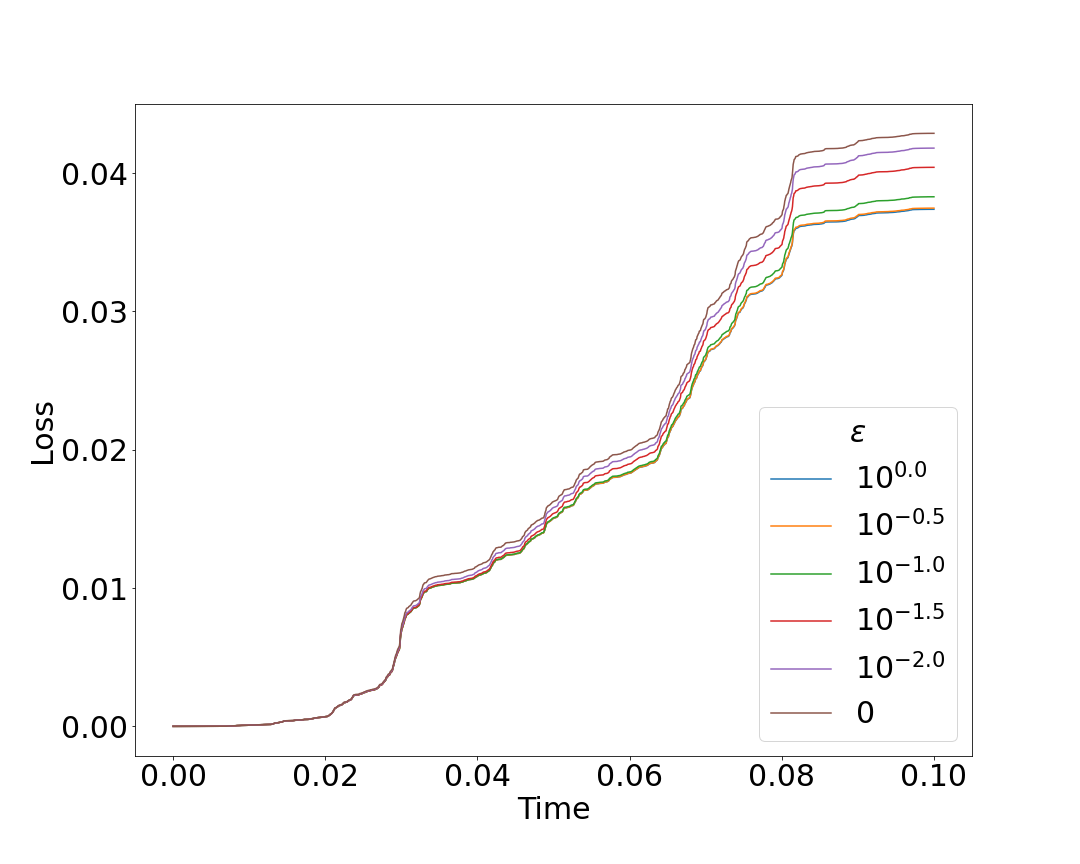}
			\caption{$X_{0-} \sim^d  \operatorname{Uniform}[0.25,\,0.35],\, \alpha = 0.5$}
			
		\end{subfigure}
		\hfill
		\begin{subfigure}[b]{0.42\columnwidth}
			\centering
			\includegraphics[width=\columnwidth]{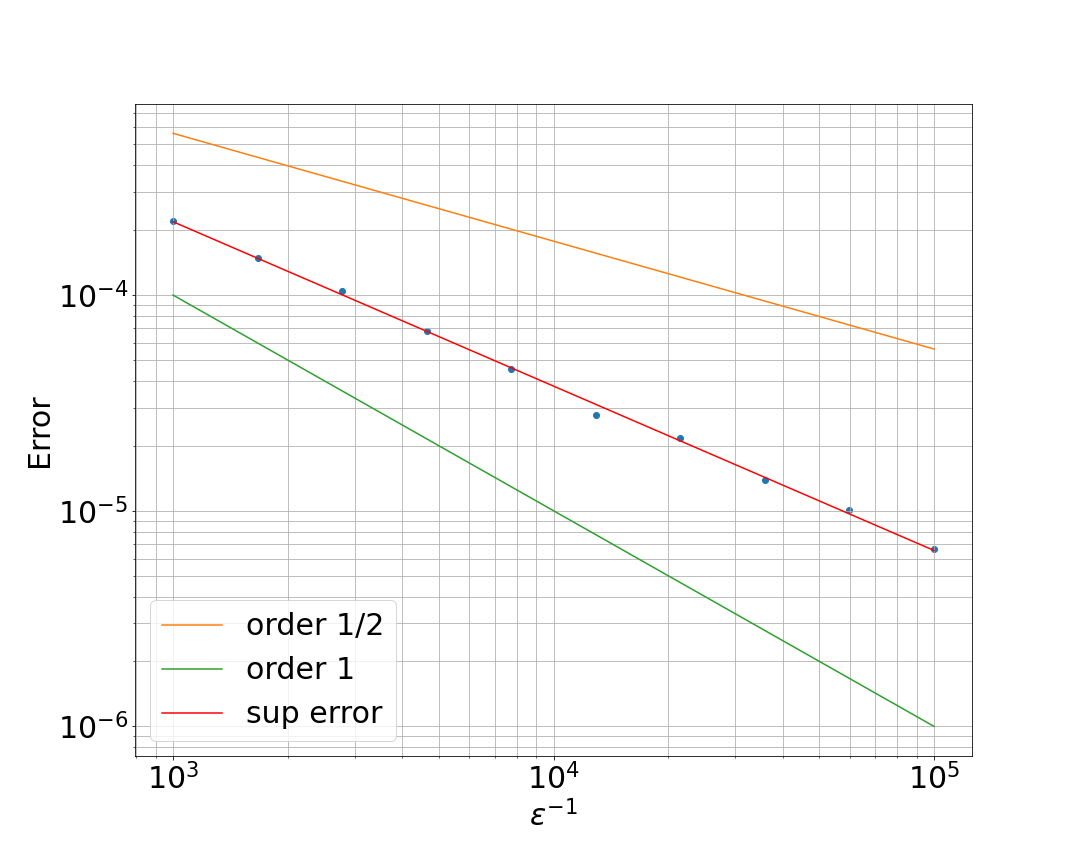}
			\caption{Rate of Convergence}
			
		\end{subfigure}
	}
	
	\makebox[\linewidth][c]{
		\begin{subfigure}[b]{0.42\columnwidth}
			\centering
			\includegraphics[width=\columnwidth]{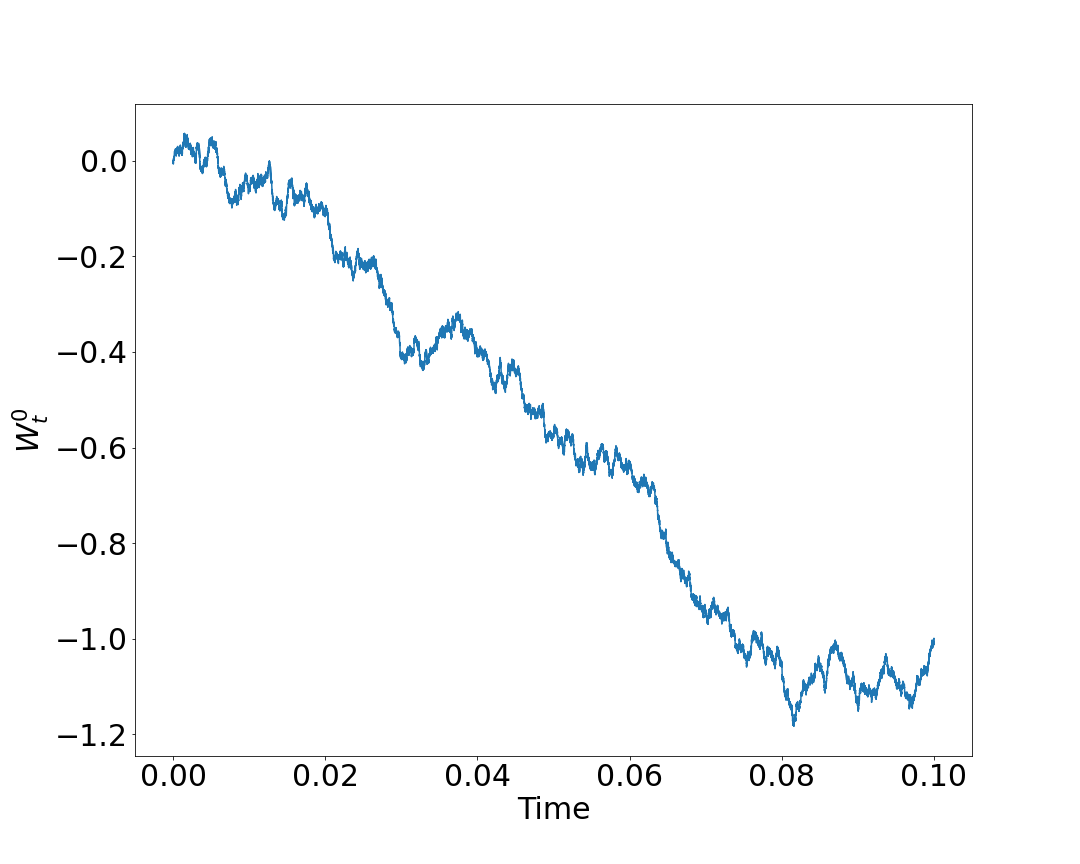}
			\caption{Common Noise Path}
			
		\end{subfigure}
		\hfill
		\begin{subfigure}[b]{0.42\columnwidth}
			\centering
			\includegraphics[width=\columnwidth]{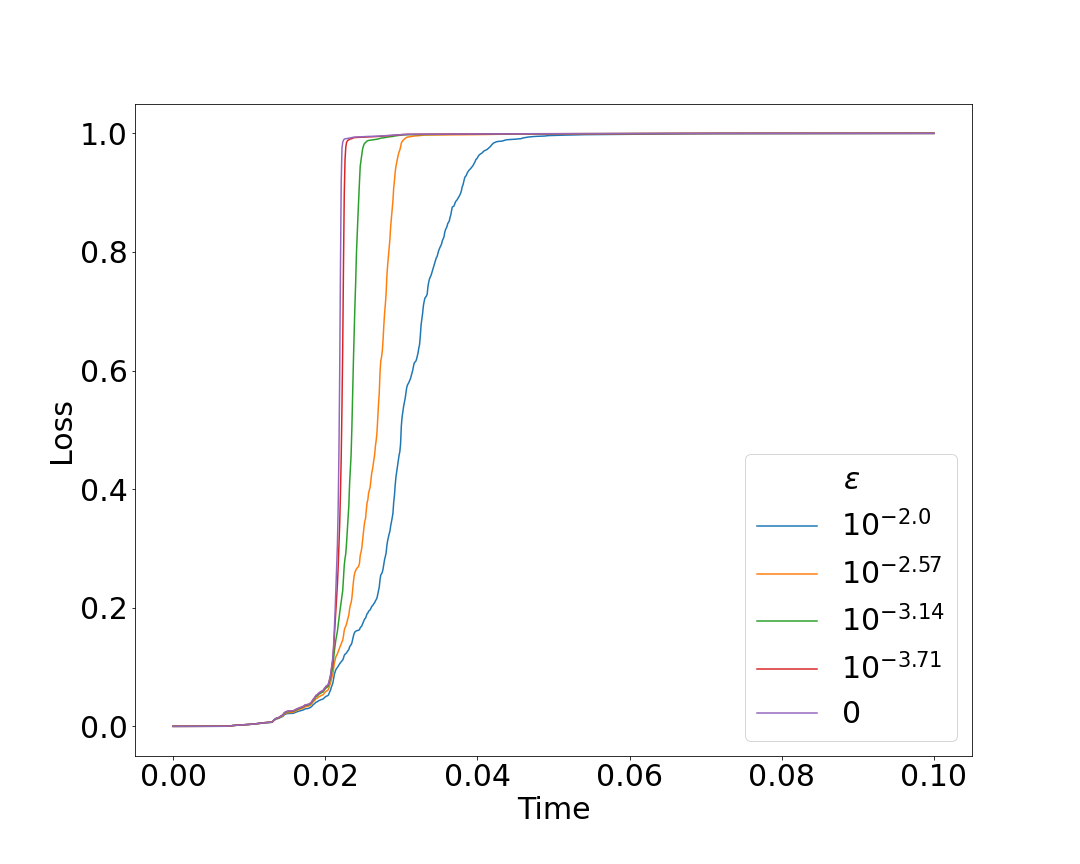}
			\caption{$X_{0-} \sim^d  \operatorname{Uniform}[0.25,\,0.35],\, \alpha = 0.5$}
			
		\end{subfigure}
		\hfill
		\begin{subfigure}[b]{0.42\columnwidth}
			\centering
			\includegraphics[width=\columnwidth]{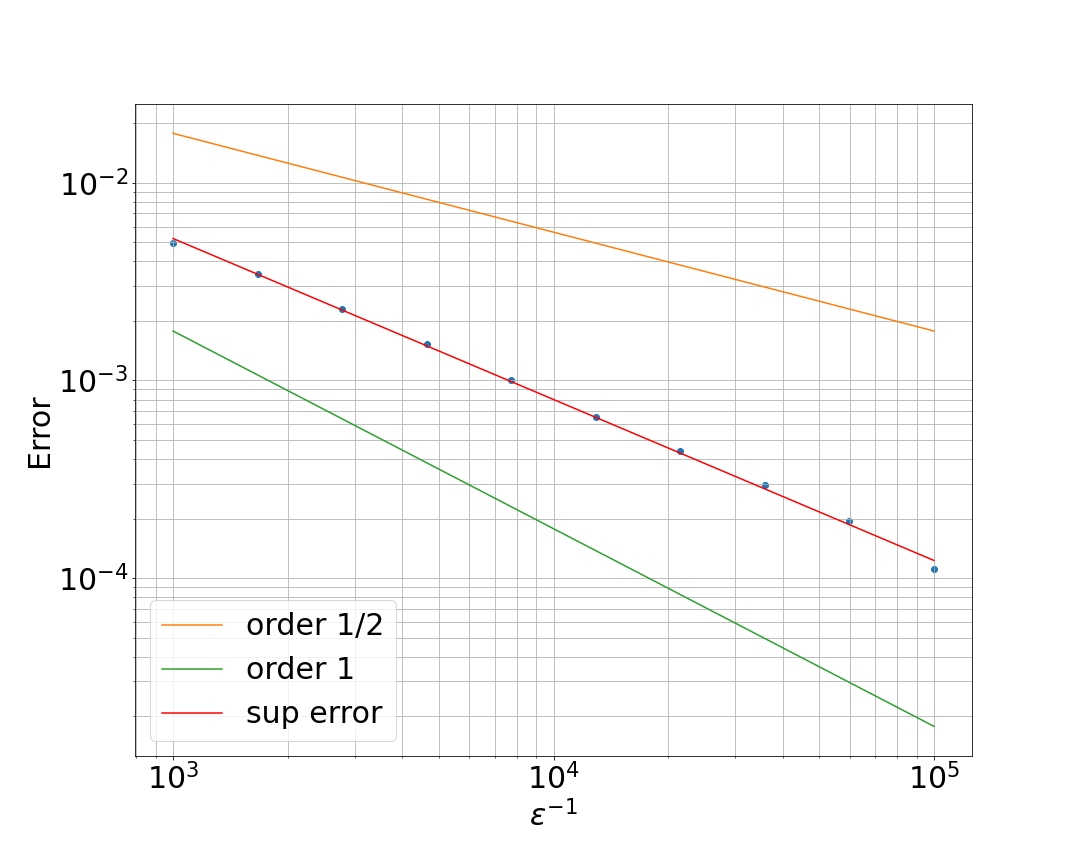}
			\caption{Rate of Convergence}
			
		\end{subfigure}
	}
	\caption{Initial density vanishing at zero with common noise}
	\label{fig: FIGURE IN THE COMMON NOISE SETTING}
\end{figure}

\newpage
\begin{appendices}
	
	\section{Technical lemmas}
	
	\begin{lemma}\label{app:lem: Y* IS  A CONTINUOUS LOCAL MARTINGALE}
		Suppose that $\tilde{P}^{\varepsilon_n} = \operatorname{Law}(\tilde{X}^{\varepsilon_n},\,\tilde{\mathcal{Y}}^{\varepsilon_n})$ converges weakly in $\mathcal{P}(D_\R \times \mathcal{C}_\R)$ to $\tilde{P}^*$, where $\tilde{X}^{\varepsilon_n}$ and $\tilde{\mathcal{Y}}^{\varepsilon_n}$ are the extensions of ${X}^{\varepsilon_n}$ and ${\mathcal{Y}}^{\varepsilon_n}$, respectively. Let $X^*$ and $Y^*$ be the canonical processes on $D_\R \times \mathcal{C}_\R$ such that for $(\eta,\,\omega) \in D_\R \times \mathcal{C}_\R$, $X^*(\eta,\,\omega) = \eta$ and  $Y^*(\eta,\,\omega) = \omega$. Then, under $\tilde{P}^*$, $Y^*$ is a martingale with respect to the filtration generated by $(X^*,\,Y^*)$ with quadratic variation
		\begin{equation*}
			\left \langle Y^* \right \rangle_t =
			\begin{cases}
				\begin{aligned}
					&0 &\qquad &t \in [-1,\,0), \\
					&\int_0^t \sigma(s,\,X^*_s)^2 \diff s &\qquad &t \in [0,\,T], \\
					& \int_0^T \sigma(s,\,X_s^*)^2 \diff s + (t - T) &\qquad &t \in (T,\,\Bar{T}]. 
				\end{aligned}
			\end{cases}
		\end{equation*}
	\end{lemma}
	
	\begin{proof}
		Set $\tilde{P}^*$ to be the limit point of $(\tilde{P}^{\varepsilon_n})_{n \ge 0}$ and 
		\begin{equation*}
			\mathbb{T}^{\tilde{P}^*} \coloneqq \left\{t \in [-1,\,\Bar{T}]\; : \; \tilde{P}^*(\eta_t = \eta_{t-}) = 1 \right\}.
		\end{equation*}
		Now for any $s_0,\,t_0 \in \mathbb{T}^{\tilde{P}^*}$ with $s_0 < t_0$ and $\{s_i\}_{i= 1}^k \subset [-1,\,s_0] \cap  \mathbb{T}^{\tilde{P}^*}$, we define the function
		\begin{equation*}
			F \, : \, D_\R \times \mathcal{C}_\R \to \R,\quad (\eta,\omega) \mapsto (\omega_{t_0} - \omega_{s_0})\prod_{i = 1}^k f_i(\eta_{s_i},\omega_{s_i}),
		\end{equation*}
		for arbitrary $f_i \in \mathcal{C}_b(D_\R\times\mathcal{C}_\R)$. To show that $Y^*$ is a martingale, it is sufficient to show that $\E^{\tilde{P}^*}[F(X^*,\,Y^*)] = 0$.
		
		As $\tilde{P}^{\varepsilon_n} \implies \tilde{P}^*$, then by Skorokhod's Representation Theorem, see \citep[Theorem~7.6]{billingsley2013convergence}, there exist $((x^n,\,y^n))_{n \ge 1}$ and $(x,\,y)$ defined on the same background space such that $(x^n,\,y^n)$ converges to $(x,\,y)$ almost surely in $(D_\R,\, M_1) \times (\mathcal{C}_\R,\,\norm{\cdot}_{\infty})$ with $\operatorname{Law}(x^n,\,y^n) = \tilde{P}^{\varepsilon_n}$ and $\operatorname{Law}(x,y) = \tilde{P}^*$. Now for any $p > 1$,
		\begin{equation*}
			\E\left[\nnnorm{F(x^n,\,y^n)}^p\right] \le C \E \left[\sup_{s \le \Bar{T}}\nnorm{\tilde{\mathcal{Y}}_s^{\varepsilon_n}}^p\right] \le C,
		\end{equation*}
		where $C$ is a constant that changes from line to line and depends only on $p,\,\sigma,\,T$ and the $f_i$'s but is independent of $\varepsilon$. Therefore, $F(x^n,\,y^n)$ is $L^p$-bounded uniformly in $n$. For $t \in \{t_0,\,s_0,\,s_1,\, \ldots,\,s_k\}$, $t$ is an almost sure continuity point of $x$. Therefore, by the properties of $M_1$-convergence, $(x^n_t,\,y^n_t)$ converges to $(x_t,\,y_t)$ almost surely. Hence, we have almost sure convergence of $F(x^n,\,y^n)$ to $F(x,\,y)$. Vitali's Convergence Theorem states that almost sure convergence and uniform integrability imply convergence of means, hence
		\begin{equation*}
			\E^{\tilde{P}^*}[F(X^*,\,Y^*)] = \E[F(x,\,y)] = \lim_{n \to \infty} \E[F(x^n,\,y^n)] = \lim_{n \to \infty} \E[F(\tilde{X}^{\varepsilon_n},\,\tilde{\mathcal{Y}}^{\varepsilon_n})] = 0,
		\end{equation*}
		where the last equality follows from the fact that $\E[F(\tilde{X}^{\varepsilon_n},\,\tilde{\mathcal{Y}}^{\varepsilon_n})] = 0$ for all $n$ as $\tilde{\mathcal{Y}}^{\varepsilon_n}$ is a martingale. Therefore, by a monotone class theorem argument, $Y^*$ is a continuous local martingale.
		
		Recall $x^n \to x$ almost surely in $(D_\R,\,M_1)$, hence we have pointwise convergence at the continuity points of $x$, see \citep[Theorem~12.5.1]{whitt2002stochastic}. As $\sigma$ is in $\mathcal{C}^{1,\,2}$ by Assumption \ref{ass: MODIFIED AND SIMPLIED FROM SOJMARK SPDE PAPER ASSUMPTIONS II}, there exists a set of full probability such that $\sigma(s,x^n_s) \to \sigma(s,\,x_s)$ for a set of $s$'s that have full Lebesgue measure in $[0,\,T]$. Furthermore, as $\sigma$ is bounded, by the Bounded Convergence Theorem
		\begin{equation}\label{app:eqn: FIRST EQUATION IN app:lem: Y* IS  A CONTINUOUS LOCAL MARTINGALE}
			\int_0^t \sigma(s,\,x^n_s)^2 \diff s \to \int_0^t \sigma(s,\,x_s)^2 \diff s 
		\end{equation}
		almost surely for any $t \in [0,\,T]$. Set 
		\begin{equation*}
			\left \langle Y \right \rangle_t =
			\begin{cases}
				\begin{aligned}
					&0 &\qquad &t \in [-1,\,0), \\
					&\int_0^t \sigma(s,\,X_s)^2 \diff s &\qquad &t \in [0,\,T], \\
					& \int_0^T \sigma(s,\,X_s)^2 \diff s + (t - T) &\qquad &t \in (T,\,\Bar{T}],
				\end{aligned}
			\end{cases}
		\end{equation*}
		where $Y$ is any one of $Y^*,\,y^n$ or $\tilde{\mathcal{Y}}^{\varepsilon_n}$ and $X$ is the respective $X^*,\,x^n$ or $\tilde{X}^{\varepsilon_n}$. Employing \eqref{app:eqn: FIRST EQUATION IN app:lem: Y* IS  A CONTINUOUS LOCAL MARTINGALE}, 
		\begin{equation*}
			F(x^n,\, (y^n)^2 - \langle y^n\rangle) \to F(x,\, y^2 - \langle y\rangle) \qquad \text{almost surely.}
		\end{equation*}
		Also, by the above and the boundedness of $\sigma$ by Assumption \ref{ass: MODIFIED AND SIMPLIED FROM SOJMARK SPDE PAPER ASSUMPTIONS II}, $F(x^n,\, (y^n)^2 - \langle y^n\rangle)$ is $L^p$-bounded uniformly in $n$. Hence, by Vitali's Convergence Theorem,
		\begin{align*}
			\E^{\tilde{P}^*}\left[F(X^*,\, (Y^*)^2 - \langle Y^*\rangle)\right] &= \E\left[F(x,\, y^2 - \langle y\rangle)\right] \\
			&= \lim_{n \to \infty}\E\left[F(x^n,\, (y^n)^2 - \langle y^n\rangle)\right] \\
			&= \lim_{n \to \infty}\E\left[F(\tilde{X}^{\varepsilon_n},\, (\tilde{\mathcal{Y}}^{\varepsilon_n})^2 - \langle \tilde{\mathcal{Y}}^{\varepsilon_n}\rangle)\right] = 0,
		\end{align*}
		where the last equality follows from the fact that $(\tilde{\mathcal{Y}}^{\varepsilon_n})^2 - \langle \tilde{\mathcal{Y}}^{\varepsilon_n} \rangle$ is a true martingale due to the boundedness of $\sigma$. This completes the proof.
	\end{proof}
	
	\begin{lemma}\label{app:lem: GENERAL STRONG CROSSSING PROPERTY RESULT}
		Consider the process $Z_t = M_t + tX$ for $t \in [-1,\,\Bar{T}]$, where $M_t$ is a continuous local martingale with $c_M (t - s) \le \langle M \rangle_t - \langle M \rangle_s \le C_M(t-s)$ for any $0 \le s < t$ almost surely, and $X$ is a non-negative random variable such that $\E[X] < \infty$. Then, for any stopping time $\tau$ where $\tau \ge 0$ almost surely, then 
		\begin{equation*}
			\prob\left[\inf_{s \in (\tau,(\tau + h) \wedge \Bar{T})}\{Z_s - Z_\tau\} \ge 0,\, \tau < \Bar{T}\right]= 0,
		\end{equation*}
		for any $h > 0$.
	\end{lemma}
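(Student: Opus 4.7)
The plan is to reduce the problem to the local oscillatory behaviour of Brownian motion at a stopping time via the Dambis--Dubins--Schwarz (DDS) time-change, and then to conclude using the law of the iterated logarithm (LIL). Heuristically, a continuous martingale whose quadratic variation has strictly positive linear growth rate oscillates at the Brownian scale $\sqrt{u}$ in every right-neighbourhood of any (stopping) time, which dominates the linear correction $uX$ as $u \downarrow 0$, regardless of the size of $X$.

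Concretely, I would first extend $M$ past $\Bar{T}$ by appending an independent standard Brownian motion, so that $\langle M\rangle_\infty = \infty$ almost surely while preserving the bilipschitz bounds on $\langle M\rangle$ (possibly with slightly different constants). By DDS there is a standard Brownian motion $B$, on a possibly enlarged probability space, such that $M_t = B_{\langle M\rangle_t}$. Since $\tau \geq 0$ is a stopping time for $M$'s filtration, $A_\tau := \langle M\rangle_\tau$ is a stopping time for $B$'s filtration, and the strong Markov property yields that
\begin{equation*}
    \tilde{B}_v := B_{A_\tau + v} - B_{A_\tau}, \qquad v \geq 0,
\end{equation*}
is again a standard Brownian motion. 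On $\{\tau < \Bar{T}\}$ set $\delta := h \wedge (\Bar{T} - \tau) > 0$. Writing $Z_s - Z_\tau = (M_s - M_\tau) + (s-\tau) X$ and using $X \geq 0$, the event
$\{\inf_{s \in (\tau, (\tau+h)\wedge \Bar{T})}(Z_s - Z_\tau) \geq 0\}$ becomes $M_{\tau+u} - M_\tau \geq -uX$ for every $u \in (0,\delta)$. Setting $v_u := A_{\tau+u} - A_\tau$, the bilipschitz bounds $c_M u \leq v_u \leq C_M u$ make $u \mapsto v_u$ continuous and strictly increasing, and its image sweeps out an interval $(0, v_\delta)$ with $v_\delta > 0$. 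Since $u \leq v_u/c_M$, the event is contained in $\{\tilde{B}_v \geq -(X/c_M)\,v \text{ for all } v \in (0, v_\delta)\}$.

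The main obstacle is the potential dependence of $X$ on $\tilde{B}$, but this is bypassed by exploiting the pathwise nature of the LIL: for almost every $\omega$ the Brownian path $v \mapsto \tilde{B}_v(\omega)$ satisfies
$\liminf_{v \to 0^+} \tilde{B}_v(\omega)/\sqrt{2v \log\log(1/v)} = -1$,
so in particular $\tilde{B}_v(\omega)/v \to -\infty$ along a subsequence. On this $\prob$-full set of Brownian paths, for any finite value of $X(\omega)/c_M$ and any $v_\delta(\omega) > 0$ one can find $v \in (0, v_\delta(\omega))$ with $\tilde{B}_v(\omega) < -(X(\omega)/c_M)\,v$. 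Since $X < \infty$ a.s.\ (because $\E[X] < \infty$) and $v_\delta > 0$ on $\{\tau < \Bar{T}\}$, the event in question has probability zero. If one prefers to avoid invoking the pathwise LIL directly, an equivalent bookkeeping is to decompose $\{X < \infty\} = \bigcup_{n \in \N}\{X \leq n\}$ and apply the fixed-constant version of the statement ``$\prob(\tilde{B}_v \geq -cv \text{ for all } v \in (0,\eta)) = 0$ for every $c, \eta > 0$'' on each piece.
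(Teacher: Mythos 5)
Your proof is correct, and it shares the paper's basic skeleton -- reduce to Brownian motion via the Dambis--Dubins--Schwarz time change and use the lower bound $c_M$ on the growth of $\langle M\rangle$ to transfer the right-neighbourhood of $\tau$ to a nondegenerate right-neighbourhood of the time-changed stopping time -- but it finishes differently. The paper handles the random drift coefficient by truncating on $\{X\leq\lambda\}$, applies the reflection principle to get the quantitative bound $\prob[\inf_{s\leq\Delta}B_{c_Ms}>-\Delta(1+\lambda)]\leq 2\Delta^{1/2}(1+\lambda)(2\pi c_M)^{-1/2}$, and then optimises ($\lambda=\Delta^{-1/4}$, $\Delta\downarrow 0$), also cutting off near the terminal time via $\bar\tau=\tau\wedge(\bar T-\Delta)$. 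You instead apply the strong Markov property at $A_\tau=\langle M\rangle_\tau$ and conclude from the pathwise law of the iterated logarithm that $\liminf_{v\downarrow 0}\tilde B_v/v=-\infty$ almost surely, which kills the event for every finite $X(\omega)$ and every positive random window $v_\delta(\omega)$ simultaneously; this neatly sidesteps both the truncation of $X$ and the $\Delta$-limit, and, as you correctly point out, no independence of $X$ from $\tilde B$ is needed because the LIL statement is a property of a full-measure set of paths. The price is reliance on the LIL and on checking that $A_\tau$ is a stopping time for the DDS filtration (standard, since $\langle M\rangle$ is continuous and strictly increasing on $[0,\bar T]$), whereas the paper's route is more elementary (reflection principle only) at the cost of bookkeeping. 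Two minor remarks: your fallback argument via the decomposition $\{X<\infty\}=\bigcup_n\{X\leq n\}$ also needs a decomposition over the random interval length, e.g.\ $\{v_\delta\geq 1/m\}$, before the fixed-constant, fixed-window null statement can be applied; and the extension of $M$ beyond $\bar T$ is harmless but not strictly necessary, since all relevant times satisfy $\tau+u\leq\bar T$ and the DDS theorem on an enlarged space already furnishes a Brownian motion on $[0,\infty)$.
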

	
	\begin{proof}
		In the case when $M$ is simply a Brownian motion, the result readily follows from the Strong Markov Property and the standard properties of Brownian motion. As $M$ is a continuous local martingale, we may view it as a (random) time-changed Brownian motion. We exploit this fact to show the claim. To begin, fix a $\Delta \in (0,h)$, $\lambda > 0$ and set $\Bar{\tau} \coloneqq \tau \wedge (\Bar{T} - \Delta)$. Then, conditioning on the event $E \coloneqq \{\tau \le \Bar{T} - \Delta\} \cap \{X \le \lambda\}$ and its complement, we have
		\begin{equation}\label{app:eq: FIRST EQUATION IN app:lem: GENERAL STRONG CROSSSING PROPERTY RESULT}
			\begin{split}
				\prob\left[\underset{s \in (\tau,(\tau + h) \wedge \Bar{T})}{\inf}\{Z_s - Z_\tau\} > -\Delta,\, \tau < \Bar{T}\right]  &\le \prob\left[\underset{s \in (\Bar{\tau},\Bar{\tau} + \Delta)}{\inf}\{M_s - M_{\Bar{\tau}} + (s - \Bar{\tau})\lambda\} > -\Delta\right]\\
				&\qquad + \prob\left[E^\complement,\,\tau < \Bar{T}\right].
			\end{split}
		\end{equation}
		Focussing on the first term, we observe
		\begin{equation*}
			\prob\left[\underset{s \in (\Bar{\tau},\Bar{\tau} + \delta)}{\inf}\{M_s - M_{\Bar{\tau}} + (s - \Bar{\tau})\lambda\} > -\Delta\right] \le \prob\left[\underset{s \in (\Bar{\tau},\Bar{\tau} + \delta)}{\inf}\{M_s - M_{\Bar{\tau}}\} > -\Delta(1 + \lambda)\right]
		\end{equation*}
		By the Dubins-Schwarz Theorem, $M$ is a time-changed Brownian motion. Therefore there exists a Brownian Motion $B$ such that
		\begin{equation*}
			\prob\left[\underset{s \in (\Bar{\tau},\Bar{\tau} + \delta)}{\inf}\{M_s - M_{\Bar{\tau}}\} > -\Delta(1 + \lambda)\right] = \prob\left[\underset{s \in (\Bar{\tau},\Bar{\tau} + \delta)}{\inf}\{B_{\langle M \rangle_s - \langle M \rangle_{\Bar{\tau}}}\} > - \Delta(1 + \lambda) \right]
		\end{equation*}
		Now as $\Bar{\tau} = \tau \wedge (\Bar{T} - \Delta) > 0$ almost surely, $\langle M \rangle_s - \langle M \rangle_{\Bar{\tau}} \ge c_M(s - \Bar{\tau})$ for any $s > \Bar{\tau}$ almost surely. So
		\begin{equation*}
			\prob\left[\underset{s \in (\Bar{\tau},\Bar{\tau} + \delta)}{\inf}\{B_{\langle M \rangle_s - \langle M \rangle_{\Bar{\tau}}}\} > - \Delta(1 + \lambda) \right] \le \prob\left[\underset{s \in (0,\Delta)}{\inf}\{B_{c_Ms}\} > - \Delta(1 + \lambda) \right]
		\end{equation*}
		By the reflection principle of Brownian motion, we have
		{\small\begin{align*}
				\prob\left[\inf_{s \le \Delta} B_{c_M s} \le - \Delta(1 + \lambda) \right] 
				&= 2\prob\left[B_{\gamma c_M} \le  -  \Delta(1 + \lambda)\right] \\
				&= 2 (2\pi)^{-1/2}\int_{-\infty}^{-\Delta^{1/2}c_M^{-1/2}(1 + \lambda)} e^{\frac{-y^2}{2}} \diff y 
				\ge 1 - 2\Delta^{1/2}(1 + \lambda)(2\pi c_M)^{-1/2}.
		\end{align*}}
		In conclusion, we have shown
		{\begin{align*}
				\prob\left[\underset{s \in (\Bar{\tau},\Bar{\tau} + \Delta)}{\inf}\{M_s - M_{\Bar{\tau}} + (s - \Bar{\tau})\lambda\} > -\Delta\right] &\le \prob\left[\underset{s \in (0,\Delta)}{\inf}\{B_{c_Ms}\} > - \Delta(1 + \lambda) \right]\\ &\le 2\Delta^{1/2}(1 + \lambda)(2\pi c_M)^{-1/2}
		\end{align*}}
		Setting $\lambda = \Delta^{-1/4}$, then by continuity of measure and the above, the expression in \eqref{app:eq: FIRST EQUATION IN app:lem: GENERAL STRONG CROSSSING PROPERTY RESULT} converges to 0 as we send $\Delta$ to zero. This completes the proof.
	\end{proof}
	
	\begin{lemma}[Convergence of Stopping Times]\label{app:lem: CONVERGENCE OF THE STOPPING TIME FOR PARTICLES THAT EXHIBIT THE CROSSING PROPERTY}
		Consider a sequence of functions $(z^n)_{n \ge 1}$ in $\DR$ converging towards some $z \in \DR$ with respect to the $M_1$-topology. We assume that $z$ has the following crossing property:
		\begin{equation}\label{app:eq: STRONG CROSSING PROPERTY}
			\forall \, h > 0\;\tau_0(z)  < \Bar{T} \qquad \implies \qquad \inf_{s \in (\tau_0(z),(\tau_0(z) + h)\wedge \Bar{T})}\left\{z_s - z_{\tau_0(z)}\right\} < 0,
		\end{equation}
		where $\tau_0$ is defined as in \eqref{eq: FIRST HITTING TIME OF ZERO OF THE CANONICAL PROCESS ON THE SPACE OF CADLAG FUNCTIONS} and $\Delta z_t \le 0$ for all $t \in [-1,\Bar{T}]$. Then we have
		\begin{equation*}
			\lim_{n \to \infty} \tau_0(z^n) = \tau_0(z).
		\end{equation*}
	\end{lemma}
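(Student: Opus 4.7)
The plan is to prove the two one-sided inequalities $\limsup_n \tau_0(z^n) \leq \tau_0(z)$ and $\liminf_n \tau_0(z^n) \geq \tau_0(z)$ separately. The upper bound will use the strong crossing property \eqref{app:eq: STRONG CROSSING PROPERTY}, and the lower bound will use the sign-controlled jump hypothesis $\Delta z_t \leq 0$ together with M1-continuity of the infimum functional at a continuity point.

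For the upper bound, the inequality is trivial when $\tau_0(z) = \Bar{T}$, so I would fix an arbitrary $h \in (0, \Bar{T} - \tau_0(z))$. Right-continuity of $z$ combined with the definition of $\tau_0(z)$ as the infimum of a right-closed set forces $z_{\tau_0(z)} \leq 0$, so the strong crossing property produces some $s^\ast \in (\tau_0(z), \tau_0(z)+h)$ with $z_{s^\ast} < z_{\tau_0(z)} \leq 0$. Because continuity points of $z$ are dense in $[-1,\Bar{T}]$ and $z$ is right-continuous, I may perturb $s^\ast$ infinitesimally to assume it is a continuity point of $z$ while preserving $z_{s^\ast} < 0$. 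At such a continuity point, M1-convergence yields pointwise convergence \citep[Theorem~12.5.1]{whitt2002stochastic}, so $z^n_{s^\ast} \to z_{s^\ast} < 0$, hence $\tau_0(z^n) \leq s^\ast < \tau_0(z)+h$ for all large $n$. Sending $h \downarrow 0$ closes this side.

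For the lower bound I would argue by contradiction. Suppose a subsequence (still denoted $(z^n)$) satisfies $\tau_0(z^n) \to t^\star < \tau_0(z)$, and pick a continuity point $s$ of $z$ with $t^\star < s < \tau_0(z)$. For large $n$ one has $\tau_0(z^n) < s$, so $\inf_{u \in [-1,s]} z^n_u \leq 0$. The map $\eta \mapsto \inf_{u \in [-1,s]} \eta_u$ is M1-continuous at paths for which $s$ is a continuity point (the infimum analogue of the supremum continuity in \citep[Theorem~13.4.1]{whitt2002stochastic}), so passing to the limit yields $\inf_{u \in [-1,s]} z_u \leq 0$. I would then show this infimum is in fact strictly positive: by the definition of $\tau_0(z)$ one has $z_u > 0$ for all $u \in [-1, s]$; were the infimum equal to $0$, one could extract $u_k \in [-1,s]$ with $z_{u_k} \to 0$ and $u_k \to u^\dagger \in [-1,s]$, and the c\`adl\`ag property would force either $z_{u^\dagger} = 0$ (immediately contradicting $u^\dagger < \tau_0(z)$) or $z_{u^\dagger-} = 0$, which combined with $\Delta z_{u^\dagger} \leq 0$ gives $z_{u^\dagger} \leq 0$, again contradicting $u^\dagger < \tau_0(z)$.

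The hard part will be the lower bound, specifically the last step that rules out $\inf_{u \leq s} z_u = 0$: without the sign-controlled jump assumption $\Delta z \leq 0$, the limiting path could approach $0$ from above through a left-limit and then jump back strictly above $0$, leaving the infimum equal to zero on $[-1,s]$ without triggering $\tau_0(z)$, and the contradiction would collapse. The pairing of $\Delta z \leq 0$ with the strong crossing property is thus what makes both directions of the argument work, and invoking the correct M1-continuity statements for pointwise evaluation and for the infimum functional at continuity points is the main technical ingredient to be assembled.
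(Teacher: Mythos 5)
Your proof is correct, and the upper bound ($\limsup_n \tau_0(z^n) \leq \tau_0(z)$) is essentially the paper's Step 1: use the crossing property to find a strictly negative value of $z$ just after $\tau_0(z)$ at a continuity point, then invoke pointwise M1-convergence there. Your lower bound, however, takes a genuinely different route. The paper works directly with parametric representations: it picks parameter points $s^n$ with $(u^n_{s^n},r^n_{s^n})=(z^n_{\tau_0(z^n)},\tau_0(z^n))$, extracts a convergent subsequence, and identifies the limit value as a convex combination $\gamma z_{L-}+(1-\gamma)z_L$ with $L=\liminf_n\tau_0(z^n)$; since this value is $\leq 0$ and $\Delta z_L\leq 0$, it concludes $z_L\leq 0$, hence $\tau_0(z)\leq L$. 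You instead argue by contradiction through the running-infimum functional: if $\tau_0(z^n)\to t^\star<\tau_0(z)$, then $\inf_{u\in[-1,s]}z^n_u\leq 0$ for a continuity point $s\in(t^\star,\tau_0(z))$, and M1-continuity of $\eta\mapsto\inf_{u\leq\cdot}\eta_u$ (the same fact the paper itself uses via \citep[Theorem~13.4.1]{whitt2002stochastic}, noting that continuity of $z$ at $s$ gives continuity of its running infimum at $s$) forces $\inf_{u\in[-1,s]}z_u\leq 0$, which your c\`adl\`ag/jump-sign argument rules out since $z>0$ on $[-1,s]$. Both routes use $\Delta z\leq 0$ at exactly the step you identify as essential (ruling out the path grazing $0$ through a left limit and jumping back up). Your version is somewhat more modular — it avoids manipulating parametric representations explicitly and applies verbatim when $\tau_0(z)=\Bar{T}$, a case the paper's Step 2 only treats implicitly — while the paper's argument extracts slightly more information (the location of the limiting pre-hitting value on the completed graph), which is in the same spirit as its other graph-based lemmas.
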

	
	\begin{proof}
		The proof is composed of two steps. We shall show that $\limsup_{n \to \infty} \tau_0(z^n) \le \tau_0(z) \le \liminf_{n \to \infty} \tau_0(z^n)$. Hence, we will have equality and the claim follows.
		
		\noindent \underline{Step 1: $\limsup_{n \to \infty} \tau_0(z^n) \le \tau_0(z)$}\\[1 ex]
		We define the set of continuity points of $z$ to be $\mathbb{T}^z \coloneqq \{t \in [-1,\Bar{T}] \, : \, z_t = z_{t-}\}$. We remark that $\mathbb{T}^z$ is co-countable by \citep[Section~13]{billingsley2013convergence}. As $\tau_0(z) < \bar{T}$, by \eqref{app:eq: STRONG CROSSING PROPERTY} for any fixed $m \in \N$ there exists a $t \in (\tau_0(z),(\tau_0(z) + 1/m) \wedge \Bar{T}) \cap \mathbb{T}^z$ such that $z_t < 0$. Now, as $t$ is a continuity point of $z$, by \citep[Theorem~12.5.1]{whitt2002stochastic}, we have that $z^n_t \to z_t$ in $\R$ as $n \to \infty$. Therefore, for large $n$, $z_t^n < 0$ hence
		\begin{equation*}
			\limsup_{n \to \infty} \tau_0(z^n)  \le t \le \tau_0(z) + \frac{1}{m}.
		\end{equation*}
		As $m \in \N$ was arbitrary, the claim follows.
		
		\noindent \underline{Step 2: $\liminf_{n \to \infty} \tau_0(z^n) \ge \tau_0(z)$}\\[1ex]
		As $z^n \to z$ in the $M_1$-topology, we may find a sequence of parametric representations $((u^n,r^n))_{n \ge 1}$ of $(z^n)_{n \ge 1}$ which converges uniformly to a parametric representation $(u,r)$ of $z$, see \citep[Theorem~12.5.1]{whitt2002stochastic}. Therefore, we may find an $s^n \in [0,1]$ such that $(u^n_{s^n},r^n_{s^n}) = (z^n_{\tau_0(z^n)},\tau_0(z^n))$. By Step 1, since $\tau_0(z) < \Bar{T}$, we have 
		\begin{equation*}
			\liminf_{n \to \infty} \tau_0(z^n) \le \limsup_{n \to \infty} \tau_0(z^n) \le \tau_0(z) < \bar{T}.
		\end{equation*}
		Therefore, by the finiteness of $\liminf_{n \to \infty} \tau_0(z^n)$ and compactness of $[0,1]$, we may find a subsequence $n_k$ such that $\tau_0(z^{n_k}) \to \liminf_{n \to \infty} \tau_0(z^n)$ and $s_{n_k} \to s$ for some $s \in [0,1]$. By the uniform convergence of the parametric representations 
		\begin{align*}
			\liminf_{k \to \infty} z^{n_k}_{\tau_0(z^{n_k})} &= \liminf_{k \to \infty} u^{n_k}_{s^{n_k}} = u_s,\\
			\liminf_{k \to \infty} \tau_0(z^{n_k}) &= \liminf_{k \to \infty} r^{n_k}_{s^{n_k}} = r_s.
		\end{align*}
		As $r_s = \liminf_{n \to \infty} \tau_0(z^n)$, we may find $\gamma \in [0,1]$ such that $u_s = \gamma z_{(\liminf_{n \to \infty} \tau_0(z^n))-} + (1 - \gamma)z_{\liminf_{n \to \infty} \tau_0(z^n)}$. We also note $u_s \le 0$ as $\liminf_{k \to \infty} z^{n_k}_{\tau_0(z^{n_k})} \le 0$. Lastly, as $\Delta z_t \le 0$ for all $t$, we have $z_{\liminf_{n \to \infty} \tau_0(z^n)} \le 0$. Therefore, $\tau_0(z) \le {\liminf_{n \to \infty} \tau_0(z^n)}$. This completes the proof. 
	\end{proof}
	
	\begin{lemma}[Functional Continuity II]\label{app: lem: FUNCTIONAL CONTINUITY II}
		Let $\mu \in \mathcal{P}(\DR)$ be any measure such that
		\begin{equation}\label{app:eq: CROSSING PROPERTY EQUATION IN app: lem: FUNCTIONAL CONTINUITY II}
			\mu\left(\underset{s \in (\tau_{0}(\eta),\, (\tau_{0}(\eta) + h)\wedge \Bar{T})}{\inf}\{\eta_s - \eta_{\tau_0(\eta)}\} \ge 0,\,\tau_0(\eta) < \bar{T}\right) = 0,
		\end{equation}
		for any $h > 0$. Then, for any sequence of measures $(\mu^n)_{n \ge 1}$ such that $\mu^n \implies \mu$ in $(\mathcal{P}(\DR), \tmwk)$, we have
		\begin{equation*}
			\nu^{\mu^n}_t \coloneqq \mu^n(\eta_t \in \cdot,\, \tau_0(\eta) > t) \implies \nu^{\mu}_t \coloneqq \mu(\eta_t \in \cdot,\, \tau_0(\eta) > t), 
		\end{equation*}
		in $\MRR$, the space of sub-probability measures on $\R$ endowed with the topology of weak convergence, for and $t \ge 0$ such that $\mu(\eta_t = \eta_{t-}) = 1$ and $\mu(\tnot = t) = 0$.
	\end{lemma}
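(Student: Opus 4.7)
My approach is to reduce the weak convergence of sub-probability measures to an application of the continuous mapping theorem and bounded convergence. First, by Skorohod's Representation Theorem (applicable since $(\mathcal{P}(\DR),\tmwk)$ is Polish), I would construct random variables $Z^n$ and $Z$ on a common probability space with $\operatorname{Law}(Z^n) = \mu^n$, $\operatorname{Law}(Z) = \mu$, and $Z^n \to Z$ almost surely in $(\DR, \textnormal{M1})$. Then weak convergence of $\nu^{\mu^n}_t$ to $\nu^\mu_t$ in $\MR$ is equivalent to showing, for every $\phi \in \mathcal{C}_b(\R)$, that $\mathbb{E}[\phi(Z^n_t)\mathbf{1}_{\tau_0(Z^n) > t}] \to \mathbb{E}[\phi(Z_t)\mathbf{1}_{\tau_0(Z) > t}]$.

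The key is to identify a single set of full $\mu$-measure on which the map $\eta \mapsto \phi(\eta_t)\mathbf{1}_{\tau_0(\eta) > t}$ is M1-continuous. I would take the intersection of three events: first, the set where $\eta$ has the crossing property from \eqref{app:eq: CROSSING PROPERTY EQUATION IN app: lem: FUNCTIONAL CONTINUITY II}, which has full $\mu$-measure by assumption and guarantees, via \cref{app:lem: CONVERGENCE OF THE STOPPING TIME FOR PARTICLES THAT EXHIBIT THE CROSSING PROPERTY}, that $\tau_0$ is M1-continuous at such paths whenever $\tau_0(\eta) < \Bar{T}$ (the cases $\tau_0(\eta) = \Bar{T}$ are handled as in the proof of \cref{cor:sec2: LEMMA THAT THE HITTING TIME IS A M1 CONTINUOUS MAP}); second, the set where $\eta$ is continuous at $t$, which has full $\mu$-measure by the hypothesis $\mu(\eta_t = \eta_{t-}) = 1$, so that the projection $\eta \mapsto \eta_t$ is M1-continuous at $\eta$; and third, the set $\{\tau_0(\eta) \neq t\}$, which has full measure by $\mu(\tau_0 = t) = 0$, so that the map $r \mapsto \mathbf{1}_{r > t}$ is continuous at $\tau_0(\eta)$.

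On the intersection of these three sets, which has full $\mu$-measure, the composite map $\eta \mapsto \phi(\eta_t)\mathbf{1}_{\tau_0(\eta) > t}$ is M1-continuous. Consequently, $\phi(Z^n_t)\mathbf{1}_{\tau_0(Z^n) > t} \to \phi(Z_t)\mathbf{1}_{\tau_0(Z) > t}$ almost surely. Since $\phi$ is bounded and the indicator is bounded by one, the Bounded Convergence Theorem yields convergence of the expectations, which rewrites as $\langle \nu^{\mu^n}_t, \phi\rangle \to \langle \nu^\mu_t, \phi\rangle$ for every $\phi \in \mathcal{C}_b(\R)$.

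The main obstacle I anticipate is verifying that $\tau_0$ is M1-continuous at $\mu$-almost every $\eta$; this relies crucially on the strong crossing property \eqref{app:eq: CROSSING PROPERTY EQUATION IN app: lem: FUNCTIONAL CONTINUITY II}. Without it, an approximating sequence could hit zero arbitrarily before $\eta$ does (via a small downward excursion of $\eta$ just after $\tau_0(\eta)$), breaking $\limsup_n \tau_0(\eta^n) \leq \tau_0(\eta)$. The crossing property forces $\eta$ to dip strictly below zero on every right-neighbourhood of $\tau_0(\eta)$, which together with $\Delta \eta \leq 0$ (handled in \cref{app:lem: CONVERGENCE OF THE STOPPING TIME FOR PARTICLES THAT EXHIBIT THE CROSSING PROPERTY}) is exactly what is needed to pass to the limit in the hitting time. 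Everything else is routine Skorohod plus Bounded Convergence bookkeeping.
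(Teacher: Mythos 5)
Your proposal is correct and follows essentially the same route as the paper: the paper applies the Continuous Mapping Theorem to the pushforward of $\mu^n$ under $\eta \mapsto (\eta_t,\,\ind_{\{\tau_0(\eta) > t\}})$ and then tests against $f(x)\ind_{\{1\}}(y)$, while you run the equivalent Skorohod-representation-plus-bounded-convergence version, identifying exactly the same full-$\mu$-measure continuity set (continuity at $t$, $\tau_0(\eta)\neq t$, and the crossing property yielding M1-continuity of $\tau_0$). Note only that, like the paper's own proof, your appeal to the stopping-time convergence lemma implicitly uses that $\mu$-a.e.\ path has only downward jumps, a property not stated in the hypothesis but available for the measures to which the lemma is applied.
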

	
	\begin{proof}
		The proof is an application of the Continuous Mapping Theorem, \citep[Theorem~2.7]{billingsley2013convergence}. We only need to construct $\mu$-almost sure continuous maps.
		
		\noindent \underline{Step 1: Projection of measures from $\DR$ to $\R\times\{0,\,1\}$}\\[1 ex]
		Consider the map
		\begin{equation}\label{app:eq: FIRST CONTINUOUS MAP IN app: lem: FUNCTIONAL CONTINUITY II PROOF}
			(X_t,\,\ind_{\{\tau_{0}(\cdot)\}})\,:\, \DR \to \R\times\{0,1\},\qquad \eta \mapsto (\eta_t,\,\ind_{\{\tnot > t\}})
		\end{equation}
		\eqref{app:eq: FIRST CONTINUOUS MAP IN app: lem: FUNCTIONAL CONTINUITY II PROOF} is a $\mu$-almost sure continuous map. Choose a $\eta \in \DR$ such that $\eta_t = \eta_{t-},\,\tnot \neq t$ and is in the complement of the event in \eqref{app:eq: CROSSING PROPERTY EQUATION IN app: lem: FUNCTIONAL CONTINUITY II}. Such $\eta$'shave full measure under $\mu$. $M_1$-convergence implies pointwise convergence at continuity points, \citep[Theorem~12.5.1]{whitt2002stochastic}, therefore $X_t$ is $M_1$-continuous for every such $\eta$. Also by Lemma \ref{lem:sec2: LEMMA STATING THAT THE CONDITION FEEDBACK IS A CONTINUOUS MAP HENCE WE HAVE WEAK CONVERGENCE OF FEEDBACK ESSENTIALLY}, since \eqref{app:eq: CROSSING PROPERTY EQUATION IN app: lem: FUNCTIONAL CONTINUITY II} holds, $\tau_0$ is an $M_1$-continuous map at $\eta$. As $\tnot \neq t$, $\ind_{\{\tau_{0}(\cdot)\}}$ is $M_1$-continuous at $\eta$. Hence, $(X_t,\,\ind_{\{\tau_{0}(\cdot)\}})$ is a $\mu$-almost sure continuous map. By the Continuous Mapping Theorem,
		\begin{equation*}
			(X_t,\,\ind_{\{\tau_{0}(\cdot)\}})^\#\mu^n \implies (X_t,\,\ind_{\{\tau_{0}(\cdot)\}})^\#\mu \quad \text{in} \quad \mathcal{P}(\DR \times \{0,\,1\}).
		\end{equation*}
		
		\noindent \underline{Step 2: Weak convergence of sub-probability measures}\\[1 ex]
		For any $f \in \mathcal{C}_b(\R)$, define the map
		\begin{equation*}
			\hat{f}\,:\, \R \times \{0,\,1\} \to \R,\qquad (x,y) \mapsto f(x)\ind_{\{1\}}(y). 
		\end{equation*}
		It is clear $\hat{f} \in \mathcal{C}_b(\R\times\{0,\,1\})$. By step 1,
		\begin{equation*}
			\langle(X_t,\,\ind_{\{\tau_{0}(\cdot)\}})^\#\mu^n,\hat{f}\rangle \xrightarrow[]{} \langle(X_t,\,\ind_{\{\tau_{0}(\cdot)\}})^\#\mu,\hat{f}\rangle.
		\end{equation*}
		But by definition, $\langle(X_t,\,\ind_{\{\tau_{0}(\cdot)\}})^\#\mu^n,\hat{f}\rangle = \nu^{\mu^n}_t(f)$ and $\langle(X_t,\,\ind_{\{\tau_{0}(\cdot)\}})^\#\mu,\hat{f}\rangle = \nu^\mu_t(f)$. So $\nu^{\mu^n}_t(f) \xrightarrow[]{} \nu^{\mu}_t(f)$. The conclusion now follows by Portmanteau's Theorem.
	\end{proof}
	
	
	\begin{lemma}[Weak convergence of sub-probability measures]\label{app: lem: CONVERGENCE OF SUB PROBABILITY MEASURES LEMMA}
		Suppose that $\tilde{\mathbf{P}}^{\varepsilon_n} \implies \tilde{\mathbf{P}}^*$ on $(\mathcal{P}(\DR),\tmwk)$ for a positive sequence $(\varepsilon_n)_{n \ge 1}$ which converges to zero. Set $$\mathbb{T} \coloneqq \left\{t \in [-1,\Bar{T}]\;:\; \E\left[\tilde{\mathbf{P}}^*(\eta_t = \eta_{t-})\right] = 1,\,\E\left[\tilde{\mathbf{P}}^*(\tnot = t)\right] = 0\right\}.$$ Then for any $t \in \mathbb{T}$,
		\begin{equation*}
			\bm{\nu}_t^{\varepsilon_n} \implies \bm{\nu}_t^* \coloneqq \tilde{\mathbf{P}}^*(\eta_t \in \cdot,\, \tnot > t) \quad \text{in} \quad \MRR.
		\end{equation*}
	\end{lemma}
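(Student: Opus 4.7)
The plan is to view the projection $\Phi_t \colon \mathcal{P}(\DR) \to \MR$, $\Phi_t(\mu) \vcentcolon= \mu(\eta_t \in \cdot,\, \tau_0(\eta) > t)$, as a continuous transformation on a set of full limiting measure and apply the Continuous Mapping Theorem to the weak convergence $\Tilde{\mathbf{P}}^{\varepsilon_n} \implies \Tilde{\mathbf{P}}^*$. By construction $\Phi_t(\Tilde{\mathbf{P}}^{\varepsilon_n}) = \bm{\nu}_t^{\varepsilon_n}$ and $\Phi_t(\Tilde{\mathbf{P}}^*) = \bm{\nu}_t^*$, so once such a.s.\ continuity of $\Phi_t$ is in place, the conclusion is immediate.

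To establish the continuity, I would appeal to \cref{app: lem: FUNCTIONAL CONTINUITY II}, which asserts that $\Phi_t$ is continuous at any $\mu \in \mathcal{P}(\DR)$ satisfying (a) the strong crossing property
\begin{equation*}
    \mu\left(\inf_{s \in (\tau_0(\eta),\,(\tau_0(\eta) + h)\wedge \Bar{T})} \{\eta_s - \eta_{\tau_0(\eta)}\} \geq 0,\, \tau_0(\eta) < \Bar{T}\right) = 0 \quad \text{for every } h>0,
\end{equation*}
together with (b) $\mu(\eta_t = \eta_{t-}) = 1$ and (c) $\mu(\tau_0(\eta) = t) = 0$. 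Conditions (b) and (c) hold on a set of full $\operatorname{Law}(\Tilde{\mathbf{P}}^*)$-measure by the very definition of $\mathbbm{T}$. Condition (a) is a consequence of \cref{lem:sec2: LEMMA STATING THE STRONG CROSSING PROPERTY OF THE LIMITING RANDOM MEASURE P*}: for each fixed $h > 0$ the expectation of the displayed probability vanishes, so that quantity itself is zero on a full-measure event $\Omega_h$. Taking the countable intersection $\Omega_* \vcentcolon= \bigcap_{k \geq 1} \Omega_{1/k}$ yields a single full-measure set on which the crossing event vanishes for every $h \in \{1/k : k \geq 1\}$; the monotonicity $E_{h_2} \subset E_{h_1}$ for $h_1 < h_2$ then extends this to all $h>0$ simultaneously on $\Omega_*$.

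Combining the three bullets, $\Phi_t$ is continuous at $\operatorname{Law}(\Tilde{\mathbf{P}}^*)$-a.e.\ $\mu$. Since $(\mathcal{P}(\DR),\tmwk)$ and $\MR$ are Polish, and measurability of $\Phi_t$ follows from a standard Monotone Class argument combined with the measurability of the projections $\mu \mapsto \mu(A)$, the Continuous Mapping Theorem \citep[Theorem~2.7]{billingsley2013convergence} delivers $\Phi_t(\Tilde{\mathbf{P}}^{\varepsilon_n}) \implies \Phi_t(\Tilde{\mathbf{P}}^*)$ in $\MR$, which is the required statement $\bm{\nu}_t^{\varepsilon_n} \implies \bm{\nu}_t^*$.

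The only technical subtlety is the countable-to-uncountable transfer underlying (a); the rest is a clean reduction to the already-established functional continuity result and the Continuous Mapping Theorem.
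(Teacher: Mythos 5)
Your proof is correct and is essentially the paper's argument: both rest on the definition of $\mathbbm{T}$ together with the strong crossing property lemma to produce a Borel set of measures of full $\operatorname{Law}(\Tilde{\mathbf{P}}^*)$-mass at which, by \cref{app: lem: FUNCTIONAL CONTINUITY II}, the map $\mu \mapsto \mu(\eta_t \in \cdot,\,\tnot > t)$ is continuous. The only difference is cosmetic — where you invoke the Continuous Mapping Theorem directly (with the measurability remark), the paper unpacks the same step by hand via Skorokhod representation, dominated convergence and Portmanteau.
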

	
	
	\begin{proof}
		By definition of $\mathbb{T}$ and Lemma \ref{lem:sec2: LEMMA STATING THE STRONG CROSSING PROPERTY OF THE LIMITING RANDOM MEASURE P*}. for any $t \in \mathbb{T}$ there is a set of $\mu$'s of full $\operatorname{Law}(\mathbf{P}^*)$-measure such that
		\begin{equation}\label{app:eq: FIRST EQUATION STATING THE PROPERTIES OF MU IN app: lem: CONVERGENCE OF SUB PROBABILITY MEASURES LEMMA}
			\mu(\eta_t \neq \eta_{t-}) = \mu(\tnot = t) =0,\, \mu\left(\underset{s \in (\tau_{0}(\eta),\, (\tau_{0}(\eta) + h)\wedge \bar{T})}{\inf}\{\eta_s - \eta_{\tau_0(\eta)}\} \ge 0,\,\tau_0(\eta) < \bar{T}\right) = 0.
		\end{equation}
		
		As $\tilde{\mathbf{P}}^{\varepsilon_n} \implies \tilde{\mathbf{P}}^*$, by Skorokhod's Representation Theorem there exists a $(\mathbf{Q}^n)_{n \ge 1},\,\mathbf{Q}^*$ such that $\mathbf{Q}^n \xrightarrow[]{} \mathbf{Q}^*$ almost surely, $\operatorname{Law}(\mathbf{Q}^n) = \operatorname{Law}(\tilde{\mathbf{P}}^{\varepsilon_n})$, $\operatorname{Law}(\mathbf{Q}^*) = \operatorname{Law}(\tilde{\mathbf{P}}^{*})$ and $\mathbf{Q}^*$ satisfies \eqref{app:eq: FIRST EQUATION STATING THE PROPERTIES OF MU IN app: lem: CONVERGENCE OF SUB PROBABILITY MEASURES LEMMA} almost surely. Set
		\begin{equation*}
			\nu_t^{\mathbf{Q}^n} = \mathbf{Q}^n(\eta_t \in \cdot,\, \tnot > t),\quad \text{and}\quad \nu_t^{\mathbf{Q}^*} = \mathbf{Q}^*(\eta_t \in \cdot,\, \tnot > t)
		\end{equation*}
		By Lemma \ref{app: lem: FUNCTIONAL CONTINUITY II}, $\nu_t^{\mathbf{Q}^n} \xrightarrow[]{} \nu_t^{\mathbf{Q}^*}$ almost surely in $\MRR$. Now, for any $F \in \mathcal{C}_b(\MRR)$, by the Dominated Convergence Theorem
		\begin{equation*}
			\underset{n \xrightarrow[]{} \infty}{\lim} \E \left[F(\bm{\nu}_t^{\varepsilon_n})\right] =  \underset{n \xrightarrow[]{} \infty}{\lim} \E \left[F(\nu_t^{\mathbf{Q}^n} )\right] = \E \left[F(\nu_t^{\mathbf{Q}^*} )\right] = \E \left[F(\bm{\nu}_t^{*})\right].
		\end{equation*}
		The result now follows by Portmanteau's Theorem.
	\end{proof}
	
	
	\begin{lemma}[Functional Continuity III]\label{app: lem: FUNCTIONAL CONTINUITY III}
		Let $\mu \in \mathcal{P}(\DR)$ be any measure such that
		\begin{equation}\label{app:eq: CROSSING PROPERTY EQUATION IN app: lem: FUNCTIONAL CONTINUITY III}
			\mu\left(\underset{s \in (\tau_{0}(\eta),\, (\tau_{0}(\eta) + h)\wedge \bar{T})}{\inf}\{\eta_s - \eta_{\tau_0(\eta)}\} \ge 0,\,\tau_0(\eta) < \Bar{T}\right) = 0,
		\end{equation}
		for any $h > 0$ and let $g(t,x,\nu)$ be any function satisfying Assumption \ref{ass: MODIFIED AND SIMPLIED FROM SOJMARK SPDE PAPER ASSUMPTIONS II} \ref{ass: MODIFIED AND SIMPLIED FROM SOJMARK SPDE PAPER ASSUMPTIONS II ONE}. Then $\int_0^tg(s,\eta_s^n,\nu^{\mu^n}_s) \diff s$ converges to $\int_0^t g(s,\eta_s,\nu^{\mu}_s) \diff s$ for any $t \ge 0$ whenever $(\eta^n,\mu^n) \xrightarrow[]{} (\eta,\mu)$ in $(\mathcal{P}(\DR),\tmwk)\times (\DR,M_1)$ along a sequence for which $\sup_{n \ge 1} \langle\mu^n,\,\sup_{s \le \Bar{T}} \nnnorm{\tilde{\eta}_s}^p \rangle < \infty$ for some $p > 1$ and any $t \ge 0$. For any measure $m \in \mathcal{P}(\DR),\, \nu_s^m \coloneqq m(\tilde{\eta}_s\in \cdot,\, \tau_0(\tilde{\eta}) > s)$.
	\end{lemma}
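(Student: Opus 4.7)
The plan is to write the difference as a sum of two contributions, one controlling the change in the spatial argument and one controlling the change in the measure argument, and then apply dominated convergence on each piece using the uniform $L^p$ moment bound:
\begin{align*}
\int_0^t \bigl(g(s,\eta^n_s,\nu^{\mu^n}_s)-g(s,\eta_s,\nu^{\mu}_s)\bigr)\diff s
&= \int_0^t \bigl(g(s,\eta^n_s,\nu^{\mu^n}_s)-g(s,\eta_s,\nu^{\mu^n}_s)\bigr)\diff s \\
&\quad + \int_0^t \bigl(g(s,\eta_s,\nu^{\mu^n}_s)-g(s,\eta_s,\nu^{\mu}_s)\bigr)\diff s.
\end{align*}

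For the first (spatial) term, Assumption (i) gives a uniform Lipschitz bound $\nnnorm{g(s,x,\mu)-g(s,y,\mu)}\leq C_b\nnnorm{x-y}$, so the integrand is bounded by $C_b\nnnorm{\eta^n_s-\eta_s}$. Since $\eta^n\to\eta$ in the M1-topology, \citep[Theorem~12.5.1]{whitt2002stochastic} yields $\eta^n_s\to\eta_s$ for every continuity point $s$ of $\eta$, and this set is co-countable, hence of full Lebesgue measure. Uniform integrability in $s$ follows from the fact that M1-convergence implies $\sup_n\sup_{s\leq\Bar{T}}\nnnorm{\eta^n_s}<\infty$, so bounded convergence finishes this piece.

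For the second (measure) term, Assumption (i) gives
$\nnnorm{g(s,\eta_s,\nu^{\mu^n}_s)-g(s,\eta_s,\nu^{\mu}_s)}\leq C_b(1+\nnnorm{\eta_s}+\langle\nu^{\mu^n}_s,\nnnorm{\,\cdot\,}\rangle)\,d_0(\nu^{\mu^n}_s,\nu^{\mu}_s)$.
By the crossing assumption \eqref{app:eq: CROSSING PROPERTY EQUATION IN app: lem: FUNCTIONAL CONTINUITY III} combined with \cref{app: lem: FUNCTIONAL CONTINUITY II}, $\nu^{\mu^n}_s\implies\nu^{\mu}_s$ in $\MR$ for all but countably many $s$. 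The uniform bound $\sup_n\langle\mu^n,\sup_{u\leq\Bar{T}}\nnnorm{\Tilde{\eta}_u}^p\rangle<\infty$ with $p>1$ gives uniform integrability of the identity under $\{\nu^{\mu^n}_s\}_n$, which upgrades weak convergence to convergence in the $d_0$ metric (since test functions are Lipschitz with linear growth, and the first moments converge). The prefactor $(1+\nnnorm{\eta_s}+\langle\nu^{\mu^n}_s,\nnnorm{\,\cdot\,}\rangle)$ is uniformly bounded in $s\in[0,t]$ and in $n$ by the same $L^p$ bound, so another application of dominated convergence closes the argument.

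The main subtlety I expect is upgrading weak convergence of $\nu^{\mu^n}_s$ to convergence in $d_0$: the $d_0$ metric pairs $\mu$ with test functions $\psi$ satisfying only $\nnorm{\psi(0)}\leq 1$ and $\norm{\psi}_{\operatorname{Lip}}\leq 1$, so $\psi$ has linear growth and is not bounded. This is exactly where the uniform $L^p$ moment hypothesis with $p>1$ enters: a standard truncation of $\psi$ at level $R$ together with $\sup_n\langle\nu^{\mu^n}_s,\nnnorm{\,\cdot\,}^p\rangle<\infty$ forces the tails to vanish uniformly in $n$, so that weak convergence on the compact part and tail control on the complement yield $d_0$-convergence. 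Once this is in place, the two dominated-convergence arguments conclude the proof.
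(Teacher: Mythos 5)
Your proposal is correct and follows essentially the same route as the paper: bound the integrand through \cref{ass: MODIFIED AND SIMPLIED FROM SOJMARK SPDE PAPER ASSUMPTIONS II} \eqref{ass: MODIFIED AND SIMPLIED FROM SOJMARK SPDE PAPER ASSUMPTIONS II ONE}, obtain convergence of the integrand for (Lebesgue) almost every $s$ via \cref{app: lem: FUNCTIONAL CONTINUITY II} together with pointwise M1-convergence at continuity points, and conclude by dominated convergence, with the uniform $p$-th moment ($p>1$) entering exactly where the paper uses it, namely to upgrade weak convergence of $\nu_s^{\mu^n}$ to convergence in $d_0$. The only cosmetic difference is the device used for that upgrade: you truncate the Lipschitz test functions and control the tails uniformly with the moment bound, while the paper couples via Skorohod's Representation Theorem and applies Vitali's Convergence Theorem to get $d_0(\nu_s^{\mu^n},\nu_s^{\mu})\leq \E\nnnorm{X^n-X}\to 0$ --- both are standard and interchangeable here.
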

	

	\begin{proof}
		By Assumption \ref{ass: MODIFIED AND SIMPLIED FROM SOJMARK SPDE PAPER ASSUMPTIONS II},
		\begin{equation}\label{app:eq: UPPER BOUND ON DRIFT IN  app: lem: FUNCTIONAL CONTINUITY III}
			\nnnorm{g(s,\eta_s^n,\nu_s^{\mu^n})} \le C (1 + \sup_{m \ge 1}\, \sup_{u \le \bar{T}} \nnnorm{\eta_u^m} + \sup_{m \ge 1} \langle\mu^m,\,\sup_{u \le \bar{T}} \nnnorm{\tilde{\eta}_u} \rangle)
		\end{equation}
		The right-hand side of \eqref{app:eq: UPPER BOUND ON DRIFT IN  app: lem: FUNCTIONAL CONTINUITY III} is finite because $\eta^n \to \eta$ in $(\DR,M_1)$ and by our assumption. Thus, it is sufficient to show that $g(s,\eta_s^n,\nu_s^{\mu^n})$ converges to $g(s,\eta_s,\nu_s^{\mu})$ on a set of full Lebesgue measure. The conclusion then follows from the Dominated Convergence Theorem.
		
		By Assumption \ref{ass: MODIFIED AND SIMPLIED FROM SOJMARK SPDE PAPER ASSUMPTIONS II},
		\begin{equation}\label{eq: DIFFERENCE BETWEEN ANY FUNCTION G SATISFYING THE LOCALLY D0 Lipschitzness ASSUMPTION}
			\nnnorm{g(s,\eta_s^n,\nu_s^{\mu^n}) - g(s,\eta_s,\nu_s^{\mu})} \le C\nnnorm{\eta_s^n - \eta_s} + C(1 + \nnnorm{\eta_s} + \langle \nu_s^{\mu^n},\,\nnnorm{\cdot}\rangle)d_0(\nu_s^{\mu^n},\,\nu_s^{\mu}).
		\end{equation}
		For any $s \in \mathbb{T}^\mu \coloneqq \left\{t \in [-1,\Bar{T}]\;:\; \mu(\eta_t = \eta_{t-})= 1,\,\mu(\tau_0 = t)= 0\right\}$, the first term converges to zero as $\eta^n \to \eta$ in $(\DR,M_1)$. For the second term in \eqref{eq: DIFFERENCE BETWEEN ANY FUNCTION G SATISFYING THE LOCALLY D0 Lipschitzness ASSUMPTION}, it suffices to show $d_0(\nu_s^{\mu^n},\,\nu_s^{\mu}) \to 0$, since $(1 + \nnnorm{\eta_s} + \langle \nu_s^{\mu^n},\,\nnnorm{\cdot}\rangle)$ is bounded uniformly in $n$ by our assumption.
		
		Recall that 
		\begin{equation}\label{eq: DEFINITION OF THE D0 METRIC IN THE PROOF OF THE FUNCTIONAL CONTINUITY III LEMMA IN THE FIRST PAPER}
			d_0(\nu_s^{\mu^n},\,\nu_s^{\mu}) = \sup\left\{\nnnorm{\langle\nu_s^{\mu^n} - \nu_s^{\mu},\, \psi\rangle}\,:\, \psi \in \mathcal{C}_{d_0}\right\},
		\end{equation}
		where $\mathcal{C}_{d_0} \coloneqq \{\psi \in \mathcal{C}(\R)\,:\, \norm{\psi}_{\operatorname{Lip}} \le 1,\, \nnnorm{\psi(0)} \le 1\}$. Fix a $\delta > 0$, then for any $\lambda > 1$, by the Arzerl\`a-Ascoli Theorem, there exists a finite family of functions $\psi_1,\ldots,\psi_m \in \mathcal{C}_{d_0}$ supported on $[-\lambda - 1, \lambda + 1]$, for $m = m(\lambda) \in \N$ such that for any $\psi \in \mathcal{C}_{d_0}$,
		\begin{equation}\label{eq: THE SUP NORM DISTANCE BETWEEN OUR FINITE APPROXIMATING SET IN THE FIRST PAPER PROOF}
			\sup_{x \in [-\lambda,\lambda]} \nnnorm{\psi(x) - \psi_i(x)} < \delta/2
		\end{equation}
		for some $i \in \{1,\ldots,m\}$. Fixing any $\psi \in \mathcal{C}_{d_0}$ and choosing a $\psi_i$ such that \eqref{eq: THE SUP NORM DISTANCE BETWEEN OUR FINITE APPROXIMATING SET IN THE FIRST PAPER PROOF} holds, we have
		\begin{align*}
			\nnnorm{\langle\nu_s^{\mu^n} - \nu_s^{\mu},\psi\rangle} 
			\le& \nnnorm{\int_0^{\lambda} (\psi - \psi_i) \diff(\nu_s^{\mu^n} - \nu_s^{\mu})} 
			+\nnnorm{\int_{\lambda}^{\infty} (\psi - \psi_i) \diff(\nu_s^{\mu^n} - \nu_s^{\mu})} \\
			&+ \nnnorm{\langle\nu_s^{\mu^n} - \nu_s^{\mu},\psi_i\rangle}.
		\end{align*}
		By our choice of $\psi_i$, the first term is bounded by $\delta$ uniformly in $n$. By the linear growth condition for functions in $\mathcal{C}_{d_0}$,
		\begin{equation*}
			\nnnorm{\int^{\infty}_{\lambda} (\psi - \psi_i) \diff(\nu_s^{\mu^n} - \nu_s^{\mu})} \le C \langle\nu_s^{\mu^n} + \nu_s^{\mu},\nnnorm{\cdot}\ind_{[\lambda,\infty)}\rangle
		\end{equation*}
		for a constant $C$ independent of $n$. Consequently, by the definition of $\nu_s^{\mu^n}$,
		\begin{align*}
			\langle\nu_s^{\mu^n},\nnnorm{\cdot}\ind_{[\lambda,\infty)}\rangle \le
			\int_{\DR} \nnnorm{\tilde{\eta}_s}\ind_{\{\tilde{\eta}_s > \lambda\}}\diff\mu^n(\tilde{\eta}) 
			&\le \langle\mu^n,\,\sup_{u \le \bar{T}} \nnnorm{\tilde{\eta}_u}^p \rangle^{\frac{1}{p}} \mu^n\bigg(\sup_{u \le \bar{T}} \nnnorm{\tilde{\eta}_u} > \lambda\bigg)^{\frac{p-1}{p}}\\
			&\le \frac{1}{\lambda^{p - 1}} \langle\mu^n,\,\sup_{u \le \bar{T}} \nnnorm{\tilde{\eta}_u}^p \rangle.
		\end{align*}
		The second inequality follows from H\"older's inequality, while the last inequality follows from Markov's inequality. We may also employ the same argument to upper bound the $\langle\nu_s^{\mu},\nnnorm{\cdot}\ind_{[\lambda,\infty)}\rangle$ term in the above. Hence, by these upper bounds and as $\mu^n \implies \mu$, we deduce that
		\begin{equation*}
			\nnnorm{\int^{\infty}_{\lambda} (\psi - \psi_i) \diff(\nu_s^{\mu^n} - \nu_s^{\mu})} \le
			\frac{C}{\lambda^{p - 1}} \sup_{m \ge 1} \langle\mu^m,\,\sup_{u \le \bar{T}} \nnnorm{\tilde{\eta}_u}^p \rangle,
		\end{equation*}
		where $C$ is independent of $n$. Returning to \eqref{eq: DEFINITION OF THE D0 METRIC IN THE PROOF OF THE FUNCTIONAL CONTINUITY III LEMMA IN THE FIRST PAPER}, we have shown that
		\begin{equation*}
			d_0(\nu_s^{\mu^n},\,\nu_s^{\mu}) \le \delta + \frac{C}{\lambda^{p - 1}} \sup_{m \ge 1} \langle\mu^m,\,\sup_{u \le \bar{T}} \nnnorm{\tilde{\eta}_u}^p \rangle + \sum_{i = 1}^m\nnorm{\langle\nu_s^{\mu^n} - \nu_s^{\mu},\psi_i\rangle} . 
		\end{equation*}
		The middle term will vanish uniformly in $n$ as $\lambda \to \infty$. Thus, we fix a $\lambda$ sufficiently large such that the middle term is bounded by $\delta$ uniformly in $n$. As $\lambda$ is now fixed, so is $m$. By Lemma \ref{app: lem: FUNCTIONAL CONTINUITY II}, $\nu_s^{\mu^n} \implies \nu_s^{\mu}$ in $\MRR$; hence, as the $\psi_i$'s have compact support, they are continuous and bounded. Therefore, the final term will be smaller than $\delta$ for all $n$ sufficiently large. Consequently, for all $n$ sufficiently large, $d_0(\nu_s^{\mu^n},\,\nu_s^{\mu}) \le C\delta$ for some constant $C$ independent of $n$. Hence, $d_0(\nu_s^{\mu^n},\,\nu_s^{\mu}) \to 0$ as $n \to \infty$. This completes the proof.
	\end{proof}
	\color{black}
	
	
	\begin{lemma}\label{app:lem: WEAK STOCHASTIC UPPER BOUND ON THE MEASURE OF THE DELAYED SYSTEM}
		Fix any $t < T$. There is a constant $C >0$ independent of $\varepsilon$ and $t$ such that for any $\gamma < 1 \wedge (T - t)$ we have
		{\small\begin{equation*}
				\prob\left[ \bm{\nu}_t^\varepsilon[0, \alpha_t z + C\gamma^{1/3} + \alpha_t(L_t^\varepsilon - \mathfrak{L}_t^\varepsilon) + (\alpha_{t + \gamma} - \alpha_t)]\ge z, \; \forall \; z \le L_{t + \gamma}^\varepsilon - L_t^\varepsilon - C\gamma^{1/3}\right]  \ge 1 - C\gamma^{1/3}
		\end{equation*}}
	\end{lemma}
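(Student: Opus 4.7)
The plan is to translate the statement about the random sub-probability measure $\bm{\nu}_t^\varepsilon$ into a path-wise estimate on $X_t^\varepsilon$ for particles whose first hitting time lies in a short sub-interval $(t, t+\tilde\gamma]$, with $\tilde\gamma \leq \gamma$ to be chosen, and then to use continuity of $L^\varepsilon$ (guaranteed by \cref{thm: EXISTENCE AND UNIQUESS OF SOLUTIONS THEOREM FROM SPDE}) to parametrise $\tilde\gamma$ by $z$. Since $\prob(\tau^\varepsilon \in (t, t+\tilde\gamma] \mid W^0) = L_{t+\tilde\gamma}^\varepsilon - L_t^\varepsilon$, any upper bound $X_t^\varepsilon \leq M$ on this event yields $\bm{\nu}_t^\varepsilon[0, M] \geq L_{t+\tilde\gamma}^\varepsilon - L_t^\varepsilon$, up to exceptional Brownian contributions. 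The three error terms in the lemma will correspond respectively to the Brownian fluctuations ($C\gamma^{1/3}$), the latency between smoothed and instantaneous loss ($\alpha_t(L_t^\varepsilon - \mathfrak{L}_t^\varepsilon)$), and the time variation of $\alpha$ on $[t,t+\gamma]$ ($\alpha_{t+\gamma}-\alpha_t$).

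First I would derive the underlying path-wise bound. On $\{\tau^\varepsilon \in (t,t+\tilde\gamma]\}$, the SDE together with $X^\varepsilon_{\tau^\varepsilon} \leq 0$ gives
\[
X_t^\varepsilon \leq -\int_t^{\tau^\varepsilon} b(u, X_u^\varepsilon, \bm{\nu}_u^\varepsilon)\,\diff u - (\mathcal{Y}_{\tau^\varepsilon}^\varepsilon - \mathcal{Y}_t^\varepsilon) + \int_t^{\tau^\varepsilon} \alpha(u)\,\diff \mathfrak{L}^\varepsilon_u.
\]
Modulo a further good event controlling $\sup_{s \leq T}\nnorm{X_s^\varepsilon}$ via \cref{prop: GRONWALL TYPE UPPERBOUND ON THE SUP OF THE MCKEAN VLASOV EQUATION WITH CONVOLUTION IN THE BANKING MODEL SIMPLIFIED AND INDEXED GENERALISED}, the drift integral is $O(\gamma) \leq C\gamma^{1/3}$ for $\gamma \leq 1$. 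Since $\alpha$ is non-decreasing by \cref{ass: MODIFIED AND SIMPLIED FROM SOJMARK SPDE PAPER ASSUMPTIONS II}\eqref{ass: MODIFIED AND SIMPLIED FROM SOJMARK SPDE PAPER ASSUMPTIONS II FIVE} and $\mathfrak{L}^\varepsilon \leq 1$, I bound $\int_t^{\tau^\varepsilon}\alpha(u)\,\diff\mathfrak{L}^\varepsilon_u \leq \alpha_{t+\gamma}(\mathfrak{L}_{\tau^\varepsilon}^\varepsilon - \mathfrak{L}_t^\varepsilon) \leq \alpha_t(\mathfrak{L}_{\tau^\varepsilon}^\varepsilon - \mathfrak{L}_t^\varepsilon) + (\alpha_{t+\gamma} - \alpha_t)$. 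The pointwise inequality $\mathfrak{L}_s^\varepsilon \leq L_s^\varepsilon$ (which follows from the monotonicity of $L^\varepsilon$ and $\norm{\kappa^\varepsilon}_1 = 1$) then gives $\mathfrak{L}_{\tau^\varepsilon}^\varepsilon - \mathfrak{L}_t^\varepsilon \leq (L_{t+\tilde\gamma}^\varepsilon - L_t^\varepsilon) + (L_t^\varepsilon - \mathfrak{L}_t^\varepsilon)$, which produces all the explicit error terms of the lemma.

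The main technical obstacle is controlling $\sup_{s \in [t,t+\gamma]}\nnorm{\mathcal{Y}_s^\varepsilon - \mathcal{Y}_t^\varepsilon}$ cleanly, because this increment depends simultaneously on both Brownian motions. Here I would split $\mathcal{Y}^\varepsilon - \mathcal{Y}_t^\varepsilon = (Y^\varepsilon - Y_t^\varepsilon) + (\mathcal{Y}^{0,\varepsilon} - \mathcal{Y}_t^{0,\varepsilon})$ into its $W$- and $W^0$-driven components and define the good events
\[
G^0 := \left\{\sup_{s \in [t, t+\gamma]}\nnorm{\mathcal{Y}_s^{0,\varepsilon} - \mathcal{Y}_t^{0,\varepsilon}} \leq \tfrac{C}{2}\gamma^{1/3}\right\},\qquad G^W := \left\{\sup_{s \in [t, t+\gamma]}\nnorm{Y_s^\varepsilon - Y_t^\varepsilon} \leq \tfrac{C}{2}\gamma^{1/3}\right\}.
\]
The key point is that $G^0$ is $W^0$-measurable and so can be moved outside the conditional law defining $\bm{\nu}_t^\varepsilon$. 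The boundedness of $\sigma,\rho$ in \cref{ass: MODIFIED AND SIMPLIED FROM SOJMARK SPDE PAPER ASSUMPTIONS II}, together with Doob's $L^2$-inequality and Chebyshev, gives $\prob(G^0) \geq 1 - C\gamma^{1/3}$ unconditionally. Conditionally on $W^0$, the integrand $\sigma\sqrt{1-\rho^2}$ in $Y^\varepsilon$ is still bounded and $W$ remains an independent Brownian motion, so the same argument yields $\prob(G^W \mid W^0) \geq 1 - C\gamma^{1/3}$ almost surely. The exponent $1/3$ is forced precisely so that conditional Chebyshev returns $C\gamma/(C\gamma^{1/3})^2 = C\gamma^{1/3}$.

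To assemble the proof, for each $z \leq L_{t+\gamma}^\varepsilon - L_t^\varepsilon - C\gamma^{1/3}$ I would use continuity of $L^\varepsilon$ to select $\tilde\gamma(z) \in [0,\gamma]$ with $L_{t+\tilde\gamma(z)}^\varepsilon - L_t^\varepsilon = z + C\gamma^{1/3}$. On $G^0 \cap G^W \cap \{\tau^\varepsilon \in (t, t+\tilde\gamma(z)]\}$, substituting into the path-wise bound yields $X_t^\varepsilon \leq \alpha_t z + C'\gamma^{1/3} + \alpha_t(L_t^\varepsilon - \mathfrak{L}_t^\varepsilon) + (\alpha_{t+\gamma} - \alpha_t)$, where $C' = (\norm{\alpha}_\infty+1)C$ absorbs the shift $\alpha_t \cdot C\gamma^{1/3}$ arising from the substitution. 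Therefore, on $G^0$,
\begin{align*}
\bm{\nu}_t^\varepsilon\bigl[0, \alpha_t z &+ C'\gamma^{1/3} + \alpha_t(L_t^\varepsilon - \mathfrak{L}_t^\varepsilon) + (\alpha_{t+\gamma} - \alpha_t)\bigr] \\
&\geq \prob\bigl(\tau^\varepsilon \in (t, t+\tilde\gamma(z)],\, G^W \mid W^0\bigr) \geq \bigl(z + C\gamma^{1/3}\bigr) - C\gamma^{1/3} = z,
\end{align*}
uniformly over $z$ in the admissible range. Relabelling $C'$ back to $C$ and using $\prob(G^0) \geq 1 - C\gamma^{1/3}$ completes the argument.
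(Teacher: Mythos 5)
Your overall strategy---bounding $X_t^\varepsilon$ pathwise on the event that the particle is absorbed in $(t,t+\tilde\gamma]$, using continuity of $L^\varepsilon$ to select $\tilde\gamma(z)$, and splitting the error into Brownian fluctuations, the latency term $\alpha_t(L_t^\varepsilon-\mathfrak{L}_t^\varepsilon)$ and the variation of $\alpha$---is the same as the paper's, and your bound on $\int_t^{\tau^\varepsilon}\alpha(u)\,\diff\mathfrak{L}_u^\varepsilon$ matches the paper's integration-by-parts estimate. The gap is in how you dispose of the good events. Your argument hinges on the claim that $G^0$ is $W^0$-measurable and can therefore be pulled outside the conditional law defining $\bm{\nu}_t^\varepsilon$. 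In the generality of this lemma that claim is false: the integrand of $\mathcal{Y}^{0,\varepsilon}_s=\int_0^s\sigma(u,X_u^\varepsilon)\rho(u,\bm{\nu}_u^\varepsilon)\,\diff W^0_u$ depends on $X_u^\varepsilon$ and hence on the idiosyncratic noise $W$, so neither $\mathcal{Y}^{0,\varepsilon}$ nor $G^0$ is $\sigma(W^0)$-measurable; the same objection applies to the ``further good event'' controlling $\sup_{s\leq T}\nnorm{X_s^\varepsilon}$ that you need for the drift. Consequently the step ``on $G^0$, $\bm{\nu}_t^\varepsilon[0,M]\geq\prob(\tau^\varepsilon\in(t,t+\tilde\gamma(z)],\,G^W\mid W^0)$'' is not justified: what the pathwise inclusion actually yields is a lower bound from which one must also subtract $\prob\bigl((G^0)^\complement\cup(\text{drift bad event})\mid W^0\bigr)$, and your unconditional estimate $\prob(G^0)\geq 1-C\gamma^{1/3}$ gives no almost-sure control of this conditional probability.

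This is exactly the difficulty the paper's proof is organised around: it collects the drift and both martingale fluctuations into a single unconditional bad event $E_2$, introduces the genuinely $W^0$-measurable event $E_3:=\{\prob[E_2\mid W^0]\leq\gamma^{1/3}\}$, subtracts $\prob[E_2\mid W^0]$ inside the conditional law on $E_3$, and then shows $\prob[E_3^\complement]\leq C\gamma^{1/3}$ by two applications of Markov's inequality together with the uniform moment bound of the Gronwall proposition and an eighth-moment Burkholder--Davis--Gundy estimate. Your proposal can be repaired along these lines (replace ``on $G^0$'' by ``on the $W^0$-measurable event where the conditional probability of the combined bad event is at most $\gamma^{1/3}$'' and bound its complement by Markov), but as written the measurability claim is wrong and the conclusion does not follow. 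A secondary remark: your almost-sure bound $\prob(G^W\mid W^0)\geq 1-C\gamma^{1/3}$ for the $W$-driven part is salvageable via a conditional Doob argument in the filtration enlarged by $\sigma(W^0)$, since $W$ remains a Brownian motion there and the integrand is bounded; no analogous argument is available for the $W^0$-driven integral, which is why the paper avoids conditional estimates on the fluctuations altogether.
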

	
	
	\begin{proof}
		To begin, fix a $\gamma > 0$ such that $\gamma <  1 \wedge (T - t)$ and fix a $z \in \R$. Then we denote by $E_1^z$ the event
		{\footnotesize
			\begin{equation*}
				\Big\{
				X_t^\varepsilon - \gamma C(1 + \underset{u \le t + \gamma}{\sup}\nnnorm{X_u^\varepsilon} + \E[\underset{u \le t + \gamma}{\sup}\left.\nnnorm{X_u^\varepsilon}\right|W^0]) - \underset{u \le \gamma}{\sup}\nnnorm{\mathcal{Y}^{\varepsilon}_{t + u} - \mathcal{Y}^{\varepsilon}_{t}}- \alpha_t z - \alpha_t(L_t^\varepsilon - \mathfrak{L}_t^\varepsilon) - (\alpha_{t + \gamma} - \alpha_t)\le 0,\, \tau^\varepsilon > t
				\Big\}
		\end{equation*}}
		where $C$ is the constant from the linear growth condition on $b$. Now fix $x \le L_{t + \gamma}^\varepsilon - L_t^\varepsilon$. By the continuity of the loss process, \citep[Theorem~2.4]{hambly2019spde}, there exists a $s \le \gamma$ such that $ x = L_{t + s}^\varepsilon - L_t^\varepsilon$. Employing the integration by parts formula, we observe for any $u \in [t, t+ s]$
		\begin{align*}
			\int_t^u \alpha_v \diff \mathfrak{L}_v^\varepsilon &= \alpha_u \mathfrak{L}_u^\varepsilon - \alpha_t \mathfrak{L}_t^\varepsilon - \int_t^u \alpha^\prime(v) \mathfrak{L}_v^\varepsilon \diff v \le \alpha_u \mathfrak{L}_u^\varepsilon - \alpha_t \mathfrak{L}_t^\varepsilon\\
			&\le \alpha_{t + s} {L}_{t + s}^\varepsilon - \alpha_t \mathfrak{L}_t^\varepsilon \le \alpha_{t} {L}_{t + s}^\varepsilon + (\alpha_{t + s} - \alpha_t) - \alpha_t \mathfrak{L}_t^\varepsilon, 
		\end{align*}
		where to establish upper bounds we use the fact that $\alpha$ is non-negative and non-decreasing, and $\mathfrak{L}_v^\varepsilon \le L_v^\varepsilon \le 1$ for any $v \ge 0$. Therefore for any $u \in [t, t+ s]$
		\begin{align*}
			X_u^\varepsilon &=
			X_t^\varepsilon + (X_u^\varepsilon - X_t^\varepsilon) =
			X_t^\varepsilon + \int_t^u b(v,X_v^\varepsilon,\bm{\nu}_v^\varepsilon) \diff v + \mathcal{Y}_u - \mathcal{Y}_t - \int_t^u \alpha_v \diff \mathfrak{L}_v^\varepsilon \\
			&\ge X_t^\varepsilon - \gamma C(1 + \underset{u \le t + \gamma}{\sup}\nnnorm{X_u^\varepsilon} + \E[\underset{u \le t + \gamma}{\sup}\left.\nnnorm{X_u^\varepsilon}\right|W^0]) - \underset{u \le \gamma}{\sup}\nnnorm{\mathcal{Y}^{\varepsilon}_{t + u} - \mathcal{Y}^{\varepsilon}_{t}}\\
			& \qquad - \{\alpha_{t} {L}_{t + s}^\varepsilon + (\alpha_{t + s} - \alpha_t) - \alpha_t \mathfrak{L}_t^\varepsilon \pm \alpha_t L_t^\varepsilon\}\\
			&\ge X_t^\varepsilon - \gamma C(1 + \underset{u \le t + \gamma}{\sup}\nnnorm{X_u^\varepsilon} + \E[\underset{u \le t + \gamma}{\sup}\left.\nnnorm{X_u^\varepsilon}\right|W^0]) - \underset{u \le \gamma}{\sup}\nnnorm{\mathcal{Y}^{\varepsilon}_{t + u} - \mathcal{Y}^{\varepsilon}_{t}}\\
			& \qquad - \alpha_{t}x - (\alpha_{t + s} - \alpha_t) - \alpha_t(L_t^\varepsilon -  \mathfrak{L}_t^\varepsilon)
		\end{align*}
		Therefore as $L^\varepsilon$ is $W^0$ measurable and conditioning on $W^0$ fixes $L^\varepsilon$, we have
		\begin{equation*}
			\prob\left[\left. E_1^x\right|W^0\right] \ge \prob\left[\left.\underset{t \le u \le t + s}{\inf}X_u^\varepsilon \le 0, \tau^\varepsilon > t\right|W^0\right] = L_{t + s}^\varepsilon - L_t^\varepsilon = x. 
		\end{equation*}
		Now, for any fixed $z \le L_{t + \gamma}^\varepsilon - L_t^\varepsilon - 2 \gamma^{1/3}$, set $z_0 = z + 2 \gamma^{1/3}$. We define the event
		\begin{equation*}
			E_2 \coloneqq \Big\{ \gamma C(1 + \underset{u \le t + \gamma}{\sup}\nnnorm{X_u^\varepsilon} + \E[\underset{u \le t + \gamma}{\sup}\left.\nnnorm{X_u^\varepsilon}\right|W^0]) + \underset{u \le \gamma}{\sup}\nnnorm{\mathcal{Y}^{\varepsilon}_{t + u} - \mathcal{Y}^{\varepsilon}_{t}} \ge \gamma^{1/3} \Big\}
		\end{equation*}
		Then on the event $E_1^{z_0} \cap E_2^\complement$
		{\begin{align*}
				X_t^\varepsilon - \alpha_t z_0 &\le 
				\gamma C(1 + \underset{u \le t + \gamma}{\sup}\nnnorm{X_u^\varepsilon} + \E[\underset{u \le t + \gamma}{\sup}\left.\nnnorm{X_u^\varepsilon}\right|W^0]) + \underset{u \le \gamma}{\sup}\nnnorm{\mathcal{Y}^{\varepsilon}_{t + u} - \mathcal{Y}^{\varepsilon}_{t}}\\
				&\qquad+ \alpha_t(L_t^\varepsilon - \mathfrak{L}_t^\varepsilon) + (\alpha_{t + \gamma} - \alpha_t) \\
				&\le \gamma^{1/3} + \alpha_t(L_t^\varepsilon - \mathfrak{L}_t^\varepsilon) + (\alpha_{t + \gamma} - \alpha_t).
		\end{align*}}
		Therefore on the same event
		\begin{equation*}
			X_t^\varepsilon - \alpha_t z =  X_t^\varepsilon - \alpha_t z_0 + 2 \alpha_t \gamma^{1/3} \le (1 + 2 \alpha_t)\gamma^{1/3} + \alpha_t(L_t^\varepsilon - \mathfrak{L}_t^\varepsilon) + (\alpha_{t + \gamma} - \alpha_t)
		\end{equation*}
		Consequently, we deduce
		\begin{align*}
			\bm{\nu}_t^\varepsilon[0, \alpha_t z + (1 + 2 \alpha_t)\gamma^{1/3} + \alpha_t(L_t^\varepsilon - \mathfrak{L}_t^\varepsilon) + (\alpha_{t + \gamma} - \alpha_t)] 
			&\ge \prob\left[\left.E_1^{z_0} \cap E_2^\complement\right|W^0\right]\\
			&\ge \prob\left[\left.E_1^{z_0}\right|W^0\right] - \prob\left[\left. E_2\right|W^0\right]\\
			&\ge z_0- \prob\left[\left.E_2\right|W^0\right].
		\end{align*}
		Therefore if we have control over the mass $\prob\left[\left.E_2\right|W^0\right]$, we may estimate the mass with respect to $\bm{\nu}_t$ that is near the boundary. Therefore, defining the event $E_3 \coloneqq \{\prob\left[\left.E_2\right|W^0\right] \le \gamma^{1/3}\}$ we deduce on $E_3$
		\begin{equation*}
			\bm{\nu}_t^\varepsilon[0, \alpha_t z + (1 + 2 \alpha_t)\gamma^{1/3} + \alpha_t(L_t^\varepsilon - \mathfrak{L}_t^\varepsilon) + (\alpha_{t + \gamma} - \alpha_t)] \ge z_0 - \gamma^{1/3} \ge z.
		\end{equation*}
		The last inequality follows from the fact that $z_0 = z + 2 \gamma^{1/3}$. Now we only need to find a $C$ independent of $\varepsilon,\, \gamma$ and $t$ such that $\prob[E_3^\complement] \le C \gamma^{1/3}$. By application of Markov's inequality twice
		\begin{align*}
			\prob\left[E_3^\complement\right]
			&\le \gamma^{-1/3} \prob\left[E_2\right]\\
			&\le \gamma^{-1/3} \prob \left[\gamma C(1 + \underset{u \le t + \gamma}{\sup}\nnnorm{X_u^\varepsilon} + \E[\underset{u \le t + \gamma}{\sup}\left.\nnnorm{X_u^\varepsilon}\right|W^0]) \ge 2^{-1}\gamma^{1/3}/2\right]\\
			&\qquad+ \gamma^{-1/3 }\prob \left[\underset{u \le \gamma}{\sup}\nnnorm{\mathcal{Y}^{\varepsilon}_{t + u} - \mathcal{Y}^{\varepsilon}_{t}} \ge 2^{-1}\gamma^{1/3}/2\right]\\
			&\le 2C\gamma^{1/3} \E \left[1 + \underset{u \le t + \gamma}{\sup}\nnnorm{X_u^\varepsilon} + \E[\underset{u \le t + \gamma}{\sup}\left.\nnnorm{X_u^\varepsilon}\right|W^0]\right]
			+ 2^8\gamma^{-3 }\E \left[\underset{u \le \gamma}{\sup}\nnnorm{\mathcal{Y}^{\varepsilon}_{t + u} - \mathcal{Y}^{\varepsilon}_{t}}^8\right]\\
			&\le c_1 (\gamma^{1/3} + \gamma) \le 2 c_1 \gamma^{1/3},
		\end{align*}
		where $c_1$ depends on the constant from Proposition \ref{prop: GRONWALL TYPE UPPERBOUND ON THE SUP OF THE MCKEAN VLASOV EQUATION WITH CONVOLUTION IN THE BANKING MODEL SIMPLIFIED AND INDEXED GENERALISED}, the constant from Burkholder-Davis-Gundy to bound the second term and the uniform bounds on $\sigma$, but is notably independent of $\varepsilon$. Therefore, setting $C = \max\{1 + 2 \alpha(T),c_1\}$ completes the proof.
	\end{proof}
	
	
	\begin{lemma}\label{app:lem: WEAK STOCHASTIC UPPER BOUND ON THE MEASURE OF THE SINGULAR SYSTEM AT CONTINUITY POINTS}
		Suppose that $\tilde{\mathbf{P}}^{\varepsilon_n} \implies \tilde{\mathbf{P}}^*$ on $(\mathcal{P}(\DR),\tmwk)$ for a positive sequence $(\varepsilon_n)_{n \ge 1}$ which converges to zero. Set $$\mathbb{T} \coloneqq \left\{t \in [-1,\Bar{T}]\;:\; \E\left[\tilde{\mathbf{P}}^*(\eta_t = \eta_{t-})\right] = 1,\,\E\left[\tilde{\mathbf{P}}^*(\tnot = t)\right] = 0\right\}.$$ Then for any $t \in \mathbb{T}\cap [0,T)$ and $\gamma > 0$ such that $t + \gamma \in \mathbb{T} \cap [0,T)$ we have
		\begin{equation*}
			\prob\left[ \bm{\nu}_t[0, \alpha(t)z + C\gamma^{1/3} + \alpha(t + \gamma) - \alpha(t)]\ge z, \; \forall \; z \le L_{t + \gamma} - L_t - C\gamma^{1/3}\right]  \ge 1 - C\gamma^{1/3}
		\end{equation*}
	\end{lemma}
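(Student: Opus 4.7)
The plan is to deduce the claim from the corresponding delayed lemma, Lemma \ref{app:lem: WEAK STOCHASTIC UPPER BOUND ON THE MEASURE OF THE DELAYED SYSTEM}, by passing $\varepsilon_n \to 0$ and absorbing the vanishing delay defect $L_t^{\varepsilon_n} - \mathfrak{L}_t^{\varepsilon_n}$ together with the discrepancy between $L^{\varepsilon_n}_{t+\gamma}-L^{\varepsilon_n}_t$ and $L_{t+\gamma}-L_t$ into a slightly enlarged constant $C$, taken as a fixed multiple of the constant $C_0$ delivered by that earlier lemma.

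First, I would upgrade the weak convergence $\Tilde{\mathbf{P}}^{\varepsilon_n} \Rightarrow \Tilde{\mathbf{P}}^*$ to an almost sure convergence on a common probability space via Skorokhod's representation theorem. Since $t, t+\gamma \in \mathbbm{T} \cap [0,T)$, Lemma \ref{app: lem: CONVERGENCE OF SUB PROBABILITY MEASURES LEMMA} yields $\bm{\nu}^{\varepsilon_n}_t \Rightarrow \bm{\nu}_t$ and $\bm{\nu}^{\varepsilon_n}_{t+\gamma} \Rightarrow \bm{\nu}_{t+\gamma}$, Lemma \ref{lem:sec2: LEMMA STATING THAT THE CONDITION FEEDBACK IS A CONTINUOUS MAP HENCE WE HAVE WEAK CONVERGENCE OF FEEDBACK ESSENTIALLY} gives $L^{\varepsilon_n}_s \to L_s$ for $s \in \{t, t+\gamma\}$, and Corollary \ref{cor:sec2: COROLLARY SHOWING THAT WE HAVE CONVERGENCE OF THE DELAYED / SMOOTHENED LOSS TO THE LIMITING LOSS} provides $\mathfrak{L}^{\varepsilon_n}_t \to L_t$, whence $\alpha(t)(L^{\varepsilon_n}_t - \mathfrak{L}^{\varepsilon_n}_t) \to 0$ on a set of full measure.

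Next, writing $A_n$ for the event supplied by Lemma \ref{app:lem: WEAK STOCHASTIC UPPER BOUND ON THE MEASURE OF THE DELAYED SYSTEM} with its constant $C_0$, Fatou gives $\prob[\limsup_n A_n] \geq 1 - C_0 \gamma^{1/3}$. Setting $C := 2C_0$, I fix $\omega$ in $\limsup_n A_n$ together with the good convergence set, and any target $z_0 \leq L_{t+\gamma}(\omega) - L_t(\omega) - C\gamma^{1/3}$. The extra factor of two in $C$ ensures that eventually, along the subsequence $(n_k)$ on which $\omega \in A_{n_k}$, one has $z_0 \leq L^{\varepsilon_{n_k}}_{t+\gamma}(\omega) - L^{\varepsilon_{n_k}}_t(\omega) - C_0\gamma^{1/3}$. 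For any $\delta > 0$, eventually $\alpha(t)(L^{\varepsilon_{n_k}}_t - \mathfrak{L}^{\varepsilon_{n_k}}_t) < \delta$, so the delayed lemma provides
\[
\bm{\nu}^{\varepsilon_{n_k}}_t\bigl[\,0,\, \alpha(t)z_0 + C_0\gamma^{1/3} + \delta + \alpha(t+\gamma) - \alpha(t)\,\bigr] \geq z_0.
\]

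Finally, I would pass $k \to \infty$ by applying the Portmanteau theorem to the closed interval on the left (recall $\limsup_k \bm{\nu}^{\varepsilon_{n_k}}_t(K) \leq \bm{\nu}_t(K)$ for closed $K$, which inherits the lower bound $z_0$), and then send $\delta \downarrow 0$ using continuity from above of $x \mapsto \bm{\nu}_t[0,x]$. Since $C_0 \leq C$, this delivers the claimed inequality at $z_0$, and varying $z_0$ over its full range completes the proof. The principal obstacle is the interaction between the \emph{for-all-$z$} quantifier and potential discontinuities of $x \mapsto \bm{\nu}_t[0,x]$ precisely at the endpoint appearing in the bound; this is what forces the two-step padding scheme, simultaneously enlarging $C_0$ to $C = 2C_0$ for the admissible range of $z$, and sliding the upper endpoint by the auxiliary $\delta \downarrow 0$ to circumvent possible atoms at that endpoint.
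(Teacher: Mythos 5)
Your proposal is correct, and it follows the same overall strategy as the paper: transfer the estimate of \cref{app:lem: WEAK STOCHASTIC UPPER BOUND ON THE MEASURE OF THE DELAYED SYSTEM} to the limit using a Skorokhod coupling together with the almost sure convergences $\bm{\nu}_t^{\varepsilon_n}\to\bm{\nu}_t$, $L^{\varepsilon_n}_s\to L_s$ ($s\in\{t,t+\gamma\}$) and $\mathfrak{L}^{\varepsilon_n}_t\to L_t$. The difference is purely in the limit-passage bookkeeping. The paper introduces the event that $L^{\varepsilon_n}_t$, $L^{\varepsilon_n}_{t+\gamma}$, $\mathfrak{L}^{\varepsilon_n}_t$ and $d_L(\bm{\nu}_t^{\varepsilon_n},\bm{\nu}_t)$ are within tolerances $\delta_1,\dots,\delta_4$, pads the interval and the admissible range of $z$ by these tolerances, kills the complement probability as $n\to\infty$, and only then sends the $\delta_i\downarrow0$ via continuity of measure; this preserves the original constant $C$. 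You instead work $\omega$-wise on $\limsup_n A_n$ (reverse Fatou), use the closed-set Portmanteau inequality for the sub-probability measures (valid here since total masses converge, by the convergence of $L^{\varepsilon_n}_t$), slide the right endpoint by an auxiliary $\delta\downarrow0$, and absorb the discrepancy between $L^{\varepsilon_n}_{t+\gamma}-L^{\varepsilon_n}_t$ and $L_{t+\gamma}-L_t$ by doubling the constant to $2C_0$. The doubling is harmless, as the constant in the statement is generic and the downstream application (\cref{app:lem: APPENDIX LEMMA STATING THE UPPER BOUND ON THE JUMPS OF THE LIMITING PROCESS}) only needs some uniform constant with $\gamma_n\downarrow0$. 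Two cosmetic remarks: after the Skorokhod coupling, the almost sure convergence of $\bm{\nu}_t^{\varepsilon_n}$ should be obtained from the pathwise result \cref{app: lem: FUNCTIONAL CONTINUITY II} (together with the crossing property of \cref{lem:sec2: LEMMA STATING THE STRONG CROSSING PROPERTY OF THE LIMITING RANDOM MEASURE P*}) rather than from \cref{app: lem: CONVERGENCE OF SUB PROBABILITY MEASURES LEMMA}, which is only a statement about convergence in law; and, like the paper, you implicitly use that the event in the delayed lemma is a measurable functional of $\Tilde{\mathbf{P}}^{\varepsilon_n}$ alone, so that its probability is unchanged under the coupled copies — worth stating, but not a gap.
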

	
	
	\begin{proof}
		As $\tilde{\mathbf{P}}^{\varepsilon_n} \implies \tilde{\mathbf{P}}^*$, by employing Skorokhod's Representation Theorem, there exists a $(\bm{\mu}^n)_{n \ge 1}$ and $\bm{\mu}$ such that $\operatorname{Law}(\bm{\mu}^n) = \operatorname{Law}(\tilde{\mathbf{P}}^{\varepsilon_n})$, $\operatorname{Law}(\bm{\mu}) = \operatorname{Law}(\tilde{\mathbf{P}}^{*})$ and $\bm{\mu}^n \xrightarrow[]{} \bm{\mu}$ almost surely in  $(\mathcal{P}(D_\R),\mathfrak{T}_{M_1}^{\text{wk}})$. As $\operatorname{Law}(\mathbf{P}^*)$-almost every measure $\mu$ satisfy \eqref{app:eq: CROSSING PROPERTY EQUATION IN app: lem: FUNCTIONAL CONTINUITY II} by Lemma \ref{lem:sec2: LEMMA STATING THE STRONG CROSSING PROPERTY OF THE LIMITING RANDOM MEASURE P*}, then by Lemma \ref{app: lem: FUNCTIONAL CONTINUITY II} $\nu_t^{\bm{\mu}^n} \xrightarrow[]{} \nu_t^{\bm{\mu}}$ almost surely for any $t \in \mathbb{T}$. Furthermore for any $t \in \mathbb{T}\cap [0,T)$, by Lemma \ref{lem:sec2: LEMMA STATING THAT THE CONDITION FEEDBACK IS A CONTINUOUS MAP HENCE WE HAVE WEAK CONVERGENCE OF FEEDBACK ESSENTIALLY} and Corollary \ref{cor:sec2: COROLLARY SHOWING THAT WE HAVE CONVERGENCE OF THE DELAYED / SMOOTHENED LOSS TO THE LIMITING LOSS}, we have
		{\small\begin{equation*}
				\bm{\mu}^n(\tnot \le t) \xrightarrow[]{} \bm{\mu}(\tnot \le t) \quad \textnormal{and} \quad \int_0^t \kappa^{\varepsilon_n}(t-s)\bm{\mu}^n(\tnot \le s) \diff s  \xrightarrow[]{} \int_0^t \kappa^{\varepsilon_n}(t-s)\bm{\mu}(\tnot \le s) \diff s
		\end{equation*}}
		almost surely. Therefore for simplicity and notational convenience for the remainder of this proof, we may suppose
		\begin{align*}
			&\bm{\nu}_t^{\varepsilon_n} \xrightarrow[]{} \bm{\nu}_t^* &\quad &\textnormal{a.s. in } \MRR,\\ 
			&L_t^{\varepsilon_n} \xrightarrow[]{} L_t^* &\quad &\textnormal{a.s. in } \R,\\ 
			&\mathfrak{L}_t^{\varepsilon_n} \xrightarrow[]{} {L}_t^* &\quad &\textnormal{a.s. in } \R.
		\end{align*}
		Recall by Lemma \ref{app:lem: WEAK STOCHASTIC UPPER BOUND ON THE MEASURE OF THE DELAYED SYSTEM},
		{\small\begin{equation*}
				\prob\left[ \bm{\nu}_t^\varepsilon[0, \alpha_t z + C\gamma^{1/3} + \alpha_t(L_t^\varepsilon - \mathfrak{L}_t^\varepsilon) + (\alpha_{t + \gamma} - \alpha_t)]\ge z, \; \forall \; z \le L_{t + \gamma}^\varepsilon - L_t^\varepsilon - C\gamma^{1/3}\right]  \ge 1 - C\gamma^{1/3},
		\end{equation*}}
		for any $\varepsilon > 0$. It is well known that the Levy-Prokhorov metric, $d_L$, metricizes weak convergence, \citep[Theorem~1.11]{prokhorov1956convergence}. Fixing $\delta_1,\,\delta_2,\,\delta_3,\,\delta_4 > 0$, we define the event
		\begin{equation*}
			A_n \coloneqq \left\{\nnnorm{L_t^{\varepsilon_n} - L_t^*} < \delta_1,\nnnorm{L_{t+\gamma}^{\varepsilon_n} - L_{t+ \gamma}^*} < \delta_2, \nnnorm{\mathfrak{L}_t^{\varepsilon_n} - L_t^{\varepsilon_n}} < \delta_3, d_L(\bm{\nu}_t^{\varepsilon_n},\bm{\nu}_t^*) < \delta_4\right\}
		\end{equation*}
		Therefore
		{\footnotesize
			\begin{align*}
				&1 - C\gamma^{1/3} 
				\le\prob\left[ \bm{\nu}_t^{\varepsilon_n}[0, \alpha_t z + C\gamma^{1/3} + \alpha_t(L_t^{\varepsilon_n} - \mathfrak{L}_t^{\varepsilon_n}) + (\alpha_{t + \gamma} - \alpha_t)]\ge z, \; \forall \; z \le L_{t + \gamma}^{\varepsilon_n} - L_t^{\varepsilon_n} - C\gamma^{1/3}\right]\\
				&\qquad \qquad \qquad+ \prob[A_n^\complement]\\
				&\quad\le\prob\left[ \delta_4 + \bm{\nu}_t^*(-\delta_4, \alpha_t z + C\gamma^{1/3} + \alpha_t \gamma_3 + (\alpha_{t + \gamma} - \alpha_t) + \delta_4)\ge z, \; \forall \; z \le L_{t + \gamma}^* - L_t^* - C\gamma^{1/3}- \delta_1 - \delta_2\right]\\
				&\quad+ \prob[A_n^\complement]
		\end{align*}}
		Sending $\varepsilon_n \xrightarrow[]{} 0$, then $\prob[A_n^\complement] \xrightarrow[]{} 0$ by the Dominated Convergence Theorem as we have almost sure convergence. Lasts by sending $\delta_1 \xrightarrow[]{} 0 ,\, \delta_2 \xrightarrow[]{} 0 ,\,\delta_3 \xrightarrow[]{} 0 ,\,\delta_4 \xrightarrow[]{} 0$ one at a time and in order, then by employing continuity of measure we may conclude
		\begin{equation*}
			\prob\left[ \bm{\nu}_t^*[0,\alpha_t z + C\gamma^{1/3} + (\alpha_{t + \gamma} - \alpha_t)]\ge z, \; \forall \; z \le L_{t + \gamma}^* - L_t^* - C\gamma^{1/3}\right] \ge 1 - C\gamma^{1/3}
		\end{equation*}
	\end{proof}
	
	
	\begin{lemma}\label{app:lem: APPENDIX LEMMA STATING THE UPPER BOUND ON THE JUMPS OF THE LIMITING PROCESS}
		Suppose that $\tilde{\mathbf{P}}^{\varepsilon_n} \implies \tilde{\mathbf{P}}^*$ on $(\mathcal{P}(\DR),\tmwk)$ for a positive sequence $(\varepsilon_n)_{n \ge 1}$ which converges to zero. Then we have
		\begin{equation}\label{app:eq: EQUATION SHOWS THE UPPER BOUND ON THE LIMITING LOSS IN app:lem: APPENDIX LEMMA STATING THE UPPER BOUND ON THE JUMPS OF THE LIMITING PROCESS}
			L_t^* \le \inf \left\{x \ge 0\,;\, \bm{\nu}_{t-}^*([0,\,\alpha(t)x]) < x \right\}
		\end{equation}
		almost surely for any $t \in [0,\,T)$.
	\end{lemma}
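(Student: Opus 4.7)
I first note that the stated inequality must be read as a bound on the jump $\Delta L_t = L_t - L_{t-}$, since the lemma is used in the proof of Theorem 2.3 precisely in the form $\Delta L_t \le \inf\{x\geq 0 : \bm{\nu}_{t-}[0,\alpha(t)x]<x\}$ and, as written, the inequality $L_t \le \inf\{\ldots\}$ would fail at any continuity point where $L_t$ is a nontrivial positive number while the infimum is zero. I therefore describe a plan for proving the intended statement $\Delta L_t \le \inf\{x\geq 0 : \bm{\nu}_{t-}[0,\alpha(t)x]<x\}$, which is what the remainder of the paper actually relies upon.

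The strategy is to transfer the stochastic lower bound of \cref{app:lem: WEAK STOCHASTIC UPPER BOUND ON THE MEASURE OF THE SINGULAR SYSTEM AT CONTINUITY POINTS} onto an arbitrary $t \in [0,T)$ by a two-sided continuity-point approximation. Since $\mathbbm{T}^c$ is countable, I would pick sequences $s_n \uparrow t$ and $u_n \downarrow t$ with $s_n,u_n \in \mathbbm{T}\cap[0,T)$ and apply \cref{app:lem: WEAK STOCHASTIC UPPER BOUND ON THE MEASURE OF THE SINGULAR SYSTEM AT CONTINUITY POINTS} with base point $s_n$ and increment $\gamma_n := u_n - s_n \downarrow 0$, obtaining
\begin{equation*}
    \bm{\nu}_{s_n}\bigl[0,\, \alpha(s_n) z + C\gamma_n^{1/3} + \alpha(u_n)-\alpha(s_n)\bigr] \geq z \qquad \forall\, z \leq L_{u_n} - L_{s_n} - C\gamma_n^{1/3}
\end{equation*}
on events of probability at least $1 - C\gamma_n^{1/3}$. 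I would choose $\gamma_n$ summable (say $\gamma_n = n^{-4}$) so that by Borel--Cantelli these events occur eventually, a.s., producing a single full-probability set on which the chain of inequalities holds for all large $n$ simultaneously.

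Passing to the limit, the deterministic inputs are: $\mathcal{C}^1$-regularity of $\alpha$ from \cref{ass: MODIFIED AND SIMPLIED FROM SOJMARK SPDE PAPER ASSUMPTIONS II}(v) gives $\alpha(s_n),\alpha(u_n)\to \alpha(t)$; the \cadlag property of $L$ together with $s_n, u_n \in \mathbbm{T}$ gives $L_{s_n} \to L_{t-}$ and $L_{u_n}\to L_t$, so $L_{u_n} - L_{s_n} \to \Delta L_t$; and $\gamma_n^{1/3}\to 0$. The probabilistic ingredient is the weak convergence $\bm{\nu}_{s_n} \Rightarrow \bm{\nu}_{t-}$ as sub-probability measures on $\R$, established along the same lines as \cref{app: lem: CONVERGENCE OF SUB PROBABILITY MEASURES LEMMA}: decompose $\langle \bm{\nu}_{s_n},f\rangle = \mathbf{P}^*[f(\eta_{s_n})\ind_{\tau_0>s_n}]$, note $\eta_{s_n}\to \eta_{t-}$ $\mathbf{P}^*$-a.s.\ and that $\{\tau_0 > s_n\}\downarrow\{\tau_0 \geq t\}$, and apply the Conditional Dominated Convergence Theorem. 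Given these limits, fix any $z < \Delta L_t$ and any $\delta > 0$. For large $n$ we have $z \leq L_{u_n} - L_{s_n} - C\gamma_n^{1/3}$ and the shift is dominated by $\alpha(t)z + \delta$, so
\begin{equation*}
    \bm{\nu}_{s_n}\bigl[0,\,\alpha(t)z + \delta\bigr] \geq z.
\end{equation*}
Since $[0,\alpha(t)z + \delta]$ is closed, the Portmanteau theorem yields $\bm{\nu}_{t-}[0,\alpha(t)z + \delta] \geq z$, a.s. Letting $\delta\downarrow 0$ through a countable sequence and invoking right-continuity of the CDF of $\bm{\nu}_{t-}$, I deduce $\bm{\nu}_{t-}[0,\alpha(t)z] \geq z$ for every $z<\Delta L_t$ in a countable dense set, and hence by monotonicity for every such $z$. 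This exactly means $\inf\{x \geq 0:\bm{\nu}_{t-}[0,\alpha(t)x]<x\} \geq \Delta L_t$.

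The principal obstacle is the tension between the almost-sure events produced by \cref{app:lem: WEAK STOCHASTIC UPPER BOUND ON THE MEASURE OF THE SINGULAR SYSTEM AT CONTINUITY POINTS} (whose probability tends to one only at rate $\gamma_n^{1/3}$) and the need for a pointwise-in-$\omega$ conclusion after taking $n\to\infty$; this is resolved, as indicated, by a Borel--Cantelli choice of $\gamma_n$. A secondary subtlety is the weak continuity $s \mapsto \bm{\nu}_s$ from the left at an arbitrary $t$; here one must ensure uniform integrability of the test functionals under $\mathbf{P}^*$ (handled by the moment bounds inherited from \cref{prop: GRONWALL TYPE UPPERBOUND ON THE SUP OF THE MCKEAN VLASOV EQUATION WITH CONVOLUTION IN THE BANKING MODEL SIMPLIFIED AND INDEXED GENERALISED}) so that the Dominated Convergence step is legitimate for sub-probability mass on unbounded $\R$.
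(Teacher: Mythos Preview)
Your proposal is essentially the same argument as the paper's. The paper also selects $t_n \uparrow t$, $t_n + \gamma_n \downarrow t$ with $t_n,\,t_n+\gamma_n \in \mathbbm{T}$ and $\gamma_n$ summable (specifically $\gamma_n < 2^{-3n}$), invokes Borel--Cantelli on the events from \cref{app:lem: WEAK STOCHASTIC UPPER BOUND ON THE MEASURE OF THE SINGULAR SYSTEM AT CONTINUITY POINTS}, establishes $\bm{\nu}_{t_n}\to\bm{\nu}_{t-}$ by dominated convergence against bounded continuous test functions, and passes to the limit via Portmanteau on the closed interval $[0,\alpha(t)z+\gamma]$ before sending $\gamma\downarrow 0$. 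Your observation that the displayed inequality should read $\Delta L_t \leq \ldots$ rather than $L_t \leq \ldots$ is correct; the paper's own proof begins ``It is clear that [the lemma] holds for any $t \in \mathbbm{T}$'', which only makes sense for the jump version.

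One point you gloss over but the paper treats separately: at $t=0$ there is no $s_n \in \mathbbm{T}\cap[0,T)$ with $s_n < 0$, so \cref{app:lem: WEAK STOCHASTIC UPPER BOUND ON THE MEASURE OF THE SINGULAR SYSTEM AT CONTINUITY POINTS} cannot be applied with a left-approximating base point. The paper handles this by going back to \cref{app:lem: WEAK STOCHASTIC UPPER BOUND ON THE MEASURE OF THE DELAYED SYSTEM} at $t=0$ directly, using that $\bm{\nu}^\varepsilon_{0-} = \nu_{0-}$ is the deterministic initial law, and then only taking $\gamma_n \downarrow 0$ from the right. Also, your remark about needing uniform integrability for the convergence $\bm{\nu}_{s_n}\Rightarrow\bm{\nu}_{t-}$ is unnecessary: weak convergence of sub-probability measures is tested against bounded continuous $\phi$, so bounded convergence suffices, as the paper notes.
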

	
	
	\begin{proof}
		It is clear that Lemma \ref{app:lem: APPENDIX LEMMA STATING THE UPPER BOUND ON THE JUMPS OF THE LIMITING PROCESS} holds for any $t \in \mathbb{T}$. Hence we must only show the upper bound for $t \not \in \mathbb{T}$. We first consider the case when $t \in (0,T) \cap \mathbb{T}^\complement$. The case when $t = 0$ will be treated separately. Now as $\mathbb{T}$ is dense in $[0,T]$, we may find a $(t_n)_{n \ge 1},\, (t_n + \gamma_n)_{n \ge 1} \subset \mathbb{T}$ such that $t_n \uparrow t$, $t_n + \gamma_n \downarrow t$ and $\gamma_n < 2^{-3n}$. Now by the Borel-Cantelli Lemma, we have a set of full measure such that
		\begin{equation}\label{app:eq: FIRST EQUATION IN app:lem: APPENDIX LEMMA STATING THE UPPER BOUND ON THE JUMPS OF THE LIMITING PROCESS}
			\bm{\nu}_{t_n}^*[0,\alpha_{t_n} z + C\gamma_n^{1/3} + (\alpha_{{t_n} + \gamma_n} - \alpha_{t_n})]\ge z, \; \forall \; z \le L_{{t_n} + \gamma_n} - L_{t_n} - C\gamma_n^{1/3},
		\end{equation}
		for all (possibly stochastic) $n$ large. Furthermore, by the dominated convergence theorem, we have
		\begin{equation}\label{app:eq: SECOND EQUATION IN app:lem: APPENDIX LEMMA STATING THE UPPER BOUND ON THE JUMPS OF THE LIMITING PROCESS}
			\bm{\nu}_{t_n}^* \xrightarrow[]{} \bm{\nu}_{t-}^*,
		\end{equation}
		$\operatorname{Law}(\tilde{\mathbf{P}}^*)$-almost surely in $\MRR$ as for any $\phi \in \mathcal{C}_b(\R)$
		\begin{equation*}
			\lim_{n \xrightarrow[]{} \infty} \bm{\nu}_{t_n}^*(\phi) = \lim_{n \xrightarrow[]{} \infty} \int_{\DR} \phi(\eta_{t_n})\ind_{\{\tnot > t_n\}} \diff \tilde{\mathbf{P}}^*(\eta) =  \int_{\DR} \phi(\eta_{t-})\ind_{\{\tnot \ge t\}} \diff \tilde{\mathbf{P}}^*(\eta).
		\end{equation*}
		So on an event of full $\operatorname{Law}(\mathbf{P}^*)$ measure where \eqref{app:eq: FIRST EQUATION IN app:lem: APPENDIX LEMMA STATING THE UPPER BOUND ON THE JUMPS OF THE LIMITING PROCESS} and \eqref{app:eq: SECOND EQUATION IN app:lem: APPENDIX LEMMA STATING THE UPPER BOUND ON THE JUMPS OF THE LIMITING PROCESS} holds, by Portmanteau Theorem for any $\gamma > 0$, $z < \Delta L_t$
		\begin{equation*}
			\bm{\nu}_{t-}^*[0,\,\alpha(t)z + \gamma] \ge \limsup_{n \xrightarrow[]{} \infty} \bm{\nu}_{t_n}^*[0,\,\alpha(t)z + \gamma] \ge \limsup_{n \xrightarrow[]{} \infty} \bm{\nu}_{t_n}^*[0,\alpha_{t_n} z + C\gamma_n^{1/3} + (\alpha_{{t_n} + \gamma_n} - \alpha_{t_n})]\ge z.
		\end{equation*}
		This holds as $z < \Delta L_t^*$ and $ L_{{t_n} + \gamma_n}^* - L_{t_n}^* - C\gamma_n^{1/3} \xrightarrow[]{} \Delta L_t$. Sending $\gamma$ to zero shows the claim for every $t > 0$. 
		
		In the case when $t = 0$, we have by Lemma \ref{app:lem: WEAK STOCHASTIC UPPER BOUND ON THE MEASURE OF THE DELAYED SYSTEM}
		\begin{equation*}
			\prob\left[ \bm{\nu}_{0-}^\varepsilon[0, \alpha_0 z + C\gamma^{1/3} + (\alpha_{\gamma} - \alpha_0)]\ge z, \; \forall \; z \le L_{ \gamma}^\varepsilon - C\gamma^{1/3}\right]  \ge 1 - C\gamma^{1/3}.
		\end{equation*}
		As $\bm{\nu}^{\varepsilon}_{0-} = \nu_{0-}$, where $\nu_{0-}$ is a deterministic measure, and $\bm{\nu}_{0-}^*$ is almost surely distributed as $\nu_{0-}$ almost surely, then we have by Lemma \ref{app:lem: WEAK STOCHASTIC UPPER BOUND ON THE MEASURE OF THE SINGULAR SYSTEM AT CONTINUITY POINTS}
		\begin{equation*}
			\prob\left[ \bm{\nu}_{0-}^*[0, \alpha(0)z + C\gamma^{1/3} + \alpha(\gamma) - \alpha(0)]\ge z, \; \forall \; z \le L_{\gamma}^* - C\gamma^{1/3}\right]  \ge 1 - C\gamma^{1/3}.
		\end{equation*}
		for $\gamma \in \mathbb{T}$. Now the rest of the proof follows similar arguments as above by choosing $\gamma_n \in \mathbb{T}$ such that $\gamma_n \downarrow 0$.
	\end{proof}
	
	\begin{proposition}[Gr\"onwall Type Inequality I]\label{prop: GENERALISED GRONWALL TYPE INEQUALITY}
		Suppose $a,\,\tilde{\alpha},\, \tilde{\beta} \in \R_+$ such that $a \ge 0,\,0< \tilde{\beta}<1,\, \tilde{\alpha} > 0$. Suppose $g(t)$ is a nonnegative, nondecreasing continuous function defined on $0 \le t < T$, $g(t) \le M$ (constant), and suppose $u(t)$ is nonnegative and bounded on $0 \le t < T$ with 
		\begin{equation*}
			u(t) \le a + g(t)\int_0^t (t-s)^{\tilde{\beta}-1}s^{\tilde{\alpha} - 1 }u(s) \diff{} s
		\end{equation*}
		on this interval. Then
		\begin{equation*}
			u(t) \le a\left[1 + \sum_{n \ge 1 } g_t^n C_n t^{n(\tilde{\alpha} + \tilde{\beta} - 1)}\right],\quad 0 \le t< T,
		\end{equation*}
		where
		$C_0\coloneqq 1, 
		C_{n+1} \coloneqq B\left((n+1)\tilde{\alpha} + n\tilde{\beta} - n,\tilde{\beta} \right)C_n,$
		with $B(\tilde{\alpha},\,\tilde{\beta}) \coloneqq \int_0^1(1 - \tilde{s})^{\tilde{\beta}  - 1}\tilde{s}^{\tilde{\alpha}-1} \diff{} \tilde{s}.$
	\end{proposition}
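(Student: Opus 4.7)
The plan is to prove this by a Picard-type iteration of the inequality, a standard argument for generalized Henry--Gronwall lemmas that needs to be adapted here to accommodate the extra singular weight $s^{\Tilde{\alpha}-1}$ inside the kernel. First I would substitute the right-hand side bound back into itself and exploit the monotonicity of $g$ (i.e.\ $g(s) \leq g(t)$ for $s \leq t$) to pull the prefactor out of the inner integral. After one substitution this gives
\begin{equation*}
u(t) \leq a + ag(t)\int_0^t (t-s)^{\Tilde{\beta}-1} s^{\Tilde{\alpha}-1}\diff s + g(t)^2 \!\!\int_0^t\!\!\int_0^{s_1}\!\!\! (t-s_1)^{\Tilde{\beta}-1} s_1^{\Tilde{\alpha}-1}(s_1-s_2)^{\Tilde{\beta}-1} s_2^{\Tilde{\alpha}-1} u(s_2)\diff s_2 \diff s_1,
\end{equation*}
and the first-order term evaluates to $ag(t)t^{\Tilde{\alpha}+\Tilde{\beta}-1}B(\Tilde{\alpha},\Tilde{\beta}) = a g(t) C_1 t^{\Tilde{\alpha}+\Tilde{\beta}-1}$ by the definition of the Beta function. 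Iterating $n$ times in total produces
\begin{equation*}
u(t) \leq a \sum_{k=0}^n g(t)^k C_k t^{k(\Tilde{\alpha}+\Tilde{\beta}-1)} + R_n(t),
\end{equation*}
where $R_n(t)$ is an $(n+1)$-fold iterated integral of $u$ against the product kernel weighted by $g(t)^{n+1}$.

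The second step is to verify by induction the Beta-function identity that governs the coefficients. Collapsing the nested integrals one innermost layer at a time, the relevant computation is
\begin{equation*}
\int_0^{s}(s-r)^{\Tilde{\beta}-1} r^{j(\Tilde{\alpha}+\Tilde{\beta}-1)+\Tilde{\alpha}-1}\diff r = B\bigl((j+1)\Tilde{\alpha}+j\Tilde{\beta}-j,\,\Tilde{\beta}\bigr)\, s^{(j+1)(\Tilde{\alpha}+\Tilde{\beta}-1)}.
\end{equation*}
This is exactly the recursion $C_{k+1} = B((k+1)\Tilde{\alpha}+k\Tilde{\beta}-k,\Tilde{\beta})C_k$ defining the constants in the statement, so after the $k$-fold integration the coefficient of $g(t)^k$ is $C_k t^{k(\Tilde{\alpha}+\Tilde{\beta}-1)}$ as required. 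A routine use of Fubini's theorem is needed to justify rewriting the iterated integrals so that the innermost variable can be integrated out first.

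The main obstacle will be controlling the remainder $R_n(t)$ and proving it vanishes as $n \to \infty$, locally uniformly in $t$. Using the same Beta identities together with the assumed $L^\infty$ bound on $u$ over $[0,T)$ one obtains
\begin{equation*}
R_n(t) \leq \|u\|_{L^\infty(0,T)}\, g(t)^{n+1}\, C_{n+1}\, t^{(n+1)(\Tilde{\alpha}+\Tilde{\beta}-1)}.
\end{equation*}
Writing $C_n$ as a telescoping product of Gamma-function ratios and applying Stirling's formula shows that $C_n$ decays roughly like $(n!)^{-\Tilde{\beta}}$ up to geometric factors, so that $\sum_k g(t)^k C_k t^{k(\Tilde{\alpha}+\Tilde{\beta}-1)}$ is a convergent (Mittag--Leffler--type) series for every $t < T$ and $R_n(t) \to 0$. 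Sending $n \to \infty$ then yields the stated bound. The delicate point throughout is the interaction between the weight $s^{\Tilde{\alpha}-1}$, which is non-integrable at zero, and the convolution kernel $(t-s)^{\Tilde{\beta}-1}$; tracking the exponents carefully through the induction is what forces the unusual form of the recursion defining $C_n$.
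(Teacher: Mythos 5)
Your proposal is correct and follows essentially the same route as the paper: iterating the integral inequality, establishing by induction the Beta-function identity that yields the coefficients $C_n$ (with the monotonicity of $g$ used to pull $g(t)$ out of the nested integrals), bounding the remainder by $\norm{u}_{L^\infty}g(t)^{n+1}C_{n+1}t^{(n+1)(\Tilde{\alpha}+\Tilde{\beta}-1)}$, and letting $n\to\infty$. The only cosmetic differences are that the paper phrases the iteration via powers of the operator $B\phi_t = g_t\int_0^t(t-s)^{\Tilde{\beta}-1}s^{\Tilde{\alpha}-1}\phi_s\diff s$ and controls the decay of $C_n$ through Gautschi's inequality for Gamma-function ratios, whereas you invoke Stirling; both arguments implicitly use $\Tilde{\alpha}+\Tilde{\beta}>1$ in the same way.
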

	\begin{proof}
		Let $B\phi_t = g_t\int_0^t(t-s)^{\tilde{\beta} - 1} s^{\tilde{\alpha} - 1}\phi_s \diff{} s$, $t \ge 0$, for localling integrable functions $\phi$. Then $u_t \le a(t) + Bu_t$ implies
		\begin{equation*}
			u_t \le \sum_{k = 0}^{n-1}B^ka + B^nu_t.
		\end{equation*}
		Let us prove that 
		\begin{equation}\label{eq: LINEAR GRONWALL EQUATION MY VERSION}
			B^n(1)_t \le _nt^{n(\tilde{\alpha} + \tilde{\beta} - 1)}g_t^n
		\end{equation}
		and $B^nu_t \rightarrow 0$ as $n \rightarrow +\infty$ for each t in $0 \le t < T$.
		\\[1 ex]
		\noindent \underline{Step 1: $B^n(1)_t \le C_nt^{n(\tilde{\alpha} + \tilde{\beta} - 1)}g_t^n$.}
		For $n = 1$,
		\begin{align*}
			B(1)_t &= g_t \int_0^t(t-s)^{\tilde{\beta} -1}s^{\tilde{\alpha}-1} \diff{}s,\quad \text{set } \tilde{s} = s/t\\
			&= g_t \int_0^1 t^{\tilde{\beta} -1}(1 - \tilde{s})^{\tilde{\beta} - 1}t^{\tilde{\alpha} -1}\tilde{s}^{\tilde{\alpha}-1} t \diff{}\tilde{s}\\
			&= g_t t^{\tilde{\alpha} + \tilde{\beta} - 1} B(\tilde{\alpha},\,\tilde{\beta}).
		\end{align*}
		Now, suppose the claim is true for $n = k$, then for $n = k + 1 $
		\begin{align*}
			B^{k+1}(1)_t &= g_t\int_0^t(t-s)^{\tilde{\beta} - 1}s^{\tilde{\alpha} -1}B^{k}(1)_s \diff{}s,\\
			&\le g_t \int_0^t(t-s)^{\tilde{\beta} - 1}s^{\tilde{\alpha} -1}s^{k(\tilde{\alpha} + \tilde{\beta} -1)}g_s^kC_k \diff{} s, \quad \text{by above}\\
			&\le C_k g_{t}^{k+1} \int_0^t(t-s)^{\tilde{\beta} - 1}s^{\tilde{\alpha} -1}s^{k(\tilde{\alpha} + \tilde{\beta} -1)} \diff{} s, \quad \text{set } \tilde{s} = s/t\\
			&= C_k g_{t}^{k+1} \int_0^1 t^{\tilde{\beta} - 1}(1-\tilde{s})^{\tilde{\beta} - 1}t^{\tilde{\alpha} -1}\tilde{s}^{\tilde{\alpha} - 1}t^{k(\tilde{\alpha} + \tilde{\beta} -1)}\tilde{s}^{k(\tilde{\alpha} + \tilde{\beta} -1)} t\diff{} \tilde{s},\\
			&= C_{k+1}g^{k+1}_t t^{(k+1)(\tilde{\alpha} + \tilde{\beta} - 1)}.
		\end{align*}
		Hence the claim is true by the principle of induction.
		\\[1 ex]
		\noindent \underline{Step 2:}
		We observe that $B$ is monotone, that is if $\phi_1 \le \phi_2\, \forall t \in [0,\,T]$, then by the nonnegativity of $g$ we have $B(\phi_1) \le B(\phi_2)$. Also by the linearity of integration, we see also that $B$ is a linear operator. Therefore,
		\begin{equation*}
			B(u)_t = g(t)\int_0^t (t-s)^{\tilde{\beta}-1}s^{\tilde{\alpha} - 1 }u(s) \diff{} s \le \norm{u}_{L^\infty} g(t)\int_0^t (t-s)^{\tilde{\beta}-1}s^{\tilde{\alpha} - 1 } \diff{} s = \norm{u}_{L^\infty} B(1)_t
		\end{equation*}
		Therefore, by linearity, monotonicity and step 1
		\begin{equation*}
			B^n(u)_t \le \norm{u}_{L^\infty} B^n(1)_t \le \norm{u}_{L^\infty} C_n g_t^nt^{n(\tilde{\alpha} + \tilde{\beta} - 1)}
		\end{equation*}
		\\[1 ex]
		\noindent \underline{Step 3: Summability of $C_n$.}
		By Gautschi's inequality, \cite{GautschiInequality}, we have that for all $x > 0 $ and $s \in (0,\,1)$
		\begin{equation*}
			x^{1-s} < \frac{\Gamma(x+1)}{\Gamma(x+s)} < (x + 1)^{1-s}
		\end{equation*}
		Therefore,
		\begin{align*}
			\frac{C_{n+1}}{C_n} &= B\left((n+1)\tilde{\alpha} + n\tilde{\beta} - n,\,\tilde{\beta}\right),\\
			&= \frac{\Gamma\left((n+1)\tilde{\alpha} + n\tilde{\beta} - n\right)\Gamma(\tilde{\beta})}{\Gamma\left((n+1)\tilde{\alpha} + (n+1)\tilde{\beta} - n\right)},\\
			&= \Gamma(\tilde{\beta})\left[\frac{\Gamma\left((n+1)\tilde{\alpha} + (n+1)\tilde{\beta} - (n+1) + 1- \tilde{\beta}\right)}{\Gamma\left((n+1)\tilde{\alpha} + (n+1)\tilde{\beta} - (n+1) + 1\right)}\right],\\
			&\le \Gamma(\tilde{\beta})(n+1)^{-\tilde{\beta}}(\tilde{\alpha} + \tilde{\beta} -1)^{-\tilde{\beta}} \quad \text{ by Gautschi's Inequality}.\\
		\end{align*}
		Hence $C_{n+1}/C_n \rightarrow 0$ and $n \rightarrow +\infty$. Hence by the ratio test, we have that $C_n$ is summable.
		\\[1 ex]
		\noindent \underline{Step 4: Summability of $B^n(u)_t$.}
		By step $3$, we have that
		\begin{equation*}
			\frac{C_{n+1}\norm{u}_{L^\infty}t^{(n+1)(\tilde{\alpha} + \tilde{\beta} - 1)}g_t^{n+1}}{C_{n}\norm{u}_{L^\infty}t^{n(\tilde{\alpha} + \tilde{\beta} - 1)}g_t^{n}} = \norm{u}_{L^\infty}t^{\tilde{\alpha} + \tilde{\beta} - 1}g_t\frac{C_{n+1}}{C_n} \xrightarrow[]{n \rightarrow +\infty} 0
		\end{equation*}
		Therefore by the ratio test then the comparison test, we have that $B^n(u)_t$ is summable. Hence $B^n(u)_t \rightarrow 0$ as $n \rightarrow +\infty$.
		\\[1 ex]
		\noindent \underline{Step 5:}
		As $u_t \le a + B(u)_t$, then it is clear by the Principle of Induction, by using the monotonicity and linearity of $B$, we have $u_t \le \sum_{j = 1}^{N-1}aB^j(1)_t + B^N(u)_t $. Hence taking limiting as $N \xrightarrow[]{} \infty$, by step 1 and step 4, we conclude $u_t \le \sum_{j \ge 0} aC_jt^{j(\tilde{\alpha} + \tilde{\beta} - 1)}g_t^j$. The proof is now complete.
	\end{proof}
	
	
	\begin{proof}[Proof of Proposition \ref{prop: MAIN RESULT PROPOSITION IN THE CONTINUOUS CASE}]
		This proof is analogous to that of Proposition \ref{prop: MAIN RESULT PROPOSITION IN THE GENERAL CASE}. Most of the details have been skipped for brevity. Choose $t_0 \in (0 ,\, t_{\mathrm{explode}})$.
		
		\noindent \underline{Step 1: Regularity of $L$ and decomposition into integral form.}
		As $L \in \mathcal{C}^1([0,\,t_{\mathrm{explode}}))$, by the Fundamental Theorem of Calculus we have $L_t - L_s \le K(t - s)$ for any $t,s \in [0,\,t_{\mathrm{explode}})$ with $t > s$ and $K = \sup_{u \le t_0} \nnorm{L^\prime_u}$. Now we may write $L$ as
		\begin{equation*}
			L_t = \int_0^t\kappa^\varepsilon(t-s)L_s\diff{s} + \left[1 - \int_0^t\kappa^\varepsilon(t-s)\diff{s}\right]L_t + \int_0^t\kappa^\varepsilon(t-s)(L_t-L_s)\diff{s}.
		\end{equation*}
		Observe
		\begin{equation*}
			\left[1 - \int_0^t\kappa^\varepsilon(t-s)\diff{s}\right]L_t \le {2K\varepsilon} \qquad \textnormal{and} \qquad  \int_0^t\kappa^\varepsilon(t-s)(L_t-L_s)\diff{s} \le {K\varepsilon}.
		\end{equation*}
		Therefore
		\begin{equation*}
			L_t = \int_0^t\kappa^\varepsilon(t-s)L_s\diff{s} + \Psi^\varepsilon(t) \qquad \text{ where } \nnorm{\Psi^\varepsilon(t)} \le {3K\varepsilon} \, \quad \forall \,t \in [0,\,t_0].
		\end{equation*}
		
		\noindent \underline{Step 2: Comparison between the delayed loss and the instantaneous loss.}
		As in Proposition \ref{prop: MAIN RESULT PROPOSITION IN THE GENERAL CASE}, we have 
		\begin{equation*}
			0 \le \nnnorm{L_t - L_t^\varepsilon} \le K c_1 \int_0^t\int_0^u \frac{\kappa^\varepsilon(u - s)\nnnorm{L_s - L_s^\varepsilon}}{\sqrt{ t- u}} \diff s \diff u- + K c_1 \int_0^t \frac{\nnnorm{\Psi^\varepsilon(s)}}{\sqrt{t - s}}.
		\end{equation*}
		Note as $\nnnorm{\Psi^\varepsilon(t)} \le 3K\varepsilon$ for all $t \in [0,\,t_0]$, we see that the second term above is bounded above by  $C_{K,t_0} \varepsilon$. Therefore,
		\begin{align}
			0 \le \nnorm{L_t - {L}^\varepsilon_t} &\le \; K c_1 \int_0^t\int_s^t \frac{\kappa^\varepsilon(u - s)\nnnorm{L_s - L_s^\varepsilon}}{ \sqrt{ t- u}} \diff u \diff s + C_{K,t_0} \varepsilon\nonumber \\
			&= \; K c_1 \int_0^t \nnorm{L_s - {L}_s^\varepsilon} \rho^\varepsilon(t,s) \diff s + C_{K,t_0} \varepsilon, \label{eq: UPPERBOUND ON THE DIFFERENCES OF THE LOSSES IN THE CONTINUOUS CASE}
		\end{align}
		where
		\begin{equation*}
			\rho^\varepsilon(t,s) = \int_s^t  \frac{\kappa^{\varepsilon}(u-s)}{\sqrt{t-u}} \diff u.
		\end{equation*}
		\noindent \underline{Step 3: Bounds on $ \rho^\varepsilon(t,s)$.}
		As in Proposition \ref{prop: MAIN RESULT PROPOSITION IN THE GENERAL CASE}, the presence of $\kappa$ in $\rho^\varepsilon$ makes the function too general to do any analysis, hence we shall construct polynomial bounds on $\rho^\varepsilon$. Then we can apply generalised versions of Gr\"onwall's Lemma. Recall $t \ge s$, 
		
		\noindent \underline{Case 1:} ${t-s} \le \varepsilon$\\[1 ex]
		\begin{align*}
			\rho^\varepsilon(t,s) &= \int_s^t  \frac{\kappa^{\varepsilon}(u-s)}{\sqrt{t-u}} \diff u \qquad \qquad \textrm{let $\tilde{u} = \frac{u-s}{\varepsilon}$}\\
			&= \int_0^{\frac{t-s}{\varepsilon}} \frac{\kappa(\tilde{u})}{\sqrt{t-s - \varepsilon\tilde{u}}} \diff \tilde{u} \le \frac{\norm{\kappa}_{L^\infty}}{\varepsilon^{1/2}} \int_0^{\frac{t-s}{\varepsilon}} \frac{\diff \tilde{u}}{\sqrt{\frac{t-s}{\varepsilon} - \tilde{u}}}\\
			&= \frac{2\norm{\kappa}_{L^\infty}(t- s)^{1/2}}{\varepsilon} \le \frac{2\norm{\kappa}_{L^\infty}}{(t- s)^{1/2}}
		\end{align*}
		\noindent \underline{Case 2:} ${t-s} > \varepsilon$\\[1 ex]
		As the support of $\kappa^\varepsilon$ is in $[0,\,\varepsilon]$
		\begin{align*}
			\rho^\varepsilon(t,s) &= \int_s^t  \frac{\kappa^{\varepsilon}(u-s)}{\sqrt{t-u}} \diff u = \int^{s + \varepsilon}_s  \frac{\kappa^{\varepsilon}(u-s)}{\sqrt{t-u}} \diff u \\
			&\le \frac{\norm{\kappa}_{L^\infty}}{\varepsilon} \int^{s + \varepsilon}_s  \frac{\diff u}{\sqrt{t-u}} = {2\norm{\kappa}_{L^\infty}}\left[\frac{(t-s)^{1/2} - (t - s - \varepsilon)^{1/2}}{\varepsilon}\right].
		\end{align*}
		\\[1 ex]
		\noindent \underline{Step 4: Gr\"onwall type argument.}
		Now that we have sufficiently simplified $\rho^\varepsilon$, we may put \eqref{eq: UPPERBOUND ON THE DIFFERENCES OF THE LOSSES IN THE CONTINUOUS CASE} into a form where we may apply Gr\"onwall's inequality. By step 4 case 1 and \eqref{eq: UPPERBOUND ON THE DIFFERENCES OF THE LOSSES IN THE CONTINUOUS CASE}, we have for $t \le \varepsilon$
		\begin{align*}
			\nnorm{L_t - {L}^\varepsilon_t} \le K c_1 \int_0^t 2\norm{\kappa}_{L^\infty} (t - s)^{-1/2} \nnorm{L_s - {L}_s^\varepsilon} \rho^\varepsilon(t,s) \diff s +  C_{K,t_0} \varepsilon.
		\end{align*}
		By step 4 case 2 and \eqref{eq: UPPERBOUND ON THE DIFFERENCES OF THE LOSSES IN THE CONTINUOUS CASE}, we have for $t > \varepsilon $
		\begin{align*}
			\nnorm{L_t - {L}^\varepsilon_t} &\le K c_1 \int_0^{t - \varepsilon} \nnorm{L_s - {L}_s^\varepsilon} \rho^\varepsilon(t,s) \diff s + K c_1 \int_{t - \varepsilon}^t \nnorm{L_s - {L}_s^\varepsilon} \rho^\varepsilon(t,s) \diff s+C_{K,t_0} \varepsilon\\
			&\le 2 K c_1 \norm{\kappa}_{L^\infty} \int_0^{t - \varepsilon} \left[\frac{(t-s)^{1/2} - (t - s - \varepsilon)^{1/2}}{\varepsilon}\right] \nnorm{L_s - {L}_s^\varepsilon} \diff s \\
			&\phantom{\le}+ 2 K c_1 \norm{\kappa}_{L^\infty} \int_{t - \varepsilon}^t (t -s)^{-1/2}\nnorm{L_s - {L}_s^\varepsilon} \diff s + C_{K,t_0} \varepsilon\\
			&\le 2 K c_1 \norm{\kappa}_{L^\infty} \sum_{j \ge 2 } \tilde{C}_j \varepsilon^{j-1} \int_0^{t - \varepsilon} (t-s)^{\frac{-2j+1}{2}}\nnorm{L_s - {L}_s^\varepsilon} \diff s \\
			&\phantom{\le}+ 2 K c_1 \norm{\kappa}_{L^\infty} \int_{0}^t (t -s)^{-1/2}\nnorm{L_s - {L}_s^\varepsilon} \diff s+ C_{K,t_0} \varepsilon,
		\end{align*}
		where the second term in the last line captures the higher order terms from Taylor's Theorem. The Monotone Convergence Theorem allows us to swap integrals and sums. We now consider
		\begin{equation*}
			\tilde{C}_j \varepsilon^{j-1} \int_0^{t - \varepsilon} (t-s)^{\frac{-2j+1}{2}} \diff s.
		\end{equation*}
		We shall proceed in two cases. 
		\\[1 ex]
		\noindent \underline{Case 1:} $\varepsilon < t \le 2\varepsilon$\\[1 ex]
		\begin{equation*}
			\tilde{C}_j \varepsilon^{j-1} \int_0^{t - \varepsilon} (t-s)^{\frac{-2j+1}{2}} \diff s \le \tilde{C}_j \varepsilon^{j-1} \varepsilon^{\frac{-2j+1}{2}} \int_0^{t - \varepsilon}  \diff s 
			= {\tilde{C}_j \varepsilon^{\frac{1}{2}}}
		\end{equation*}
		where the first inequality follows from the fact that $(-2j + 1)/2 < 0$ as $j \ge 2$ and $t - s \in [\varepsilon,\,t]$ for $s \in [0,\,t-\varepsilon]$. 
		\\[1 ex]
		\noindent \underline{Case 2:} $ t >  2\varepsilon$\\[1 ex]
		We observe that
		\begin{equation*}
			\tilde{C}_j \varepsilon^{j-1} \int_0^{\varepsilon} (t-s)^{\frac{-2j+1}{2}} \diff s 
			\le \tilde{C}_j \varepsilon^{j-1} \varepsilon^{\frac{-2j+1}{2}} \int_0^{\varepsilon} \diff s 
			= {\tilde{C}_j \varepsilon^{\frac{1}{2}}}
		\end{equation*}
		and
		\begin{align*}
			\tilde{C}_j \varepsilon^{j-1} \int_\varepsilon^{t - \varepsilon} (t-s)^{\frac{-2j+1}{2}} \diff s &= \tilde{C}_j \varepsilon^{j-1} \frac{2}{2j-3}\left.(t - s)^{\frac{-2j+3}{2}}\right|_{s = \varepsilon}^{t - \varepsilon}\\ 
			&= \frac{2\tilde{C}_j \varepsilon^{j-1}}{2j-3} \left[\varepsilon^{\frac{-2j+3}{2}} - (t  - \varepsilon)^{\frac{-2j+3}{2}}\right]\\
			&\le \frac{2\tilde{C}_j \varepsilon^{j-1}\varepsilon^{\frac{-2j+3}{2}}}{2j - 3}\\
			&= \frac{2\tilde{C}_j \varepsilon^{1/2}}{2j - 3} 
			\le {2\tilde{C}_j \varepsilon^{1/2}}, 
		\end{align*}
		where the last inequality follows from the fact that $j \ge 2$. 
		Therefore, we have
		\begin{align*}
			\tilde{C}_j \varepsilon^{j-1} \int_0^{t - \varepsilon} (t-s)^{\frac{-2j+1}{2}} \diff s 
			&= \tilde{C}_j \varepsilon^{j-1} \int_0^{\varepsilon} (t-s)^{\frac{-2j+1}{2}} \diff s + \tilde{C}_j \varepsilon^{j-1} \int_\varepsilon^{t - \varepsilon} (t-s)^{\frac{-2j+1}{2}} \diff s\\
			&\le {3\tilde{C}_j \varepsilon^{1/2}},
		\end{align*}
		for all $t > \varepsilon$. As $L$ and ${L}^\varepsilon$ are bounded by $1$, so independent of $t$ being greater or less than $\varepsilon$ we have
		\begin{align*}
			\nnorm{L_t - {L}^\varepsilon_t} &\le  2 K c_1 \norm{\kappa}_{L^\infty} \int_{0}^t (t -s)^{-1/2}\nnorm{L_s - {L}_s^\varepsilon} \diff s+ {12 K c_1 \norm{\kappa}_{L^\infty}\varepsilon^{1/2} \sum_{j \ge 2} \tilde{C}_j} + C_{K,t_0} \varepsilon \\
			&= 2 K c_1 \norm{\kappa}_{L^\infty} \int_{0}^t (t -s)^{-1/2}s^{-\gamma}\nnorm{L_s - {L}_s^\varepsilon} \diff s+ C_{K,t_0}\varepsilon^{1/2},
		\end{align*}
		for any $t \in [0,\,t_0]$. Lastly by Proposition \ref{prop: GENERALISED GRONWALL TYPE INEQUALITY} using $\tilde{\beta} = 1/2$ and $\tilde{\alpha} = 1$, then $\tilde{\alpha} + \tilde{\beta}  - 1 > 0$ as $\gamma < 1/2$ and
		\begin{align*}
			\nnorm{L_t - {L}^\varepsilon_t} &=  C_{K,t_0}\varepsilon^{1/2}\sum_{n \ge 0} (2 K c_1 t_0^{1/2} \norm{\kappa}_{L^\infty})^n C_n.
		\end{align*}
		This completes the proof.
	\end{proof}

	\section{Further numerical analysis}\label{app:sec: FURTHER ANALYSIS AND DISCUSSION ON NUMERICS}
	
	In Section \ref{sec: SECTION ON THE NUMERICAL SCHEMES}, we considered six examples to compare the theoretical rate of convergence with that obtained numerically. The parameters used for each simulation are given in \cref{tab: PARAMETERS OF THE NUMERICAL SIMULATIONS}.

	\begin{table}[H]
		\centering
		\scriptsize
		\begin{tabular}{|c|c|c|c|c|c|c|c|c|c|}
			\hline
			Simulation & CC1\tablefootnote{Continuous case 1} & CC2\tablefootnote{Continuous case 2} &  DC1\tablefootnote{Discontinuous case 1} & DC2\tablefootnote{Discontinuous case 2} & CNC1\tablefootnote{Common noise case 1: with increasing path} & CNC2\tablefootnote{Common noise case 2: with decreasing path}\\
			\hline
			Initial Condition & $\operatorname{Unif}[0.25,\, 0.35]$&$\Gamma(2.1,0.5)$&$\Gamma(1.2,\,0.5)$&$\Gamma(1.4,0.5)$&$\operatorname{Unif}[0.25,\, 0.35]$&$\operatorname{Unif}[0.25,\, 0.35]$ \\
			\hline
			$\alpha$ &$0.5$&$1.3$&$0.9$&$2$&$0.5$&$0.5$ \\
			\hline
			$\Delta_t$ &$10^{-6}$&$10^{-6}$&$10^{-9}$&$10^{-9}$&$10^{-6}$&$10^{-6}$ \\
			\hline
			$t_{\operatorname{max}}$ &$0.1$&$0.1$&$10^{-4}$&$10^{-4}$&$0.1$&$2 \times 10^{-2}$ \\
			\hline
		\end{tabular}
		\caption{Parameters of numerical simulations in Section \ref{sec: SECTION ON THE NUMERICAL SCHEMES}}
		\label{tab: PARAMETERS OF THE NUMERICAL SIMULATIONS}
	\end{table}
	
	With the chosen parameters, we generated the convergence graphs in \cref{fig: FIGURE IN THE CONTINUOUS SETTING}, \cref{fig: FIGURE IN THE DISCONTINUOUS SETTING} and \cref{fig: FIGURE IN THE COMMON NOISE SETTING} 
	from $\operatorname{Error}(\varepsilon_{n})$, where $\varepsilon_{n} \vcentcolon = \varepsilon \times \Delta^n$, with $\varepsilon$ and $\Delta$ as positive constants, and $\operatorname{Error}$ is the 
	corresponding difference between the smoothed and limiting loss functions.
	Assuming a power law relationship between the error and the parameter $\varepsilon$, 
	\begin{equation*}
		\operatorname{Error}(\varepsilon) \approx A \varepsilon^\beta,
	\end{equation*}
	where $A$ and $\beta$ are constants,
	we performed a linear regression on $\operatorname{Log} \operatorname{Error}(\varepsilon)$ versus $\operatorname{Log} \varepsilon$, which determined the line of best fit shown in the plots. The slope, shown in \cref{tab: GRADIENT OF THE REGRESSION LINE WITH FIXED DELTA T}, represents our best estimate for the rate of convergence for each setting.
	
	\begin{table}[H]
		\centering
		\begin{tabular}{|c|c|c|c|c|c|c|}
			\hline
			Setting & CC1 & CC2 & DC1 & DC2 & CNC1 & CNC2 \\
			\hline
			\hline
			Rate & $1.0202$ & $0.9635$ & $0.9295$ & $0.8126$ & $0.7621$ & $0.8144$ \\
			\hline
		\end{tabular}
		\caption{Gradient of the regression line}
		\label{tab: GRADIENT OF THE REGRESSION LINE WITH FIXED DELTA T}
	\end{table}
	
	To assess if the estimated slope corresponds to an asymptotic value, 
	we also conducted an alternative analysis of the rate of convergence. 
	By computing the ratio between two consecutive errors, we observe
	\begin{equation*}
		\frac{\operatorname{Error}(\varepsilon_{n+1})}{\operatorname{Error}(\varepsilon_n)} \approx \frac{A\Delta^{n\beta + \beta}\varepsilon^\beta}{A \Delta^{n\beta}\varepsilon^\beta} \approx \Delta^\beta,
	\end{equation*}
	and taking logarithms with base $\Delta$, we may deduce
	\begin{equation*}
		\operatorname{Log}_{\Delta}\left( \frac{\operatorname{Error}(\varepsilon_{n+1})}{\operatorname{Error}(\varepsilon_n) }\right) \approx \beta.
	\end{equation*}
	By using the relationship that $\operatorname{Log}_a b = \operatorname{Log}_c b / \operatorname{Log}_c a$ for any $a,\, b,\, c > 0$, we obtain approximate expressions for $\beta$ as follows:
	\begin{equation}\label{eq: SECOND and only equation in the table}
		\footnotesize
		\beta_n := \operatorname{Log}_{\Delta}\left( \frac{\operatorname{Error}(\varepsilon_{n+1})}{\operatorname{Error}(\varepsilon_n) }\right) = \frac{\operatorname{Log}({\operatorname{Error}(\varepsilon_{n+1})}) - \operatorname{Log}({\operatorname{Error}(\varepsilon_{n})})}{\operatorname{Log}(\Delta)} =  \frac{\operatorname{Log}({\operatorname{Error}(\varepsilon_{n+1})}) - \operatorname{Log}({\operatorname{Error}(\varepsilon_{n})})}{\operatorname{Log}(\varepsilon_{n+1}) - \operatorname{Log}(\varepsilon_n)}.
	\end{equation}
	where $\operatorname{Log}$ represents the logarithm with respect to any base. 
	\begin{table}[!t]
		\centering
		\small
		\begin{tabular}{|c|c|c|c|c|c|c|c|c|c|}
			\cline{2-10}
			\multicolumn{1}{c|}{}&\multicolumn{9}{c|}{$\varepsilon_n$}\\
			\hline
			Simulation & $n = 1$ & $n = 2$ & $n = 3$ & $n = 4$ & $n = 5$ & $n = 6$ & $n = 7$ & $n = 8$ & $n = 9$ \\
			\hline
			\hline
			CC1 & $0.9395$ & $0.9596$ & $1.0348$ & $0.9428$ & $0.9649$ & $1.0884$ & $1.0954$ & $1.1309$ & $0.9805$ \\
			CC2 & $0.9315$ & $1.0231$ & $0.9101$ & $0.8885$ & $0.9012$ & $0.5896$ & $1.3951$ & $1.1744$ & $1.0025$ \\
			DC1 & $0.9195$ & $1.0594$ & $0.8032$ & $1.0674$ & $1.2587$ & $0.9238$ & $0.3265$ & $1.5408$ & $0.0566$ \\
			DC2 & $0.5304$ & $0.7907$ & $0.5235$ & $1.1918$ & $0.7092$ & $0.6909$ & $0.8841$ & $1.2225$ & $0.5127$ \\
			CNC1 & $0.7646$ & $0.7054$ & $0.8223$ & $0.8060$ & $0.9489$ & $0.4754$ & $0.8792$ & $0.6219$ & $0.8243$ \\
			CNC2 & $0.7258$ & $0.7915$ & $0.8005$ & $0.8219$ & $0.8305$ & $0.7787$ & $0.7749$ & $0.8241$ & $1.0809$ \\
			\hline
		\end{tabular}
		\caption{Gradient between adjacent points in the Log-Log plots in \cref{fig: FIGURE IN THE CONTINUOUS SETTING}, \cref{fig: FIGURE IN THE DISCONTINUOUS SETTING} and \cref{fig: FIGURE IN THE COMMON NOISE SETTING}}
		\label{tab: ALTERNATIVE RATE OF CONVERGENCE ANALYSIS TABLE}
	\end{table}
	From Table \ref{tab: ALTERNATIVE RATE OF CONVERGENCE ANALYSIS TABLE}, it is evident that in the cases of CC1 and CC2, the rate of convergence approaches $1$ asymptotically. However, for all other scenarios, there appears to be no distinct pattern or clear convergence of the gradients, which generally lie between $1/2$ and $1$.
	
	Furthermore, we investigated the sensitivity of the convergence rate analysis to the choice of $\Delta_t$.
	It is clear that for meaningful approximations to the smoothed system it is needed that $\Delta_t$
	is sufficiently small compared to $\varepsilon$, which necessitates extremely small time steps and
	makes the simulation of the particle systems computationally costly.
	To assess whether $\Delta_t$ is sufficiently small, we generated in \cref{fig: FIGURE WHERE WE RANGE THE DELTA T'S} and \cref{tab: TABLE WITH RESPECT TO CHANGING THE TIME DISCITISATION TO COMPUTE THE RATE OF CONVERGENCE OF THE REGRESSION LINE} rate of convergence plots with different values of $\Delta_t$. For each $\Delta_t$, we selected several values of $\varepsilon$ that were uniformly spaced (after taking logarithms) within the interval $[\Delta_t \times 10^{-2.5},\, \Delta_t \times 10^{-1}]$. The findings indicate that the estimated rate of convergence remains consistent with respect to variations in $\Delta_t$.
	
	\begin{table}[H]
		\centering
		\begin{tabular}{|c|c|c|c|c|c|c|c|c|}
			\cline{2-9}
			\multicolumn{1}{c|}{}&\multicolumn{8}{c|}{$\Delta_t$}\\
			\hline
			Simulation & $10^{-4.5}$ & $10^{-5}$ & $10^{-5.5}$ & $10^{-6}$ & $10^{-7.5}$ & $10^{-8}$ & $10^{-8.5}$ & $10^{-9}$ \\
			\hline
			\hline
			CC1 & $0.836$ & $0.944$ & $0.987$ & $0.967$ & $-$ & $-$ & $-$ & $-$ \\
			CC2 & $0.988$ & $1.014$ & $1.039$ & $0.970$ & $-$ & $-$ & $-$ & $-$ \\
			DC1 & $-$ & $-$ & $-$ & $-$ & $0.821$ & $0.940$ & $0.968$ & $0.909$ \\
			DC2 & $-$ & $-$ & $-$ & $-$ & $0.539$ & $0.829$ & $0.728$ & $0.850$ \\
			CNC1 & $0.775$ & $0.750$ & $0.875$ & $0.751$ & $-$ & $-$ & $-$ & $-$ \\
			CNC2 & $0.685$ & $0.793$ & $0.806$ & $0.830$ & $-$ & $-$ & $-$ & $-$ \\
			\hline
		\end{tabular}
		\caption{Gradient of the line of best fit in \cref{fig: FIGURE WHERE WE RANGE THE DELTA T'S} (if plotted)}
		\label{tab: TABLE WITH RESPECT TO CHANGING THE TIME DISCITISATION TO COMPUTE THE RATE OF CONVERGENCE OF THE REGRESSION LINE}
	\end{table}

	\begin{figure}[H]
		\makebox[\linewidth][c]{
			\begin{subfigure}[b]{0.4\columnwidth}
				\centering
				\includegraphics[width=\columnwidth]{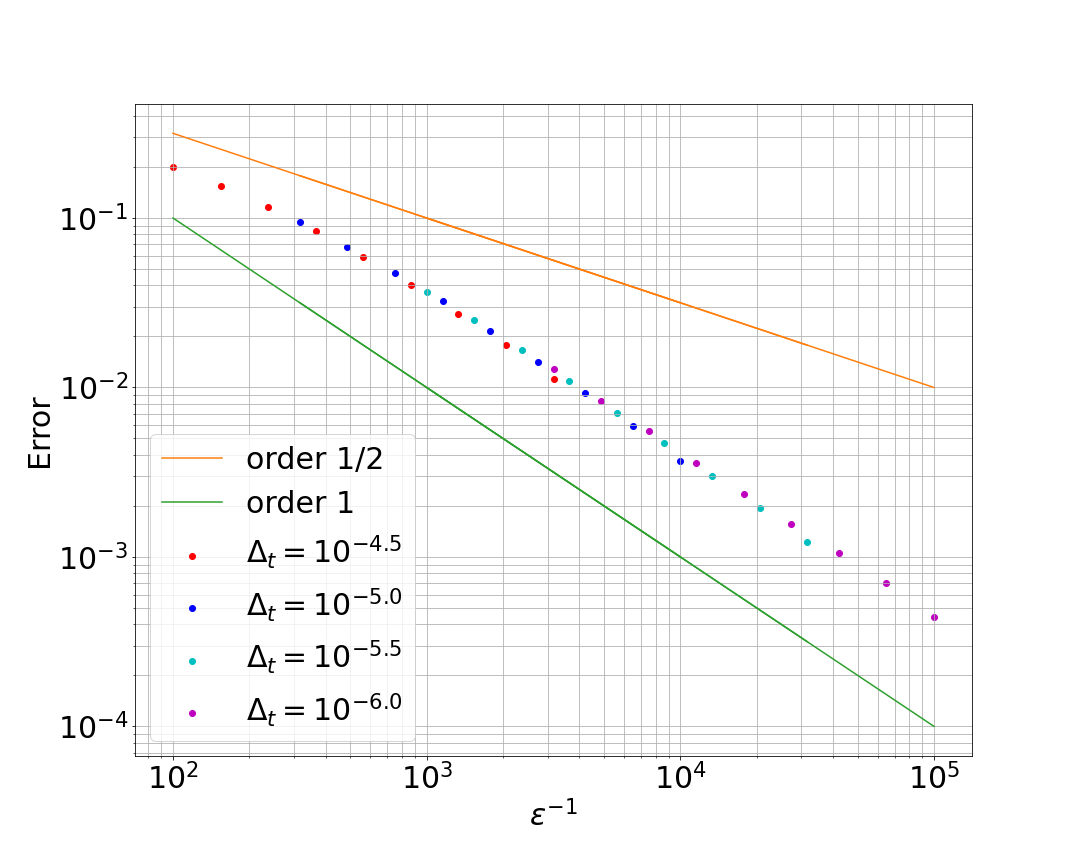}
				\caption{CC1}
				
			\end{subfigure}
			\hfill
			\begin{subfigure}[b]{0.4\columnwidth}
				\centering
				\includegraphics[width=\columnwidth]{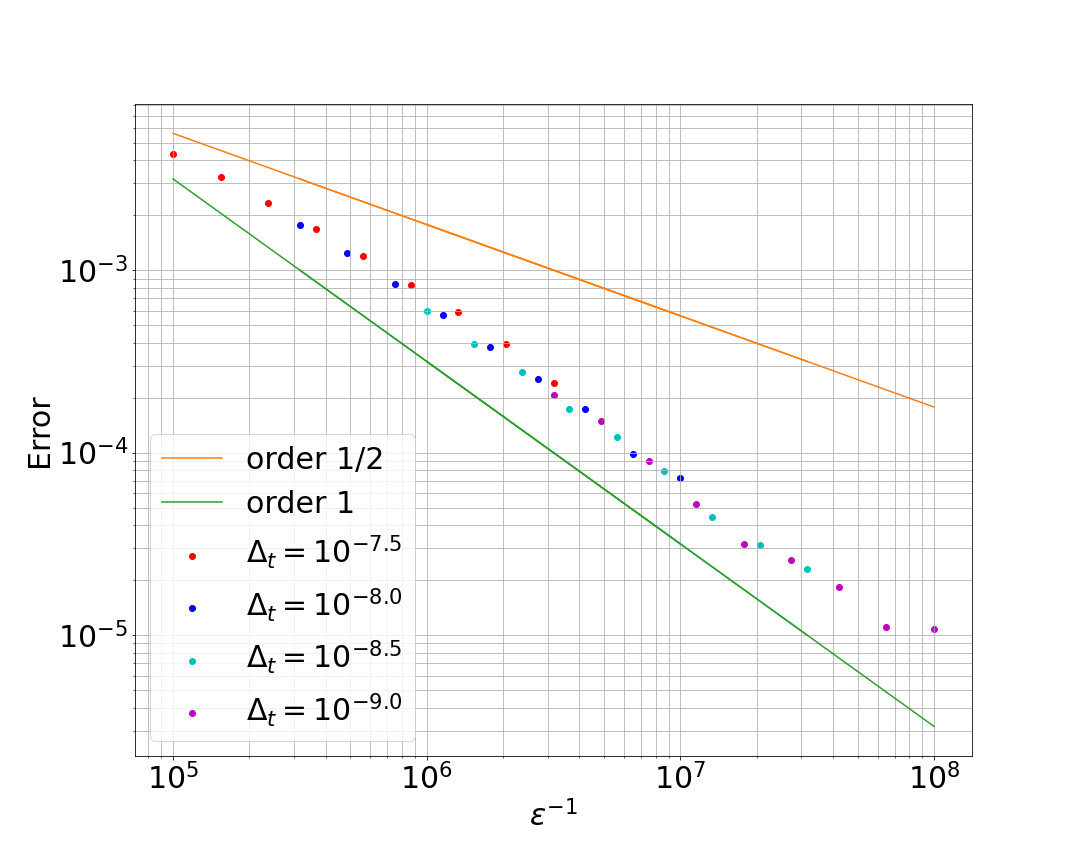}
				\caption{DC1}
				
			\end{subfigure}
			\hfill
			\begin{subfigure}[b]{0.4\columnwidth}
				\centering
				\includegraphics[width=\columnwidth]{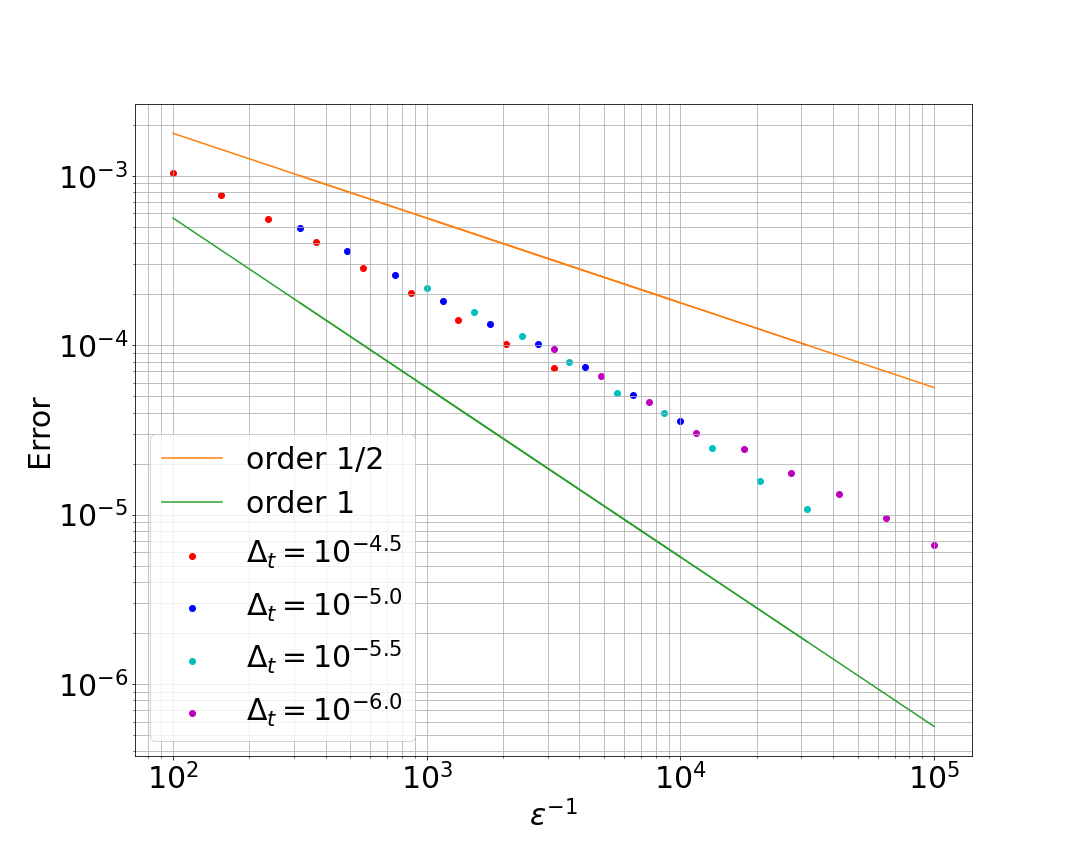}
				\caption{CNC1}
				
			\end{subfigure}
		}
		
		\makebox[\linewidth][c]{
			\begin{subfigure}[b]{0.4\columnwidth}
				\centering
				\includegraphics[width=\columnwidth]{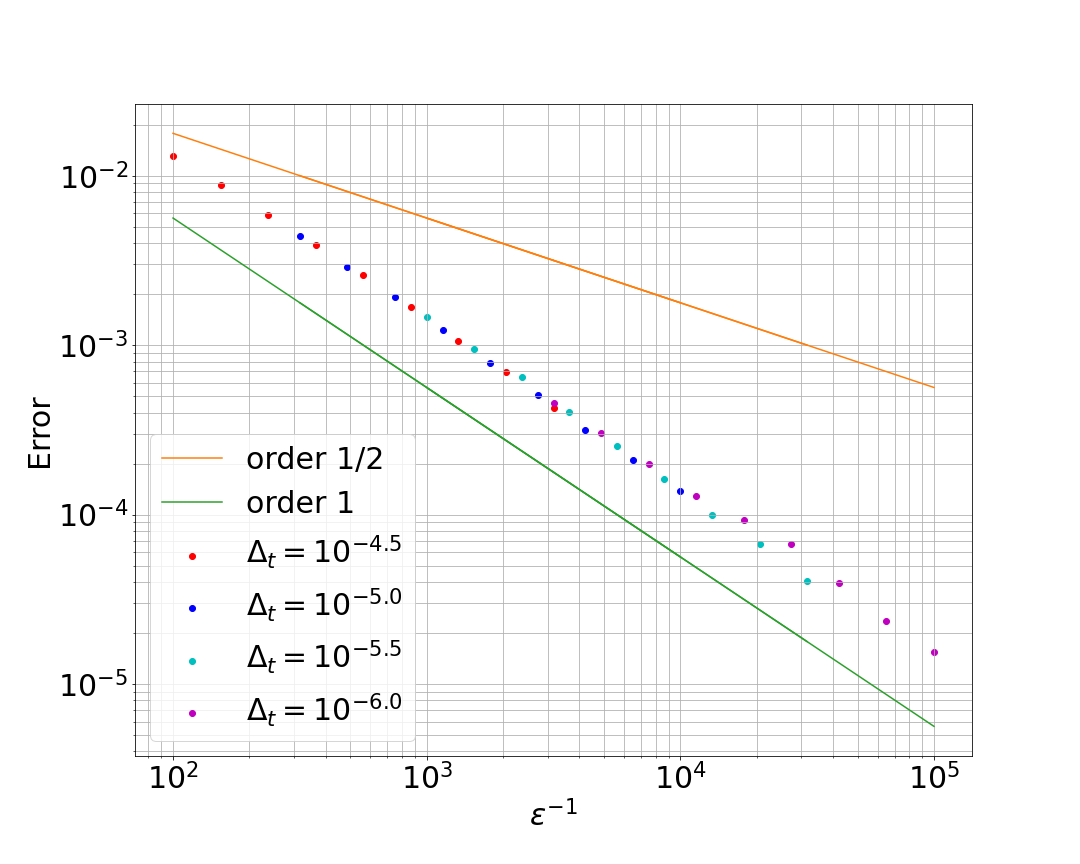}
				\caption{CC2}
				
			\end{subfigure}
			\hfill
			\begin{subfigure}[b]{0.4\columnwidth}
				\centering
				\includegraphics[width=\columnwidth]{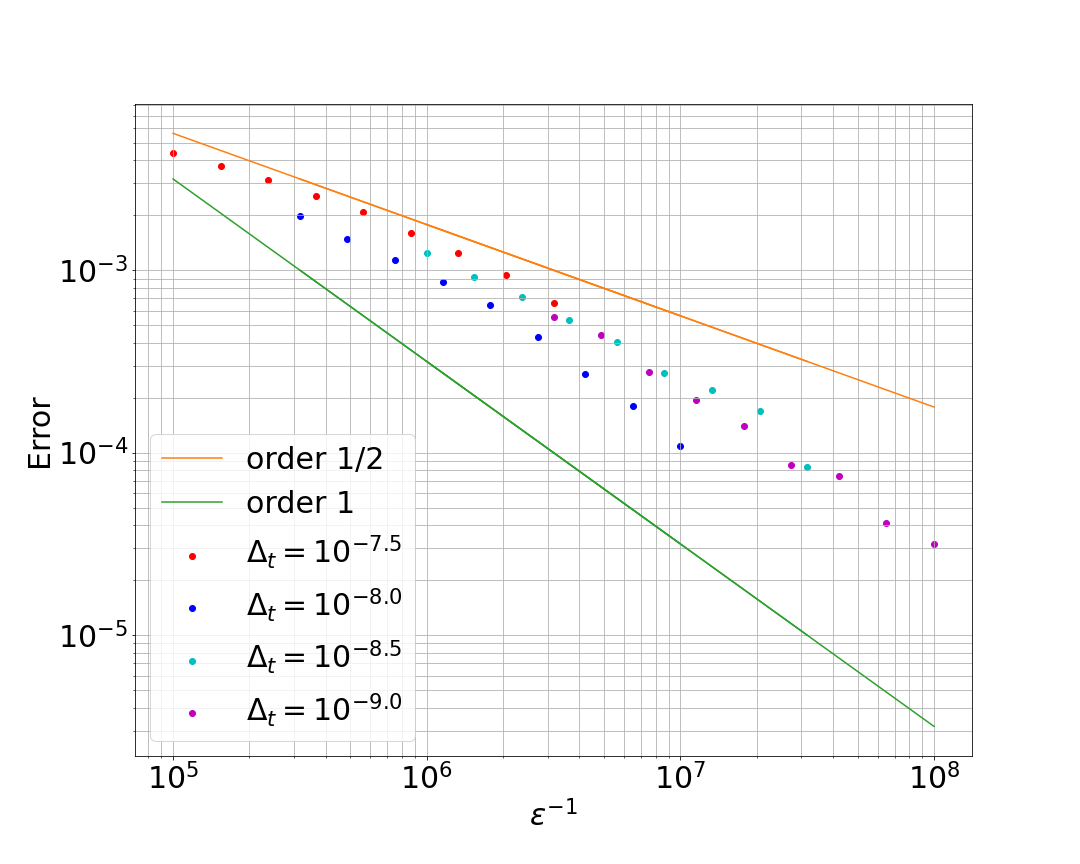}
				\caption{DC2}
				
			\end{subfigure}
			\hfill
			\begin{subfigure}[b]{0.4\columnwidth}
				\centering
				\includegraphics[width=\columnwidth]{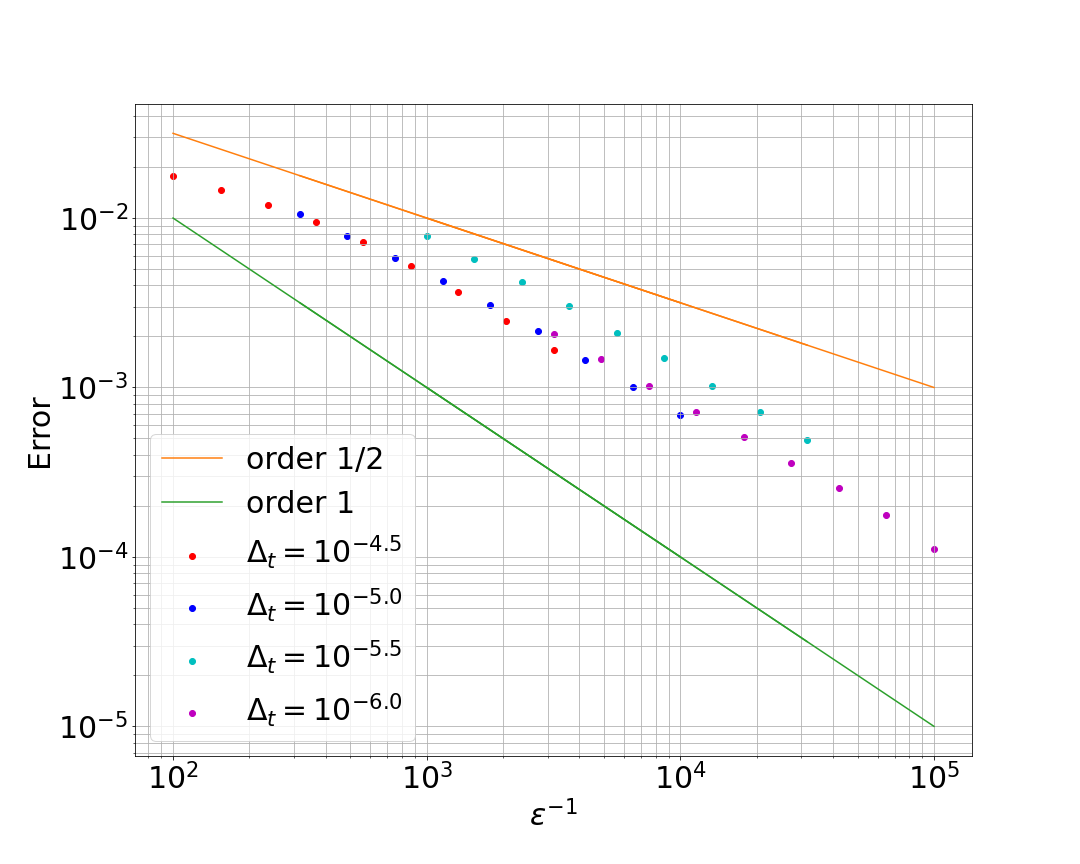}
				\caption{CNC2}
				
			\end{subfigure}
		}
		\caption{Sensitivity of rate of convergence with respect to changes in $\Delta_t$}
		\label{fig: FIGURE WHERE WE RANGE THE DELTA T'S}
	\end{figure}

	Finally, in \cref{fig: CASE WHEN WE USE AN INITIAL CONDITION THAT IS HOLDER REGULAR BUT THERE IS NO DISCONITNUITY} and \cref{tab: RATE OF CONVERGENCE PLOTS USING AN ALTERNATIVE INITIAL CONDITION}, we investigated a scenario where the initial condition is H\"older continuous near the boundary without any observed jump discontinuity in the simulations, specifically, $X_{0-} \sim^d \Gamma(1.5,2)$ with $\alpha = 1.3$. By \citep[Theorem~1.1]{delarue2022global}, the limiting loss function is $1/2$-H\"older continuous at $0$. The rate of convergence appears to be between $1/2$ and $1$ in this setting.
	
	\begin{table}[!h]
		\centering
		\begin{tabular}{|c|c|c|c|c|}
			\hline
			$\Delta_t$ & $10^{-4.5}$ & $10^{-5}$ & $10^{-5.5}$ & $10^{-6}$ \\
			\hline
			Gradient &$0.913$ & $0.863$ & $0.798$ & $0.66$ \\
			\hline
		\end{tabular}
		\caption{Gradient of the line of best fit in \cref{fig: CASE WHEN WE USE AN INITIAL CONDITION THAT IS HOLDER REGULAR BUT THERE IS NO DISCONITNUITY}}
		\label{tab: RATE OF CONVERGENCE PLOTS USING AN ALTERNATIVE INITIAL CONDITION}
	\end{table}
	
	\begin{figure}[!t]
		\makebox[\linewidth][c]{
			\begin{subfigure}[b]{0.55\columnwidth}
				\centering
				\includegraphics[width=\columnwidth]{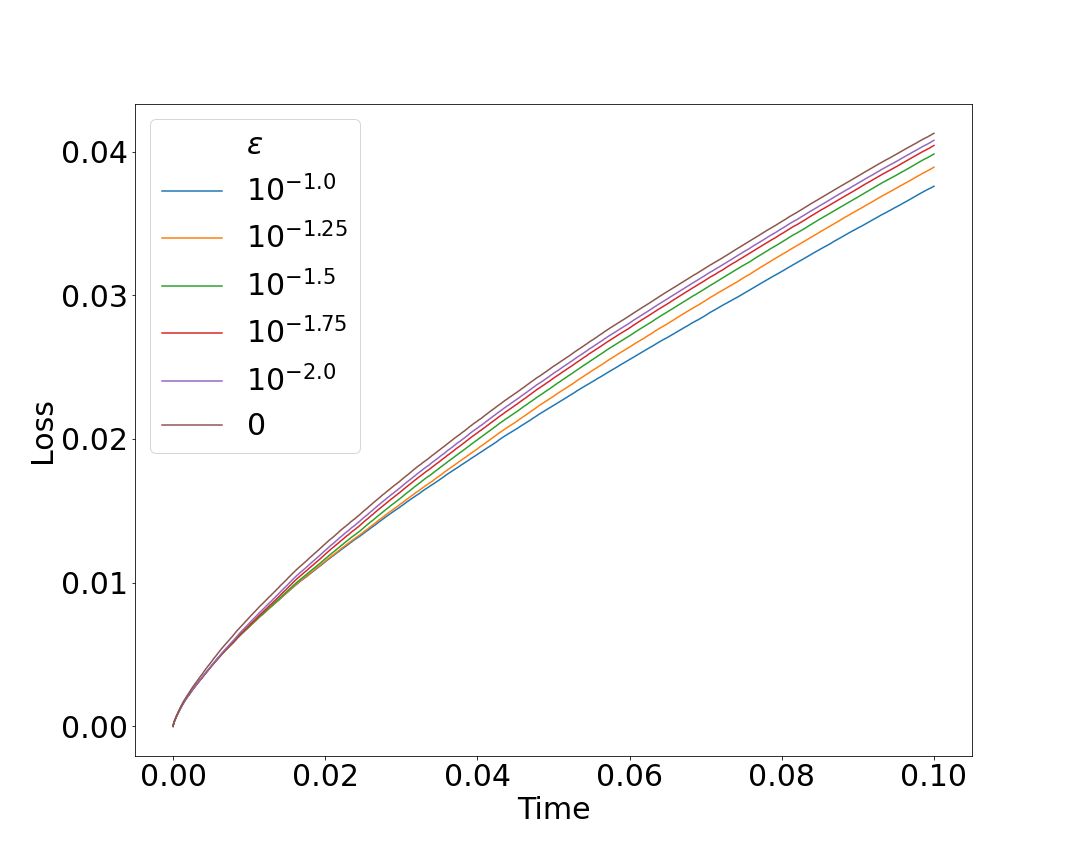}
				\caption{$X_{0-} \sim^d \Gamma(1.5,2),\,\alpha = 1.3$}
				
			\end{subfigure}
			\hfill
			\begin{subfigure}[b]{0.55\columnwidth}
				\centering
				\includegraphics[width=\columnwidth]{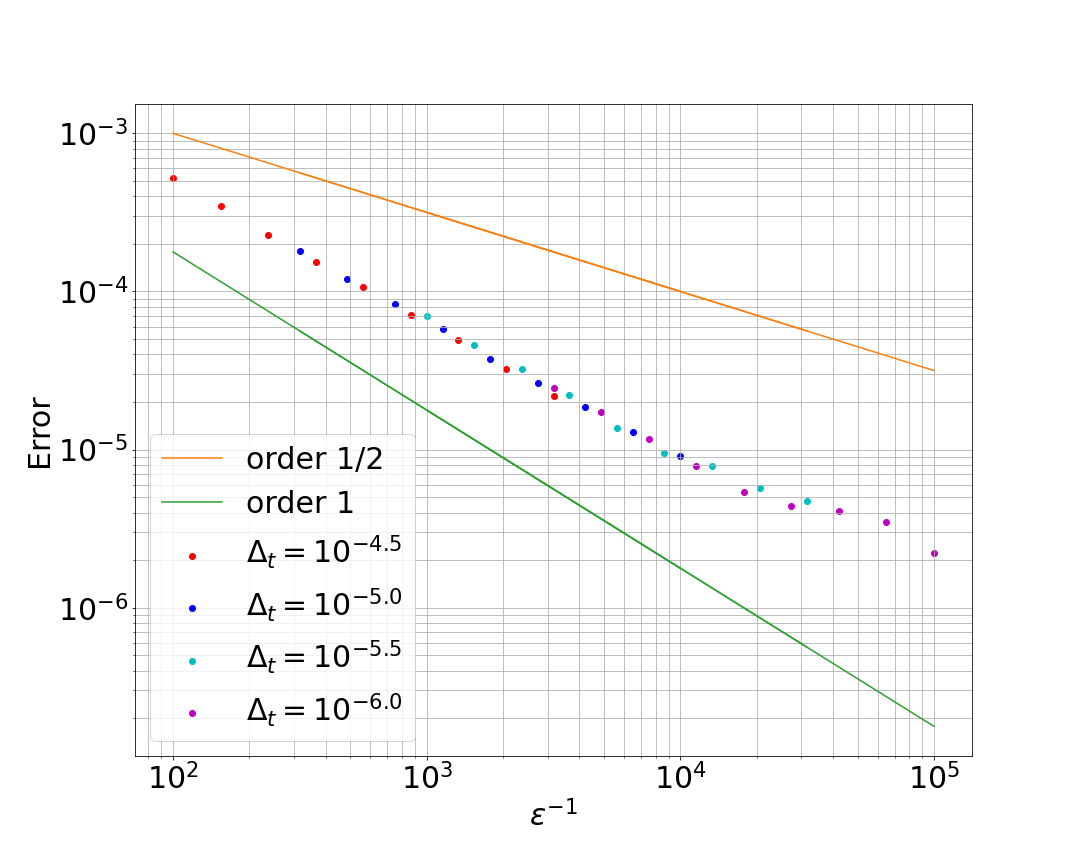}
				\caption{Rate of Convergence}
				\label{fig: RATE OF CONVERGENCE PLOTS USING AN ALTERNATIVE INITIAL CONDITION}
			\end{subfigure}
		}
		\caption{Sensitivity of the rate of convergence with respect to changes in $\Delta_t$}
		\label{fig: CASE WHEN WE USE AN INITIAL CONDITION THAT IS HOLDER REGULAR BUT THERE IS NO DISCONITNUITY}
	\end{figure}
	
\end{appendices}

\paragraph{Acknowledgements.} This research has been supported by the EPSRC Centre for Doctoral Training in Mathematics of Random Systems: Analysis, Modelling and Simulation (EP/S023925/1). AP thanks Philipp Jettkant for discussions on this material. Further, the authors wish to thank two anonymous referees for their careful reading and suggestions for improvement.




\printbibliography

@article{burzoni2021mean,
  title={Mean field games with absorption and common noise with a model of bank run},
  author={Burzoni, Matteo and Campi, Luciano},
  journal={arXiv preprint arXiv:2107.00603},
  year={2021}
}

@article{inglis2015mean,
  title={Mean-field limit of a stochastic particle system smoothly interacting through threshold hitting-times and applications to neural networks with dendritic component},
  author={Inglis, James and Talay, Denis},
  journal={SIAM Journal on Mathematical Analysis},
  volume={47},
  number={5},
  pages={3884--3916},
  year={2015}
}

@article{hambly2019spde,
  title={An SPDE model for systemic risk with endogenous contagion},
  author={Hambly, Ben and S{\o}jmark, Andreas},
  journal={Finance and Stochastics},
  volume={23},
  number={3},
  pages={535--594},
  year={2019},
  publisher={Springer}
}

@article{kaushansky2020convergence,
  title={Convergence of a time-stepping scheme to the free boundary in the supercooled Stefan problem},
  author={Kaushansky, Vadim and Reisinger, Christoph and Shkolnikov, Mykhaylo and Song, Zhuo Qun},
  journal={The Annals of Applied Probability},
  volume={33},
  number={1},
  pages={274--298},
  year={2023}
}

@article{cuchiero2024implicit,
  title={Implicit and fully discrete approximation of the supercooled Stefan problem in the presence of blow-ups},
  author={Cuchiero, Christa and Reisinger, Christoph and Rigger, Stefan},
  journal={SIAM Journal on Numerical Analysis},
  volume={62},
  number={3},
  pages={1145--1170},
  year={2024}
}

@article{ledger2021mercy,
  title={At the mercy of the common noise: blow-ups in a conditional McKean--Vlasov problem},
  author={Ledger, Sean and S{\o}jmark, Andreas},
  journal={Electronic Journal of Probability},
  volume={26},
  pages={1--39},
  year={2021},
  publisher={Institute of Mathematical Statistics and Bernoulli Society}
}

@article{GautschiInequality,
author = {Gautschi, Walter},
title = {Some elementary inequalities relating to the gamma and incomplete gamma function},
journal = {Journal of Mathematics and Physics},
volume = {38},
number = {1-4},
pages = {77-81},
year = {1959}
}

@article{delarue2015particle,
  title={Particle systems with a singular mean-field self-excitation. Application to neuronal networks},
  author={Delarue, Fran{\c{c}}ois and Inglis, James and Rubenthaler, Sylvain and Tanr{\'e}, Etienne},
  journal={Stochastic Processes and their Applications},
  volume={125},
  number={6},
  pages={2451--2492},
  year={2015},
  publisher={Elsevier}
}

@article{hambly2019mckean,
  title={A McKean--Vlasov equation with positive feedback and blow-ups},
  author={Hambly, Ben and Ledger, Sean and S{\o}jmark, Andreas},
  journal={Annals of Applied Probability},
  volume={29},
  number={4},
  pages={2338--2373},
  year={2019},
  publisher={Institute of Mathematical Statistics}
}

@article{fasano1981new,
  title={New results on some classical parabolic free-boundary problems},
  author={Fasano, Antonio and Primicerio, Mario},
  journal={Quarterly of Applied Mathematics},
  volume={38},
  number={4},
  pages={439--460},
  year={1981}
}

@article{fasano1983critical,
  title={A critical case for the solvability of Stefan-like problems},
  author={Fasano, Antonio and Primicerio, Mario and Hadeler, Karl P. },
  journal={Mathematical Methods in the Applied Sciences},
  volume={5},
  number={1},
  pages={84--96},
  year={1983},
  publisher={Wiley Online Library}
}

@article{dibenedetto1984ill,
  title={The ill-posed Hele-Shaw model and the Stefan problem for supercooled water},
  author={DiBenedetto, Emmanuele and Friedman, Avner},
  journal={Transactions of the American Mathematical Society},
  volume={282},
  number={1},
  pages={183--204},
  year={1984}
}

@article{chayes1996hydrodynamic,
  title={Hydrodynamic limits for one-dimensional particle systems with moving boundaries},
  author={Chayes, Lincoln and Swindle, Glen},
  journal={Annals of Probability},
  volume={24},
  number={2},
  pages={559--598},
  year={1996},
  publisher={Institute of Mathematical Statistics}
}

@article{chayes2008two,
  title={A two-sided contracting Stefan problem},
  author={Chayes, Lincoln and Kim, Inwon C.},
  journal={Communications in Partial Differential Equations},
  volume={33},
  number={12},
  pages={2225--2256},
  year={2008},
  publisher={Taylor \& Francis}
}

@article{AVRAM198963,
title = {Probability bounds for M-Skorohod oscillations},
journal = {Stochastic Processes and their Applications},
volume = {33},
number = {1},
pages = {63-72},
year = {1989},
author = {Florin Avram and Murad S. Taqqu}
}

@book{whitt2002stochastic,
  title={Stochastic-Process Limits: An Introduction to Stochastic-Process Limits and Their Application to Queues},
  author={Whitt, Ward},
  year={2002},
  publisher={Springer Science \& Business Media}
}

@book{billingsley2013convergence,
  title={Convergence of Probability Measures},
  author={Billingsley, Patrick},
  year={2013},
  publisher={John Wiley \& Sons}
}

@article{particleSystemHittingTimeChristoph,
title = {Simulation of a simple particle system interacting through hitting times},
journal = {Discrete \& Continuous Dynamical Systems - B},
volume = {24},
number = {10},
pages = {5481-5502},
year = {2019},
author = {Kaushansky, Vadim and Reisinger, Christoph},
}

@article{cuchiero2020propagation,
    title={Propagation of minimality in the supercooled Stefan problem}, 
    author={Cuchiero, Christa and Rigger, Stefan and Svaluto-Ferro, Sara},
    journal={The Annals of Applied Probability},
    volume={33},
    number={2},
    pages={1588--1618},
    year={2023}
}

@article{nadtochiy2019particle,
  title={Particle systems with singular interaction through hitting times: application in systemic risk modeling},
  author={Nadtochiy, Sergey and Shkolnikov, Mykhaylo},
  journal={The Annals of Applied Probability},
  volume={29},
  number={1},
  pages={89--129},
  year={2019},
  publisher={Institute of Mathematical Statistics}
}

@article{ledger2020uniqueness,
  title={Uniqueness for contagious McKean--Vlasov systems in the weak feedback regime},
  author={Ledger, Sean and S{\o}jmark, Andreas},
  journal={Bulletin of the London Mathematical Society},
  volume={52},
  number={3},
  pages={448--463},
  year={2020},
  publisher={Wiley Online Library}
}

@article{kunita1986tightness,
  title={Tightness of probability measures in d ([0, t]; c) and d ([0, t]; d)},
  author={Kunita, Hiroshi},
  journal={Journal of the Mathematical Society of Japan},
  volume={38},
  number={2},
  pages={309--334},
  year={1986}
}

@article{chentsov1956weak,
  title={Weak convergence of stochastic processes whose trajectories have no discontinuities of the second kind and the “heuristic” approach to the Kolmogorov-Smirnov tests},
  author={Chentsov, Nikolai N.},
  journal={Theory of Probability \& Its Applications},
  volume={1},
  number={1},
  pages={140--144},
  year={1956},
  publisher={SIAM}
}

@article{prokhorov1956convergence,
  title={Convergence of random processes and limit theorems in probability theory},
  author={Prokhorov, Yuri V.},
  journal={Theory of Probability \& Its Applications},
  volume={1},
  number={2},
  pages={157--214},
  year={1956},
  publisher={SIAM}
}

@article{carmona:hal-01144847,
  TITLE = {{Mean field games with common noise}},
  AUTHOR = {Carmona, Ren{\'e} and Delarue, Fran{\c c}ois and Lacker, Daniel},
  JOURNAL = {{Annals of Probability}},
  PUBLISHER = {{Institute of Mathematical Statistics}},
  VOLUME = {44},
  NUMBER = {6},
  PAGES = {3740-3803.},
  YEAR = {2016}
}

@article{MEANFIELDTHROUGHHITTINGTIMESNADOTOCHIY,
  title={Mean field systems on networks, with singular interaction through hitting times},
  journal={The Annals of Probability},
  volume={48},
  number={3},
  pages={1520--1556},
  year={2020},
author={Nadtochiy, Sergey and Shkolnikov, Mykhaylo}
}

@article{delarue2022global,
  title={Global solutions to the supercooled Stefan problem with blow-ups: regularity and uniqueness},
  author={Delarue, Fran{\c{c}}ois and Nadtochiy, Sergey and Shkolnikov, Mykhaylo},
  journal={Probability and Mathematical Physics},
  volume={3},
  number={1},
  pages={171--213},
  year={2022},
  publisher={Mathematical Sciences Publishers}
}

@article{stefan1889einige,
  title={{\"U}ber einige Probleme der Theorie der W{\"a}rmeleitung},
  author={Stefan, Joseph},
  journal={Sitzungber., Wien, Akad. Mat. Natur},
  volume={98},
  pages={473--484},
  year={1889}
}

@article{lipton2021semi,
  title={Semi-analytical solution of a McKean--Vlasov equation with feedback through hitting a boundary},
  author={Lipton, Alexander and Kaushansky, Vadim and Reisinger, Christoph},
  journal={European Journal of Applied Mathematics},
  volume={32},
  number={6},
  pages={1035--1068},
  year={2021},
  publisher={Cambridge University Press}
}

@article{HOWISONSINGULARITY,
    author = {Howison, Samuel D. and Ockendon, John R. and Lacey, Andrew A.},
    title = {Singularity development in moving-boundary problems},
    journal = {The Quarterly Journal of Mechanics and Applied Mathematics},
    volume = {38},
    number = {3},
    pages = {343-360},
    year = {1985}
}

@article{lacey1985ill,
  title={Ill-posed free boundary problems},
  author={Lacey, Andrew A. and Ockendon, John R. },
  journal={Control and Cybernetics},
  volume={14},
  number={275-296},
  pages={C432},
  year={1985}
}

@article{fasano1989singularities,
  title={On the singularities of one-dimensional {Stefan} problems with supercooling},
  author={Fasano, Antonio  and Primicerio, Mario and Howison, Samuel D. and Ockendon, John R. },
  journal={Mathematical Models for Phase Change Problems},
  pages={215--226},
  year={1989},
  publisher={Springer}
}

@article{sherman1970general,
  title={A general one-phase Stefan problem},
  author={Sherman, Bernard},
  journal={Quarterly of Applied Mathematics},
  volume={28},
  number={3},
  pages={377--382},
  year={1970}
}

	



\end{document}